\frenchspacing  \linespread{1.1}
\newcommand{\thechapterwords}
{ \ifcase \thechapter\or One\or Two\or Three\or Four\or
Five\or Six\or Seven \or Eight\or Nine\or Ten\or Eleven\fi}
\def\thickhrulefill{\leavevmode \leaders \hrule height 1ex \hfill \kern \z@}
\def\@makechapterhead#1{%
  \vspace*{15\p@}%
  {\parindent \z@ \centering \reset@font
        \thickhrulefill\quad
        \scshape \@chapapp{} \thechapterwords
        \quad \thickhrulefill
        \par\nobreak
        \vspace*{15\p@}%
        \interlinepenalty\@M
        \hrule
        \vspace*{15\p@}%
        \Huge \bfseries #1\par\nobreak
        \par
        \vspace*{15\p@}%
        \hrule
    \vskip 60\p@
  }}
\def\@makeschapterhead#1{%
  \vspace*{15\p@}%
  {\parindent \z@ \centering \reset@font
        \thickhrulefill
        \par\nobreak
        \vspace*{15\p@}%
        \interlinepenalty\@M
        \hrule
        \vspace*{15\p@}%
        \Huge \bfseries #1\par\nobreak
        \par
        \vspace*{15\p@}%
        \hrule
    \vskip 60\p@
  }}
  \def\@makechapterhead#1{%
  \vspace*{15\p@}%
  {\parindent \z@ \centering \reset@font
        \thickhrulefill\quad
        \scshape \@chapapp{} \thechapterwords
        \quad \thickhrulefill
        \par\nobreak
        \vspace*{15\p@}%
        \interlinepenalty\@M
        \hrule
        \vspace*{15\p@}%
        \Huge \bfseries #1\par\nobreak
        \par
        \vspace*{15\p@}%
        \hrule
    \vskip 60\p@
    }}
  \frenchspacing \pagestyle{headings}
\newlength{\defbaselineskip}
\newcommand{\setlinespacing}[1]%
           {\setlength{\baselineskip}{#1 \defbaselineskip}}
\newtheorem{remark}{Remark}[section]
\theoremstyle{plain}
\newtheorem{lemma}{Lemma}[section]
\newtheorem{proposition}{Proposition}[section]
\newtheorem{theorem}{Theorem}[section]
\newtheorem{definition}{Definition}[section]
\newtheorem{corollary}{Corollary}[section]
\newtheorem{example}{Example}[section]
\newcommand{\Z}{\mathbb Z}
\newcommand{\R}{\mathbb R}
\newcommand{\C}{\mathbb C}
\newcommand{\N}{\mathbb N}
\newcommand{\G}{\mathcal{G}}
\newcommand{\D}{\mathcal{D}}
\newcommand{\m}{\mathfrak m}
\newcommand{\s}{\mathfrak{s}}
\newcommand{\h}{\mathfrak h}
\newcommand{\pd}{\partial}
\newcommand{\der}{\hbox{\rm der}}
\newcommand{\pder}{\hbox{\rm Pder}}
\newcommand{\paut}{\hbox{\rm Paut}}
\newcommand{\sspan}{\hbox{\rm span}}
\newcommand{\beq}{\begin{equation}}
\newcommand{\eeq}{\end{equation}}
\newcommand{\beqn}{\begin{eqnarray}}
\newcommand{\eeqn}{\end{eqnarray}}
\newcommand{\bpro}{\begin{proposition}}
\newcommand{\epro}{\end{proposition}}
\newcommand{\blem}{\begin{lemma}}
\newcommand{\elem}{\end{lemma}}
\newcommand{\bdfn}{\begin{definition}}
\newcommand{\edfn}{\end{definition}}
\newcommand{\bcor}{\begin{corollary}}
\newcommand{\ecor}{\end{corollary}}
\newcommand{\bthm}{\begin{theorem}}
\newcommand{\ethm}{\end{theorem}}
\newcommand{\bex}{\begin{example}}
\newcommand{\eex}{\end{example}}
\newcommand{\brmq}{\begin{remark}}
\newcommand{\ermq}{\end{remark}}
\newcommand{\benum}{\begin{enumerate}}
\newcommand{\eenum}{\end{enumerate}}
\newcommand{\bitem}{\begin{itemize}}
\newcommand{\eitem}{\end{itemize}}
\theoremstyle{plain}
\title{On the Geometry of Cotangent Bundles of Lie Groups }
\author{Bakary MANGA}
\date{}
\begin{document}
\begin{titlepage}
\begin{center}
\includegraphics[height=2cm,width=2.1cm]{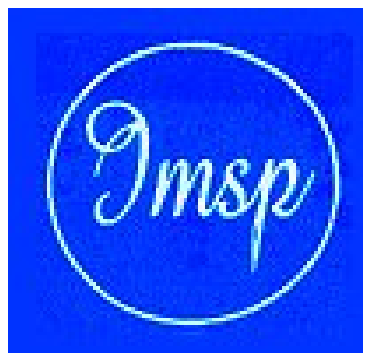} \qquad 
{\bf Universit\'e d'Abomey-Calavi (UAC), B\'enin} \qquad
\includegraphics[height=2cm,width=2.1cm]{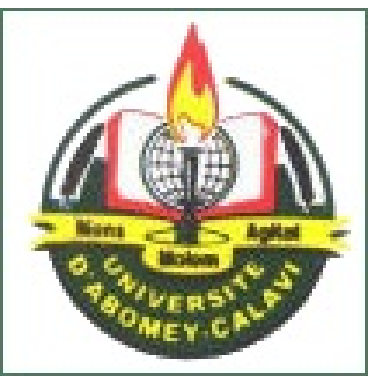}

\vspace{0.2cm}
The Abdus-Salam International Centre for Theoretical Physics (ICTP), Italy\\
\vspace{0.2cm}
{{\bf\it Institut de Math\'ematiques et de Sciences Physiques (IMSP), B\'enin\\}}
\end{center}

{\bf Order $n^0$ : 43/2010}

\vspace{1.2cm}

\begin{center}

{\bf PhD Thesis \\
\vskip 0.2cm 

Option : Differential Geometry\\}

\vspace{1.5cm}
{\bf Title :\\}
{\Large\bf On the Geometry of Cotangent Bundles of Lie Groups\\ }
\vspace{1cm}
By 

\vspace{0.5cm}

Bakary Manga\footnote{Supported by The Abdus Salam International Centre for Theoretical Physics (ICTP), Italy}\\

\vspace{1cm}

\end{center}

\vspace{1cm}

\begin{center}
{\bf Defended in June $18^{th}$, $2010$}
\end{center}

\end{titlepage}
\dominitoc

\pagenumbering{roman}

\chapter*{Dedication}
\addstarredchapter{Dedication}

{\it To My Sister Binta Manga }
\chapter*{Acknowledgements}
\addstarredchapter{Acknowledgements}

As prelude to their works, authors often display a warning, a kind of umbrella, 
fortunately opened to protect themselves from possible backslash of their statement. 
As an author, if I was allowed to do the same, I will state the following:
I acknowledge that It will be quite hard to find convenient words which can express my thanks to all those 
who helped me to achieve the dream of writing a PhD thesis. For those whose names are not written here, please be 
indulgent with me and accept my apologies.

\vskip 0.3cm

  I wish long life to the {\it Institut de Mathematiques et de Sciences Physiques (IMSP)}, 
the institute in which this thesis has been achieved.

\vskip 0.3cm

The relations between {\it Professor Jean-Pierre Ezin}, director of the IMSP (1988-2008) and me have never been always easy. 
Nevertheless, Prof. Ezin has contributed to make me become a better person by allowing my personality, even overflowing at times, 
to be expressed.  I am grateful to you for that and as well as for your strong and paramount battle related to support 
fundamental sciences in Africa. Thank you commissary \footnote{Mr. Ezin is nowadays  commissary in charge of Human Resources, 
Sciences and New Technologies of the African Union.}.

\vskip 0.3cm

{\it Professor Jo\"el Tossa}, has been for me more than an advisor.  With me, he has been always  available and patient. 
He constantly did his best in order to provide me with solutions whenever my working and living conditions were tough. 
Our relations have been intensified over years  such that it is becoming difficult for me not to have it around me.
 For sure, he will be one of those persons that I will miss  a lot once I am back in Senegal. I owe him quite a lot.

\vskip 0.3cm

I was very lucky to have {\it Professor Alberto Medina} of the University of Montpellier II (UM II) as another advisor of my thesis. 
In 2005, {\it Professor Medina} invited me for a stay in his research group whose niche is the Homogeneous spaces
(Lab. of Geometry, Topology and Algebra (GTA), Department of Mathematics, UM II). He has always been there for me and has 
boosted me every time the spectrum of doubt and abandon was in my mind.

\vskip 0.3cm 

At times, it is hard to ask to our thesis's advisers some type of questions. Such situations happen 
mainly because in our thoughts, we tend to believe that those questions are not tough enough and moreover, 
we do not want to show them our weaknesses. Hence, there is a need to have someone to whom our doubts and stupidities can 
be conveyed without any hesitation. In Doctor Andr\'e Diatta, I found that person. I was more that lucky to have him and never, 
he complained. Most of the results available in this thesis would have not been possible without his  energetic support.  
Several of the results within this thesis are obtained in collaboration with him and some of them will be published in joint papers.
Big brother, I am your servant!

\vskip 0.3cm

{\it Hermione Gandjidon} and {\it Rekiath Yasso} have had to learn the \LaTeX \, in order to become my "Secretaries". 
I thank both of you.

\vskip 0.3cm

By being at the IMSP, I met a huge number of researchers, Physicists and Mathematicians. 
 All interesting  interactions  and exchanges that we had had helped me to become a Mathematician.
 I thank you all.

\vskip 0.3cm

At the campus of the {\it  "Ecole Normale Superieure Nadjo"} where live PhDs and Engineering  students and where people 
call me {\it The Old}, I met guys funnier than me. I will miss the afternoon soccer's games of Tuesdays and Fridays 
through which I had opportunities to demonstrate to young people that {\it The Old} still has some youth in him.

\vskip 0.3cm

I spent many years in this wonderful country and I would like to emphasize that the hospitality  of the Beninese people 
 has nothing to envy to the famous Senegalese {\it "Teranga"} ("Teranga"= hospitality). Here is an opportunity to express my 
gratitude to all the people of Benin. 

\vskip 0.3cm

To the {\it Ly}'s family who adopted me and  the Senegalese community of Porto-Novo, I would like to express my sincere thanks.

\vskip 0.3cm

I would also like to convey my profound gratitude to families  Vigan, Dossa and Hounkenou all from Porto-Novo.

\vskip 0.3cm

I am grateful to {\it Professor Augustin Banyaga} who has accepted  to be a referee for my thesis. Professor L\'eonard Todjihounde
also provided a report for this thesis. Many thanks to him.

\vskip 0.3cm 

All my studies at the IMSP have been funded by the {\it  International Centre for Theoretical Physics (ICTP)} through the {\it ICAC-3 program}.
My trip and stay to Montpellier was funded by {\it SARIMA project}. Thanks a lot for your financial supports.

\vskip 0.3cm 

At last but not at least. My family has endured my absence for years. They have shown a lot of patience and devotion. 
Thank you so much dear parents, brothers and sisters. My friends from Senegal and elsewhere have been of a tremendous
and immeasurable support. At several occasions, they have represented me efficiently where I was not able to be. 
They encouraged and motivated me all along the way. Thanks to {\it A\"issatou Massaly, Mohamed Badji, Aliou Sarr, 
Anita Hounkenou, Coumba Tir\'era, Herv\'e G. Enjieu Kadji,   Youssouf Massaly, Samba Mbaye}.

\chapter*{Abstract}
\addstarredchapter{Abstract}

In this thesis we study the geometry of cotangent bundles of Lie groups as Drinfel'd double Lie groups. Lie groups
of automorphisms of cotangent bundles of Lie groups are completely characterized and interesting results are obtained. 
We give prominence to the fact that the Lie groups  of automorphisms of cotangent bundles of Lie groups are super symmetric 
Lie groups (Theorem \ref{structuretheorem}). 
In the cases of orthogonal Lie algebras, semi-simple Lie algebras and compact Lie algebras we recover by simple methods  
interesting co-homological known results (Section \ref{chap:cohomology}).

\vskip 0.3cm 

Another theme in this thesis is the study of prederivations of cotangent bundles of Lie groups. The Lie algebra of 
prederivations encompasses the one of derivations as a subalgebra. We find out that Lie algebras of cotangent Lie groups 
(which are not semi-simple) of semi-simple Lie groups have the property that all their prederivations are derivations. 
This result is an extension
of a well known result due to M\"uller (\cite{muller}). The structure of the Lie algebra of prederivations of 
Lie algebras of cotangent bundles of Lie groups is explored and we have shown that the Lie algebra of prederivations of Lie algebras of 
cotangent bundle of Lie groups are reductive Lie algebras.

\vskip 0.3cm

Prederivations are useful tools for classifying objects like pseudo-Riemannian metrics (\cite{bajo4}, \cite{muller}). 
We have studied bi-invariant metrics on cotangent bundles of Lie groups and their isometries. The Lie algebra of the Lie group of
isometries of a bi-invariant metric on a Lie group is composed with prederivations of the Lie algebra which are skew-symmetric
with respect to the induced orthogonal structure on the Lie algebra. We have shown that  the Lie 
group of isometries of any bi-invariant metric on the cotangent bundle of any semi-simple Lie groups is generated by the exponentials of 
inner derivations of the Lie cotangent algebra.

\vskip 0.3cm

Last, we have dealt with an introduction to the geometry the Lie  group of affine motions of the real line $\R$, which is a K\"ahlerian 
Lie group (see \cite{lichnerowicz-medina}). We describe, through explicit expressions, the symplectic structure, the complex structure, geodesics. 
Since the symplectic structure corresponds to a solution of the Classical Yang-Baxter equation $r$ (see \cite{di-me-cybe}), we
also study the double Lie group associated to $r$.

\chapter*{R\'esum\'e en Fran\c{c}ais}
\addstarredchapter{R\'esum\'e en Fran\c{c}ais}

Cette th\`ese est une contribution \`a l'\'etude de la g\'eom\'etrie des fibr\'es cotangents des groupes de Lie et des espaces homog\`enes.

\vskip 0.3cm 

Nous caract\'erisons compl\'etement  les groupes des automorphismes des fibr\'es cotangents des groupes de Lie 
et montrons qu'ils sont des groupes de Lie super-sym\'etriques (Theorem \ref{structuretheorem}). 
Dans le cas particulier d'un  groupe de Lie orthogonal, c'est-\`a-dire un groupe de Lie muni d'une  
m\'etrique bi-invariante, nous utilisons la m\'etrique pour r\'einterpr\'eter 
les relations et retrouver  des r\'esultats  connus de cohomologie. 

\vskip 0.3cm

Le fibr\'e cotangent $T^*G$ d'un groupe de Lie $G$ peut \^etre identifier au produit cartesien $G\times \G^*$, 
o\`u $\G^*$ est l'espace dual de l'alg\`ebre de Lie $\G$ de $G$. On peut alors munir $T^*G$ 
de la structure de groupe de Lie obtenue par produit semi-direct de $G$ et $\G^*$ via la repr\'esentation coadjointe.
Cette structure de groupe de Lie  fait de $T^*G$ un double de Drinfel'd.

\vskip 0.3cm

Nous avons \'egalement \'etudi\'e les pr\'ederivations des alg\`ebres de Lie des fibr\'es cotangents des groupes de Lie. 
Nous montrons que l'alg\`ebre des pr\'ederivations des alg\`ebres de Lie des groupes de Lie fibr\'es cotangents 
des groupes de Lie sont r\'eductives. 
M\"uller a montr\'e que toutes les pr\'ederivations d'une alg\`ebre de Lie semi-simple sont des d\'erivations (interieures). 
Nous \'etendons ce r\'esultat en montrant que si un groupe de Lie est semi-simple alors toutes les pr\'ederivations de  
l'alg\`ebre de Lie de son fibr\'e cotangent sont des d\'erivations quoi que le fibr\'e cotangent soit non semi-simple 
(Theorem \ref{thm-prederivation-semi-simple}).

\vskip 0.3cm

Un autre th\`eme abord\'e dans cette th\`ese est l'\'etude des m\'etriques bi-invariantes sur les fibr\'es cotangents des 
groupes de Lie. Nous caract\'erisons toutes les m\'etriques bi-invariantes sur les fibr\'es cotangents des groupes de Lie et 
\'etudions le groupe de leurs isom\'etries. L'alg\`ebre de Lie de ce groupe d'isom\'etries n'est  rien d'autre que l'alg\`ebre 
de toutes les pr\'ederivations de l'alg\`ebre du fibr\'e cotangent qui sont antisym\'etriques par rapport \`a la 
structure orthogonale induite sur l'alg\`ebre de Lie du fibr\'e cotangent.

\vskip 0.3cm

Enfin, nous avons fait une introduction \`a la g\'eom\'etrie du groupe de Lie des transformations affines de la droite r\'eelle. 
Nous donnons des expressions explicites d'une  forme symplectique, d'une structure affine, des g\'eod\'esiques de 
cette structure affine. La forme symplectique donnant lieu \`a une solution des \'equations Classiques de Yang-Baxter, 
nous avons \'egalement \'etudi\'e le groupe de Lie double de Drinfel'd du groupe des transformations affines de la droite r\'eelle.

\tableofcontents


\chapter*{General Introduction}
\fancyhead[L]{General Introduction}

\pagenumbering{arabic}
\addstarredchapter{General Introduction}

This thesis is a contribution to the large  task of studying the geometry of cotangent bundles of  Lie groups and 
the geometry of homogeneous spaces. 

\vskip 0.5cm

One of the greatest importance of the cotangent bundle $T^*G$ of a  Lie group $G$ is that it is a symplectic manifold on which $G$ acts 
by symplectomorphisms  with a Lagrangian orbit. A symplectic Lie group is a pair $(G,\omega)$ consisting of a Lie group $G$ and 
a closed non-degenerate $2$-form $\omega$ which is invariant under left translations of $G$. 
If identified with the Cartesian product $G\times \G^*$, the cotangent bundle $T^*G$ possesses a Lie group structure obtained by 
semi-direct product $T^*G:=G\ltimes \G^*$ using the coadjoint action of $G$ on the dual space $\G^*$ of the Lie algebra $\G$ of $G$. 
This Lie group structure  together with the Liouville form is not  a symplectic Lie group. But, if  $G$  carries a left-invariant affine structure, 
then its cotangent bundle carries a symplectic Lie group structure. This particular Lie group structure, sometimes called {\it "cotangent" }, 
is obtained by taking the semi-direct product of $G$ and the dual space $\G^*$ of its Lie algebra $\G$ by means of a natural action given 
by the affine structure on $G$. The corresponding Lie algebra is the semi-direct product via the left multiplications given by the 
left-symmetric product. The two Lie group structure on $T^*G$ defined above are not isomorphic. 

\vskip 0.5cm

It is also well known, since the works of Drinfel'd (\cite{drinfeld}), that $T^*G$ (with the Lie group structure performed by 
semi-direct product $G\ltimes \G^* $ via the coadjoint representation of $G$ on $\G^*$ ) is a particular case of the large class of the so-called 
Drinfel'd double Lie groups. As a Drinfel'd double Lie group, $T^*G$ admits a  metric which is invariant under left and right translations 
(\cite{drinfeld}). This Lie group structure on $T^*G$ does not admit a left-invariant symplectic form, except in the Abelian case.

\begin{center}
{\bf In this thesis we deal with the Lie group structure which make  $T^*G$ a Drinfel'd double Lie group of $G$.}
\end{center}


Studying the geometry of a given Lie group is to study invariant structures on it. The cotangent bundle $T^*G$ of a Lie group $G$ 
can exhibit very interesting and rich algebraic and geometric structures 
(affine, symplectic, pseudo-Riemannian, K\"ahlerian,...\cite{marle}, \cite{kronheimer}, \cite{feix}, \cite{drinfeld}, 
\cite{di-me-cybe}, \cite{bajo-benayadi-medina}). 

\vskip 0.5cm

Such structures can be better understood when one can
exhibit the group of transformations which preserve them.  This very often involves the automorphisms 
of $T^*G$, if in particular, such structures are invariant under  left or right multiplications by the  elements of $T^*G$. 
This is one of the reason for which we deal with {\it automorphisms of cotangent bundles of Lie groups} in  {\it Chapter II}  of this 
dissertation. We study the connected component of the unit of the  group of automorphisms of the Lie algebra 
$\mathcal D:=T^*\mathcal G$ of $T^*G$. Such a connected component being spanned by exponentials of derivations of $\D$, 
we often work with those derivations. Let  $\der(\G)$ stand for  the Lie algebra of derivations of $\G$, while $\mathcal J$  
denotes that of linear maps $j:\G\to\G$ satisfying $j([x,y])=[j(x),y]$, for every elements $x,y$ of $\G$. 
We give a characterization of all derivations of $T^*\G$ (Theorem \ref{derivationschar})
and  show that in particular, if $G$ has a bi-invariant Riemannian or pseudo-Riemannian metric, then every 
derivation $\phi$ of $\D$ can be expressed in terms of elements of $\der(\G)$ and $\mathcal J$ alone. Furthermore, we give prominence to the fact 
that the Lie group $Aut(\D)$ of automorphisms of $\D$ is a super symmetric Lie group and its Lie algebra $\der(\D)$
possesses Lie subalgebras which are Lie superalgebras, {\it i.e.} they are $\mathbb Z/2\mathbb Z$-graded Lie algebras with
the Lie bracket satisfying $[x,y]=-(-1)^{deg(x)deg(y)}[y,x]$ (Theorem \ref{structuretheorem}). We also consider particular 
cases (e.g. orthogonal Lie algebras, semi-simple Lie algebras, compact Lie algebras)  and recover by simple methods  interesting 
cohomological known results (Section \ref{chap:cohomology}).

\vskip 0.5cm

In {\it Chapter III} we completely characterise the space of prederivations of $\D$, that is endomorphisms $p$ of $\D$ which satisfy
$$
p\big(\big[x,[y,z]\big]\big) = \big[p(x),[y,z]\big] + \big[x,[p(y),z]\big] +
\big[x,[y,p(z)]\big],
$$
for every elements $x,y,z$ of $\D$. The Lie algebra $\der(\D)$ is a subalgebra of the Lie algebra $\pder(\D)$ of prederivations of 
$\D$. Prederivations can be used to study bi-invariant metrics on Lie groups (\cite{bajo1}, \cite{bajo4}, \cite{muller}). One of the 
important results within this chapter is: {\it If $G$ is a semi-simple Lie group with Lie algebra $\G$, then any prederivations of $T^*\G$ 
(not semi-simple) is a derivation}. This is an extension of the result of M\"uller (\cite{muller}) which states that any prederivation 
of a semi-simple Lie algebra is a derivation, hence an inner derivation. We also give a structure theorem for $\pder(\D)$ which 
states that $\pder(\D)$ decomposes into  $\pder(T^*\G) =\G_0\oplus\G_1$, where $\G_0$ is a reductive subalgebra of $\pder(\D)$, 
that is $[\G_0,\G_0]\subset \G_0$ and $[\G_0,\G_1]\subset \G_1$. Semi-simple, compact and more generally orthogonal
Lie algebras are also considered in this chapter.

\vskip 0.5cm

We study bi-invariant metrics of cotangent bundles of Lie groups in {\it Chapter IV}. In this chapter we characterise all 
orthogonal structures on $T^*\G$ (Theorem \ref{thm:orthogonal-structure}) and  their isometries. It is known  
 that if $(G,\mu)$ is a connected and simply-connected orthogonal Lie group with Lie algebra $\G$, 
then the isotropy group of the neutral element of $G$ in the group $I(G,\mu)$ of isometries of $(G,\mu)$ 
is isomorphic to the  group  of preautomorphisms of $\G$ which preserve the non-degenerate bilinear form induced 
by $\mu$ on $\G$ and whose Lie algebra is the  whole set of skew-symmetric prederivations of $\G$ (\cite{muller}). 
We characterise the isometries of bi-invariant metrics through the skew-symmetric prederivations with respect 
to the orthogonal structures induced on $T^*\G$ by the bi-invariant metrics 
(Proposition \ref{prop:skew-symmetric-prederivation}). 
In the case where $\G$ possesses an orthogonal structure, we proved that any orthogonal structure on $T^*\G$ can be 
expressed in terms of the duality pairing and endomorphisms of $\G$ which commute with all adjoint operators 
(Theorem \ref{theorem:orthogonal-structure-orthogonal-lie-algebras}).
If $\G$ is a semi-simple Lie algebra, we prove that  any prederivation of  $T^*\G$ which is skew-symmetric with respect 
to any orthogonal structure on $T^*\G$ is an inner derivation (Proposition \ref{skew-sym-prederivation-semisimple}); that is the 
connected component of the unit of the Lie group of isometries of any bi-invariant metric on $T^*G$ is spanned by exponentials 
of inner derivations of  $T^*\G$. Examples of the affine Lie group of the real line, the special linear group 
$SL(2,\R)$, the group $SO(3,\R)$ of rotations and the  $4$-dimensional oscillator group are given.

\vskip 0.5cm

The geometry of the Lie group of affine motions of the real line is explored in { \it Chapter V}. The geodesics of the 
left invariant affine structure induced by the symplectic structure are studied as well as integrale curves of left invariant vector fields. 
Since, the symplectic form considered on the affine Lie group corresponds to an invertible solution of the Classical Yang-Baxter equation, 
we have also studied the geometry of the corresponding double Lie group. The affine and complex structures on the double  introduced by Diatta 
and Medina (\cite{di-me-cybe}) are considered.

\chapter{Invariant Structures on Lie Groups}

\rhead{\textbf{\thepage}}
\lhead{\textsl{\leftmark}}

\minitoc

This chapter  is to make this thesis as self contained as possible. It defines basic notions and terminologies which might 
be useful throughout this dissertation.

\section{Orthogonal Structures on Lie Groups}

\subsection{Definition and Examples}

Let $G$ be a Lie group, $\epsilon$ its identity element, $\G$ its Lie algebra and
$TG$ its tangent bundle. Let $\G^*$ stand for the dual space of $\G$ and let $\mathbb K$ stand for the field 
$\R$ of real numbers or the field $\C$ of complex numbers.

\bdfn
A bi-invariant pseudo-metric on $G$ is a function $\mu : TG \to \mathbb K$ which is quadratic on each
fiber, nondegenerate and invariant under both left and right translations of the group $G$.
\edfn

A bi-invariant pseudo-metric on  the Lie group $G$ corresponds to a non-degenerate quadratic form
$q: \G \to \mathbb K$ for which the adjoint operators are skew-symmetric; that is, if $\mu$
also denotes the corresponding symmetric bilinear form on $\G$, we have

\beq\label{ad-invariant}
\mu ([x,y],z) + \mu(y,[x,z]) = 0
\eeq
for every $x,y,z$ in $\G$.

\vskip 0.3cm

Conversely, if $\G$ admits a non-degenerate symmetric bilinear form for which the adjoint operators
are skew-symmetric, then there exists, on every connected Lie group with Lie algebra $\G$,
a bi-invariant pseudo-metric.

\vskip 0.3cm

Throughout this work, a non-degenerate symmetric bilinear form on $\G$ for which the adjoint operators
are skew-symmetric is called simply a bi-invariant scalar product or an orthogonal structure on $\G$
as well as a bi-invariant pseudo-metric on $G$ is called a bi-invariant metric or an orthogonal
structure on $G$.

\vskip 0.3cm

Lie groups (resp. Lie algebras) with orthogonal structures are called orthogonal or quadratic
(see e.g. \cite{me-re85}, \cite{me-re93}). In \cite{astrakhantsev} orthogonal Lie algebras are called
{\it metrizable Lie algebras}.

\vskip 0.3cm

In \cite{me-re85}, Medina and Revoy have shown that every orthogonal Lie algebra is obtained
by the so-called {\it double extension } procedure.

\vskip 0.3cm

Consider the isomorphism of vector spaces $\theta : \G \to \G^*$ defined by 
\beq \label{isomorphism-theta}
\langle \theta(x),y \rangle := \mu(x,y),
\eeq 
where $\langle, \rangle$ on the left hand side, is the duality pairing $\langle x,f \rangle=f(x)$, 
between elements $x$ of $\G$ and $f$ of $\G^*$.
Then, $\theta$ is an isomorphism of $\G$-modules in
the sense that it is equivariant with respect to the adjoint and coadjoint actions of
$\G$ on $\G$ and $\G^*$ respectively; {\it i.e.}
\beq\label{suz10}
\theta \circ  ad_x =  ad^*_x \circ \theta,
\eeq
for all $x$ in $\G$. Which is also
\beq\label{suz9}
\theta^{-1} \circ ad^*_x = ad_x \circ \theta^{-1}, 
\eeq
for all $x$ in $\G$. The converse is also true. More precisely, a Lie group (resp. algebra) is orthogonal if and
only if its adjoint and coadjoint representations are isomorphic. See Theorem 1.4. of \cite{me-re93}.

\bex{\normalfont
Any Abelian Lie algebra with any scalar product is an orthogonal Lie algebra.}
\eex

\bex{\normalfont
Semi-simple Lie algebras with the Killing forms are orthogonal Lie algebras.}
\eex

\bex[Oscillator groups or diamond groups]\label{example-oscillator-group}{\normalfont
Let $\lambda=(\lambda_1, \lambda_2, \ldots, \lambda_n)$, where $0 <\lambda_1 \leq \lambda_2 \leq \cdots \leq \lambda_n$ are 
positive real numbers. On $\R^{2n+2} \equiv \R \times \R \times \C^n$, define the operation
\beqn\label{oscillator-group-operation}
(t,s\,;\,z_1,\ldots,z_n)\cdot (t',s'\,;\,z'_1,\ldots,z'_n) &=& \nonumber\\
\Big(t+t'\,,\, s+s'+\frac{1}{2}
\sum_{j=1}^nIm(\bar{z}_jz'_je^{i\lambda_jt})&\,;\,& z_1+z'_1e^{i\lambda_1t}\,,\,
\ldots\,,\, z_n+z'_ne^{i\lambda_nt}\Big),
\eeqn
where $t, t',s, s'$ are real numbers while $z_i, z_i'$, $i=1, 2,\ldots, n$, are in $\C$.
Endowed with the operation (\ref{oscillator-group-operation}) and the adjacent manifold structure, $\R^{2n+2}$
is a Lie group of dimension $2n+2$. This Lie group is noted by $G_\lambda$ and is called {\bf  Oscillator group } in \cite{bromberg-medina2004},
\cite{diaz-gadea-oubina}, \cite{gadea-oubina99}, \cite{streater}
(for dimension $n=1$), {\bf twisted Heisenberg group} in \cite{zeghib-esp-temps-hom} and
{\bf diamond group} in other literature (\cite{zeghib-esp-temps-hom}).

The Lie algebra of $G_\lambda$, called {\bf oscillator algebra} of dimension $2n+2$, is the vector space spanned by
$\{e_{-1},e_0,e_1,e_2,\ldots,e_n,\check e_1,\check e_2,\ldots,\check e_n\}$ with the following non-zero brackets:
\beq
[e_{-1},e_j] = \lambda_j \check e_j \quad ;  \quad [e_{-1},\check e_j] = -\lambda_j e_j \quad ; \quad [e_j,\check e_j] = e_0.
\eeq
The oscillator Lie algebra is noted by $\G_\lambda$. Every element $x$ of $\G_\lambda$ can be written:
\beq
x =\alpha e_{-1} + \beta e_0 + \sum_{j=1}^n\alpha_je_j + \sum_{j=1}^n\beta_j\check e_j.
\eeq
where $\alpha,\beta,\alpha_j,\beta_j$ ($1\leq j \leq n$) are real numbers. Then the following
quadratic form defines an orthogonal structure on $\G_\lambda$ (see \cite{bromberg-medina2004}):
\beq
k_\lambda(x,x) := 2\alpha \beta + \sum_{j=1}^n\frac{1}{\lambda_j}(\alpha_j^2 + \beta_j^2)
\eeq
Oscillator groups  are subject of a lot of studies (\cite{diaz-gadea-oubina}, \cite{dorr}, \cite{gadea-oubina99},
\cite{gadea-oubina02},  \cite{levichev}, \cite{me-re-oscillator}, \cite{streater} ). They appear in
many branches of Physics and Mathematical Physics and give particular solutions of Einstein-Yang-Mills equations (\cite{levichev}).
}
\eex

\subsection{Hyperbolic Lie Algebras, Manin Lie Algebras}

\bdfn
A hyperbolic plan $E$ is a $2$-dimensional linear space endowed with a non-degenerate
symmetric bilinear form $B$ such that there exists a non-zero element $v$ of $E$ with $B(v,v)=0$.
A hyperbolic space is an orthogonal sum of hyperbolic plans.
\edfn

\brmq {\normalfont
A hyperbolic space is of even dimension.}
\ermq
For more about hyperbolic spaces, see \cite{lang}.
\bdfn
A hyperbolic Lie algebra is an orthogonal Lie  algebra which contains two totally isotropic subspaces in duality for the orthogonal structure.
\edfn
\bdfn
A Manin Lie algebra is an orthogonal Lie algebra $\G$ such that:
\bitem
\item $\G$ admits two totally isotropic subalgebras $\h_1$ and $\h_2$;
\item $\h_1$ and $\h_2$ are  in duality one to the other for the orthogonal structure of $\G$.
\eitem
In this case,  $(\G,\h_1,\h_2)$ is called a Manin triple.
\edfn

\section{Poisson Structure on Lie Groups}


\subsection{Poisson Brackets, Poisson Tensors on a Manifold}
\bdfn
A $\mathcal C^\infty$-smooth {\bf \it Poisson structure (Poisson bracket)} on a $\mathcal C^\infty$-smooth
finite-dimensional manifold $M$ is an $\R$-bilinear skew-symmetric operation
\beqn
\begin{array}{rll}
\mathcal C^\infty(M) \times \mathcal C^\infty(M) & \longrightarrow & \mathcal C^\infty(M) \\
                             (f,g)               &\longmapsto     & \{f,g\}
\end{array}
\eeqn
on the space $\mathcal C^\infty(M)$ of real-valued $\mathcal C^\infty$-smooth functions on $M$,
such that
\bitem
\item $\big(\mathcal C^\infty(M),\{,\}\big)$ is a Lie algebra;
\item $\{,\}$ is a derivation in each factor, that is it verifies the Liebniz identity
\beq
\{f,gh\} = \{f,g\}h + g\{f,h\}
\eeq
for every $f,g,h$  in $\mathcal C^\infty(M)$.
\eitem
A manifold equipped with such a bracket is called a {\bf \it Poisson manifold}.
\edfn
\bex{\normalfont
Any manifold carries a trivial Poisson structure. One just has to put $\{f,g\}=0$,
for all smooth functions $f$ and $g$ on $M$.}
\eex
\bex{\normalfont
Let  $(x,y)$ denote coordinates on $\R^2$ and $p:\R^2 \to \R$ be an arbitrary smooth function.
One defines a smooth Poisson structure on $\R^2$ by putting
\beq
\{f,g\} = \left(\frac{\pd f}{\pd x}\frac{\pd g}{\pd y}-\frac{\pd f}{\pd y}\frac{\pd g}{\pd x} \right) p,
\eeq
for every $f,g$ in $\mathcal C^\infty(\R^2)$.}
\eex
\bex{\normalfont
A symplectic manifold $(M,\omega)$ is a manifold $M$ equipped with a non-degenerate closed
differential $2$-form $\omega$, called a {\bf \it symplectic form}.
If $f : M \to \R$ is a function on $(M,\omega)$, we define its
{\bf \it Hamiltonian vector field}, denoted by $X_f$, as follows:
\beq
i_{X_f}\omega :=\omega(X_f,\cdot)= -Tf,
\eeq
where $Tf$ stands for the differential map of $f$. One defines on $(M,\omega)$ a natural bracket,
called the {\bf \it Poisson bracket
of $\omega$}, as follows:
\beq
\{f,g\} = \omega(X_f,X_g) = -\langle Tf,X_g\rangle = -X_g(f) = X_f(g),
\eeq
for every $f,g \in \mathcal C^\infty(M)$. Thus, any symplectic manifold carries a Poisson
structure.
}
\eex

\vskip 0.3cm

Let $(M,\{,\})$ be a Poisson manifold. To every $f $ in $\mathcal C^\infty(M)$ corresponds
a unique vector field $X_f$ on $M$ such that
\beq
X_f(g) = \{f,g\},
\eeq
for every $g \in \mathcal C^\infty(M)$ (see e.g. \cite{marsden-ratiu}).
This is an extension of the notion of Hamiltonian vector field from the
symplectic to the Poisson context. Thus, $X_f$ is called the {\bf \it
Hamiltonian vector field} associated to $f$ as well as $f$ is
called the {\bf \it Hamiltonian function} of $X_f$.

\bdfn
A map $\phi :(M_1,\{,\}_1 ) \to (M_2,\{,\}_2)$ between two  Poisson manifolds is
said to be a Poisson morphism if it is smooth and satisfies
\beq
\{f \circ \phi,g \circ \phi\}_1 = \{f,g\}_2 \circ \phi,
\eeq
for every $f,g$ in $\mathcal C^\infty(M_2)$.
\edfn

\bex{\normalfont
If  $(M_1,\{,\}_1)$ and $(M_2,\{,\}_2)$ are Poisson manifolds, then $M_1 \times M_2$
has a Poisson structure characterized  by the following properties:
\benum
\item the projections $\pi_i:M_1 \times M_2 \to M_i$, $i=1,2$, are Poisson morphisms;
\item $\{f \circ \pi_1,g \circ \pi_2\}_1 =0$, for any $f$ in $\mathcal C^\infty(M_1)$ and
$g$ in $\mathcal C^\infty(M_2)$.
\eenum}
\eex

\bex[Kirilov-Kostant-Sauriau (KKS)]\label{linear-poisson-structure}{\normalfont
Let $\G$ be a finite dimensional Lie algebra seen as  the space of linear maps on its dual
$\G^*$; {\it i.e.} $\G \equiv (\G^*)^* \subset \mathcal C^\infty(\G^*)$.
Let $f,g$ be in $\mathcal C^\infty(\G^*)$ and $\alpha$ belongs to $\G^*$. If $T_\alpha f$ and $T_\alpha g$
denote the differentials of $f$ and $g$ at the point $\alpha$  (seen as elements of $\G$),
one defines a linear Poisson structure on $\G^*$ by putting:
\beq
\{f,g\}(\alpha) := \langle \alpha,[T_\alpha f,T_\alpha g]_\G\rangle,
\eeq
where $\langle,\rangle$ in the right hand side stands for the pairing between $\G$ and its dual.

This structure plays an important role in many domains of mathematical physics,
quantization, hyper-k\"ahlerian geometry, ...
}
\eex

\vskip 0.3cm

Let $M$ be a smooth manifold of dimension $n$ ($n \in \N^*$) and $p$ a positive integer.
Denote by $\Lambda^pTM$ the space of tangent $p$-vectors of $M$. It is a vector
bundle over $M$ whose fiber over each point $x \in M$ is the space
$\Lambda^pT_xM = \Lambda^p(T_xM)$, which is the exterior antisymmetric
product of $p$ copies of the tangent space $T_xM$. Of course $\Lambda^1TM=TM$.

\vskip 0.3cm

Let $(x^1,\ldots,x^n)$ be a local system of coordinates at $x \in M$. Then $\Lambda^pT_xM$
admits a linear basis consisting of the elements
$\frac{\pd}{\pd x^{i_1}}\wedge \ldots \wedge \frac{\pd}{\pd x^{i_p}}(x)$ with
$i_1 < i_2 < \ldots < i_p$.

\bdfn
A smooth $p$-vector field $\pi$ on $M$ is a smooth section of $\Lambda^pTM$, i.e. a map $\pi$
from $M$ to $\Lambda^pTM$, which associates to each point $x$ of $M$ a $p$-vector
$\pi(x)$ of $\Lambda^pT_xM$, in a smooth way.
\edfn

\vskip 0.3cm

Given a $2$-vector field $\pi$ on a smooth manifold $M$, one defines a tensor field, also denoted by $\pi$, by the following
formula:
\beq\label{poisson-tensor}
\{f,h\}:= \pi(f,h):=\langle Tf \wedge Th,\pi \rangle
\eeq

\bdfn
A $2$-vector field $\pi$, such that the bracket given by (\ref{poisson-tensor})
is a Poisson bracket, is called a Poisson tensor, or also a Poisson structure.
\edfn

\vskip 0.3cm

Any Poisson tensor $\pi$ arises from a $2$-vector field which we will also
denote by $\pi$.

\bex{\normalfont
The Poisson tensor corresponding to the standard symplectic structure
$\omega = \sum_{k=1}^ndx^k\wedge dy^k$ on $\R^{2n}$ is
$\sum_{k=1}^n \frac{\pd}{\pd x^k} \wedge \frac{\pd}{\pd y^k}$.
}
\eex

\subsection{Poisson-Lie Groups}

\bdfn\label{poisson-lie-structure}
Let $G$  be a Lie group. A {\bf Poisson-Lie structure} on $G$ is a Poisson tensor $\pi$ on the underlying
manifold of $G$ such that the multiplication map
\beqn
\begin{array}{ccc}
G \times G  & \to    &  G \\
(g , g')    &\mapsto & gg'
\end{array}
\eeqn
is a Poisson morphism (is grouped as said in \cite{drinfeld}); $G \times G$ being
equipped with the Poisson  tensor product $\pi \times \pi$.

A Lie group $G$ with a Poisson-Lie structure $\pi$ is called a {\bf Poisson-Lie group} and is denoted by
$(G,\pi)$.
\edfn
\noindent
The Definition \ref{poisson-lie-structure} is equivalent to the following:
\beq\label{relation:multiplicative}
\pi(gh) = T_hL_g\cdot \pi(h) + T_gR_h\cdot \pi(g),
\eeq
for every $g,h$  in $G$, where the differentials $T_gL_h$ and $T_gR_h$ of the left translation $L_g$ and the right translation $R_g$
 are naturally extended to the linear space $\Lambda^2T_gG$; {\it  i.e.}
\beqn
T_gL_h \cdot (X\wedge Y):= (T_gL_h \cdot X) \wedge (T_gL_h \cdot Y),\\
T_gR_h \cdot (X\wedge Y):= (T_gR_h \cdot X) \wedge (T_gR_h \cdot Y),
\eeqn
for any $X,Y$ in $T^*_gG$.

\bdfn
Let $G$  be a Lie group. A tensor field $\pi$ on $G$ is called multiplicative if it satisfies Relation (\ref{relation:multiplicative}).
\edfn

\bex{\normalfont
To every Lie algebra corresponds, at least, one Poisson-Lie group. Indeed, the dual space (seen as
an Abelian Lie group) of any Lie algebra, endowed with its linear Poisson structure given in
Example \ref{linear-poisson-structure} is a Poisson-Lie group.
}
\eex

Every Lie group possesses, at least, one non-trivial Poisson tensor (see \cite{desmedt}).

\subsection{Dual Lie Group, Drinfel'd Double of a Poisson-Lie Group}

Let $G$ be a Lie group with Lie algebra $\G$, $\pi$ a Poisson-Lie tensor on $G$ with corresponding
bracket $\{,\}$. One obtains a Lie algebra structure $[,]_*$ on the dual space $\G^*$ of $\G$
by setting
\beq\label{dual-bracket}
[\alpha,\beta]_*:=T_\epsilon\{f,g\},
\eeq
where $f$ and $g$ are smooth functions on $G$ such that $\alpha$ and $\beta$ are equal, respectively,
to the differentials of $f$ and $g$  at the identity element $\epsilon$ of $G$:
$T_\epsilon f=\alpha$, $T_\epsilon g = \beta$.

\bdfn
\benum
\item The bracket (\ref{dual-bracket}) is called the ''linearized'' bracket of $\pi$ at $\epsilon$ and
the pair $(\G^*,[,]_*)$ is said to be the "linearized" or the {\bf dual Lie algebra } of $(G,\pi)$ or
of $(\G,\lambda)$, where $\lambda :=T_\epsilon\pi : \G \to \Lambda^2\G$ ($\lambda$ is the transpose of
$[,]_* : \Lambda^2\G^* \to \G^*$).
\item Every Lie group, with Lie algebra $(\G^*,[,]_*)$ is called a {\bf dual Lie group} of $(G,\pi)$.
\item A subgroup $H$ of a Poisson-Lie group $(G,\pi)$ is said to be a Poisson-Lie subgroup of $(G,\pi)$
if it is also a Poisson submanifold of $(G,\pi)$.
\eenum
\edfn

\bdfn
Let $\G$ be a Lie algebra. 
\benum
\item A {\bf Lie bi-algebra structure} on $\G$ is a $1$-cocycle
$\lambda : \G \to \Lambda^2 \G$ for the adjoint representation of $\G$ such that its
transpose $ \lambda^t : \Lambda^2\G^* \to \G^*$ defines a Lie bracket on the vector space $\G^*$.
\item A Lie algebra, with a Lie bi-algebra structure, is called a Lie bi-algebra.
\eenum 
\edfn

A Lie bi-algebra will be denoted by $(\G,\lambda)$  or $(\G,\G^*)$.

\bthm(\cite{drinfeld})
Let $G$ be a simply connected Lie group. A Poisson structure, with Poisson tensor $\pi$, on $G$
bijectively corresponds to a Lie bi-algebra structure $\lambda=T_\epsilon\pi$ on the
Lie algebra $\G$ of $G$.
\ethm

\vskip 0.3cm

Let $(\G,[,])$ be a Lie algebra. Suppose $(\G,\G^*)$ is a Lie bi-algebra and let $[,]_*$ denote
the induced Lie bracket on $\G ^*$. Then, the transpose of the Lie bracket of $\G$ is a $1$-cocycle
of the Lie algebra $(\G^*,[,]_*)$. Hence, the Lie algebras $(\G,[,])$ and $(\G,[,]_*)$ act one to the other
by their respective coadjoint actions. The space $\G \times \G^*$ can be equipped with the scalar product:
\beq
\langle (x,f),(y,g) \rangle := f(y) + g(x),
\eeq
for every $x,y$ in $\G$ and every $f,g$ in $\G^*$.

We have the following result due to Drinfel'd.

\bthm(\cite{drinfeld})\label{lie-bialgebra-double}
The following are equivalent:
\benum
\item $(\G,\G^*)$ is a Lie bi-algebra;
\item $\D:= (\G \times \G^*,\langle,\rangle)$ is equipped with a unique structure $[,]_\D$ of orthogonal
Lie algebra such that:
\benum
\item $\G$  and $\G^*$ are Lie subalgebras of $\D$;
\item $\G$ and $\G^*$ are totally isotropic and in duality for the scalar product $\langle,\rangle$.
\eenum
In this case, for every $x$ in $\G$ and $g$ in $\G^*$,
\beq
[x,g]_\D=ad_x^*g - ad_g^*x
\eeq
\eenum
\ethm
\bdfn
The Lie algebra $\D$ of the Theorem \ref{lie-bialgebra-double} is called the
{\bf Drinfel'd double Lie algebra} of $(G,\pi)$ or of  $(\G,\lambda:=T_\epsilon\pi)$.
Every Lie group, with Lie algebra $\D$, is called a {\bf Drinfel'd double Lie group} of $(G,\pi)$.
\edfn
The Lie bracket on $\D$ reads:
\beq
[(x,f),(y,g)]_\D = \big([x,y] + ad_f^*y - ad_g^*x,[f,g]_* + ad_x^*g - ad_y^*f\big)
\eeq
for every $(x,f)$ and $(y,g)$ in $\D$.

\vskip 0.3cm

Since its introduction in $1983$ (\cite{drinfeld}), the notion of Drinfel'd double attracted 
many researchers (\cite{aminou-kosmann}, \cite{burciu}, \cite{ciccoli-guerra}, \cite{delvaux-vandaele04},\cite{delvaux-vandaele07},
 \cite{desmedt},\cite{di-me-cybe}, \cite{kosmann-magri}).

\section{Yang-Baxter Equation}

Let $M$ be a smooth manifold. For any integer $p$, let $\Omega_p(M)$ stand for the space of smooth sections of $\Lambda^pTM$ and
let $\Omega_*(M)$ be the direct sum of the spaces $\Omega_p(M)$, where
\bitem
\item $\Omega_p(M)=\{0\}$, if $p <0$ ;
\item $\Omega_0(M)=\mathcal C^\infty(M)$ ;
\item $\Omega_1(M)=\mathfrak X(M)$ (smooth vector fields on $M$) ;
\item $\Omega_p(M)=\{0\}$, if $p >\dim M$.
\eitem

\bthm[Schouten Bracket Theorem]
There exists a unique bilinear operation $[,]:\Omega_*(M)\times \Omega_*(M) \to \Omega_*(M)$ natural with respect to
restriction to open sets, called the {\bf Schouten Bracket}, that satisfies the following properties:
\benum
\item $[,]$ is a biderivation of degree $-1$, {i.e.} for all homogeneous elements $A$ and $B$
of $\Omega_*(M)$ and any $C$ in $\Omega_*(M)$,
\bitem
\item  $\deg [A,B]=\deg A + \deg B -1$ ; and
\item $[A,B\wedge C] = [A,B]\wedge C + (-1)^{(\deg A + 1)\deg B} B\wedge [A,C]$ ;
\eitem
\item $[,]$ is defined on $\mathcal C^\infty(M)$ and on $\mathfrak X(M)$ by
\benum
\item $[f,g] = 0$, for any $f,g$ in $\mathcal C^\infty(M)$ ;
\item $[X,f] = X\cdot f$, for all $f$ in $\mathcal C^\infty(M)$ and any  vector field $X$ on $M$ ;
\item $[X,Y]$ is the usual Jacobi-Lie bracket of vector fields if $X$ and $Y$ are in $\mathfrak X(M)$ ;
\eenum
\item $[A,B] = (-1)^{\deg A \deg B}[B,A]$.
\eenum
In addition, we have the graded Jacobi identity
\beq
(-1)^{\deg A\deg C} [[A,B],C] + (-1)^{\deg B\deg A} [[B,C],A] + (-1)^{\deg C\deg B} [[C,A],B] =0
\eeq
for all homogeneous elements $A,B,C$ of $\Omega_*(M)$.
\ethm

\vskip 0.3cm

Let $G$ be a Lie group with Lie algebra $\G$. On the $\Z$-graded vector space $\Lambda\G:=\oplus_{p\in\Z} \Lambda^p \G$
we consider the structure of graded Lie algebra obtained by the extension of the Lie bracket of $\G$ satisfying the 
properties of the definition of the Schouten's bracket.

\vskip 0.3cm

Let $r \in \Lambda^2 \G$ and note by $\eta$ the $1$-coboundary defined by
\beq
\eta(g)= Ad_g r - r, 
\eeq
for all $g$ in $G$. 

\vskip 0.3cm

Let $r^+$ and $r^-$ denote the left invariant and right invariant tensor fields associated to
$r$ respectively. One wonders whether the corresponding multiplicative tensor $\pi:= r^+-r^-$
is Poisson. The answer is given by the

\bpro(\cite{lichnerowicz77})\label{poisson-cohom}
Let $\Lambda$ be a contravariant skew-symmetric $2$-vector field on a manifold $M$.
\bitem
\item[(i)] $\Lambda$ is Poisson if and only if $[\Lambda,\Lambda]=0$ (this is equivalent to the Jacobi identity);
\item[(ii)] If $[\Lambda,\Lambda]=0$,  then the map
\beqn
\begin{array}{ccc}
\pd : \Omega_*(M) & \to     & \Omega_*(M)  \\
        P  & \mapsto & [\Lambda,P]
\end{array}
\eeqn
is an operator of cohomology, i.e. $\pd_\Lambda \circ \pd_\Lambda =0$.
\eitem
\epro

\bdfn
The cohomology defined by $(ii)$ of Proposition \ref{poisson-cohom} is called the Poisson cohomology
of the Poisson manifold $(M,\Lambda)$.
\edfn

According to the Proposition \ref{poisson-cohom}, the tensor field  $\pi$ is Poisson if
and only if the $3$-tensor field
\beq
[\pi,\pi] = [r^+,r^+]+[r^-,r^-] = [r,r]^+-[r,r]^-
\eeq
vanishes identically or equivalently, for every $g \in G$, the $3$-tensor
\beq
[\pi,\pi]_g:= T_\epsilon (Ad_g[r,r] -[r,r])
\eeq
equals zero. Hence, $\pi$ defines a Lie-Poisson tensor if and only if the $3$-vector
$[r,r]$ is $Ad_G$-invariant, {\it i.e.} for all $g \in G$,
\beq\label{GYBE}
Ad_g[r,r] = [r,r]
\eeq
\bdfn
\benum
\item Equation (\ref{GYBE}) below is called the {\bf Generalized Yang-Baxter Equation (GYBE)} and its solutions are called {\bf $r$-matrices}.
\item In the particular case where $[r,r] = 0$, one says that $r$ is a solution of the {\bf Classical Yang-Baxter Equation (CYBE)}.
\eenum
\edfn

\section{Symmetric Spaces}

Let $G$ be a connected Lie group with Lie algebra $\G$ and identity element $\epsilon$.
\bdfn
A {\bf symmetric space} for $G$ is a homogeneous space $M \equiv G/H$ such that the isotropy group
$H$ of any arbitrary point is an open subgroup of the fixed point set $G_\sigma:=\{g \in G: \sigma(g)=g\}$ of
an involution $\sigma$ of $G$.
\edfn
The involution $\sigma$ is in fact an automorphism of $G$ and fixes the identity element $\epsilon$.
Hence, the differential at $\epsilon$ of $\sigma$ is an automorphism, also denote by $\sigma$,
of the Lie algebra $\G$ with square equal to the identity mapping of $\G$: $\sigma^2=Id_\G$.
Then the eigenvalues of $\sigma$ are $+1$ and $-1$. The eigenspace associated to $+1$ is the Lie
algebra $\h$ of $H$. We denote the  $-1$ eigenspace by $\m$.
Since $\sigma$ is a automorphism of $\G$, we have the following decomposition
\beq\label{symmetric-space-decomp}
\G = \h \oplus \m
\eeq
with
\beq\label{symmetric-space-brackets}
[\h,\h] \subset \h \quad ; \quad  [\h,\m] \subset \m \quad ; \quad  [\m,\m] \subset \h.
\eeq
Conversely, given any Lie algebra $\G$ with  direct sum decomposition (\ref{symmetric-space-decomp})
satisfying (\ref{symmetric-space-brackets}), the linear map $\sigma$, equal to the identity on
$\h$ and minus the identity on $\m$, is an involutive automorphism of $\G$.

\section{Reductive  Lie algebras}
Let $\G$ be a finite dimensional Lie algebra over a field $\mathbb K$ of characteristic zero. 
\bdfn
A subalgebra $\mathfrak h$ is said to be reductive in $\G$ if $\G$ is a semisimple $\mathfrak h$-module; that is 
$\G$ is a sum of simple $\mathfrak h$-modules in the adjoint representation of $\mathfrak h$ in $\G$.
The Lie algebra $\G$ is said to be reductive if it is a reductive subalgebra of itself.
\edfn
Let $\G$ be an arbitrary finite dimensional Lie algebra over $\mathbb K$, $V$ be a semisimple $\G$-module and 
$I$ be the ideal $I=\{x\in \G : x\cdot V=\{0\}\}$. Now set $\h=\G/I$. By a result (see \cite{chevalley}) due to 
E. Cartan and N. Jacobson, we have $\h=[\h,\h]+Z(\h)$, where $Z(\h)$ is the center of $\h$, $[\h,\h]$ is semisimple, and 
$V$ is semisimple as a $Z(\h)$-module.

Now suppose that $\G$ is reductive. Since the center $Z(\G)$ of $\G$ is a $\G$-submodule of $\G$, there is an ideal $J$ of 
$\G$ such that $\G$ is the direct sum $J\oplus Z(\G)$. By the result we have cited above, we have $J=[J,J]+Z(J)$, where $Z(J)$
is the center of $J$ and $[J,J]$ is semisimple. But, of course, $[J,J]=[\G,\G]$ and $Z(J)=\{0\}$. Hence, $\G=[\G,\G]+Z(\G)$ and 
$[\G,\G]$ is semisimple. Conversely, it is clear that if a finite dimensional Lie algebra  $\G$ over $\mathbb K$ satisfies
these last conditions, then $\G$ is reductive.

\section{Lie Superalgebras}
Let $\mathbb K$ be an Abelian field of characteristic zero.

\subsection{Definition of a Lie Superalgebra}

\bdfn
A $\mathbb K$-linear space $L$ is a $\mathbb K$-superalgebra, with superbracket $[\cdot,\cdot]$, if
\bitem
\item[$a)$] it is $\Z/2\Z$-graded, {\it i.e.}
\beq
L=L_{\bar 0} \oplus L_{\bar 1} \quad ; \quad [L_{\bar 0},L_{\bar 0}] \subset L_{\bar 0}
\quad ; \quad [L_{\bar 0},L_{\bar 1}] \subset L_{\bar 1} \quad ; \quad
[L_{\bar 1},L_{\bar 1}] \subset L_{\bar 0}
\eeq
\item[$b)$] for every homogeneous elements $a,b,c$ of $L$,
\beq
(-1)^{|a|\cdot |c|}[a,[b,c]] + (-1)^{|b|\cdot |a|}[b,[c,a]] + (-1)^{|c|\cdot |b|}[c,[a,b]] = 0.
\eeq
where $|a|$ (respectively $|b|$ and $|c|$) stands for the degree of $a$ (respectively $b$ and $c$).
\eitem
\edfn 

\vskip 0.3cm

For a $\mathbb K$-superalgebra $L = L_{\bar 0} \oplus L_{\bar 1}$, $L_{\bar 0}$ is an ordinary
Lie algebra while $L_{\bar 1}$ is a $L_{\bar 0}$-module.
Each element $z$ of $L$ can be uniquely written:
\beq
z=z_{\bar 0} + z_{\bar 1}
\eeq
where $z_{\bar 0} \in L_{\bar 0}$ and $z_{\bar 1} \in L_{\bar 1}$. One says that $z_{\bar 0}$
is the component of degree $\bar 0$ of $z$ and $z_{\bar 1}$ is the component of
degree $\bar 1$ of $z$.

\subsection{Derivations of Lie Superalgebras}
See \cite{kac} for more about Lie superalgebras.
\bdfn
Let $L$ be a Lie superalgebra over $\mathbb K$.
\benum 
\item A derivation of degree $\bar 0$ of $L$ is an endomorphism $D: L \to L$ such  that
\beq
D(L_{\bar 0}) \subset L_{\bar 0} \quad ; \quad D(L_{\bar 1}) \subset L_{\bar 1} \quad ;
\quad D[a,b]=[D(a),b]+[a,D(b)]
\eeq
for every $a,b \in L$.
\item A derivation of degree $\bar 1$ of $L$ is an endomorphism $D: L \to L$ such  that
\beq
D(L_{\bar 0}) \subset L_{\bar 1} \quad ; \quad D(L_{\bar 1}) \subset L_{\bar 0} \quad ;
\quad D[a,b]=[D(a),b]+(-1)^{|a|}[a,D(b)]
\eeq
for every homogeneous element $a$ of $L$ and every element $b \in L$.
\item More generaly, a derivation of degree $r (r \in \Z_2:=\Z/2\Z)$ of a Lie superalgebra $L$ is an
endomorphism $D \in End_r(L):=\{\varphi \in End(L) : \varphi(L_s) \subset L_{r+s}\}$.
\eenum 
\edfn 

Note by $der_{\bar 0}(L)$ the set of derivations of $L$ of degree $\bar 0$ and by $der_{\bar 1}(L)$
the set of derivations of $L$ of degree $\bar 1$. Then,
\beq
der(L) = der_{\bar 0}(L) \oplus  der_{\bar 1}(L)
\eeq
Hence, to know the derivations of a Lie superalgebra $L$ it sufficies to know the derivations of degree
$\bar 0$ and the derivations of degree $\bar 1$. If $d$ is in $\der(L)$, we write
\beq
d = d_{\bar 0} + d_{\bar 1} 
\eeq
with $d_{\bar 0} \in der_{\bar 0}(L)$  and $d_{\bar 1} \in der_{\bar 1}(L)$.

\bpro\label{superalgebra-derivations}
Let $d = d_{\bar 0} + d_{\bar 1} \in der(L)$. Then,
\benum
\item About $d_{\bar 0}$:
\bitem
\item[a)] ${d_{\bar 0}}_{|L_{\bar 0}} : L_{\bar 0} \to L_{\bar 0}$ is a derivation of the Lie algebra
$L_{\bar 0}$.
\item[b)] If $z_{\bar 0} \in L_{\bar 0}$ and $z_{\bar 1} \in L_{\bar 1}$, then
\beq
d_{\bar 0}[z_{\bar 0},z_{\bar 1}] = [d_{\bar 0}(z_{\bar 0}),z_{\bar 1}] +
[z_{\bar 0},d_{\bar 0}(z_{\bar 1})];
\eeq
that is the endomorphism ${d_{\bar 0}}_{|L_{\bar 1}} : L_{\bar 1} \to L_{\bar 1}$ verifies
\beq
[d_{\bar 0},ad_{z_{\bar 0}}]_{|L_{\bar 1}} = {ad_{(d_{\bar 0}(z_{\bar 0}))}}_{|L_{\bar 1}}.
\eeq
\item[c)] For every $z_{\bar 1},z'_{\bar 1} \in L_{\bar 1}$,
\beq
d_{\bar 0}[z_{\bar 1},z'_{\bar 1}] = [d_{\bar 0}(z_{\bar 1}),z'_{\bar 1}] +
[z_{\bar 1},d_{\bar 0}(z'_{\bar 1})]
\eeq
\eitem
\item About $d_{\bar 1}$:
\bitem
\item[a)] The morphism ${d_{\bar 1}}_{|L_{\bar 0}} : L_{\bar 0} \to L_{\bar 1}$ verifies
\beq
d_{\bar 1}[z_{\bar 0},z'_{\bar 0}] = [d_{\bar 1}(z_{\bar 0}),z'_{\bar 0}]
+ [z_{\bar 0},d_{\bar 1}(z'_{\bar 0})],
\eeq
for every $z_{\bar 0},z'_{\bar 0} \in L_{\bar 0}$; {\it i.e.} ${d_{\bar 1}}_{|L_{\bar 0}} : L_{\bar 0} \to L_{\bar 1}$ is a $1$-cocycle.
\item[b)] The morphism ${d_{\bar 1}}_{|L_{\bar 1}} : L_{\bar 1} \to L_{\bar 0}$ satisfies
\beq
d_{\bar 1}[z_{\bar 0},z_{\bar 1}] = [d_{\bar 1}(z_{\bar 0}),z_{\bar 1}] + [z_{\bar 0},d_{\bar 1}(z_{\bar 1})]
\eeq
for every $z_{\bar 0} \in L_{\bar 0}$ and every $z_{\bar 1} \in L_{\bar 1}$
\eitem
\eenum
\epro

\section{Cohomology of Lie Algebras}

Let $\G$ be a Lie algebra over a field $\mathbb K$ of  characteristic zero.
\bdfn
A  $\G$-module is a linear space  $V$ of same dimension than $\G$ with a bilinear map
$\varphi : \G \times V \to V$ such that
\beq
\varphi ([x,y],v) = \varphi(x,\varphi(y,v)) - \varphi(y,\varphi(x,v)),
\eeq
\quad
for every elements  $x$, $y$ of $\G$  and every vector $v$ in $V$.
\edfn

A $\G$-module corresponds to a representation of $\G$ on the linear space $V$, 
that is a homomorphism $\Phi : \G \to gl(V)$ defined by:
\beq
\Phi(x)(v) = \varphi(x,v):= x \cdot v,
\eeq
for every $x$  in $\G$ and every $v$ in $V$.
\bdfn

Let  $V$ be a $\G$-module and  $p$ be a non-zero integer ($p\in \mathbb N^*$).
\bitem
\item A cochain of $\G$ of degree $p$ (or a $p$-cochain) with values in
 $V$ is a  skew-symmetric $p$-linear  map from $\mathfrak g^p = \overbrace{\mathfrak g \times \cdots \times
\mathfrak g}^{\mbox{ $p$ times}}$ to $V$.
\item A cochain of degree $0$ (or a $0$-cochain) of $\G$ with values in $V$ is a constant map from $\G$ to $V$.
\eitem
\edfn
Note by  $\mathcal C^p(\G,V)$ the space of $p$-cochains of $\G$ with values in $V$. We have:
\beq
\mathcal C^p(\G,V) = \left\{
\begin{array}{lll}
Hom(\Lambda^p \G,V), & \mbox{ si } p \geq 1;\cr
V                    & \mbox{ si } p = 0;\cr
\{0\}                & \mbox{ si } p < 0.
\end{array}
\right.
\eeq
The space of cochains of $\G$ with values in $V$ is denoted by
$\mathcal C^*(\G,V) := \oplus_p \mathcal C^p(\G,V)$. One endows $\mathcal C^p(\G,V)$ with
a $\G$-module structure by setting :
\beq
(x\cdot \Phi)(x_1, \ldots,x_p) = x \cdot \Phi(x_1, \ldots,x_p) -
\sum_{1 \leq i \leq p} \Phi(x_1, \ldots,x_{i-1},[x,x_i],x_{i+1}, \ldots,x_p)
\eeq
for all  $x,x_1, \ldots,x_p$ in $\G$ and all $\Phi$ in $\mathcal C^p(\G,V)$. This structure can be extended
to the space $\mathcal C^*(\G,V)$.

Let us now define the  endomorphism $d : \mathcal C^*(\G,V) \to \mathcal C^*(\G,V)$, called coboundary operator:
\bitem
\item If $\Phi$ is in $\mathcal C^0(\G,V)=V$  and $x$ is in $\G$, then
\beq
(d\Phi)(x) = d\Phi(x) = x \cdot \Phi.
\eeq
\item For $p \geq 1$, $x_1,\ldots,x_{p+1}$  in $\mathfrak g$ and 
$\Phi$ in $\mathcal C^p(\mathfrak g,V)$,
\beqn
(d\Phi)(x_1,\ldots,x_{p+1})\!\!\! & = &\!\!\! \sum_{1 \leq s \leq p+1} (-1)^{s+1}
x_s \cdot \Phi(x_1,\ldots,\hat{x}_s,\ldots,x_{p+1}) \\
&+&\!\!\!\!\!\! \sum_{1 \leq s < t \leq p+1} (-1)^{s+t}
\Phi([x_s,x_t],x_1,\ldots,\hat{x}_s,\ldots,\hat{x}_t,\ldots,x_{p+1}) \nonumber
\eeqn
\eitem

This endomorphism sends  $\mathcal C^p(\G,V)$ on $\mathcal C^{p+1}(\G,V)$.
One denotes by  $d_p$ the restriction of $d$ to the space $\mathcal C^p(\G,V)$ of $p$-cochains.
We have the
\bpro
$d^2 := d \circ d = 0$. More precisely $d_p \circ d_{p-1} = 0$, for every integer $p$.
\epro
The proof can be readed in \cite{goze-khakimdjanov}.

\vskip 0.3cm 

Set  $Z^p(\G,V):= \ker d_p$ and  $B^p(\G,V):=Im d_{p-1}$.
\bdfn
An element of  $Z^p(\G,V)$ is called a cocyle of degree $p$ (or $p$-cocycle) of $\G$ with values
in $V$ while an element of  $B^p(\G,V)$ is said to be a coboundary of  degree $p$ (or $p$-coboundary)
of  $\mathfrak g$ with values in $V$.
\edfn
Since  $d^2 = 0$, one has: $B^p(\G,V) \subset Z^p(\G,V)$. Then, we set, for
$p\geq 1$,
\beq 
H^p(\G, V):= Z^p(\G,V) / B^p(\G,V).
\eeq 
\bdfn
$H^p(\G, V)$ is the space of cohomology of $\G$ of degree $p$ (or  $p^{th}$ space of cohomology of
$\G$) with values in $V$.
\edfn

\section{Affine Lie Groups}

\bdfn
A $n$-dimensional affine manifold is smooth manifold $M$ equipped with a smooth atlas $(U_i,\Phi_i)_i$ such that
the transition functions ${\Phi_i \circ \Phi_j^{-1}}_{\mid \Phi_j(U_i\cap U_j)} : \Phi_j(U_i\cap U_j) \to \Phi_i(U_i\cap U_j)$, 
whenever they exist, are restrictions of affine transformations of $\R^n$. 
\edfn
The definition below is equivalent to the following: if $(U_{i},\Phi_{i})$
and $(U_{j},\Phi_{j})$ are two charts of the atlas satisfy $U_{i}\cap U_{j}$ then there exists an 
element $\theta_{ij}$ of the affine group $\hbox{\rm Aff}(\R^n)=GL(n,\R)\ltimes \R^n$ of $\R^n$ such that 
\beq 
{\Phi_i \circ \Phi_j^{-1}}_{\mid \Phi_j(U_i\cap U_j)} = {\theta_{ij}}_{\mid \Phi_j(U_i\cap U_j)}.
\eeq 
Recall that $\hbox{\rm Aff}(\R^n)$ is the group of diffeomorphimsms of $\R^n$ which preserve the standard connection 
$\nabla$ of $\R^n$:
\beq 
\nabla_XY:=\sum_{k=1}^n(X\cdot f_k)\frac{\pd}{\pd x_k},
\eeq 
where $Y=\sum_{k=1}^nf_k\frac{\pd}{\pd x_k}$. Every open set $U_i$ is then endowed with a connection $\nabla^i$ 
which is the reciprocal image by $\Phi_i$ of the connection induced on $\Phi_i(U_i)$ by $\nabla$. 
The connection $\nabla^i$ is torsion free and without 
curvature. Since the transition function preserve $\nabla$, the connections $\nabla^i$ can be perfectly glued into a 
zero-curvature and torsion free connection on the manifold $M$.

\bdfn
An application $f:M \to N$ between two affine manifold is called an affine transformation if
\bitem
\item it is smooth ;
\item for all charts $(U,\Phi)$ of $M$ and $(V,\Psi)$ of $N$ such that $f^{-1}(V)\cap U \neq \emptyset$, the 
function $\Psi\circ f \circ \Phi^{-1} : \Phi(f^{-1}(V)\cap U) \to \Psi(f^{-1}(V)\cap U)$ is the restriction of an element of 
$\hbox{\normalfont \rm Aff}(\R^n)$.
\eitem
\edfn

\bdfn
A Lie group $G$ endowed with a left-invariant affine structure is called an affine Lie group. In other words, an affine Lie group
is a Lie group equipped with an affine structure for which the left translations are affine transformations.
\edfn 
\noindent
The existence of affine structures is a difficult and interesting problem (\cite{helmsteter},\cite{milnor3}, 
\cite{lichnerowicz-medina88}, \cite{lichnerowicz91}).

\chapter{Automorphisms of Cotangent Bundles of Lie Groups}
\minitoc

\section{Introduction}

Let G be a Lie group whose Lie algebra $\G $ is
identified with its tangent space $T_{\epsilon}G$ at the unit
$\epsilon$. Throughout this work the cotangent bundle $T^*G$  of $G$, is
seen as a Lie group  which is obtained by the semi-direct product $G\ltimes\G^*$
of $G$ and the Abelian Lie group $\mathcal G^*$, where $G$ acts on the
dual space $\G^*$ of $\G$ via the coadjoint action. Here, using the
trivialization by left translations, the manifold underlying $T^*G$ has been
identified with the trivial  bundle $G\times \G^*$. We sometimes refer to the above Lie group structure 
on $T^*G,$ as its natural Lie group structure.
 The Lie algebra  $Lie(T^*G)=\G\ltimes
 \G^*$ of $T^*G$ will be denoted by $T^*\G$ or simply by $\D$.

\vskip 0.3cm

It is our aim in this work to fully study the connected component of the unit
of the  group of automorphisms of the Lie algebra $\mathcal D:=T^*\mathcal G$. Such a
connected component being spanned by exponentials of derivations of
$\D$, we will work with those derivations and the first
cohomology space $H^1(\D,\D)$, where $\D$ is
seen as the $\D$-module for the adjoint representation.

\vskip 0.3cm

Our motivation for this work comes from several interesting algebraic
and geometric problems.

\vskip 0.3cm

 The cotangent bundle $T^*G$ of a Lie group $G$ can exhibit very interesting and rich
algebraic and geometric structures (\cite{bajo-benayadi-medina},  \cite{di-me-cybe}, 
\cite{drinfeld}, \cite{feix}, \cite{kronheimer}, \cite{marle}).
Such structures can be better understood when one can compare, deform or
classify them. This very often involves the invertible homomorphisms
(automorphisms) of $T^*G$, if in particular, such structures are invariant under
 left or right multiplications by the  elements of $T^*G$.
The derivatives at the unit of automorphisms of the Lie group $T^*G$ are
automorphisms of the Lie algebra $\D.$ Conversely, if $G$ is connected and simply connected,
then so is $T^*G$ and every automorphism of the Lie algebra $\D$
integrates to an automorphism of the Lie group $T^*G$. A problem involving left or right  invariant
structures on a Lie group also usually transfers to one on its Lie algebra, with the Lie
algebra automorphisms used as a means to compare or classify the corresponding induced structures.

\vskip 0.3cm

In the purely algebraic point of view, finding and understanding the
derivations of a given Lie algebra, is in itself an interesting
problem  (\cite{deruiter}, \cite{dixmier57},  \cite{jacobson55}, \cite{jacobson37},
\cite{leger63}, \cite{togo67}).

\vskip 0.3cm

On the other hand, as a Lie group, the cotangent bundle $T^*G$ is a common
Drinfel'd double Lie group for all exact Poisson-Lie structures given by
solutions of the Classical Yang-Baxter Equation in $G$. See e.g. \cite{di-me-poisson}.
Double Lie algebras (resp. groups) encode information on
integrable Hamiltonian systems and Lax pairs (\cite{babelon-viallet}, \cite{bordemann}, 
\cite{drinfeld}, \cite{lu-weinstein}), Poisson homogeneous spaces of Poisson-Lie groups and 
the symplectic foliation of the corresponding Poisson structures (\cite{di-me-poisson}, \cite{drinfeld}, 
\cite{lu-weinstein}). To that extend, the complete description of the group of automorphisms of the double Lie algebra of a
Poisson-Lie structure would be a big contribution towards solving very
interesting and hard problems. See Section \ref{openproblems} for wider discussions.

\vskip 0.3cm

Interestingly, the space of derivations of  $\D$
encompasses interesting spaces of operators on  $\mathcal G$, among
which the derivations of  $\G$, the second space of the left invariant de
Rham cohomology $H^2_{inv}(G,\R)$ of $G$, bi-invariant endomorphisms,
in particular operators giving rise to complex group structure in
$G$, when they exist.

\vskip 0.3cm

Throughout this work,  $\der(\G)$ will stand for the Lie algebra of derivations of
$\G$, while $\mathcal J$ will denote that of linear maps $j:\G\to\G$ satisfying
$j([x,y])=[j(x),y]$, for every elements $x,y$ of $\G$.  We consider Lie groups and
Lie algebras over the field $\R$. However, most of the results within this chapter are
valid for any field of characteristic zero.

\medskip
We summarize some of our main results as follows.

\vskip 0.3cm

\noindent
{\bf Theorem~A.} ~ {\em   Let $G$ be a
Lie group, $\G$ its Lie algebra, $T^*G$ its cotangent bundle and $\D:=\G\ltimes \G^*=Lie(T^*G)$.
A derivation of $\D$, has the form
\beq
\phi(x,f) = \Big(\alpha(x) + \psi(f),\beta(x) + f\circ(j-\alpha)\Big), \nonumber
\eeq
for all $(x,f)$ in $\D$; where 
\bitem 
\item $\alpha: \mathcal G \to \mathcal G$ is a derivation of the Lie algebra $\mathcal G$; 
\item the linear map $j:\G\to\G$ is in $\mathcal J$; 
\item $\beta :\mathcal G \to \mathcal G^*$ is a $1$-cocycle of $\mathcal G$ with values in $\mathcal G^*$ for the coadjoint 
action of $\mathcal G$ on $\mathcal G^*$; 
\item $\psi : \mathcal G^* \to \mathcal G$ is a linear map satisfying the following conditions: 
for all  $x$ in  $\mathcal G$ and all $f,g$ in $\mathcal G^*$,
\beq
\psi \circ ad^*_x = ad_x \circ \psi \text{~ and ~}
ad^*_{\psi(f)}g = ad^*_{\psi(g)}f. \nonumber
\eeq
\eitem 

\vskip 0.3cm

 If $G$ has a bi-invariant Riemannian or pseudo-Riemannian metric, say $\mu,$ then every 
derivation $\phi$ of $\D$ can be expressed in terms of elements of $\der(\G)$ and $\mathcal J$ alone, as follows,
 \beq
 \phi(x,f) = \Big(\alpha(x) + j\circ\theta^{-1}(f),
\theta\circ\alpha'(x) + f\circ(j'-\alpha)\Big), \nonumber
\eeq
for any element  $(x,f)$ of $\D$, where $\alpha,\alpha'$ are derivations of $\G$;  the maps $j,j'$ are in
$\mathcal J$ and $\theta:\G\to\G^*$ with $\theta(x)(y):=\mu(x,y),$  for every elements $x,y $ of $\G$.
 }

\vskip 0.3cm

\noindent {\bf Theorem~B.} ~ {\em  Let $G$ be a
Lie group and $\G$ its Lie algebra. The group Aut($\D$) of
automorphisms of the Lie algebra $\D$ of the cotangent bundle $T^*G$
of $G$, is a super symmetric Lie group. More precisely, its Lie algebra
$der(\mathcal D)$ is a $\mathbb Z/2\mathbb Z$-graded symmetric (super-symmetric) Lie algebra which decomposes 
into a direct sum of vector spaces
\beq
 der(\mathcal D):=\mathcal G_0\oplus\mathcal G_1,  \text{~
 with~ } [\mathcal G_i,\mathcal G_{i'}]\subset\mathcal
 G_{i+i'}, ~~ i,i' \in \mathbb Z/2\mathbb Z = \{0, 1\} \nonumber
\eeq
 where  $\mathcal G_0$ is the Lie algebra
 \beq
 \mathcal G_0:=\Big\{ \phi:\D\to\D,  \phi (x,f)= \Big(\alpha(x)~, ~ f\circ(\alpha -j)  \Big), 
{~with~} \alpha\in der(\mathcal G) \text{~and~} j\in\mathcal J\Big\} \nonumber
 \eeq
  and
 $\mathcal G_1$ is the direct sum (as a vector space) of the space $\mathcal Q$ of 1-cocycles
 $\beta:~\mathcal G\to\mathcal G^*$ and the space $\Psi$ of equivariant maps $\psi: ~\mathcal G^*\to\mathcal G$ with respect
 to the coadjoint and the adjoint representations, satisfying 
$$ad^*_{\psi(f)}g=ad^*_{\psi(g)}f,$$
for every elements  $f,g$ of $\G^*$. 

\vskip 0.3cm 

Moreover, $\G_0\oplus\tilde \G_1$ and $\G_0\oplus\tilde \G_1'$ are subalgebras 
of $\der(\D)$ which are Lie superalgebras, i.e. they are $\mathbb Z/2\mathbb Z$-graded Lie algebras with 
the Lie bracket satisfying 
$$[x,y]=-(-1)^{deg(x)deg(y)}[y,x],$$ 
where $\tilde \G_1:=\mathcal Q$ and 
$\tilde \G_1':=\Psi$ are Abelian subalgebras of $der(\D)$ and $deg(x)=i$, if $x\in\G_i$.
}

\vskip 0.5cm

The Lie superalgebras $\G_0\oplus\tilde \G_1$ and $\G_0\oplus\tilde \G_1'$ respectively correspond to the 
subalgebras of all elements of der($\D$) which preserve the subalgebra $\G$ and the ideal $\G^*$ of $\D$.

\vskip 0.3cm

\noindent 
{\bf Theorem~C.} ~ {\em The first cohomology space $H^1(\D,\D)$ of the (Chevalley-Eilenberg) cohomology 
associated with the adjoint action of $\D$ on itself, satisfies
$$
H^1(\D,\D) \stackrel{\sim}{=} H^1(\G,\G)\oplus  \mathcal J^t\oplus H^1(\G,\G^*)\oplus \Psi,
$$  
where $H^1(\G,\G)$ and $H^1(\G,\G^*)$ are the first cohomology spaces associated with the adjoint 
and coadjoint actions of $\G$, respectively; and
$\mathcal J^t:=\{j^t, j \in \mathcal J \}$ (space of transposes of elements of $\mathcal J$).

\vskip 0.3cm

If $\G$ is semi-simple, then  $\Psi=\{0\}$ and thus $H^1(\D,\D) \stackrel{\sim}{=}  \mathcal J^t.$ Moreover, we have
$\mathcal J \stackrel{\sim}{=} \R^p$,
where $p$ is the number of the simple
 ideals $\s_i$ of $\G$ such that $\G=\s_1\oplus\cdots\oplus \s_p$. Hence, of course, $H^1(\D,\D) \stackrel{\sim}{=}   \R^p.$

\vskip 0.3cm

If $\G$ is a compact Lie algebra, with centre $Z(\G)$, we get 
\beqn
H^1(\G,\G) &\stackrel{\sim}{=}& End(Z(\G)), \nonumber \\
\mathcal J &\stackrel{\sim}{=}& \R^p\oplus End(Z(\G)), \nonumber \\
H^1(\G,\G^*) &\stackrel{\sim}{=}& L(Z(\G), Z(\G)^*), \nonumber \\
\Psi &\stackrel{\sim}{=}& L(Z(\G)^*, Z(\G)). \nonumber
\eeqn 
Hence, we get 
$$
 H^1(\D,\D) \stackrel{\sim}{=} (End(\R^k) )^4\oplus \R^p,
$$ 
where $k$ is the dimension of the center of $\G$, 
and $p$ is the number of the simple
components of the derived ideal $[\G,\G]$ of $\G$. Here, if $E,F$ are vector spaces, $L(E,F)$ is the space of 
linear maps $E\to F.$
}

\vskip 0.3cm

Traditionally, spectral sequences are used as a powerful tool for the study  of the cohomology spaces of 
extensions of Lie groups or Lie algebras and more generally, of locally trivial fiber bundles 
(see e.g. \cite{knapp}, \cite{neeb} for very interesting results and discussions). However, for the purpose 
of this investigation, we use a direct approach. Some parts of Theorem C can also be seen as a refinement, 
using our direct approach,  of some already known results (\cite{knapp}).
 Its proof is given by different lemmas and propositions, discussed in Sections \ref{chap:cohomology} and  
\ref{chap:orthogonal-algebra}.

\vskip 0.3cm

The chapter is organized as follows. In Section \ref{chap:preliminaries}, we explain some of the material and 
terminology needed to make the chapter more self contained. Sections \ref{chap:automorphisms} and 
\ref{chap:orthogonal-algebra} are the actual core of the work where the main calculations and proofs of 
theorems are carried out. In Section \ref{openproblems}, we discuss some subjects related to this work, 
as well as some of the possible applications.

\section{Preliminaries}\label{chap:preliminaries}

Although not central to the main purpose of the work within this chapter, the following material might be 
useful, at least, as regards parts of the terminology used throughout this chapter.

\subsection{The Cotangent Bundle of a Lie Group}\label{chap:notations}

Throughout this chapter, given a Lie group $G$, we will always let $G\ltimes \G^*$ stand for the Lie group 
consisting of the Cartesian product $G\times \G^*$ as its underlying manifold, together with the group 
structure obtained by semi-direct product using the coadjoint action of $G$ on $\G^*.$
Recall that  the trivialization by left translations, or simply the left trivialization of $T^*G$ is given 
by the following isomorphism $\zeta$ of vector bundles
\beq \zeta:~ T^*G \to G\times \G^*, ~~ (\sigma,\nu_\sigma)\mapsto (\sigma,\nu_\sigma\circ T_\epsilon L_\sigma),\nonumber
\eeq
where $L_\sigma$ is the left multiplication  $L_\sigma:G\to G,$ ~ $\tau\mapsto L_\sigma(\tau):=\sigma\tau$ ~ by $\sigma$
 in $G$ and $T_\epsilon L_\sigma$ is the derivative of $L_\sigma$ at the unit $\epsilon.$
In this chapter, $T^*G$ will always be endowed with the Lie group structure such that  $\zeta$ is 
an isomorphism of Lie groups. The Lie algebra of $T^*G$ is then the semi-direct product $\D:=\G\ltimes\G^*.$ 
More precisely, the Lie bracket on $\D$ reads
\beq \label{bracket_double}
[(x,f),(y,g)]:=([x,y],ad^*_xg-ad^*_yf),
\eeq
for any two elements $(x,f)$ and $(y,g)$ of $\D$.


In this work, we will refer to an object which is invariant
under both left and right translations in a Lie group $G$, as a
bi-invariant object. We discuss in this section, how $T^*G$ is naturally endowed with a 
bi-invariant  pseudo-Riemannian metric.

\vskip 0.3cm

\vskip 0.3cm

The cotangent bundle of any Lie group (with its natural Lie group
structure, as above) and in
general any element of the larger and interesting family of the so-called
Drinfel'd doubles (see Section \ref{double}),  are orthogonal Lie groups  (\cite{drinfeld}),
as explained below.

\vskip 0.3cm

 As above, let  $\mathcal D:=\G \ltimes \G^*$
be the Lie algebra of the cotangent bundle $T^*G$ of $G$, seen as the
semi-direct product of $\G$ by $\G^*$ via the coadjoint
action of $\G$ on $\G^*$, as in (\ref{bracket_double}).
Let $\mu_0$ stand for the duality pairing $\langle,
 \rangle$, that is, for all  $(x,f), \; (y,g)$ in $\D$,
\beq\label{eq:dualitypairing}
\mu_0\Big((x,f),(y,g)\Big) = f(y) + g(x).
\eeq
 Then, $\mu_0$ satisfies the property (\ref{ad-invariant})  on $\D$ and hence gives rise to a
 bi-invariant (pseudo-Riemannian) metric on $T^*G$.

\subsection{ Doubles of  Poisson-Lie Groups and Yang-Baxter Equation}\label{double}

We explain in this section how cotangent bundles of Lie groups are part of the broader family
of the so-called double Lie groups of Poisson-Lie groups.

\vskip 0.3cm

A Poisson structure on a manifold $M$ is given by a Lie bracket $\{,\}$ on the space 
$\mathcal C^\infty(M,\R)$ of smooth real-valued functions on $M,$ such that, for each
$f$ in $\mathcal C^\infty(M,\R),$ the linear operator $X_f:=\{f,.\}$ on
$\mathcal C^\infty(M,\R),$ defined by  $g\mapsto X_f\!\cdot\!g:=\{f,g\},$
is a vector field on $M$. The bracket $\{,\}$ defines a $2$-tensor, that is,
a bivector field $\pi$ which, seen as a bilinear skew-symmetric 'form' on the
space of differential $1$-forms on $M$, is given by  $\pi(df,dg):=\{f,g\}.$
The Jacobi identity for  $\{,\}$ now reads $[\pi,\pi]_S=0$, where $[,]_S$ is the
so-called Schouten bracket, which is a natural extension to all multi-vector fields,
of the natural Lie bracket of vector fields. Reciprocally, any bivector field $\pi$ on
$M$ satisfying $[\pi,\pi]_S=0,$ is a Poisson tensor, {\it i.e.} defines a Poisson structure
on $M$. See e.g.  \cite{lu-weinstein}.

\vskip 0.3cm

Recall that a Poisson-Lie structure on a Lie group $G,$ is given by a Poisson tensor $\pi$ on $G,$ 
such that, when the Cartesian product $G\times G$ is equipped with the Poisson tensor $\pi\times\pi$, 
the multiplication $m: ~(\sigma,\tau)\mapsto \sigma\tau$ is a Poisson map between the Poisson manifolds 
$(G\times G, \pi\times\pi)$ and $(G, \pi).$ In other words, the derivative $m_*$ of $m$ satisfies 
$m_*(\pi\times\pi)=\pi.$ If $f,$ $g$ are in $\G^*$ and $\bar f,\bar g$ are $\mathcal C^\infty$ 
functions on $G$ with respective derivatives $f=\bar f_{*,\epsilon},$ ~ $g = \bar g_{*,\epsilon}$ at 
the unit $\epsilon$ of $G,$ one defines another element $[ f, g]_*$ of $\G^*$ by setting 
$[f,g]_*:=(\{\bar f,\bar g\})_{*,\epsilon}.$ Then $[f,g]_*$ does not depend on the choice 
of $\bar f$ and $\bar g$ as above, and $(\G^*,[,]_*)$ is a Lie algebra. Now, there is a symmetric 
role played by the spaces $\G$ and $\G^*,$ dual to each other. Indeed, as well as acting on $\G^*$ 
via the coadjoint action, $\G$ is also acted on by $\G^*$ using the coadjoint action of $(\G^*,[,]_*).$
A lot of the most interesting properties and applications of $\pi,$ are encoded in the new Lie 
algebra $(\G\oplus\G^*, [,]_\pi),$ where
\beq
[(x,f),(y,g)]_\pi:=([x,y]+ad^*_f y - ad^*_g x, ad^*_x g - ad^*_y f+ [f,g]_*),
\eeq
for every $x,y$ in $\G$ and every $f,g$ in $\G^*.$

\vskip 0.3cm

The Lie algebras $(\G\oplus\G^*,[,]_\pi)$ and $(\G^*, [,]_*)$ are respectively called the double 
and the dual Lie algebras of the Poisson-Lie group $(G,\pi)$. Endowed with the duality pairing defined 
in (\ref{eq:dualitypairing}), the double Lie algebra of any Poisson-Lie group ($G,\pi$), is an orthogonal 
Lie algebra, such that $\G$ and $\G^*$ are maximal totally isotropic (Lagrangian) subalgebras. 
The collection $(\G\oplus\G^*,\G,\G^*) $ is then called a Manin triple. More generally, 
$(\G_1\oplus\G_2,\G_1,\G_2)$ is called a Manin triple and $(\G_1,\G_2)$ a bi-algebra or a 
Manin pair, if  $\G_1\oplus\G_2$ is an orthogonal $2n$-dimensional Lie algebra whose underlying 
adjoint-invariant pseudo-Riemannian metric is of index $(n,n)$ and $\G_1,\G_2$ are two Lagrangian complementary subalgebras.
See \cite{di-me-poisson},\cite{drinfeld}, \cite{lu-weinstein}  for wider discussions.

\vskip 0.3cm

Let $r$ be an element of the wedge product $\wedge^2\G.$ Denote by $r^+$ (resp. $r^-$) the left
(resp. right) invariant bivector field on $G$ with value $r=r^+_\epsilon$ ~ (resp. $r=r^-_\epsilon$)
at $\epsilon$. If $\pi_r:=r^+-r^-$ is a Poisson tensor, then it is a Poisson-Lie tensor and $r$
is called a solution of the Yang-Baxter Equation. If, in particular, $r^+$  is a (left invariant)
Poisson tensor on $G$, then $r$ is called a solution of the Classical Yang-Baxter Equation (CYBE)
on $G$ (or $\G$). In this latter case, the double Lie algebra  $(\G\oplus\G^*,[,]_{\pi_r})$
is isomorphic to the Lie algebra $\D$ of the cotangent bundle $T^*G$ of $G$. See e.g. \cite{di-me-cybe}.
 We may also consider the linear map $\tilde r:\G^*\to\G,$ where $\tilde r (f):=r(f,.).$
The linear map $\theta_r:~(\G\oplus\G^*,[,]_{\pi_r})\to \D$, $\theta_r(x,f):= (x+\tilde r(f),f),$
is an isomorphism of Lie algebras, between $\D$ and the double Lie algebra of any
Poisson-Lie group structure on $G,$ given by a solution $r$ of the CYBE.

\section{Group of Automorphisms of $\D:=T^*\G$}\label{chap:automorphisms}

\subsection{Derivations of $\D:=T^*\G$}

Consider a Lie group $G$ of dimension $n$, with Lie algebra $\G.$
Let us also denote by $\D$ the vector space underlying the Lie algebra
$\mathcal D$ of the cotangent bundle $T^*G$,
regarded as a $\D$-module under the adjoint action of $\D$. Consider the following
complex with the coboundary operator $\partial$,
where $\partial \circ \partial = 0$:
\beq
0 \to \D \to Hom(\mathcal D,\D) \to Hom(\Lambda^2\mathcal D,\D)  \to \cdots
\to Hom(\Lambda^{2n}\mathcal D,\D) \to 0.
\eeq
We are interested in $Hom(\mathcal D,\D):= \{\phi : \mathcal D \to \D, \phi \mbox{ linear } \}$.
The coboundary $\partial \phi$ ofthe element $\phi$ of $Hom(\mathcal D,\D)$ is the
element of $Hom(\Lambda^2\mathcal D,\D)$ defined by
\beq
\partial \phi (u,v):= ad_u\big(\phi(v)\big) - ad_v\big(\phi(u)\big) - \phi([u,v]),
\eeq
for any elements $u=(x,f)$ and $ v=(y,g)$ in $\mathcal D$. An element $\phi$ of $Hom(\mathcal D,\D)$
is a $1$-cocycle if $\partial \phi = 0$, {\it i.e. }
\beqn\label{suz1}
\phi([u,v]) &=& ad_u\Big(\phi(v)\Big) - ad_v\Big(\phi(u)\Big),\nonumber \\
            &=& [u,\phi(v)]+[\phi(u),v].
\eeqn
In other words,  $1$-cocycles are the derivations of the Lie
algebra $\mathcal D$. In Section \ref{chap:cohomology}, we will characterize the first cohomology 
space $H^1(\D,\D):=\ker(\partial^2)/Im(\partial^1)$ of the associated Chevalley-Eilenberg cohomology, 
where for clarity, we have denoted by $\partial^1$ and $\partial^2$ the following restrictions 
$\partial^1 :~ \D\to Hom(\D,\D)$ and $\partial^2:~Hom(\D,\D)\to Hom(\wedge^2\D,\D)$ of the coboudary operator $\partial.$

\begin{theorem}\label{derivationschar} 
Let $G$ be a
Lie group, $\G$ its Lie algebra, $T^*G$ its cotangent bundle and $\D:=\G\ltimes \G^*$ the Lie algebra of $T^*G$.
A $1$-cocycle (for the adjoint representation) hence a derivation of $\mathcal D$ has the following form:
\beq
\phi(x,f) = \Big(\alpha(x) + \psi(f), \beta(x) + \xi(f)\Big),
\eeq
for any $(x,f)$ in $\D$; where 
\bitem
\item[-] $\alpha: \mathcal G \to \mathcal G$ is a derivation of the Lie algebra $\mathcal G$, 
\item[-] $\beta :\mathcal G \to \mathcal G^*$ is a $1$-cocycle of $\mathcal G$ with values 
in $\mathcal G^*$ for the coadjoint action of $\mathcal G$ on $\mathcal G^*$,
\item[-] $\xi : \mathcal G^* \to \mathcal G^*$ and $\psi : \mathcal G^* \to \mathcal G$ 
are linear maps satisfying the following conditions:
\eitem 
\beqn
[\xi,ad^*_x] &=& ad^*_{\alpha(x)}, \quad  \forall \; x \in \mathcal G, \label{relation-xi-alpha} \\
\psi \circ ad^*_x &=& ad_x \circ \psi, \quad \forall \; x \in \mathcal G, \label{relation-equivariance} \\
ad^*_{\psi(f)}g &=& ad^*_{\psi(g)}f, \quad \forall \; f,g \in \mathcal G^*.\label{relation-commutation}
\eeqn

\end{theorem}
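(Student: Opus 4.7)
The plan is to exploit the block decomposition of $\D=\G\oplus\G^*$ and test the derivation identity on the three natural types of pairs: both entries in $\G$, mixed, and both in $\G^*$. Concretely, writing an arbitrary linear map $\phi:\D\to\D$ as
\[
\phi(x,f) = \bigl(\alpha(x)+\psi(f),\ \beta(x)+\xi(f)\bigr),
\]
with $\alpha:\G\to\G$, $\beta:\G\to\G^*$, $\psi:\G^*\to\G$, $\xi:\G^*\to\G^*$, every condition in the theorem will come from the cocycle equation $\phi([u,v])=[\phi(u),v]+[u,\phi(v)]$ applied to suitably chosen $u,v$ using the explicit bracket (\ref{bracket_double}).

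First I would take $u=(x,0)$ and $v=(y,0)$ with $x,y\in\G$. Since $[(x,0),(y,0)]=([x,y],0)$, the $\G$-component of the derivation identity yields $\alpha([x,y])=[\alpha(x),y]+[x,\alpha(y)]$, i.e.\ $\alpha\in\der(\G)$, while the $\G^*$-component produces $\beta([x,y])=ad^*_x\beta(y)-ad^*_y\beta(x)$, which is precisely the $1$-cocycle condition for $\beta$ with respect to the coadjoint representation of $\G$ on $\G^*$.

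Next I would take $u=(x,0)$ and $v=(0,g)$. Using $[(x,0),(0,g)]=(0,ad^*_x g)$ and the fact that $ad^*_0=0$, the two components of the derivation identity split cleanly: the $\G$-component becomes $\psi(ad^*_x g)=[x,\psi(g)]=ad_x\psi(g)$, which is the equivariance relation (\ref{relation-equivariance}); and the $\G^*$-component becomes $\xi(ad^*_x g)=ad^*_{\alpha(x)}g+ad^*_x\xi(g)$, which is exactly $[\xi,ad^*_x]=ad^*_{\alpha(x)}$ as in (\ref{relation-xi-alpha}). Finally, taking $u=(0,f)$, $v=(0,g)$ and using that $\G^*$ is an abelian ideal of $\D$, the bracket $[u,v]$ vanishes, so the derivation identity reduces to $0=[\phi(u),v]+[u,\phi(v)]$. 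The $\G$-component is automatically zero, and the $\G^*$-component gives $ad^*_{\psi(f)}g-ad^*_{\psi(g)}f=0$, i.e.\ condition (\ref{relation-commutation}).

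The converse direction — that any quadruple $(\alpha,\beta,\psi,\xi)$ satisfying the four stated conditions yields a derivation — is obtained by running the same three case computations in reverse and noting that the derivation identity is bilinear, so checking it on the three spanning types of pairs suffices. There is no real obstacle: the computation is a direct bookkeeping exercise once one is careful about the semi-direct bracket and the abelian character of $\G^*$. The only point that might trip the reader is the sign produced by $-ad^*_y f$ terms in the mixed case, which is why the 1-cocycle identity for $\beta$ appears with the coadjoint (rather than adjoint) action; I would highlight this explicitly to justify that $\beta$ is a $1$-cocycle with values in the coadjoint module $\G^*$.
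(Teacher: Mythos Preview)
Your proposal is correct and follows essentially the same approach as the paper: block-decompose $\phi$ along $\D=\G\oplus\G^*$ and test the derivation identity on the three types of pairs $(\G,\G)$, $(\G,\G^*)$, $(\G^*,\G^*)$, extracting the four conditions on $\alpha,\beta,\psi,\xi$ componentwise. Your explicit remark on the converse direction and on the sign in the $\beta$-cocycle condition is a useful addition.
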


The rest of this section is dedicated to the proof of Theorem \ref{derivationschar}.

\vskip 0.3cm 
Aiming to get a simpler expression for the derivations, let us write $\phi$ in terms of
its components relative to the decomposition of $\D$
into a direct sum $\D =\mathcal G \oplus  \G^*$ of vector spaces as follows: for all $(x,f)$ in $\D$,
\beq\label{yacine}
\phi(x,f)=\Big(\phi_{11}(x)+\phi_{21}(f),\phi_{12}(x)+\phi_{22}(f)\Big),
\eeq
where $\phi_{11}:\mathcal G \to \mathcal G$, $\phi_{12}: \mathcal G \to
\mathcal G^*$, $\phi_{21}: \mathcal G^* \to \mathcal G$ and
$\phi_{22}:\mathcal G^* \to \mathcal G^*$ are all linear maps. In (\ref{yacine}) we have
made the identifications: $x=(x,0),\; f=(0,f)$ so that the element $(x,f)$ can also
be written $x+f$. Likewise, we can write
\beq
\phi(x)=(\phi_{11}(x),\phi_{12}(x)) \quad ; \quad \phi(f)=(\phi_{21}(f),\phi_{22}(f)),
\eeq
for any $x$ in $\G$ and any $f$ in $\G^*$; or simply
\beq
\phi(x)=\phi_{11}(x)+\phi_{12}(x)\quad ; \quad \phi(f)=\phi_{21}(f)+\phi_{22}(f).
\eeq

In order to find the $\phi_{ij}$'s and hence  all the derivations of
$\mathcal D$, we are now going to use the cocycle condition (\ref{suz1}).

For $x,y$ in $\mathcal G \subset \mathcal D$ we have:
\beq\label{suz2}
\phi([x,y]) = \phi_{11}([x,y]) + \phi_{12}([x,y])
\eeq
and
\beqn\label{suz3}
[\phi(x),y] + [x,\phi(y)] &=& [\phi_{11}(x) + \phi_{12}(x),y] + [x,\phi_{11}(y) + \phi_{12}(y)] \nonumber \\
                          &=& [\phi_{11}(x),y] - ad^*_y(\phi_{12}(x)) + [x,\phi_{11}(y)] +
                              ad^*_x(\phi_{12}(y)).
\eeqn
Comparing (\ref{suz2}) and (\ref{suz3}), we first get
\beq\label{phi11}
\phi_{11}([x,y]) = [\phi_{11}(x),y] + [x,\phi_{11}(y)],
\eeq
for every  $x,y$ in $\G$. This means that  $\phi_{11}$ {\em is a derivation of
the Lie algebra $\mathcal G$}.

Secondly, for all elements $x$, $y$ of $\G$, we have
\beq\label{suz5}
\phi_{12}([x,y])= ad^*_x(\phi_{12}(y)) - ad^*_y(\phi_{12}(x)).
\eeq
 Equation (\ref{suz5}) means that {\em  $\phi_{12} : \mathcal G \to
\mathcal G^*$ is a $1$-cocycle of $\mathcal G$ with
values on $\mathcal G^*$ for the coadjoint action of $\mathcal G$ on
$\mathcal G^*$}.

Now we are going to examine the following case: for all $x$ in $\mathcal G$ and all $f$ in $\mathcal G^*$,
\beqn\label{suz6}
\phi([x,f]) &=& \phi(ad^*_xf), \nonumber \\
            &=& \phi_{21}(ad^*_xf) + \phi_{22}(ad^*_xf),
\eeqn
and
\beqn\label{suz7}
[\phi(x),f] + [x,\phi(f)]&=& [\phi_{11}(x)+\phi_{12}(x),f]+[x,\phi_{21}(f) + \phi_{22}(f)],\nonumber \\
                         &=& ad^*_{\phi_{11}(x)}f + [x,\phi_{21}(f)] + ad^*_x\big(\phi_{22}(f)\big).
\eeqn
Identifying (\ref{suz6}) and (\ref{suz7}) we obtain on the one hand
\beqn
\phi_{21}(ad^*_xf) &=& [x,\phi_{21}(f)], \nonumber \\
                   &=& ad_x(\phi_{21}(f)),\nonumber
\eeqn
for every  $x$ in $\mathcal G$, and every $f$ in $\mathcal G^*$. We write the above as
\beq
\phi_{21} \circ ad^*_x = ad_x \circ \phi_{21}, \nonumber
\eeq
for all $x$ in $\mathcal G$. That is,  {\em $\phi_{21}:\mathcal G^* \to \mathcal G$ is equivariant
(commutes) with respect to the adjoint and the coadjoint actions of
$\mathcal G$
on $\mathcal G$ and $\mathcal G^*$ respectively}.

We have on the other hand
\beq\label{suz8}
\phi_{22}(ad^*_xf) = ad^*_x(\phi_{22}(f)) + ad^*_{\phi_{11}(x)}f,
\eeq
for all $x$ in $\mathcal G$ and all $f$ in $\mathcal G^*$. Formula (\ref{suz8}) can be rewritten as
\beq
\phi_{22} \circ ad^*_x - ad^*_x \circ \phi_{22} = ad^*_{\phi_{11}(x)}, \nonumber
\eeq
{\it i.e.} for any element $x$ of $\G$,
\beq
 [\phi_{22},ad^*_x] = ad^*_{\phi_{11}(x)}. \nonumber
\eeq
Last, for $f$ and $g$ in $\G^*,$ we have
\beq\label{suz11}
\phi([f,g]) = 0,
\eeq
and
\beqn\label{suz12}
[\phi(f),g] + [f,\phi(g)] &=& [\phi_{21}(f)+\phi_{22}(f),g]+[f,\phi_{21}(g)+\phi_{22}(g)], \nonumber \\
                          &=& ad^*_{\phi_{21}(f)}g - ad^*_{\phi_{21}(g)}f.
\eeqn
From (\ref{suz11}) and (\ref{suz12}) it comes that for all elements $f,g$  of $\G^*$,
\beq
ad^*_{\phi_{21}(f)}g = ad^*_{\phi_{21}(g)}f.\nonumber
\eeq

Noting $\alpha :=\phi_{11}$, $\beta:=\phi_{12}$, $\psi:= \phi_{21}$ and $\xi:=\phi_{22}$, we get 
a proof of Theorem  \ref{derivationschar}. \qed

\brmq\label{notations}({\bf Notations }) 
From now on, if $\G$ is a Lie algebra, then 
\benum 
\item $\mathcal E$ will stand for the space of linear maps $\xi : \mathcal G^* \to \mathcal G^*$  
satisfying Equation (\ref{relation-xi-alpha}), for some derivation $\alpha$ of $\mathcal G$;

\item set $$\!\! \G_0\!:=\!\Big\{\!\phi: \D\to\D, \phi (x,f) \!=\! (\alpha(x), \xi(f))\!:\! \alpha\in der(\G), 
\xi\in\mathcal E, [\xi,ad^*_x]=ad^*_{\alpha(x)}, \forall x\in\G\Big\};$$

\item  we may let $\mathcal Q$ stand for the space of 1-cocycles
$\beta \!:\! \mathcal G\to \mathcal G^*$  as in (\ref{suz5}),
whereas $\Psi$ may be used for the space of equivariant linear $\psi\!:\! \mathcal G^* \to \mathcal G$
as in (\ref{relation-equivariance}), which satisfy (\ref{relation-commutation});

\item  we will denote by $\G_1,$ the direct sum  $\G_1:=\mathcal Q\oplus\Psi$ of the vector spaces $\mathcal Q$ and $\Psi$.
\eenum 
\ermq

\brmq\label{embeddings} 
The spaces $der(\G)$ of derivations of $\G$, $\mathcal Q$ and $\Psi$, as in 
Remark \ref{notations}, are all subsets of $der(\D),$ as follows. A derivation $\alpha$ of $\G$, 
an equivariant map $\psi$ in $\Psi$, and a $1$-cocycle $\beta$ in $\mathcal Q$ are 
respectively seen as the 
elements $ \phi_{\alpha}, \phi_{\psi}, \phi_{\beta}$ of $der(\D)$, with
\beqn 
\phi_{\alpha}(x,f)&:=& (\alpha(x),-f\circ\alpha) ; \cr 
\phi_{\psi}(x,f)  &:=& (\psi(f),0) ;               \cr 
\phi_{\beta}(x,f) &:=& (0,\beta(x)), \nonumber
\eeqn 
for all $(x,f)$ in $\D$.
\ermq

\bcor 
Every derivation of $\G$ is the restriction to $\G$ of a derivation of $\D.$
\ecor

\subsection{A Structure Theorem {\tiny  for} the Group of Automorphisms of $\D$}

\blem\label{structure1}
 The space $\mathcal E,$ as in Remark \ref{notations}, is a Lie algebra.

Namely, if $\xi_1, \xi_2$ in $\mathcal E$ satisfy
\beq
[\xi_1,ad^*_x] = ad^*_{\alpha_1(x)} \text{ and  } [\xi_2,ad^*_x] =ad^*_{\alpha_2(x)}, \nonumber
\eeq
for all $x$ in $\mathcal G$ and some $\alpha_1,\alpha_2$ in der($\G$),
then their Lie bracket $[\xi_1, \xi_2]$ is in $\mathcal E$ and satisfies
\beq
[[\xi_1, \xi_2],ad^*_x] = ad^*_{[\alpha_1,\alpha_2](x)}. \nonumber
\eeq
\elem
\begin{proof} Using Jacobi identity in the Lie algebra $\G l(\G^*)$ of endomorphisms
of the vector space $\G^*$, we get: for any $x$ in $\G$
\beqn
[[\xi_1, \xi_2],ad^*_x]&=& [[\xi_1,ad^*_x], \xi_2] +  [\xi_1, [\xi_2,ad^*_x]]\nonumber\\
 &=& [ad^*_{\alpha_1(x)}, \xi_2] + [\xi_1, ad^*_{\alpha_2(x)}]\nonumber\\
 &=& - ad^*_{\alpha_2\circ\alpha_1(x)} + ad^*_{\alpha_1\circ\alpha_2(x)}\nonumber\\
 &=& ad^*_{[\alpha_1,\alpha_2](x)}.\nonumber
\eeqn
\end{proof}

\blem \label{structure2}
The space $\G_0,$ as in Remark \ref{notations}, is a Lie subalgebra of
$der(\D)$.
\elem
\begin{proof}
This is a consequence of Lemma \ref{structure1}. If $\phi_1:= (\alpha_1, \xi_1),$ and 
$\phi_2:= (\alpha_2, \xi_2)$ are in $\G_0,$ then $[\phi_1,\phi_2]=([\alpha_1,\alpha_2], [\xi_1,\xi_2]),$ 
as can easily be seen, below. For every $(x,f)$ in $\D,$ we have
\beqn
 [\phi_1,\phi_2](x,f) & = & \phi_1 \Big( \alpha_2(x),\xi_2(f) \Big) - \phi_2 \Big( \alpha_1(x),\xi_1(f) \Big)
  \nonumber\\
  & = & \Big( \alpha_1\circ\alpha_2(x)~,~ \xi_1\circ\xi_2(f)\Big) - \Big( \alpha_2\circ\alpha_1(x)~,~ \xi_2\circ\xi_1(f) \Big)\nonumber\\
 & =& \Big( [\alpha_1,\alpha_2](x), [\xi_1,\xi_2](f) \Big). \nonumber
\eeqn
\end{proof}

\blem \label{structure3}
Let $\beta $  and $\psi$ be in $\mathcal Q$ and $\Psi$, respectively. Then
$[\beta,\psi]=(-\psi\circ\beta,\beta\circ\psi)$ belongs to $\G_0,$ more precisely
$\beta\circ\psi$ is in $\mathcal E $, $\psi\circ\beta$ is in $der(\mathcal G)$ and
$[\beta\circ\psi,ad_x^*]=-ad^*_{\psi\circ \beta(x)}$, for any $x$ in $\G$.
\elem
\begin{proof}
First, $\beta$ being a 1-cocycle is equivalent to
\beq
\label{eq:cocycle}
\beta\circ ad_x (y) = ad^*_x \circ \beta (y) -
ad^*_y\circ \beta (x),
\eeq
for all $x,y$ in $\G$. Now for every  $x$ in $\G$ and every $f$ in $\G^*,$ we have
\beqn
[\beta\circ\psi ,ad_x^*](f) &= & \beta\circ\psi \circ ad_x^*(f)-
ad_x^*\circ\beta\circ\psi (f)\nonumber \\
& = & \beta\circ ad_x\circ \psi (f) - ad_x^*\circ\beta\circ\psi(f), ~\text{now take $y=\psi (f)$ in (\ref{eq:cocycle})}\nonumber\\
& = & ad_{x}^*\circ\beta\circ  \psi (f) - ad_{\psi(f)}^*\beta(x) - ad_{x}^*\circ\beta\circ\psi(f),
              \nonumber   \\
& = &  - ad_{\psi(f)}^*\beta(x),~ \text{take $g=\beta(x)$ in
 (\ref{relation-commutation})} \nonumber\\
& = &  - ad_{\psi\circ \beta(x)}^*f = ad_{\alpha(x)}^*f, ~\text{where}~ \alpha= - \psi\circ \beta.\nonumber
\eeqn
Next, the proof that $\psi\circ\beta$ is in der($\G$), is
straightforward. Indeed, for every elements $x,y$ in $\G$, we have
\beqn
\psi\circ\beta [x,y] & = & \psi\Big(ad^*_x\beta (y) - ad^*_y\beta(x)\Big)\nonumber\\
                     & = & ad_x\circ\psi\circ \beta (y) - ad_y\circ\psi\circ\beta (x)\nonumber\\
                     & = & [x,\psi\circ \beta (y)] +[\psi\circ\beta(x),y]\nonumber
\eeqn
Hence $[\beta,\psi]$ belongs to $\G_0,$ for every $\beta$ in $\mathcal Q$  and every  $\psi$ in $\Psi$.
\end{proof}

\blem
Let $\phi:=(\alpha,\xi)$ be in $\G_0$, $\beta: \mathcal G\to \mathcal G^*$  and
$\psi: \mathcal G^* \to \mathcal G$ be respectively in $\mathcal Q$ and $\Psi.$
Then both $[\phi,\beta]$ and $[\phi,\psi]$ are elements of $\G_1,$ more precisely
$[\phi,\beta]$ is in $\mathcal Q$ and $[\phi,\psi]$ is in $\Psi.$
Moreover, we have $[\mathcal Q,\mathcal Q]=0$ and $[\Psi,\Psi]=0.$
\elem
\begin{proof}
Let $\phi=(\alpha,\xi)$ be in $\G_0,$ $\beta:\G\to\G^*$ a 1-cocycle and $\psi:\G^*\to\G$ an equivariant linear map. 
Using $\phi_{\beta}$  and $\phi_{\psi}$  as  in Remark \ref{embeddings}, we obtain
\beqn
[\phi,\phi_{\beta}](x,y)&=& \phi\Big(0,\beta(x)\Big)- \phi_{\beta}\Big(\alpha(x),\xi(f)\Big)\nonumber\\
&=& \Big(0,\xi\circ \beta(x)\Big)- \Big(0,\beta\circ\alpha(x)\Big)\nonumber\\
&=& \Big( 0~,~(\xi\circ \beta-\beta\circ\alpha)(x) \Big).\nonumber
\eeqn
Now, let us show that $\tilde\beta:=\xi\circ \beta-\beta\circ\alpha : \G\to\G^*$ is a $1$-cocycle.
Indeed, on the one hand we have
\beqn\label{eq:beta1}
\xi\circ \beta([x,y])&=&\xi\Big(ad_x^*\beta(y)-ad_y^*\beta(x)\Big)\nonumber\\
&=&\Big([\xi,ad_x^*]+ad^*_x\circ\xi\Big)\big(\beta(y)\big)-\Big([\xi,ad_y^*]
+ad^*_y\circ\xi\Big)\big(\beta(x)\big)\nonumber\\
&=&ad_{\alpha(x)}^*\beta(y)+ad^*_x(\xi\circ\beta(y))-ad_{\alpha(y)}^*\beta(x)-ad^*_y(\xi\circ\beta(x))
\nonumber\\
&=&ad^*_x(\xi\circ\beta(y))-ad^*_y(\xi\circ\beta(x))+ad_{\alpha(x)}^*\beta(y)-ad_{\alpha(y)}^*\beta(x)
\eeqn
On the other hand, we also have
\beqn \label{eq:beta2}\beta\circ\alpha([x,y])&=& \beta\big([\alpha(x),y]\big)+\beta\big([x,\alpha(y)]\big)\nonumber\\
&=& ad_{\alpha(x)}^*\beta(y)-ad^*_y(\beta\circ\alpha(x))+ad^*_x(\beta\circ\alpha(y)) -ad^*_{\alpha(y)} \beta(x)
\eeqn
Subtracting (\ref{eq:beta2}) from (\ref{eq:beta1}), we see that $\tilde\beta[x,y]$ now reads
\beqn \label{eq:beta3}
\tilde\beta[x,y]&=&  ad^*_x(\xi\circ \beta-\beta\circ\alpha)(y) - ad^*_y(\xi\circ \beta-\beta\circ\alpha)(x)\nonumber\\
&=& ad^*_x\tilde\beta(y)-ad^*_y\tilde\beta(x)\nonumber
\eeqn
Hence $\tilde\beta$ is an element of $\mathcal Q.$

In the same way, we also have
\beqn [\phi,\phi_{\psi}](x,y)&=& \phi\big(\psi(f),0\big)- \phi_{\psi}\big(\alpha(x),\xi(f)\big)\nonumber\\
&=& \big(\alpha\circ\psi(f),0\big)- \big(\psi\circ\xi(f),0\big)\nonumber\\
&=& \big( (\alpha\circ\psi- \psi\circ\xi)(f)~,~0 \big)\nonumber
\eeqn
The linear map $\tilde\psi:=\alpha\circ\psi- \psi\circ\xi:\G^*\to\G$ is equivariant,
{\it i.e.} is an element of $\Psi.$ As above, this is seen by first computing, for every elements $x$ of $\G$ and $f$ of $\G^*$,
\beqn \label{eq:psi1}
\alpha\circ\psi(ad_x^*f)=\alpha\big([x,\psi(f)]\big) = [\alpha(x),\psi(f)]+[x,\alpha\circ\psi(f)]
\eeqn
and
\beqn \label{eq:psi2} \psi\circ\xi(ad_x^*f)&=&\psi\circ\big([\xi,ad_x^*]+ad_x^*\circ\xi\big)(f)\nonumber\\
&=&  \psi\big( ad^*_{\alpha(x)}f\big)+\psi\big(ad_x^*\xi(f)\big)\nonumber\\
&=&  \big[\alpha(x),\psi(f)\big]+\big[x,\psi\circ\xi(f)\big],
\eeqn
then subtracting (\ref{eq:psi1}) and (\ref{eq:psi2}).

Now we have  \beq [\phi_{\beta},\phi_{\beta'}](x,f)=\phi_{\beta}(0,\beta'(x))-\phi_{\beta'}(0,\beta(x))=0
\nonumber \eeq
and
\beq [\phi_{\psi},\phi_{\psi'}](x,f)=\phi_{\psi}(\psi'(f),0)-\phi_{\psi'}(\psi(f),0)=0,
\nonumber \eeq
for all $(x,f)$ in $\D.$ In other words, $[\mathcal Q,\mathcal Q]=0$ and $[\Psi,\Psi]=0.$
\end{proof}

We summarize all the above in the

\begin{theorem}\label{structuretheorem}
Let $G$ be a
Lie group and $\G$ its Lie algebra. The group Aut($\D$) of
automorphisms of the Lie algebra $\D$ of the cotangent bundle $T^*G$
of $G$, is a super symmetric Lie group. More precisely, its Lie algebra
$der(\mathcal D)$ is a $\mathbb Z/2\mathbb Z$-graded symmetric (supersymmetric) Lie algebra which 
decomposes into a direct sum of vector spaces
\beq
 der(\mathcal D):=\mathcal G_0\oplus\mathcal G_1,  \text{~
 with~ } [\mathcal G_i,\mathcal G_j]\subset\mathcal
 G_{i+j}, ~~ i,j \in \mathbb Z/2\mathbb Z = \{0, 1\}
\eeq
where  $\mathcal G_0$ is the Lie algebra of linear maps $\phi: \D\to\D, ~  \phi (x,f) = (\alpha(x), \xi(f))$ 
with $\alpha$ a derivation of $\G$ and the linear map
 $\xi : \mathcal G^* \to
\mathcal G^*$  satisfies,
 \beq
 [\xi,ad^*_x] = ad^*_{\alpha(x)},
\eeq
for any $x$ of $\mathcal G$; and $\mathcal G_1$ is the direct sum (as a vector space)
of the space $\mathcal Q$ of $1$-cocycles $\mathcal G\to\mathcal G^*$ and the space $\Psi$
of linear maps $\mathcal G^*\to\mathcal G$ which are equivariant with respect to the coadjoint
and the adjoint representations and satisfy (\ref{relation-commutation}).

\vskip 0.2cm
Moreover, $\G_0\oplus\tilde \G_1$ and $\G_0\oplus\tilde \G_1'$ are subalgebras of der($\D$)
which are Lie superalgebras, {\it i.e.} they are $\mathbb Z/2\mathbb Z$-graded Lie algebras with
the Lie bracket satisfying 
$$[x,y]=-(-1)^{deg(x)deg(y)}[y,x],$$ 
where $\tilde \G_1:=\mathcal Q$ and $\tilde \G_1':=\Psi$ are Abelian subalgebras 
of $der(\D)$ and $deg(x)=i$, if $x\in\G_i$.
\end{theorem}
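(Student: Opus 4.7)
The plan is to assemble the preceding lemmas into the stated graded decomposition. I would begin by invoking Theorem \ref{derivationschar}, which characterises every derivation $\phi$ of $\D$ by a quadruple $(\alpha,\xi,\beta,\psi)$ with $\alpha\in\der(\G)$, $\xi\in\mathcal E$ satisfying (\ref{relation-xi-alpha}), $\beta\in\mathcal Q$, and $\psi\in\Psi$. Using the embeddings $\phi_{\alpha}$, $\phi_{\beta}$, $\phi_{\psi}$ of Remark \ref{embeddings}, any derivation decomposes uniquely as $\phi=(\alpha,\xi)+\phi_{\beta}+\phi_{\psi}$, which gives the direct-sum splitting $\der(\D)=\G_0\oplus\G_1$ of vector spaces, with $\G_1=\mathcal Q\oplus\Psi$.

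Next I would check the grading relations $[\G_i,\G_j]\subset\G_{i+j}$ for $i,j\in\Z/2\Z$. The inclusion $[\G_0,\G_0]\subset\G_0$ is exactly Lemma \ref{structure2}. The inclusion $[\G_1,\G_1]\subset\G_0$ follows by combining Lemma \ref{structure3}, which yields $[\mathcal Q,\Psi]\subset\G_0$, with the identities $[\mathcal Q,\mathcal Q]=0$ and $[\Psi,\Psi]=0$ of the lemma just preceding the theorem. The remaining inclusion $[\G_0,\G_1]\subset\G_1$ is contained in the same last lemma, whose explicit representatives $\tilde\beta=\xi\circ\beta-\beta\circ\alpha\in\mathcal Q$ and $\tilde\psi=\alpha\circ\psi-\psi\circ\xi\in\Psi$ make clear that even $[\G_0,\mathcal Q]\subset\mathcal Q$ and $[\G_0,\Psi]\subset\Psi$ separately.

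For the Lie superalgebra assertion about $\G_0\oplus\tilde\G_1$ and $\G_0\oplus\tilde\G_1'$, the decisive observation is that $\tilde\G_1=\mathcal Q$ and $\tilde\G_1'=\Psi$ are Abelian subalgebras of $\der(\D)$. Consequently, the only bi-degree in which the graded antisymmetry $[x,y]=-(-1)^{\deg(x)\deg(y)}[y,x]$ would differ from the ordinary antisymmetry of $\der(\D)$ is the $(1,1)$ component, and there both sides vanish, so the identity reduces to $0=0$. The graded Jacobi identity likewise follows from the ordinary Jacobi identity of $\der(\D)$: any triple that would expose a sign discrepancy involves two degree-one elements from the same Abelian piece, and the resulting inner bracket vanishes, killing precisely the terms where the graded and ungraded conventions disagree.

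The one substantive obstacle in this program sits inside the last preceding lemma, where one must unpack the defining identities of $\mathcal Q$ and $\Psi$ and invoke the compatibility $[\xi,ad_x^*]=ad^*_{\alpha(x)}$ to verify that the brackets $[\G_0,\mathcal Q]$ and $[\G_0,\Psi]$ actually land in $\mathcal Q$ and $\Psi$ respectively, rather than in some larger space of linear maps $\G\to\G^*$ or $\G^*\to\G$. Once that calculation is in hand (and it is, via (\ref{eq:beta1})--(\ref{eq:beta3}) and (\ref{eq:psi1})--(\ref{eq:psi2})), the structure theorem is pure bookkeeping: the vector-space decomposition, the bracket inclusions, and the graded antisymmetry/Jacobi checks are each one-line consequences of the lemmas just proved.
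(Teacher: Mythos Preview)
Your proposal is correct and mirrors the paper's approach exactly: the paper presents this theorem with the phrase ``We summarize all the above in the\ldots'' and gives no further argument, so the proof is precisely the assembly of Lemmas \ref{structure1}--\ref{structure3} and the unnamed lemma preceding the theorem, which is what you do. Your additional verification of the graded Jacobi identity (which the paper does not address explicitly) is a welcome piece of diligence and is handled correctly via the vanishing of all odd--odd brackets in $\tilde\G_1$ and $\tilde\G_1'$.
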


\vskip 0.4cm

\brmq
\bitem
\item[(a)] In Propositions \ref{decomposition-xi} and \ref{decomposition-xi'}, we will prove that every
element $\xi$ of $\mathcal E$ is the transpose $\xi=(j-\alpha)^t$ of the sum of  an adjoint-invariant 
endomorphism  $j\in\mathcal J$  and a derivation $-\alpha$ of $\G.$

\item[(b)] The Lie superalgebras $\G_0\oplus\tilde \G_1$ and $\G_0\oplus\tilde \G_1'$ respectively correspond 
to the subalgebras of all elements of der($\D$) which preserve the subalgebra $\G$ and the ideal $\G^*$ 
of $\D$. The Lie superalgebra $\G_0\oplus\tilde \G_1'$ can be seen as part of the more general case of 
derivations of a semi-direct product Lie algebra $\G\ltimes \mathcal N$ which preserve the ideal $\mathcal N$ 
and which are discussed in \cite{neeb}, among other interesting results therein.
\eitem
\ermq

Let us now have a closer look at maps $\xi,$ $\psi$ and $\beta$.

\subsection{Maps $\xi$ and Bi-invariant Tensors of Type (1,1)}

\subsubsection{Adjoint-invariant Endomorphisms}\label{chap:bi-inv-endom}
Linear operators acting on vector fields of a given Lie group $G$
can be seen as fields of endomorphisms of its tangent spaces.
Bi-invariant ones correspond to endomorphisms $j:\G \to \G$ of  the
Lie algebra $\G$ of $G$, satisfying the condition $j[x,y]=[jx,y]$,
for all $x,y$ in $\mathcal G$. If we denote by $\nabla$ the connection
on $G$ given on left invariant vector fields by
$$\nabla_xy:=\frac{1}{2}[x,y],$$ 
then using the covariant derivative,
we have  $\nabla j=0$, (see e.g. \cite{tondeur}).
As above, let $$\mathcal J:= \{j:\mathcal G\to \mathcal G, \text{ linear
and } j[x,y]=[jx,y], \forall  x,y\in\mathcal G\}.$$
 Endowed with the bracket $$[j,j']:=j\circ j'-j'\circ j,$$
the space $\mathcal J$ is a Lie algebra, and indeed a subalgebra of
the Lie algebra $\G l(\G)$ of all endomorphisms of $\G.$

\vskip 0.3cm

 In the case where the dimension of $G$ is even and if in addition $j$ satisfies
$j^2\!\!=~\!\!\!-\!identity$, then $(G,j)$ is a complex Lie group.

\subsubsection{ Maps $\xi:\G^*\to\G^*$}

\bpro\label{decomposition-xi} 
Let $\mathcal G$ be a Lie algebra and $\alpha$ a derivation of
$\mathcal G$. A linear map $\xi':\mathcal G \to \mathcal G$ satisfies
$[\xi',ad_x] = ad_{\alpha(x)}$, for every element $x$ of $\G$, if and only if there
exists a linear map $j:\G\to\G$ satisfying
\beq\label{ameth}
j([x,y]) = [j(x),y] = [x,j(y)],
\eeq
for all $x,y$ in $\G$, such that $\xi'= j + \alpha$.
\epro
\begin{proof}
 Let $\alpha$ be a derivation and $\xi'$ an
endomorphism of $\G$ satisfying the hypothesis of Proposition
\ref{decomposition-xi}, that is,
$[\xi',ad_x] = ad_{\alpha(x)} = [\alpha,ad_x]$, for any $x$ in $\G$.
 We then have,
\beq\label{aby}
[\xi'-\alpha,ad_x] = 0,
\eeq
for any $x$ of $\G$. So the endomorphism $j:=\xi'-\alpha$ commutes with all adjoint operators.

Now a linear map $j:\G\to\G$ commuting with all adjoint operators, satisfies
\beq
 0=[j,ad_x](y) = j([x,y]) - [x,j(y)], \label{aby1}
\eeq
for all elements $x,y$ of $\G$.  We also have,
\beq
 0=[j,ad_y](x) = j([y,x]) - [y,j(x)], \label{aby2}
\eeq
for all $x,y$ in $\G$. From (\ref{aby1}) and (\ref{aby2}), we have
$j([x,y]) = [j(x),y] = [x,j(y)]$, for any $x,y$ in $\G$.\\
Thus, (\ref{aby}) is equivalent to $\xi'= j +\alpha$, where
$j$ satisfies (\ref{ameth}).
\end{proof}
\brmq
The above means that the space $\mathcal J$ is the centralizer of the space $ad_{\mathcal G}$
of inner derivations of $\G$ in  $\mathcal Gl (\G):= \{l:\G\to \G \text{~ linear~ }\}$, {\it i.e.}
$$\mathcal J= Z_{\mathcal Gl(\G)} (ad_{\mathcal G}):=\{j:\G  \to \G\text{~ linear
and ~} [j,ad_x]=0, \forall x\in \G\}.$$
\ermq
\bpro \label{decomposition-xi'} 
Let $\G$ be a nonabelian Lie algebra and $\mathcal S$
the space of endomorphisms  $\xi': \G\to\G$ such that there exists a derivation
$\alpha$ of $\G$ and  $[\xi', ad_x]=ad_{\alpha (x)}$ for  all $x\in \mathcal G$.
Then $ \mathcal S$ is a Lie algebra containing $\mathcal J$ and $der(\G)$ as subalgebras.
In the case where $\G$ has a trivial centre, then $ \mathcal S$ is  the
semi-direct product $\mathcal S=der(\G)\ltimes \mathcal J$ of
 $\mathcal J$ and $der(\G)$.

The following are equivalent
\bitem 
\item[(a)] The linear map $\xi:\G^*\to\G^*$ is an element of  $\mathcal E$ with $\alpha$  as the corresponding 
derivation of $\G$, i.e. $\xi$ satisfies (\ref{relation-xi-alpha}) for the derivation $\alpha$.

\item[(b)] The transpose $\xi^t$  of $\xi$ is of the form $\xi^t=j-\alpha,$ where $j$ is in $\mathcal J$ and $\alpha$ in der($\G$).

\item[(c)] $\xi^t$ is an element of $\mathcal S,$ with corresponding derivation $-\alpha.$
\eitem 
  The transposition $\xi\mapsto \xi^t$ of linear maps is an anti-isomorphism between the Lie algebras $\mathcal E$ and $\mathcal S.$
\epro
\begin{proof}
Using the same argument as in Lemma \ref{structure1}, if $[\xi'_1, ad_x]=ad_{\alpha_1 (x)}$ and 
$[\xi'_2, ad_x]=ad_{\alpha_2 (x)}$, for every $x$ in $\G,$ then $[[\xi'_1,\xi_2'], ad_x]=ad_{[\alpha_1, \alpha_2](x)}$, 
for any element $x$ of $\G$. Thus $\mathcal S$ is a Lie algebra. From Proposition \ref{decomposition-xi}, there exist 
$j_i$ in $\mathcal J$ such that $\xi'_i=\alpha_i+j_i ,$ $i=1,2.$
Obviously, $\mathcal S$ contains $\mathcal J$ and $\der(\G).$
Thus, as a vector space, $\mathcal S$ decomposes as $\mathcal S= \der(\G) +\mathcal J.$
Now, the Lie bracket in $\mathcal S$ reads
\beq \label{eq:liebraket1-S}
[\xi'_1,\xi'_2]= [\alpha_1+j_1, \alpha_2+j_2]= [\alpha_1, \alpha_2] +[\alpha_1,j_2]+[j_1, \alpha_2]+[j_1,j_2]
\eeq
Of course, $[\alpha_1, \alpha_2]$ is in $\der(\G)$. From Section \ref{chap:bi-inv-endom}, we know that $\mathcal J$ 
is a Lie algebra, hence $[j_1,j_2]$ is in $\mathcal J$. It is easy to check that
\beq
[\alpha,j]\in \mathcal J,
\eeq
for all $\alpha$ in $\der(\G)$ and for all $j$ in $\mathcal J$. Indeed, the following holds
\beqn
[\alpha,j]([x,y])&=& \alpha \big([j(x),y]\big) - j\big([\alpha(x),y]+[x,\alpha(y)]\big)\nonumber \\
                 &=& [\alpha\circ j(x),y]+ [j(x),\alpha(y)] - [j\circ \alpha(x),y]-[j(x),\alpha(y)]\nonumber\\
                 &=&  [[\alpha,j](x),y],
\eeqn
for all $x,y$ in $\G$.
The intersection der($\G$)$\cap \mathcal J$ is made of elements $j$ of $\mathcal J$ whose image Im($j$) is
 a subset of the centre $Z(\G)$ of $\G.$ Hence if $Z(\G)=0$, then $\mathcal S=$der($\G$)$ \oplus\mathcal J$ and 
as a Lie algebra,   $\mathcal S= $ der($\G$)$\ltimes \mathcal J.$ Using this  decomposition, 
we can also rewrite  (\ref{eq:liebraket1-S}) as
\beq \label{eq:liebraket2-S}
[\xi'_1,\xi'_2]= \left[(\alpha_1,j_1)~,~ (\alpha_2,j_2)\right]= \left([\alpha_1, \alpha_2]~,~   
 [j_1,j_2] +[\alpha_1,j_2]+[j_1, \alpha_2]\right)
\eeq

The equivalence between (b) and (c) comes directly from Proposition \ref{decomposition-xi}.

\vskip 0.2cm

 Now let $\xi\in\mathcal E$, with $[\xi, ad^*_x] = ad^*_{\alpha (x)}$, $\alpha \in \der(\G)$, then
\beq
- ad_{\alpha (x)} = [\xi, ad^*_x]^t = - [\xi^t, (ad^*_x)^t] = [\xi^t, ad_x]\nonumber
\eeq
Hence $\xi^t\in \mathcal S$, with $ad_{\alpha' (x)}  = [\xi^t, ad_x]$, for all $x\in \G$,
where $\alpha':=-\alpha$. Thus, (a) implies (c). From Proposition \ref{decomposition-xi}, there exist $j\in\mathcal J$ 
such that $\xi^t=-\alpha+j.$ Now it is straightforward that if  (b) $\xi^t=-\alpha+j$ with $\alpha$ a derivation and 
$j$ in $\mathcal J$, then $\xi$ satisfies $[\xi,ad^*_x]=ad^*_{\alpha(x)}$, for all $x$ in $\G$. Hence (c) implies (a).

\vskip 0.2cm

Of course, we also know that $[\xi_1,\xi_2]^t= - [\xi_1^t,\xi_2^t]$, for every $\xi_1,\xi_2\in\mathcal E$.
\end{proof}

\begin{lemma}\label{lem:decomp-semi-simpl} 
Let $\xi': \mathcal G \to \mathcal G$ be a linear map
such that there exists $\alpha : \mathcal G \to \mathcal G$ linear and $[\xi',ad_x]=ad_{\alpha(x)}$,
for all $x$ in $\mathcal G$. Then $\xi'$ preserves every ideal $\mathcal A$ of $\mathcal G$ satisfying
$[\mathcal A,\mathcal A]=\mathcal A.$  In particular, if $\mathcal G$ is semi-simple and
$ \G = \s_1\oplus\s_2\oplus\ldots \oplus \s_p$ is a decomposition of $\mathcal G$ into a sum of simple ideals 
$\s_1,\ldots , \s_p$, then $\xi'(\s_i)\subset \s_i$, for $i=  1,\ldots, p.$
\end{lemma}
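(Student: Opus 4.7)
The plan is to exploit the defining identity $[\xi',\mathrm{ad}_x]=\mathrm{ad}_{\alpha(x)}$ directly, without first decomposing $\xi'$ as in Proposition \ref{decomposition-xi} (which would require $\alpha$ to be a derivation; the statement of the lemma only assumes it is linear). Unpacking the commutator on the right-hand side, the hypothesis reads
\beq
\xi'([x,y]) \;=\; [x,\xi'(y)] + [\alpha(x),y], \qquad \forall\, x,y \in \G. \nonumber
\eeq

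The key step is to use that $\A$ is an ideal satisfying $[\A,\A]=\A$, so every $a\in\A$ can be written as a finite sum $a=\sum_i[u_i,v_i]$ with $u_i,v_i\in\A$. Applying the identity above to each bracket, I get
\beq
\xi'([u_i,v_i]) \;=\; [u_i,\xi'(v_i)] \,+\, [\alpha(u_i), v_i]. \nonumber
\eeq
Now $u_i\in\A$ and $\A$ is an ideal, so $[u_i,\xi'(v_i)]\in\A$; likewise $v_i\in\A$ gives $[\alpha(u_i),v_i]\in\A$. Summing, $\xi'(a)\in\A$, which proves $\xi'(\A)\subset\A$.

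For the semisimple case, I would note that each simple ideal $\s_i$ is non-abelian, hence $[\s_i,\s_i]$ is a non-zero ideal of $\s_i$, forcing $[\s_i,\s_i]=\s_i$ by simplicity. The first part then yields $\xi'(\s_i)\subset \s_i$ for every $i$. I do not expect any real obstacle: the entire argument rests on a one-line manipulation of the defining commutator relation together with the ideal property. The only subtle point worth flagging in the write-up is that one must not invoke Proposition \ref{decomposition-xi} here, since $\alpha$ is only assumed linear; the argument above bypasses this because the terms $[u_i,\xi'(v_i)]$ and $[\alpha(u_i),v_i]$ individually lie in $\A$ regardless of whether $\alpha$ preserves $\A$.
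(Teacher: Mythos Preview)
Your proof is correct and essentially identical to the paper's: both unpack the hypothesis as $\xi'([x,y])=[x,\xi'(y)]+[\alpha(x),y]$, write an arbitrary element of $\A$ as a finite sum of brackets of elements of $\A$, and observe that each resulting term lies in $\A$ by the ideal property. Your remark that $\alpha$ need not be a derivation (so Proposition~\ref{decomposition-xi} is not invoked) is a valid side observation, though the paper's proof likewise does not use that proposition.
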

\begin{proof}
The proof is straightforward. Indeed, every element $x$ of an ideal $\mathcal A$ satisfying the hypothesis of 
Lemma \ref{lem:decomp-semi-simpl}, is a finite sum of the form $x=\displaystyle \sum_i [x_i,y_i]$ where $x_i,y_i$ 
are all elements of $\mathcal A$.   But as $\mathcal A$ is an ideal,
\beqn
\xi' ([x_i,y_i])&=&\xi' \circ ad_{x_i} (y_i)=\big([\xi',ad_{x_i}] + ad_{x_i}\circ \xi' \big)(y_i)\nonumber\\
&=&\big(ad_{\alpha(x_i)} + ad_{x_i}\circ \xi'\big)(y_i) =[\alpha(x_i),y_i] +
[x_i, \xi'(y_i)]\nonumber
\eeqn
is again an element of $\mathcal A.$
Hence we have $\xi' (x) = \displaystyle \sum_i \big([\alpha(x_i),y_i] +
[x_i, \xi'(y_i)]\big)$ is in $\mathcal A$.
\end{proof}

\subsection{Equivariant Maps $\psi: \G^*\to\G $}\label{chap:equivariance}

 Let $\G$ be a Lie algebra. In this section, we would like to explore properties of the space $\Psi$ of  
linear maps $\psi:\G^* \to \G$ which are equivariant with respect to the adjoint
and the coadjoint actions of $\G$ on $\G$ and $\G^*$ respectively  and satisfy: for all $f,g$ in $\G^*$,
\beq
ad_{\psi(f)}^*g=ad_{\psi(g)}^*f.\nonumber
\eeq

\blem\label{lem:intertwining}
Let $\G$ be a Lie algebra and $\psi$ an element of $\Psi$.
Then, 
\bitem 
\item [(a)] $Im \psi$ is an Abelian ideal of $\G$ and we have $\psi(ad_{\psi (g)}^*f)=0$,
for every $f,g$ in $\G^*$; 
\item[(b)] $\psi$ sends closed forms on $\G$ in the center of $\G$;
\item[(c)] $[Im \psi,\G] \subset \ker f$, for all $f$ in $\ker \psi$;
\item[(d)] the map $\psi$ cannot be invertible if $\G$ is not Abelian.
\eitem 
\elem
\noindent
\begin{proof}
(a) For every elements $f$ of $\G^*$ and $x$ of $\G$, we
have,
\beq
[\psi(f),x]  =  -(ad_x \circ \psi)(f)
             =  -(\psi \circ ad_x^* )(f) \in Im \psi.\nonumber
\eeq
Hence $Im(\psi)$ is an ideal of $\G$.

 Now, for every $f,g$ in $\G^*$, since $\psi(f)$ and $\psi(g)$ are elements of $\G$,  we also have
$\psi \circ ad^*_{\psi(f)} = ad_{\psi(f)} \circ \psi$ and
$\psi \circ ad^*_{\psi(g)} = ad_{\psi(g)} \circ \psi$. On the one hand,
\beqn
\left(\psi \circ ad^*_{\psi(f)}\right)(g) &=&  \left(ad_{\psi(f)} \circ \psi\right)(g) \nonumber \\
\psi(ad^*_{\psi(f)}g) &=& [\psi(f),\psi(g)] \label{esso3}
\eeqn
On the other hand,
\beqn
\left(\psi \circ ad^*_{\psi(g)}\right)(f) &=&  \left(ad_{\psi(g)} \circ
\psi\right)(f) \nonumber \\
\psi(ad^*_{\psi(g)}f) &=& [\psi(g),\psi(f)] \label{esso4}
\eeqn
Using  (\ref{esso3}) and (\ref{esso4}) we get
\beq\label{esso5}
 [\psi(f),\psi(g)]=\psi(ad^*_{\psi(f)}g)=\psi(ad^*_{\psi(g)}f) = [\psi(g),\psi(f)]
\eeq
Equation (\ref{esso5}) implies the following
\beq
[\psi(f),\psi(g)]=\psi(ad^*_{\psi(g)}f) = 0,
\eeq
for all elements  $f,g$ of $\G^*$. So we have proved (a).
\newline\noindent
 (b) Let $f$ be a closed form on $\G$, that is, $f$ in $\G^*$ and $ad_x^*f=0$, for all $x$ in $\G$.
The relation (\ref{relation-commutation}) implies that $ad^*_{\psi(f)}g=0 $,
for any $g$ in $\G^*$. Thus, for any element $y$ of $\G$ and any element $g$ of $\G^*$,
\beq
g([\psi(f),y])=0, \nonumber
\eeq
\noindent
 and hence $[\psi(f),y]=0$, for all $y$ in $\G$. In other words $\psi(f)$
belongs to the center of $\G$.
\newline\noindent
(c) If $f \in \ker \psi$, then $ad^*_{\psi(g)}f = ad^*_{\psi(f)}g=0$, for any $g$ in $\G^*$,
or equivalently, for every $x$ in $\G$ and $g$ in $\G^*$, $f([\psi(g),x])=0.$
It follows that $[Im \psi,\G] \subset \ker f$, for every  $f$ of $\ker \psi$.
 \newline\noindent
(d) From (a), the map $\psi$ satisfies $\psi(ad^*_{\psi(g)}f)= 0$, for any $f,g$ in $\G^*$.
 There are two possibilities here:
 \bitem 
\item[(i)] either there exist $ f,g$  in $\G^*$ such that $ad^*_{\psi(g)}f\neq 0$, in which case $ad^*_{\psi(g)}f$ 
belongs to $\ker\psi\ne 0$ and thus  $\psi$ is not invertible;
\item[(ii)] or else, suppose $ad^*_{\psi(g)}f = 0$, for all $f,g$ in $\G^*$.
This implies that $\psi(g)$ belongs to the center of $\G$ for every
$g$ in $\G^*$. In other words, the center of $\G$ contains $Im(\psi)$.
But since $\G$ is not Abelian, the center of $\G$ is different from $\G$,
hence $\psi$ is not invertible.
\eitem
\end{proof}

\blem\label{lem:coad-inv} The space of
equivariant maps $\psi:\G^* \to \G$ bijectively corresponds to that of
$\G$-invariant bilinear forms on the $\G$-module $\G^*$ for the coadjoint representation.
\elem
\begin{proof}
Indeed, each such $\psi$ defines a unique coadjoint-invariant bilinear form $\langle,\rangle_{\psi}$ on $\G^*$ as follows:
\beq
\langle f,g\rangle_{\psi}:= \langle \psi (f), g\rangle,
\eeq
for all $f,g$ in $\G^*$, where the right hand side is the duality pairing
$\langle f,x \rangle = f(x)$, $x$ in $\G$, $f$ in $\ G^*,$ as above.
The coadjoint-invariance reads
\beq
\langle ad_x^* f,g\rangle_{\psi}+ \langle f,ad_x^* g \rangle_{\psi}= 0,
\eeq
for all $x$ in $\G$ and all $f,g$ in $\G^*$; and is due to the simple equalities
\beqn
\langle ad_x ^*f,g \rangle_{\psi} &=& \langle \psi(ad_x^*f),
g\rangle = \langle ad_x\psi(f),g \rangle\nonumber\\
 &=&- \langle \psi(f),ad_x^*g \rangle=-\langle f,ad_x^*g\rangle_{\psi}. \nonumber
\eeqn
\noindent
Conversely, every $\G$-invariant bilinear form $\langle,\rangle_1$ on $\G^*$ gives rise to a unique
linear map $\psi_1: \G^*\to \G$ which is equivariant with respect to the adjoint and coadjoint representations of $\G$, by the formula
\beq
\langle\psi_1(f),g\rangle:=\langle f,g\rangle_1.
\eeq
\end{proof}

If $\psi$ is symmetric or skew-symmetric, then so is
$\langle,\rangle_\psi$ and vice versa. Otherwise,
$\langle,\rangle_\psi$ can be decomposed into a symmetric and a
skew-symmetric parts $\langle , \rangle_{\psi,s}$ and  $\langle ,
\rangle_{\psi,a}$ respectively, defined by the following formulas:
\beqn
\langle f, g \rangle_{\psi,s} &:=&\frac{1}{2}\Big[\langle f,g\rangle_\psi +\langle g, f\rangle_\psi \Big],\\
\langle f, g \rangle_{\psi,a} &:=& \frac{1}{2}\Big[ \langle f, g
\rangle_\psi - \langle g, f\rangle_\psi \Big].
\eeqn
The symmetric and skew-symmetric parts $\langle ,  \rangle_{1,s}$ and $\langle ,
\rangle_{1,a}$ of a $\G$-invariant bilinear form $\langle ,
\rangle_{1}$, are also $\G$-invariant.
From a remark in p. 2297 of \cite{me-re93}, the radical
$Rad\langle ,  \rangle_{1}:=\{ f\in \G^*, \langle f,g  \rangle_{1}=0, \forall g\in \G^*\}$ of a $\G$-invariant 
form $\langle ,  \rangle_{1},$ contains the coadjoint orbits of all its points.

\subsection{Cocycles $\G\to\G^*.$} \label{chap:cocyles}

The 1-cocycles for the coadjoint representation of a Lie algebra $\G$ are linear maps\\
$\beta:\G\to \G^*$ satisfying the cocycle condition
$\beta ([x,y])=ad_x^*\beta (y) - ad^*_y\beta (x)$, for every elements $x,y$ of $\G.$

To any given 1-cocycle $\beta$, corresponds a bilinear form $\Omega_\beta$ on $\G$, by the formula
\beq \label{eq:cocycle-2-forms}
\Omega_\beta (x,y):=\langle \beta (x),y\rangle,
\eeq
for all $x,y$ in $\G$, where $\langle,\rangle$ is again the duality pairing between
elements of $\G$ and $\G^*$.

\vskip 0.3cm

The bilinear form $\Omega_\beta$  is skew-symmetric (resp. symmetric, nondegenerate)
if and only if $\beta$ is skew-symmetric (resp. symmetric, invertible).

\vskip 0.3cm

Skew-symmetric such cocycles $\beta$ are in bijective
correspondence with closed 2-forms in $\mathcal G$, via the formula (\ref{eq:cocycle-2-forms}). 
In this sense, the cohomology space $H^1(\D,\D)$  contains the second cohomology space $H^2(\G,\R)$ 
of $\G$ with coefficients in $\R$ for the trivial action of $\G$ on $\R$. Hence, $H^1(\D,\D)$ somehow 
contains the second space $H^2_{inv}(G,\R)$ of left invariant de Rham cohomology  $H^*_{inv}(G,\R)$ of 
any Lie group $G$ with Lie algebra $\G.$

\vskip 0.3cm

Invertible skew-symmetric ones, when they exist, are those giving rise to symplectic forms or
equivalently to invertible solutions of the Classical Yang-Baxter
Equation. The study and classification of the solutions of the Classical Yang-Baxter Equation is a still 
open problem in Geometry, Theory of integrable systems. In Geometry, they give rise to very interesting 
structures in the framework of Symplectic Geometry, Affine Geometry, Theory of
Homogeneous K\"ahler domains,  (see e.g. \cite{di-me-cybe} and references therein).

\vskip 0.3cm

If $\G$ is semi-simple, then every cocycle $\beta$ is a coboundary, that is, there exists
$f_\beta$ in $\G^*$ such that $\beta (x)= -ad^*_xf_\beta$, for any $x$ in $\G.$

\subsection{Cohomology Space $H^1(\D,\D)$} \label{chap:cohomology}

Following Remarks \ref{notations} and  \ref{embeddings}, we can embed $\der(\G)$ as a subalgebra $\der(\G)_1$ 
of $\der(\D)$, using the linear map $\alpha\mapsto \phi_\alpha,$ with $\phi_\alpha(x,f)=(\alpha(x),f\circ\alpha).$ 
In the same way, we have constructed $\mathcal Q$ and $\Psi$ as subspaces of  $\der(\D).$ Likewise, 
$\mathcal J^t:=\{j^t, \text{~where~} j\in \mathcal J\}$ is seen as a subspace of $\der(\D),$ via the linear 
map $j^t\mapsto\phi_j$, with $\phi_j(x,f)=(0,f\circ j).$

\vskip 0.3cm

We give the following summary.

\begin{theorem}\label{thm:specialcase} The first cohomology space $H^1(\D,\D)$ of the (Chevalley-Eilenberg) 
cohomology associated with the adjoint action of $\D$ on itself, satisfies
$$H^1(\D,\D) \stackrel{\sim}{=} H^1(\G,\G)\oplus  \mathcal J^t\oplus H^1(\G,\G^*)\oplus \Psi,$$  
where $H^1(\G,\G)$ and $H^1(\G,\G^*)$ are the first cohomology spaces associated with the adjoint 
and coadjoint actions of $\G$, respectively; and
$\mathcal J^t:=\{j^t, j \in \mathcal J \}$ (space of transposes of elements of $\mathcal J$).

\vskip 0.2cm

If $\G$ is semi-simple, then  $\Psi=\{0\}$ and thus 
$$H^1(\D,\D) \stackrel{\sim}{=}  \mathcal J^t.$$ 

\vskip 0.3cm

Moreover, we have $\mathcal J \stackrel{\sim}{=} \R^p$, where $p$ is the number of the simple
 ideals $\s_i$ of $\G$ such that $\G=\s_1\oplus\ldots\oplus \s_p$. Hence, of course, 
$H^1(\D,\D) \stackrel{\sim}{=}   \R^p.$

\vskip 0.3cm

If $\G$ is a compact Lie algebra, with centre $Z(\G)$, we get 
\beqn 
H^1(\G,\G)   &\stackrel{\sim}{=}& End(Z(\G)),\cr 
\mathcal J   &\stackrel{\sim}{=}& \R^p\oplus End(Z(\G)),\cr 
H^1(\G,\G^*) &\stackrel{\sim}{=}& L(Z(\G), Z(\G)^*), \cr 
\Psi         &\stackrel{\sim}{=}& L(Z(\G)^*, Z(\G)). \nonumber
\eeqn 
 Hence, we get $$ H^1(\D,\D) \stackrel{\sim}{=} (End(\R^k) )^4\oplus \R^p,$$ 
where $k$ is the dimension of the centre of $\G$, and $p$ is the number of the simple
components of the derived ideal $[\G,\G]$ of $\G$. Here, if $E,F$ are vector spaces, $L(E,F)$ is the space of linear maps $E\to F.$
\end{theorem}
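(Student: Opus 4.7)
The strategy is to combine Theorem~\ref{structuretheorem} with Proposition~\ref{decomposition-xi'} to obtain a clean vector-space decomposition of $\der(\D)$, identify the $1$-coboundaries inside it, and then specialize to the semi-simple and compact cases. Via the embeddings of Remark~\ref{embeddings}, every element of $\G_0$ can be written uniquely as $\phi_\alpha+\phi_j$ with $\alpha\in\der(\G)$ and $j\in\mathcal{J}$: given $(\alpha,\xi)\in\G_0$, the endomorphism $j:=\xi^t+\alpha$ lies in $\mathcal{J}$ by Proposition~\ref{decomposition-xi'}, and the sum is direct because $\phi_\alpha+\phi_j=0$ forces $\alpha=0$ on the $\G$-component and then $j=0$ on the $\G^*$-component. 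Together with $\G_1=\mathcal{Q}\oplus\Psi$, this yields a vector-space isomorphism $\der(\D)\cong\der(\G)\oplus\mathcal{J}^t\oplus\mathcal{Q}\oplus\Psi$.

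Next I compute the $1$-coboundaries explicitly. A direct calculation gives $ad_{(u,\eta)}(x,f)=(ad_u(x),\,ad_u^*f-ad_x^*\eta)$, so every inner derivation has the form $\phi_{ad_u}+\phi_{\beta_\eta}$ with $\beta_\eta(x):=-ad_x^*\eta$; in particular its $\mathcal{J}^t$- and $\Psi$-components vanish. Hence $B^1(\D,\D)$ corresponds to $B^1(\G,\G)\oplus\{0\}\oplus B^1(\G,\G^*)\oplus\{0\}$ under the splitting, and quotienting gives $H^1(\D,\D)\cong H^1(\G,\G)\oplus\mathcal{J}^t\oplus H^1(\G,\G^*)\oplus\Psi$.

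For $\G$ semi-simple, Whitehead's first lemma kills $H^1(\G,\G)$ and $H^1(\G,\G^*)$; Lemma~\ref{lem:intertwining}(a) forces $Im(\psi)$ to be an Abelian ideal of $\G$, hence zero, so $\Psi=0$; and by Lemma~\ref{lem:decomp-semi-simpl} each $j\in\mathcal{J}$ preserves every simple ideal $\s_i$ in a decomposition $\G=\s_1\oplus\cdots\oplus\s_p$, so Schur's lemma applied to the irreducible adjoint representation of $\s_i$ makes $j|_{\s_i}$ a scalar, whence $\mathcal{J}\cong\R^p$.

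For $\G$ compact I would exploit the Lie-algebra direct sum $\G=Z(\G)\oplus[\G,\G]$ with $[\G,\G]$ compact semi-simple, which is also a $\G$-module decomposition, together with the dual splitting $\G^*=Z(\G)^*\oplus[\G,\G]^*$ in which $Z(\G)^*$ is a trivial $\G$-submodule. Each of the four pieces refines accordingly: the trivial-coefficient cohomology $H^1(\G,V)$ reduces to $\ho(\G/[\G,\G],V)\cong\ho(Z(\G),V)$, while any $1$-cocycle into $[\G,\G]$ or $[\G,\G]^*$ must vanish on $Z(\G)$ (using $Z(\G)\cap[\G,\G]=0$) and is then a coboundary by Whitehead's lemma on $[\G,\G]$; any $j\in\mathcal{J}$ preserves $Z(\G)$ and $[\G,\G]$ separately, since the identity $j[x,y]=[x,j(y)]\in[\G,\G]$ for $x,y\in[\G,\G]$ forces $j([\G,\G])\subseteq[\G,\G]$, whence $\mathcal{J}\cong End(Z(\G))\oplus\R^p$; and any $\psi\in\Psi$ lands in the Abelian ideal $Z(\G)$ by Lemma~\ref{lem:intertwining}(a) and kills $[\G,\G]^*$ by equivariance, yielding $\Psi\cong L(Z(\G)^*,Z(\G))$. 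The most delicate step will be this block-diagonality of $\mathcal{J}$ together with the vanishing of the cross-terms in $H^1(\G,[\G,\G])$ and $H^1(\G,[\G,\G]^*)$; both hinge on $Z(\G)\cap[\G,\G]=0$ in a compact Lie algebra combined with Whitehead's lemma on the semi-simple summand.
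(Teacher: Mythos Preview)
Your proposal is correct and follows essentially the same route as the paper: it too establishes the vector-space isomorphism $\der(\D)\cong\der(\G)\oplus\mathcal J^t\oplus\mathcal Q\oplus\Psi$ via the embeddings of Remark~\ref{embeddings} and Proposition~\ref{decomposition-xi'}, identifies the inner derivations as $\phi_{ad_{x_0}}+\phi_{\beta_{f_0}}$ (Proposition~\ref{prop:cohomology}), and then specializes using Lemma~\ref{lem:intertwining}, Lemma~\ref{lem:decomp-semi-simpl} with Schur, and the compact decomposition $\G=Z(\G)\oplus[\G,\G]$. The only cosmetic difference is that the paper packages the vanishing of $\psi$ on $[\G,\G]^*$ via a separate Lemma~\ref{lem:semi-simple-psi} (exploiting the orthogonal structure on $[\G,\G]$), whereas you appeal directly to equivariance; both arguments rest on the same fact that $[\G,\G]^*$ is spanned by elements of the form $ad^*_x f$.
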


The proof of Theorem \ref{thm:specialcase} is given by Proposition \ref{prop:cohomology} below and different 
lemmas and propositions, discussed in Section \ref{chap:orthogonal-algebra}.

\vskip 0.3cm

For the purpose of this investigation, we have favored a direct approach to exhibit detailed calculations of 
the first cohomology space, instead of the traditional powerful spectral sequences method commonly applied in 
the more general setting of locally trivial fiber bundles (see e.g. \cite{knapp}, \cite{neeb}).  Some parts of 
Theorem C can also be seen as a refinement, using our direct approach,  of some already known results (\cite{knapp}).

\vskip 0.3cm

As a vector space, $\der(\D)$ is isomorphic to the direct sum $\der(\G)\oplus\mathcal J^t\oplus \mathcal Q\oplus \Psi$ by
\beq \label{eq:cohomology} 
\Phi: \der(\G)\oplus\mathcal J^t\oplus \mathcal Q\oplus \Psi\to \der(\D); 
~~(\alpha,j^t,\beta,\psi)\mapsto \phi_\alpha+\phi_j+\phi_\beta+\phi_\psi.
\eeq
In this isomorphism, we have $\Phi (\der(\G)\oplus\mathcal J^t)= \der(\G)_1\oplus\mathcal J^t=\G_0$ 
and  $\Phi (\mathcal Q\oplus\Psi)=\G_1$.

\vskip 0.3cm

Now an exact derivation of $\D$, {\it i.e.} a $1$-coboundary for the Chevalley-Eilenberg cohomology associated 
with the adjoint action of $\D$ on $\D,$ is of the form $\phi_0=\partial v_0=ad_{v_0}$ for some element 
$v_0:=(x_0,f_0)$ of the $\D$-module $\D.$ That is, 
$$\phi_0(x,f)=(\alpha_0(x),\beta_0(x)+\xi_0(f)),$$ 
where 
$$
\alpha_0(x):=[x_0,x], \quad \beta_0(x)=-ad^*_xf_0, \quad \xi_0(f)=ad^*_{x_0}f.
$$ 
As we can see $\phi_0=\phi_{\alpha_0}+\phi_{\beta_0}=\Phi(\alpha_0,0,\beta_0,0)$ and

\bpro\label{prop:cohomology} 
The linear map $\Phi$ in (\ref{eq:cohomology}) induces an isomorphism   
$\bar \Phi$ in cohomology, between the spaces $H^1(\G,\G)\oplus \mathcal J^t\oplus H^1(\G,\G^*)\oplus \Psi$ and $H^1(\D,\D).$
\epro

\begin{proof}
The isomorphism in cohomology simply reads
$$\bar \Phi (class(\alpha), j^t,class(\beta),\psi)=class(\phi_\alpha+\phi_j+\phi_\beta+\phi_\psi).$$
\end{proof}

\section{Case of Orthogonal Lie Algebras}\label{chap:orthogonal-algebra}

In this section, we prove that if a Lie algebra $\G$ is orthogonal, then the Lie algebra $\der(\G)$ of 
its derivations  and the Lie algebra $\mathcal J$ of linear maps $j:\G\to \G$ satisfying $j[x,y]=[jx,y]$, 
for every $x,y$ in $\G,$ completely characterize the Lie algebra $\der(\D)$ of derivations of $\D:=\G\ltimes\G^*$, 
and hence the group of automorphisms of the cotangent bundle
of any connected Lie group with Lie algebra $\G.$ We also show that $\mathcal J$ is
isomorphic to the space of adjoint-invariant bilinear forms on $\G.$

\vskip 0.3cm

Let $(\G,\mu)$ be an orthogonal Lie algebra and consider the isomorphism $\theta : \G \to \G^*$ of $\G$-modules,  
given by $\langle \theta(x),y \rangle := \mu(x,y)$, as in Section \ref{chap:notations}.

\vskip 0.3cm

Of course, $\theta^{-1}$ is an equivariant map. But if $\G$ is not Abelian, invertible equivariant linear maps 
do not contribute to the space of derivations of $\D,$ as discussed in Lemma~\ref{lem:intertwining}.

\vskip 0.3cm

We pull coadjoint-invariant bilinear forms $B'$ on $\G^*$ back to adjoint-invariant bilinear
forms on $\G,$ as follows
$B(x,y):= B'(\theta(x),\theta(y)).$
Indeed, we have
\beqn
 B([x,y],z)  &=& B'\big(\theta([x,y]),\theta(z)\big)            \cr 
             &=& B'\big(ad^*_x\theta(y),\theta(z)\big)          \cr 
             &=& - B'\big(\theta(y),ad^*_x\theta(z)\big)        \cr
             &=& -B(y,[x,z]).\nonumber
\eeqn

\bpro
If a Lie algebra $\G$ is orthogonal, then there is an isomorphism between any two of the following vector spaces:
\bitem 
\item[(a)] the space $\mathcal J$ of linear maps $j:\G\to\G$ satisfying $j[x,y]=[jx,y]$, for every
 $x,y$ in $\G$;
\item[(b)]  the space of linear maps $\psi:\G^*\to\G$ which are equivariant with respect to the coadjoint and 
the adjoint representations of $\G$;
\item[(c)] the space of bilinear forms $B$ on $\G$ which are adjoint-invariant, {\it i.e.}
 \beq 
  B([x,y],z)+B(y,[x,z])=0,
 \eeq
for all $ x,y,z$ in $\G$;
\item[(d)] the space of bilinear forms $B'$ on $\G^*$ which are coadjoint-invariant, {\it i.e.}
\beq
 B'(ad^*_xf,g)+B'(f,ad^*_xg)=0,
\eeq
for all $x$ in $\G$,   $f,g$ in $\G^*$.
\eitem 
\epro

\begin{proof} 
$\bullet$ The linear map $\psi\mapsto \psi\circ\theta$ is an isomorphism between the space of 
equivariant linear maps $\psi:\G^*\to\G$ and the space $\mathcal J.$
Indeed, if $\psi$ is equivariant, we have
\beq
\psi\circ\theta([x,y])=-\psi(ad^*_y\theta(x))=-ad_y\psi(\theta(x))=[ \psi\circ\theta (x),y].\nonumber
\eeq
 Hence $\psi\circ\theta$ is in $\mathcal J.$ Conversely, if $j$ is in $\mathcal J$, then $j\circ\theta^{-1}$ is 
equivariant, as it satisfies
\beq
  j\circ\theta^{-1}\circ ad^*_x= j\circ ad_x\circ\theta^{-1}= ad_x \circ j\circ\theta^{-1}.\nonumber\eeq
   This correspondence is obviously linear and invertible. Hence we get the isomorphism between (a) and (b).

\vskip 0.3cm 

$\bullet$  The isomorphism between the space $\mathcal J$ of  adjoint-invariant endomorphisms and 
adjoint-invariant bilinear forms is given as follows
\beq j\in\mathcal J\mapsto B_j, \text{ where } B_j(x,y):=\mu(j(x),y).\eeq
for any $x,y$ in $\G$.
We have, for any $x,y,z$ in $\G$
\beq
B_j([x,y],z):=\mu(j([x,y]),z)=\mu([x,j(y)],z)= - \mu(j(y),[x,z])= -B_j(y,[x,z]).\nonumber\eeq
Conversely, if $B$ is an adjoint-invariant bilinear form on $\G,$ then the endomorphism $j$, defined by \beq
 \mu(j(x),y):=B(x,y)
\eeq
is an element of $\mathcal J,$ as it satisfies
\beq
\mu(j([x,y]),z):= B([x,y],z)=B(x,[y,z])= \mu(j(x) ,[y,z]) = \mu([j(x),y] ,z), \nonumber
\eeq
for all elements  $x,y,z$ of $\G$.

\vskip 0.3cm

$\bullet$ From Lemma \ref{lem:coad-inv}, the space of equivariant linear maps $\psi$ bijectively corresponds 
to that of coadjoint-invariant bilinear forms on $\G^*,$ via $\psi\mapsto \langle , \rangle_\psi$.
\end{proof}

Now, suppose $\psi$ is skew-symmetric.
 Let $\omega_\psi$ denote the corresponding skew-symmetric bilinear form  on $\G$:
\beq
\omega_\psi(x,y) := \mu(\psi\circ\theta (x),y),
\eeq
for all $x,y$ in $\G$. Then, $\omega_\psi$ is adjoint-invariant.  If we denote by $\partial$  the 
Chevalley-Eilenberg coboundary operator, that is,
\beq
(\partial\omega_\psi)(x,y,z) = - \big(\omega_\psi([x,y],z)+\omega_\psi([y,z],x)+\omega_\psi([z,x],y)\big),\nonumber
\eeq
the following formula holds true
\beq
(\partial \omega_\psi)(x,y,z)
= -\omega_\psi([x,y],z). \nonumber
\eeq
for all $x,y,z$ in $\G$.

\bcor
The following are equivalent.
\bitem
\item[(a)] $\omega_\psi$ is closed;
\item[(b)] $\psi\circ \theta([x,y])=0$, for all $x,y$ in $\G$;
\item[(c)] $Im(\psi)$ is in the centre of $\G$.
\eitem 
\noindent
In particular, if $\dim [\G,\G]\ge \dim\G-1$, then $\omega_\psi$ is closed if and only if $\psi=0.$
\ecor

\begin{proof}
 The above equality also reads
\beq \label{partial-omega}
\partial\omega_\psi(x,y,z)=-\omega_\psi([x,y],z)=-\mu(\psi\circ\theta([x,y]),z),
\eeq
for all  $x,y,z$ in $\G$; and gives the proof that (a) and (b) are equivalent. In particular,
if $\G=[\G,\G]$ then, obviously  $\partial\omega_\psi =0$ if and only if $\psi=0,$ as $\theta$  is invertible.

\vskip 0.3cm 

Now  suppose $\dim [\G,\G]= \dim\G-1$ and set $\G=\R x_0\oplus [\G,\G],$ for some element $x_0$ of $\G.$
If $\omega_\psi$  is closed, we already know that $\psi\circ \theta$ vanishes on $[\G,\G].$ Below, 
we show that, it also does on $\R x_0.$ Indeed, the formula
\beq
0=-\omega_\psi([x,y],x_0)=\omega_\psi(x_0, [x,y])=\mu(\psi\circ\theta(x_0), [x,y]),
\eeq
for all  $x,y$ in $\G$, obtained by taking $z=x_0$ in (\ref{partial-omega}),
coupled with the obvious equality $0=\omega_\psi(x_0,x_0)= \mu(\psi\circ\theta(x_0), x_0)$,
are equivalent to $\psi\circ\theta(x_0)$ satisfying $\mu(\psi\circ\theta(x_0),x)=0$
for all $x$ in $\G$. As $\mu$ is nondegenerate, this means that $\psi\circ\theta(x_0)=0$.
Hence $\psi\circ\theta=0$, or equivalently $\psi=0.$

\vskip 0.3cm

Now, as every $f$ in $\G^*$ is of the form $f=\theta(y),$ for some $y$ in $\G,$  the formula
\beq
\psi\circ\theta([x,y])= \psi\circ ad_x^*\theta(y)=  ad_x\circ \psi\circ \theta(y) = [x, \psi\circ \theta(y)],~ \forall x,y\in\G.
 \eeq
shows that $\psi\circ\theta([x,y])=0$, for all $x,y$ on $\G$ if and only if $Im(\psi)$ is a subset of the 
center of $\G.$ Thus, (b) is equivalent to (c).
\end{proof}

\medskip

Now we pull every element $\xi$ of $\mathcal E$ back to an
endomorphism $\xi'$ of $\G$ given by the formula 
$$\xi':=\theta^{-1} \circ \xi \circ \theta. $$
\bpro\label{guimbita1}
Let $(\G,\mu)$ be an orthogonal Lie algebra and $\G^*$ its dual space. Define $\theta : \G \to \G^*$ by 
$\langle \theta(x),y \rangle := \mu(x,y)$,
as in Relation (\ref{isomorphism-theta}), and let $\mathcal E$ and $\mathcal S$ stand for the same Lie algebras as above.
The linear map $Q:~\xi \mapsto \xi':=\theta^{-1} \circ
\xi \circ \theta$ is an isomorphism of Lie algebras between $\mathcal E$ and $\mathcal S.$
\epro

\begin{proof}
Let $\xi$ be in $\mathcal E,$ with $[\xi,ad^*_x]=ad^*_{\alpha(x)}$, for every $x$ in $\G$.
The image $Q(\xi)=:\xi'$ of $\xi,$ satisfies, for any  $x$ in $\G$,
\beqn
[\xi',ad_x] &:=& \xi' \circ ad_x - ad_x \circ \xi' \nonumber \\
            &:=& \theta^{-1} \circ \xi \circ \theta \circ ad_x - ad_x \circ \theta^{-1} \circ \xi \circ \theta \nonumber \\
            &=& \theta^{-1} \circ \xi \circ ad^*_x \circ \theta - \theta^{-1} \circ ad^*_x \circ \xi \circ \theta \nonumber \\
     &=& \theta^{-1}\circ (\xi \circ ad^*_x - ad^*_x \circ \xi)\circ \theta \nonumber \\
     &=& \theta^{-1} \circ ad^*_{\alpha(x)} \circ \theta, \qquad \mbox{ since } [\xi,ad^*_x] = ad^*_{\alpha(x)}. \nonumber \\
     &=& \theta^{-1} \circ \theta\circ ad_{\alpha(x)}, \qquad \mbox{ using } (\ref{suz9}). \nonumber \\
    &=& ad_{\alpha(x)}.   \nonumber
\eeqn
Now we have $[Q(\xi_1),Q(\xi_2)])=Q([\xi_1,\xi_2])$ for all $\xi_1,\xi_2$ in $\mathcal E$, as seen below.
\beqn
[Q(\xi_1),Q(\xi_2)])&:=&Q(\xi_1)Q(\xi_2) - Q(\xi_2)Q(\xi_1)\nonumber\\
&:=&\theta^{-1}\circ\xi_1\circ\theta\circ \theta^{-1}\circ\xi_2\circ\theta - 
\theta^{-1}\circ\xi_2\circ\theta\circ \theta^{-1}\circ\xi_1\circ\theta\nonumber\\
&=& \theta^{-1}\circ[\xi_1,\xi_2]\circ\theta\nonumber\\
&=& Q([\xi_1,\xi_2]).
\eeqn
\end{proof}

\bpro\label{isomorphism-cocycle-derivation}
The linear map $P:~\beta\mapsto D_\beta:=\theta^{-1}\circ \beta,$ is an isomorphism between the space of cocycles
 $\beta:\G\to\G^*$ and the space $der(\G)$ of derivations of $\G.$
\epro
\begin{proof}
  The proof is straightforward. If $\beta:\G\to\G^*$ is a cocycle, then the linear map 
$D_\beta:\G\to\G$,~ $x\mapsto\theta^{-1}(\beta(x))$ is a derivation of $\G,$ as we have
\beq
D_\beta[x,y]=\theta^{-1}\big(ad^*_x\beta(y)-ad^*_y\beta(x)\big)=[x,\theta^{-1}(\beta(y))]- [y,\theta^{-1}(\beta(x))].\nonumber
\eeq
Conversely, if $D$ is a derivation of $\G,$ then the linear map
$\beta_D:=P^{-1}(D)=\theta\circ D:\G\to\G^*,$ is 1-cocycle. Indeed we have: for every $x,y$ in $\G$
\beq
\beta_D[x,y]=\theta([Dx,y]+[x,Dy])= - ad^*_y(\theta\circ D (x)) + ad^*_x (\theta\circ D (y)). \nonumber
\eeq
\end{proof}

\subsection{Case of Semi-simple Lie Algebras}\label{chap:semisimple}

Suppose now $\G$ is semi-simple, then every derivation is inner. Thus in particular, the
derivation $\phi_{11}$ obtained in (\ref{phi11}), is of the form
\beq\label{suz4}
 \phi_{11} = ad_{x_0},
\eeq
for some  $x_0$ in $\G$. The semi-simplicity of $\G$ also implies that the $1$-cocycle $\phi_{12}$
obtained in (\ref{suz5}) is a coboundary. That is, there exists an element $f_0$ of $\G^*$ such that
\beq
\phi_{12}(x) = - ad^*_xf_0,
\eeq
for all $x$ in $\G$. Here is a direct corollary of Lemma \ref{lem:intertwining}.

\bpro\label{prop:psi-semisimple} If $\G$ is a semi-simple Lie algebra, then every linear map
$\psi:\G^*\to\G$ which is equivariant with respect to the adjoint and coadjoint actions of $\G$ and 
satisfies (\ref{relation-commutation}), is necessarily identically equal to zero.
\epro
\begin{proof}
A Lie algebra is semi-simple if and only if it contains no nonzero proper Abelian ideal. But from 
Lemma \ref{lem:intertwining}, $Im(\psi)$ must be an Abelian ideal of $\G$. So $Im(\psi)=\{0\}$ and hence $\psi=0.$
\end{proof}

\brmq
From Propositions \ref{prop:cohomology} and \ref{prop:psi-semisimple}, the cohomology space $ H^1(\D,\D)$ 
is completely determine by the space $\mathcal J$ of endomorphisms $j$ with
$j([x,y])=[j(x),y]$, for all $x,y$ in $\G,$ or equivalently, by the space of
adjoint-invariant bilinear forms on $\G.$
\ermq

\bcor
If $G$ is a semi-simple Lie group with Lie algebra $\G$, then the space of bi-invariant bilinear forms on $G$ 
is of dimension $dim H^1(\D,\D).$
\ecor


\bpro\label{guimbita2}
Suppose  $\G$ is a simple Lie algebra. Then,
\bitem 
\item[(a)] every linear map $j:\G\to\G$ in $\mathcal J$, is of the 
form $j(x)=\lambda x$, for some  $\lambda$ in $\R$;
\item[(b)] every element $\xi$ of $\mathcal E$ is of the form
\beq
\xi = ad^*_{x_0} + \lambda Id_{\G^*},
\eeq
for some  $x_0$ in $\G$ and   $\lambda$ in $\R$.
\eitem 
\epro

\begin{proof}
The part (a) is obtained from relation (\ref{aby}) and the Schur's lemma.
\newline\noindent
From Propositions \ref{decomposition-xi} and \ref{decomposition-xi'}, for every $\xi$ in
$\mathcal E,$ there exist $\alpha$ in $\der(\G)$ and $j$ in $\mathcal J$  such that
$\xi^t=\alpha + j$.
As $\G$ is simple and from part (a), there exist $x_0$ in $\G$ and $\lambda$ in $\R$ such that $\xi^t=ad_{x_0}+\lambda Id_\G.$
\end{proof}

We also have  the following.

\bpro
Let $G$ be a simple Lie  group with Lie algebra $\G$.
Let $\D:=\G\ltimes \G^*$ be the Lie algebra of the cotangent bundle $T^*G$ of $G$. Then, the first
cohomology space of $\D$ with coefficients in $\D$ is
$ H^1(\D,\D) \stackrel{\sim}{=} \R $.
\epro
\begin{proof}
Indeed, a derivation $\phi : \D \to \D$ can be written: for every element
$(x,f)$ of  $\D$,
\beq
\phi(x,f) = ([x_0,x]\; , \; ad^*_{x_0}f - ad^*_xf_0 + \lambda f),
\eeq
where $ x_0$ and $f_0$ are fixed elements in $\G$ and  $\G^*$ respectively.
The inner derivations are those with $\lambda=0$.
It follows that the first  cohomology space of $\D$ with values in $\D$ is given by
\beqn
H^1(\D,\D) &=& \{\phi : \D \to \D : \phi(x,f)=(0,\lambda f), \lambda \in \R\} \nonumber\\
           &=& \{\lambda(0, Id_{\G^*}), \lambda \in \R \} \nonumber \\
           &=& \R Id_{\G^*}  \nonumber
\eeqn
\end{proof}
As a direct consequence, we get

\bcor 
If  $\G$ is a semi-simple Lie algebra over $\R,$ then $\dim H^1(\D,\D) = p,$ where $p$ stands for the number of 
simple components of $\G,$ in its decomposition into a direct sum $\G=\s_1\oplus\cdots\oplus\s_p$ of simple 
ideals $\s_1,\ldots,\s_p.$
\ecor


Consider  a semi-simple Lie algebra $\G$  and set $\G=\s_1 \oplus \s_2 \oplus \cdots
\oplus \s_p,\; p \in \N^*$, where $\s_i,\; i=1,\ldots,p$ are simple Lie algebras. From Lemma \ref{lem:decomp-semi-simpl}, 
$\xi'$ preserves each $\s_i.$
Thus from Proposition \ref{guimbita2}, the restriction $\xi'_i$ of $\xi'$ to each $\s_i, \; i=1,2,\ldots ,p$ equals
$\xi'_i=ad_{x_{0_i}} + \lambda_i Id_{\s_i}$, for some $x_{0_i}$ in $\s_i$ and
a real number $\lambda_i$.
 Hence, $\xi'=ad_{x_0} \oplus_{i=1}^p \lambda_i Id_{\s_i}$,
where $x_0=x_{0_1}+x_{0_2}+ \cdots + x_{0_p} \in \s_1\oplus \s_2 \oplus \cdots \oplus \s_p$
and $\oplus_{i=1}^p \lambda_i Id_{\s_i}$ acts on $\s_1\oplus \s_2 \oplus \cdots \oplus \s_p$
as follows: 
$$(\oplus_{i=1}^p \lambda_i Id_{\s_i})(x_{1}+x_{2}+ \cdots + x_{p})=
\lambda_1 x_{1}+ \lambda_2 x_{2}+ \cdots + \lambda_p x_{p}.$$
In particular, we have proved

\bcor\label{j-semi-simple}
Consider the decomposition of a semi-simple Lie algebra $\G$  into a sum
$\G=\s_1 \oplus \s_2 \oplus \cdots \oplus \s_p,$  of simple Lie algebras
$\s_i,\; i=1,\ldots,p\in \N^*$. If a linear map $j:\G\to\G$ satisfies $j[x,y]=[jx,y],$
then there exist real numbers  $\lambda_1, \ldots \lambda_p$ such that
$$j=\lambda_1id_{\s_1}\oplus \cdots  \oplus \lambda_pid_{\s_p}.$$
More precisely $$j(x_1+\cdots + x_p)=\lambda_1x_1+\cdots + \lambda_px_p,$$
if $x_i$ is in $\s_i, ~ i=1,\ldots,p.$
\ecor

Now, we already know from Proposition \ref{prop:psi-semisimple}, that each $\psi$ vanishes identically.
So a $1$-cocycle of $\D$ is given by:
\beq\label{eq:cocycle-semi-simple}
\phi(x,f)=\Big([x_0,x], ad^*_{x_0}f-ad^*_xf_0 + \sum_{i=1}^p\lambda_if_i\Big)
\eeq
for every $x$ in $\G$ and every $f:=f_1 + f_2 +\cdots +f_p$ in $\s^*_1\oplus \s^*_2 \oplus
\cdots \oplus \s^*_p=\G^*$, where $x_0$ is in $\G$, $f_0$ is in $\G^*$ and $\lambda_i$,
$i=1,..,p$, are real numbers. We then have,
\bpro
Let $G$ be a semi-simple Lie group with Lie algebra $\G$ over $\R$.
Let $\D:= \G\ltimes \G^*$ be the  cotangent Lie algebra of $G$. Then, the first
 cohomology space of $\D$ with coefficients in $\D$ is given by
$ H^1(\D,\D) \stackrel{\sim}{=} \R^p$, where $p$ is the number of the simple
components of $\G$.
\epro

\subsection{Case of Compact Lie Algebras}\label{chap:compact}

It is well known that a compact Lie algebra $\G$ decomposes as the direct sum
$\G=[\G,\G]\oplus Z(\G)$ of its derived ideal $[\G,\G]$ and its centre $Z(\G)$, with $[\G,\G]$ semi-simple and 
compact. This yields a decomposition $\G^*=[\G,\G]^*\oplus Z(\G)^* $  of $\G^*$ into a direct sum of the dual 
spaces $[\G,\G]^*$, $Z(\G)^*$ of $[\G,\G]$ and $Z(\G)$ respectively, where $[\G,\G]^*$ (resp. $Z(\G)^*$) is 
identified with the space of linear forms on $\G$ which vanish on $Z(\G)$ (resp. $[\G,\G]$).
 On the other hand, $[\G,\G]$ also decomposes into as a direct sum $[\G,\G]=\s_1\oplus\ldots \oplus\s_p$ of 
simple ideals $\s_i.$
From Theorem \ref{derivationschar}, a derivation of the Lie algebra $\D := \G \ltimes \G^*$ of the cotangent 
bundle of $G$ has the following form $\phi (x,f) =\Big(\alpha(x) + \psi(f),\beta(x) + \xi(f)\Big),$ 
with conditions listed in  Theorem \ref{derivationschar}.

\vskip 0.3cm

Let us look at the equivariant maps $\psi: \G^* \to \G$ satisfying
$ad_{\psi(f)}^*g=ad_{\psi(g)}^*f$, for every $f,g$ in $\G^*$.
From Lemma \ref{lem:intertwining}, $Im(\psi)$ is an Abelian ideal of $\G$;
thus $Im(\psi) \subset Z(\G)$.
As consequence, we have $\psi (ad_x^*f)=[x,\psi(f)]=0$, for every $x$ of $\G$
and every $f$ of $\G^*$.

\blem \label{lem:semi-simple-psi}
Let $(\tilde \G,\mu)$ be an orthogonal Lie algebra  satisfying 
$\tilde\G= [\tilde\G,\tilde\G].$ Then, every $g$ in $\tilde\G^*$ is a finite sum of elements of 
the form $g_i=ad^*_{\bar x_i}\bar g_i,$ for some $\bar x_i$ in $\tilde \G,$ $\bar g_i$ in $ \tilde \G^*.$
\elem
 \begin{proof} 
Indeed, consider an isomorphism $\theta: \tilde\G \to \tilde\G^*$ of $\tilde\G$-modules. 
For every $g$ in $\tilde\G^*,$ there exists $x_g$ in $\tilde\G$ such that $g=\theta (x_g).$ But as 
$\tilde\G= [\tilde\G,\tilde\G],$ we have $ x_g=[x_1,y_1]+\ldots +[x_s,y_s]$ for some $x_i,y_i$ in $\tilde\G.$
Thus 
\beqn 
g &=& \theta ([x_1,y_1])+\ldots + \theta([x_s,y_s]) \cr 
  &=& ad^*_{x_1}\theta(y_1)+\ldots + ad^*_{x_s}\theta(y_s) \cr 
  &=& ad^*_{\bar x_1}\bar g_1 +\ldots +ad^*_{\bar x_s}\bar g_s, \nonumber
\eeqn 
where $\bar x_i=x_i$ and $ \bar g_i= \theta(y_i).$ 

\end{proof}
 A semi-simple Lie algebra being orthogonal (with, e.g.  its Killing form as $\mu$), from 
Lemma \ref{lem:semi-simple-psi} and the equality  $\psi (ad_x^*f)=0$, for all $x$ in $\G$, $f$ in $\G^*,$
each $\psi$ in $\Psi$ vanishes on $[\G,\G]^*.$ Of course, the converse is true. Every linear map 
$\psi:\G^*\to\G$ with $Im(\psi) \subset Z(\G)$ and $\psi([\G,\G]^*)=0$, is in $\Psi.$ Hence we can 
make the following identification.
\blem 
Let $\G$ be a compact Lie algebra, with centre $Z(\G).$ Then $\Psi$ is isomorphic to the 
space $L(Z(\G)^*,Z(\G))$ of linear maps $Z(\G)^*\to Z(\G).$
\elem

 The restriction of the cocycle $\beta$ to the semi-simple ideal $[\G,\G]$ is a coboundary,
that is, there exits an element $f_0$ in $\G^*$ such that for every $x_1$ in $[\G,\G]$,
$ \beta(x_1) = -ad_{x_1}^*f_0.$ Now for $x_2$ in $Z(\G)$, one has
$ 0= \beta[x_2,y]=-ad^*_y\beta(x_2)$, for all $y$ of  $\G$,  since  $x_2$ is in $Z(\G)$.
In other words, $\beta(x_2) ([y,z]) = 0$, for all $y,z$ in $\G$. That is, $\beta(x_2)$
vanishes on $[\G,\G]$ for every $x_2 \in Z(\G)$. Hence, we write
$$
\beta(x) = -ad_{x_1}^*f_0 + \eta(x_2),
$$  
for all $x:=x_1+x_2$ in $[\G,\G] \oplus Z(\G),$
where $\eta : Z(\G) \to Z(\G)^*$ is a linear map. This simply means the following.
\blem
 Let $\G$ be a compact Lie algebra, with centre $Z(\G).$ Then the first space $H^1(\G,\G^*)$ 
of the cohomology associated with the coadjoint action of $\G,$ is isomorphic to the space $L(Z(\G),Z(\G)^*)$.
\elem

We have already seen that $\xi$ is such that $\xi^t=\alpha + j$, where $\alpha$
is a derivation of $\G$ and $j$ is in $\mathcal J$. Both $\alpha$ and $j$ preserve each of
$[\G,\G]$ and $Z(\G).$ Thus we can write
$\alpha = ad_{{x_0}_1} \oplus \varphi,$  for some  ${x_0}_1 \in [\G,\G],$
where $\varphi$ is in $End(Z(\G))$. Here $\alpha$ acts on an element $x:=x_1+x_2$,
where $x_1$ is in $[\G,\G]$, $x_2$ belongs to $Z(\G),$ as follows:
\beqn 
\alpha(x)  &=& (ad_{{x_0}_1} \oplus \varphi)(x_1+x_2) \cr 
           &:=& ad_{{x_0}_1}x_1 + \varphi(x_2). \nonumber
\eeqn 

We summarize  this as
\blem\label{cohomology-compact}
If $\G$ is a compact Lie algebra with centre $Z(\G)$, then
\beq 
H^1(\G,\G)\stackrel{\sim}{=} End(Z(\G)).
\eeq 
\elem

Now, suppose for the rest of this section, that $\G$ is a compact Lie algebra.
 We write 
$$
j=\oplus_{i=1}^p \lambda_i Id_{\s_i} \oplus \rho,
$$
 where $\rho : Z(\G) \to Z(\G)$ is a linear map and $j$ acts on an element $x:=x_1 + x_2$
 as follows:
\beq
j(x) = \Big(\bigoplus_{i=1}^p \lambda_i Id_{\s_i} \oplus \rho\Big)
(x_{11} + x_{12} + \cdots + x_{1p}+x_2) = \sum_{i=1}^p \lambda_i x_{1i} + \rho(x_2) \nonumber
\eeq
 where $x_1:=x_{11} + x_{12} + \cdots + x_{1p}$ is in $[\G,\G]$, $x_2$ is in $Z(\G)$ and $ x_{1i}$ belongs to $\s_i.$
Hence,
\blem \label{j-compact}
If $\G$ is a compact Lie algebra with centre $Z(\G)$, then 
$\mathcal J\stackrel{\sim}{=} \R^p\oplus  End(Z(\G)),$ where $p$ is the number of simple components of $[\G,\G]$.
\elem
So, the expression of $\xi$ now reads
\beq
\xi = \big[-ad_{{x_0}_1}^*  + (\oplus_{i=1}^p \lambda_iId_{\s_i^*})\big]
\oplus \varphi'',  \nonumber
\eeq
with $(\varphi'')^t(x_2) = \rho(x_2) + \varphi(x_2)$, for all $x_2$ in $Z(\G)$,
where ${x_0}_1$ is in $[\G,\G]$, $\lambda_i$ is in $\R$, for all $i=1,2,\ldots,p$.

By identifying $End(Z(\G)),$ $L(Z(\G)^*,Z(\G))$ and $L(Z(\G),Z(\G)^*)$ to $End(\R^k),$ we get
\beq
H^1(\D,\D) = (End(\R^k))^4 \oplus  \R^p.
\eeq

\section{Some Possible Applications and Open Problems}\label{openproblems}

Given two left or right  invariant structures of the same `nature' (e.g. affine, symplectic, complex,
Riemannian or pseudo-Riemannian, etc) on $T^*G$, one wonders whether they are
equivalent, {\it i.e.} if there exists an automorphism of $T^*G$ mapping one to the other.
By taking the values of those structures at the unit of $T^*G,$
the problem translates to finding an automorphism of Lie algebra mapping
two structures of $\D.$ The work within this chapter may also be seen as a useful tool for the study of such structures. 
Here are some examples of problems and framework for further extension and applications of this work. 
 For more discussions about structures and problems on $T^*G$, one can have a look at \cite{bajo-benayadi-medina}, \cite{di-me-cybe}, 
\cite{drinfeld}, \cite{feix}, \cite{kronheimer}, \cite{marle}.

\subsection{Some Examples}\label{examples}

Here, we apply the above results to produce the following examples.

\bex[The Affine Lie Algebra of the Real Line]\label{chap:aff(R)}
{\normalfont
 The 2-dimensional affine Lie algebra $\G=\hbox{\rm aff}(\R)$ is solvable nonnilpotent with Lie bracket 
$$[e_1,e_2]=e_2,$$ 
in some basis ($e_1,e_2$). 
The Lie algebra $\D=T^*\G$ of the cotangent bundle of any Lie group with Lie algebra $\G,$ has a basis ($e_1,e_2,e_3,e_4$) 
with Lie bracket 
$$
[e_1,e_2]=e_2 \;,\; [e_1,e_4]=-e_4 \;,\; [e_2,e_4]=e_3,
$$ 
where $e_3:=e_1^*$ and $e_4:=e_2^*$. 
This is the semi-direct product $ \R e_1\ltimes \mathcal H_3 $ of the Heisenberg Lie algebra $\mathcal H_3=\sspan(e_2,e_3,e_4)$ 
with the line $\R e_1$, where $e_1$ acts on $\mathcal H_3$ by the restriction of the derivation of $ad_{e_1}$. 
The Lie algebra $\der(\D)$ has a basis ($\phi_1,\phi_2,\phi_3,\phi_4,\phi_5$) where 

\beq
\begin{array}{lllll}
\phi_1(e_1)=e_2  &,& \phi_1(e_4)=-e_3 &,& \phi_2(e_2)=e_2, \cr 
\phi_2(e_4)=-e_4  &,& \phi_3(e_1)=e_3 &,& \phi_4(e_1)=-e_4,\cr 
\phi_4(e_2)=e_3  &,& \phi_5(e_3)=e_3 &,& \phi_5(e_4)=e_4,
\end{array} \nonumber
\eeq
the remaining vectors $\phi_i(e_j)$ being zero, so that the Lie brackets are
$$
[\phi_2,\phi_1]=\phi_1 \; , \;  [\phi_2,\phi_3] = \phi_3 \;,\; [\phi_5,\phi_3]=\phi_3 \;,\; [\phi_5,\phi_4]= \phi_4.
$$ 
This is the semi-direct product $ \R^2\ltimes\R^3 $ of the Abelian Lie algebras $\R^3=\sspan_{\R}(\phi_1,\phi_3,\phi_4) $ 
and $\R^2=\sspan_{\R}(\phi_2,\phi_5).$ It has a contact structure (this is the Lie algebra number 18, for $p=q=1,$ in the 
list of Section 5.2.}
\eex

\bex[The Lie Algebra of the Group $SO(3)$ of Rotations]\label{chap:so(3)}
{\normalfont
Consider the Lie algebra $\mathfrak{so}(3)=\sspan(e_1,e_2,e_3)$ with 
$$ 
[e_1,e_2]=-e_3 \;,\; [e_1,e_3]=e_2 \;,\; [e_2,e_3]=-e_1.
$$
The Lie algebra $\D=T^*\G$ has $(e_1,e_2,e_3,e_4,e_5,e_6)$ with Lie bracket 
\beq
\begin{array}{llllll}
[e_1,e_5]=-e_6 &,& [e_1,e_6]=e_5  &,& [e_2,e_4]=e_6, \cr 
[e_2,e_6]=-e_4 &,& [e_3,e_4]=-e_5 &,& [e_3,e_5]=e_4, 
\end{array} \nonumber
\eeq
where $e_{3+i}=e_i^*,$ $i=1,2,3$.  The Lie algebra $\der(\D)$ is spanned by the elements $\phi_1$, $\phi_2$, $\phi_3$, $\phi_4$, 
$\phi_5$, $\phi_6$, $\phi_7$, where 
\beq
\begin{array}{llllll}
\phi_1(e_1)=-e_2 &,& \phi_1(e_2)=e_1  &,& \phi_1(e_4)=-e_5, \cr 
\phi_1(e_5)=e_4  &,& \phi_2(e_1)=-e_3 &,& \phi_2(e_3)=e_1,  \cr 
\phi_2(e_4)=-e_6 &,& \phi_2(e_6)=e_4  &,& \phi_3(e_2)=-e_3, \cr
\phi_3(e_3)=e_2  &,& \phi_3(e_5)=-e_6 &,& \phi_3(e_6)=e_5,  \cr 
\phi_4(e_1)=-e_5 &,& \phi_4(e_2)=e_4  &,& \phi_5(e_1)=-e_6, \cr 
\phi_5(e_3)=e_4  &,& \phi_6(e_4)=e_4  &,& \phi_6(e_5)=e_5,  \cr 
\phi_6(e_6)=e_6  &,& \phi_7(e_2)=-e_6 &,& \phi_7(e_3)=e_5,
\end{array} \nonumber
 \eeq
so that we have the following Lie brackets
\beq
\begin{array}{llllllll}
[\phi_1,\phi_2]=-\phi_3 &,& [\phi_1,\phi_3]=\phi_2  &,& [\phi_1,\phi_5]=-\phi_7 &,& [\phi_1,\phi_7]=\phi_5 , \cr 
[\phi_2,\phi_3]=-\phi_1 &,& [\phi_2,\phi_4]=\phi_7  &,& [\phi_2,\phi_7]=-\phi_4 &,& [\phi_3,\phi_4]=-\phi_5, \cr 
[\phi_3,\phi_5]=\phi_4  &,& [\phi_4,\phi_6]=-\phi_4 &,& [\phi_5,\phi_6]=-\phi_5 &,& [\phi_6,\phi_7]=\phi_7.   \cr
\end{array}
 \eeq 
This is the Lie algebra $\der(\D)=\mathfrak{so}(3)\ltimes \G_{id}$, where $\mathfrak{so}(3)=\sspan(\phi_1,\phi_2,\phi_3)$ and $\G_{id}$ 
is the semi-direct product $\G_{id}=\R \phi_6\ltimes \R^3$ of the abelian Lie algebras $\R^3=\sspan(\phi_4,\phi_5,\phi_7)$ 
and $\R \phi_6$ obtained by letting $\phi_6$ acts as the identity map on $\R^3.$ Thus $\der(\D)$ is also a contact Lie algebra, 
as it is the Lie algebra number 4 of Section 5.3 in \cite{diatta-contact}.
}
\eex

\bex[The Lie Algebra of the Group $SL(2,\R)$ of Spacial Linear Group]\label{chap:sl(2)}
{\normalfont
The Lie algebra  $\G=\mathfrak{sl}(2,\R)$ of $SL(2,\R)$ has a basis $(e_1,e_2,e_3)$ in which its Lie bracket reads
\beq
 [e_1 , e_2 ] = -2e_2 \quad ,\quad  [e_1 , e_3 ] = 2e_3 \quad  , \quad  [e_2 , e_3 ] =-e_1
\eeq 
 Set $e^*_1 =: e_4, e^*_2 =: e_5, e^*_3=e_6$, the Lie bracket of $\D := T^*\G$ 
in the basis $(e_1 ,e_2,e_3,e_4,e_5,e_6)$ is given by 
\beq
\begin{array}{llrlllrlllr}
[e_1, e_2] &=&-2e_2 &,& [e_1, e_3] &=& 2e_3 &,& [e_2 , e_3] &=&-e_1 , \cr 
[e_1, e_5] &=& 2e_5 &,& [e_1, e_6] &=&-2e_6 &,& [e_2 , e_4] &=& e_6 , \cr
[e_2, e_5] &=&-2e_4 &,& [e_3, e_4] &=&-e_5 &,& [e_3, e_6] &=& 2e_4. 
\end{array}
\eeq 
The Lie algebra $\der(\D)$ is $7$-dimensional. It has a basis $(\phi_1,\phi_2,\phi_3,\phi_4,\phi_5,\phi_6,\phi_7)$, where
\beq
\begin{array}{cclcccl}
\phi_1 &:=& -e_{22} + e_{33} + e_{55}-e_{66}      &,& \phi_2 &:=& e_{44} + e_{55} + e_{66}, \cr
\phi_3 &:=& -e_{12} + 2e_{31} - 2e_{46} + e_{54}  &,& \phi_4 &:=& e_{13} -2e_{21} + 2e_{45} - e_{64}, \cr
\phi_5 &:=& -e_{53} + e_{62}                      &,& \phi_6 &:=& e_{42} -e_{51} ,\cr 
 \phi_7 &:=&e_{43} -e_{61}                        & &                             
\end{array}
\eeq 
Hence, the Lie bracket of $\der(\D)$ reads
\beq
\begin{array}{ccccccccccccc}
[\phi_1,\phi_3] &=& \phi_3  &,& [\phi_1,\phi_4] &=& -\phi_4 &,& [\phi_1,\phi_6] &=& \phi_6,  \cr 
[\phi_1,\phi_7] &=& -\phi_7 &,& [\phi_2,\phi_5] &=& \phi_5  &,& [\phi_2,\phi_6] &=& \phi_6, \cr 
[\phi_2,\phi_7] &=& \phi_7  &,& [\phi_3,\phi_4] &=& 2\phi_1 &,& [\phi_3,\phi_5] &=&-2\phi_6, \cr 
[\phi_3,\phi_7] &=&-\phi_5  &,& [\phi_4,\phi_5] &=&-2\phi_7 &,& [\phi_4,\phi_6] &=& -\phi_5.
\end{array}
\eeq 
One realizes that $\der(\D) = \mathfrak{sl}(2,\R) \ltimes \G_{id}$, where $\mathfrak{sl}(2,\R) =\sspan(\phi_1,\phi_3,\phi_4)$ and as above,
$\G_{id}$ is the semi-direct product $\G_{id}= \R\phi_2 \ltimes \R^3$ of the Abelian Lie algebras $\R^3=\sspan(\phi_5,\phi_6,\phi_7)$
and $\R\phi_2$ obtained by letting $\phi_2$ act as the identity map on $\R^3$. Again, $\der(\D)$ is also a contact Lie algebra, with 
$\eta:=s\phi^*_1 + t\phi^*_5$ as an example of a contact form, where $s,t \in \R-\{0\}$.

\vskip 0.3cm

On the Lie algebra $\D=\mathfrak{sl}(2,\R)\ltimes \mathfrak{sl}(2,\R)^*$, consider the following two forms
\beqn\label{metric}%
\varphi_1\left( (x,f),(y,g) \right) &=& f(y) + g(y) \\
\varphi_2\left( (x,f),(y,g) \right) &=& f(y) + g(y) + K(x,y)
\eeqn
where $K$ stands for the Killing form of  $\mathfrak{sl}(2,\mathbb R)$.
The matrix of $\varphi_1$ has two eigenvalues, $-1$ and $+1$, both of multiplicty $3$. 
Therefore $\varphi_1$ is of signature $(3,3)$. 
Now the matrix of $\varphi_2$ 
has four eigenvalues : $4-\sqrt{17}$, $-2-\sqrt{5}$, $2-\sqrt{5}$, 
$2+\sqrt{5}$, $-2+\sqrt{5}$, $4+\sqrt{17}$. The three first eigenvalues are less than zero 
and the three last ones are positive. Hence the signature of $\varphi_2$  is $(3,3)$, too.
It is now straightforward to check that 
\beq
\der(\D) = ad_\D \oplus \mathbb R \phi_2
\eeq %
where $ad_\D:=\sspan(\phi_1,\phi_3,\phi_4,\phi_5,\phi_6,\phi_7)$ is the space of inner d\'erivations of $\D$, $\phi_2$ being, up
to multiplication by a scalar, the unique exterior derivation of $\D$. 

\vskip 0.3cm

Let us look at the restrictions of $\varphi_1$ and $\varphi_2$ to the Levi subalgebra $\mathfrak{sl}(2,\R)$ of $\D$.
Since the restriction of $\varphi_1$ to $sl(2,\R)$ is degenerate, so is the restriction of $\varphi_1$ to 
the image $\exp(\phi_i)(\mathfrak{sl}(2,\R))$ of $\mathfrak{sl}(2,\R)$ under the standard exponential map of any of the inner derivations 
$\phi_i$, $i=1,3,4,5,6,7$, of $\D$. That is because special automorphisms $\exp(\phi_i)$,  $i=1,3,4,5,6,7$, 
preserve the Levi subalgebra $sl(2,\R)$ of $\D$. Suppose that there exists a special automorphism $\exp(\phi_i)$, 
$i=1,3,4,5,6,7$, mapping $\varphi_1$ to $\varphi_2$, {\it i.e.}
\beq
\varphi_1\Big(\exp(\phi_i)(x,f)\,,\,\exp({\phi_i})(y,g)\Big) = \varphi_2\Big((x,f),(y,g)\Big)
\eeq 
for all $x,y$ in $\mathfrak{sl}(2,\R)$ and all $f,g$ in $\mathfrak{sl}(2,\R)^*$. But the restriction of $\varphi_1$ to 
$\exp(\phi_i)(\mathfrak{sl}(2,\R))$ is degenerate while the restriction of $\varphi_2$ to $\mathfrak{sl}(2,\R)$ is equal to the Killing 
form $K$ of $\mathfrak{sl}(2,\R)$, which is not degenerate. Hence, $\varphi_1$ and $\varphi_2$ are not homothetic via 
special automorphisms of $\D$. 

\vskip 0.3cm

Now, what about $\exp(\phi_2)$ ? One has $\exp(\phi_2)=e_{11} + e_{22} + e_{33} + e\phi_2$, 
where $e:=\exp(1)$. For any elements $(x,f)$ and $(y,g)$ of $\D$, we have, 
\beqn
\varphi_2\Big(\exp(\phi_2)(x,f)\,,\,\exp(\phi_2)(y,g) \Big) &=&  \varphi_2\Big((x,ef),(y,eg) \Big) \cr
                                                            &=& e\varphi_1\Big((x,f),(y,g) \Big) + K(x,y) \cr
                                                            &\neq& \varphi_1\Big((x,f),(y,g) \Big)
\eeqn 
Hence, the automorphism $\exp(\phi_2)$ too do not map $\varphi_2$ to $\varphi_1$.

}
\eex 

\subsection{Invariant Riemannian or Pseudo-Riemannian Metrics}\label{openproblems-orthogonal}

As discussed in Section \ref{chap:notations}, the Lie group $T^*G$ possesses  bi-invariant pseudo-Rieman~-~-nian metrics.

\vskip 0.3cm

Among others, one of the open problems in \cite{me-re93}, is the
question as to whether, given two bi-invariant pseudo-Riemannian metrics
$\mu_1$ and $\mu_2$ on a Lie group $\tilde G$, the two pseudo-Riemannian
manifolds $(\tilde G,\mu_1)$ and $(\tilde G,\mu_2)$  are homothetic via an
automorphism of $\tilde G.$ If this is the case, we say that $\mu_1$ and $\mu_2$
are isomorphic or equivalent.

\vskip 0.3cm

When $\tilde G =T^*G,$ the question as to how many non-isomorphic bi-invariant
pseudo-Riemannian metrics can exist on $T^*G$ is still open, in the general case.
For example, suppose $G$ itself has a  bi-invariant Riemannian or pseudo-Riemannian
metric $\mu$ and let $\mu$ stand again for the corresponding adjoint-invariant metric
in the Lie algebra $\G$ of $G.$ Then  $\mu$ induces a new adjoint-invariant metric
$\langle, \rangle_\mu$ on $\D=Lie(T^*G)$, with
\beq
\langle(x,f),(y,g)\rangle_\mu:= \langle(x,f),(y,g)\rangle + \mu(x,y),
\eeq
for all $x,y$ in $\G$ and all $f,g$ in $\G^*$, where $\langle,\rangle$ on the right
hand side is the duality pairing $\langle(x,f),(y,g)\rangle= f(y)+g(x).$
In some cases (see Section \ref{chap:sl(2)}), the two metrics can even happen to have the same index, but are still
not isomorphic via an automorphism of $\tilde G$.  If $\tilde \mu$ is a bilinear
symmetric form on $\G^*$ satisfying $\tilde \mu(ad^*_xf,g)=0$, for every
$x$ in $\G$ and every  $f,g$ in $\G^*$, then
\beq
\langle (x,f),(y,g) \rangle_{\tilde \mu}:= \langle(x,f),(y,g)\rangle +\tilde \mu(f,g),
\eeq
for all $x,y$ in $\G$, $f,g$ in $\G^*$, is also another adjoint-invariant metric on $\D$.
Now in some cases, nonzero such bilinear forms  $\tilde \mu$ may not exist, as is the
case when one of the coadjoint orbits of $G,$ is an open subset of $\G^*,$ or equivalently,
when $G$ has a left invariant exact symplectic structure.

\vskip 0.3cm

More generally, these equivalence questions can also be simply extended to all
left (resp. right) invariant Riemannian or pseudo-Riemannian structures on
cotangent bundles of Lie groups.

\subsection{Poisson-Lie Structures, Double Lie Algebras, Applications}\label{openproblems-doubles}

The classification questions of double Lie algebras, Manin pairs, Lagrangian subalgebras, ...
arising from Poisson-Lie groups, are still open problems \cite{drinfeld}, \cite{lu-weinstein}. In \cite{drinfeld}, 
Lagrangian subalgebras of double Lie algebras are used as the main tool for classifying the so-called Poisson 
Homogeneous spaces of Poisson-Lie groups.
A type of local action of those Lagrangian subalgebras is also used to describe symplectic foliations of Poisson 
Homogeneous spaces of Poisson-Lie groups in \cite{drinfeld}, \cite{di-me-poisson}.

\vskip 0.3cm

It would be interesting to extend the results within this chapter to double Lie algebras of general Poisson-Lie groups. 
It is hard to get a common substantial description valid for the
group of automorphism of the double Lie algebras of all possible
Poisson-Lie structures in a given Lie group. This is due to the
diversity of Poisson-Lie structures that can coexist in the same Lie group.

\vskip 0.3cm

Among other things, the description of the group of automorphisms of the
double Lie algebra of a
Poisson-Lie structure is a big step forward towards solving very
interesting and hard problems such as:
\bitem
\item[-]  the classification of Manin triples (\cite{lu-weinstein});
\item[-] the classification of Poisson homogeneous spaces of a Lie groups (\cite{di-me-poisson}, \cite{lu-weinstein});
\item[-] a full description and understanding   of the foliations of Poisson homogeneous spaces of Poisson Lie
groups. The  leaves of such foliations trap the   trajectories, under a Hamiltonian flow,
passing through all its points. Hence, from their knowledge, one gets a great deal of information on Hamiltonian systems.
\item[-] etc.
\eitem 

\subsection{Affine and Complex Structures on $T^*G$ }\label{openproblems-affine}

In certain cases, $T^*G$ possesses left invariant affine connections, that is,
left invariant zero curvature and torsion free linear connections. Here, the
classification problem involves Aut($T^*G$) as follows. The group Aut($T^*G$)
acts on the space of left invariant affine connections on $T^*G$,
 the orbit of each connection being the set of equivalent (isomorphic) ones.
 Recall that among other results in \cite{di-me-cybe}, the authors proved that when $G$ has an invertible solution 
of the Classical Yang-Baxter Equation (or equivalently a left invariant symplectic structure), then $T^*G$ has a 
left invariant affine connection $\nabla$ and a complex structure $J$ such that $\nabla J=0$.

\chapter{Prederivations of Lie Algebras of Cotangent Bundles of Lie Groups }
\minitoc
\section{Introduction}

In the sense of Felix Klein (\cite{klein}), studying the geometry of a  "universe" is studying its invariant 
structures under the action of a suitable Lie group. In  semi-Riemannian geometry, one of the suitable group 
used in this task  is the group  of  isometries of pseudo-Riemannian metrics. So it seems reasonable to well 
known isometries of  pseudo-Riemannian metrics. Among tools used, for instance 
in the case of bi-invariant (or orthogonal) Lie groups, there are prederivations of Lie algebras. 
M\"uller (\cite{muller}) gives an algebraic description of the group $I(G,\mu)$ of isometries of a 
connected orthogonal Lie group $(G,\mu)$. He proves that if $(G,\mu)$ is a connected and simply-connected 
orthogonal Lie group with Lie algebra $\G$, then the stabilizer of the identity element of $G$ in $I(G,\mu)$ is isomorphic to the  group of  
preautomorphisms of  $\G$ which preserve the non-degenerate bilinear form induced by $\mu$ on $\G$ and whose 
Lie algebra is the whole set of skew-symmetric prederivations of $\G$. In \cite{bajo4}, Bajo studies the 
algebra of prederivations and skew-symmetric prederivations of a direct sum of Lie algebras and this 
study allows him to generalize some results in \cite{bajo1}, \cite{bajo2} and in \cite{wu}. 

\vskip 0.3cm 

Prederivations also present an interest  in the purely algebraic point of view. As well as Jacobson (\cite{jacobson55}) 
proves that a Lie algebra admitting a non-singular derivation is necessarily nilpotent, the author quoted above 
establishes in \cite{bajo3} that a Lie algebra  possessing a non-singular prederivation is necessarily nilpotent. 
Prederivations are also useful tools for construction of affine structures on Lie algebras (see Section \ref{application-prederivations}).

\vskip 0.3cm 

In this chapter, we deal with prederivations of the Lie algebra of the cotangent bundle of a Lie group; 
which Lie algebra appears as the semi-direct sum $\G\ltimes \mathcal A$ of a Lie algebra $\G$ and an Abelian
Lie algebra $\mathcal A$. In the sequel, we will take $\G$ to be the Lie algebra of 
a Lie group $G$, $\mathcal A=\G^*$, the dual space of $\G$ and $T^*\G:=\G\ltimes \G^*$ will be the semi-direct sum 
of the Lie algebra $\G$ and the vector space $\G^*$ via the coadjoint representation.  

\vskip 0.5cm

Our aim is to explore the structure of the Lie algebra $\pder(T^*\G)$ of prederivations of $T^*\G$. 
We will have a particular attention to Lie algebras admitting quadratic or orthogonal structures. 

\vskip 0.5cm

The main results within this chapter are the following.

\vspace{0.6cm}
\noindent
{\bf Theorem A.}~{\em
A prederivation $p :T^*\G \to T^*\G$ is defined by :
\beq
p(x,f) = \Big(\alpha(x) + \psi(f),\beta(x) + \xi(f)\Big) \nonumber
\eeq
for any element $(x,f)$ of $T^*\G$, where $\alpha : \G \to \G$ is a prederivation of $\G$ and 
$\beta : \G \to \G^*$, $\psi : \G^* \to \G$ and $\xi : \G^* \to \G^*$ are linear maps satisfying 
the following four relations :
\beq
\beta\big([x,[y,z]]\big) = -ad^*_{[y,z]}\big(\beta(x)\big)   + ad^*_x \Big(ad^*_y\big(\beta(z)\big) 
-ad^*_z\big(\beta(y)\big) \Big) \nonumber
\eeq
\beq
\psi \circ ad^*_{[x,y]}    =  ad_{[x,y]} \circ \psi \nonumber
\eeq
\beq
ad^*_x  \Big(ad^*_{\psi(f)}g -ad^*_{\psi(g)}f\Big) = 0,\nonumber
\eeq
\beq
\Big[\xi,ad^*_{[x,y]}\Big]  = ad^*_{([\alpha(x),y] + [x,\alpha(y)])} \nonumber
\eeq
for every elements $x$ and $y$ of $\G$ and any elements $f$ and $g$ in $\G^*$.
}
\vspace{0.6cm}

About the structure of the Lie algebra of the Lie algebra $\pder(T^*\G)$ of prederivations of $T^*\G$, we have the 

\vspace{0.6cm}
\noindent
{\bf Theorem B.}~{\em
Let  $G$ be any finite-dimensional Lie group with Lie algebra $\G$. 
Then the Lie algebra $\pder(T^*\G)$ of prederivations of the Lie algebra $T^*\G$ of the Lie group $T^*G$ decomposes as follows :
 $\pder(T^*\G) =\G_0'\oplus\G_1'$, where $\G_0'$ is a reductive subalgebra of $\pder(T^*\G)$, 
that is $[\G_0',\G_0']\subset \G_0'$ and $[\G_0',\G_1']\subset \G_1'$.
} 

\vspace{0.6cm}

Recall that the Lie algebra of the cotangent bundle Lie group of a semi-simple Lie group is not semi-simple. 
Any way, as well as M\"uller proves that any prederivation of a semi-simple Lie algebra is a derivation we prove the following 

\vspace{0.6cm}
\noindent
{\bf Theorem C.}~{\em
Let $G$ be a semi-simple Lie group with Lie algebra $\G$. Then every prederivation of the Lie algebra $T^*\G$ of the 
cotangent bundle Lie group $T^*G$ of $G$ is a derivation.
}

\vskip 0.3cm

The chapter contains five  ($5$) sections. In Section \ref{chap:preliminaries-chap2} is explained the link between 
prederivations and isometries of bi-invariant metrics on Lie groups. In Section \ref{section:preautomorphims} 
we study the structure of the Lie algebra $\pder(T^*\G)$ of  prederivations of the Lie algebra 
$T^*\G$ of the cotangent bundle $T^*G$ of a Lie group $G$ with Lie $\G$. The particular case where the Lie group $G$
possesses a bi-invariant metric is studied in Section \ref{section:orthogonal-lie-algebra-chap2}. 
In Section \ref{section:applications-examples-chap3} we give examples and some possible applications.

\section{Preliminaries}\label{chap:preliminaries-chap2}

\subsection{Prederivations of a Lie algebra}

\bdfn
Let $\G$ be a Lie algebra. A bijective endomorphism $P:\G \to \G$ such that
\beq\label{relation-preautomorphism}
P\big(\big[x,[y,z]\big] \big) = \big[P(x),[P(y),P(z)]\big],
\eeq
 for all $x,y,z$ in $\G$, is called a {\it preautomorphism} of $\G$.
\edfn

The set of all preautomorphisms of $\G$ forms a Lie group (see \cite{muller}) which we note by $\paut(T^*\G)$.
Its Lie algebra is the subset of the set $\G l(\G)$ of endomorphisms of $\G$ consisting of elements $p$
which satisfy the following relation.
\beq
p\big(\big[x,[y,z]\big]\big) = \big[p(x),[y,z]\big] + \big[x,[p(y),z]\big] +
\big[x,[y,p(z)]\big]
\eeq
for every elements $x,y,z$ of $\G$.

\vskip 0.5cm

One can easily convince himself that any derivation of a Lie algebra is a prederivation. 
Furthermore, there exists a class
of Lie algebras that are such that any prederivation is a derivation. Semi-simple Lie algebras belong to that class
of algebras (see \cite{muller}). We will prove in Section \ref{chap:semi-simple-lie-algebras2} that
the Lie algebras (which are not semi-simple) of the cotangent Lie groups of  semi-simple Lie groups are also members of this class.

\vskip 0.5cm

As well as Jacobson (\cite{jacobson55}) proves that a Lie algebra admitting a non-singular derivation 
is necessarily nilpotent, Bajo (\cite{bajo3}) shows that a Lie algebra which possesses a non-singular 
prederivation is necessarily nilpotent. The converses of the both results are false since 
there are nilpotent Lie algebra that admit only singular derivations and prederivations (see \cite{bajo3}).

\subsection{Prederivations  and Isometries of Orthogonal Structures}

M\"uller (\cite{muller}) proves that if $(G,\mu)$ is a connected and 
simply-connected orthogonal Lie group with Lie algebra $\G$, then the isotropy group of the neutral 
element of $G$ in the group $I(G,\mu)$ of isometries of $(G,\mu)$ is isomorphic to the  subgroup of $GL(\G)$ (group of endomorphisms of $\G$) 
consisting of preautomorphisms of $\G$ which preserve the non-degenerate bilinear form induced by $\mu$ on $\G$ and whose 
Lie algebra is the  whole set of skew-symmetric prederivations of $\G$. See Section \ref{section:isometries-preautomorphisms} for wider informations.

%

\section{Preautomorphisms  of $T^*\G$}\label{section:preautomorphims}
\subsection{Prederivations of $T^*\G$}

Let $\G$ be a Lie algebra and let $T^*\G:=\G\ltimes \G^*$ stand for the semi-direct product of $\G$ with its dual via the coadjoint
representation. The Lie bracket of $T^*\G$ is given by

\beq \label{bracket_double2}
[(x,f),(y,g)]:=\Big([x,y],ad^*_xg-ad^*_yf\Big),
\eeq
for any elements $(x,f)$ and $(y,g)$ of $T^*\G$.

\vskip 0.3cm

Let $p:T^*\G \to T^*\G$ be a prederivation of $T^*\G$ and set
\beq
p(x,f) = \Big(\alpha(x) + \psi(f),\beta(x) + \xi(f)\Big),
\eeq
where $\alpha : \G \to \G$, $\psi :\G^* \to \G$, $\beta : \G \to \G^*$ and $\xi :\G^* \to \G^*$ are linear maps.

\vskip 0.3cm

Let $x,y,z$ be elements of $\G$. We have :
\beq\label{alpha1}
p\big([x,[y,z]]\big) = \underbrace{\alpha\big([x,[y,z]]\big)}_{\in \G} + \underbrace{\beta\big([x,[y,z]]\big)}_{\in \G^*}
\eeq
On the other way, since $p$ is a prederivation, then
\beqn
p\big([x,[y,z]]\big) &=& \Big[p(x),[y,z]\Big] + \Big[x,[p(y),z]\Big] + \Big[x,[y,p(z)]\Big] \cr
                     &=& \Big[\alpha(x)+\beta(x),[y,z]\Big] + \Big[x,[\alpha(y)+\beta(y),z]\Big] +\Big[x,[y,\alpha(z)+\beta(z)]\Big] \cr
                     &=&  \Big(\Big[\alpha(x),[y,z]\Big] -ad^*_{[y,z]}\big(\beta(x)\big) \Big) +\Big[x, [\alpha(y),z]
                          -ad^*_z\big(\beta(y)\big)\Big]  \cr
                     & & + \Big[x,[y,\alpha(z)]+ ad^*_y\big(\beta(z)\big)\Big]   \cr
                     &=& \Big(\Big[\alpha(x),[y,z]\Big] -ad^*_{[y,z]}\big(\beta(x)\big) \Big) + \Big(\Big[x,[\alpha(y),z]\Big]
                           -ad^*_x\circ ad^*_z\big(\beta(y)\big)   \Big) \cr
                     & & + \Big( \Big[x,[y,\alpha(z)]\Big] + ad^*_x\circ ad^*_y\big(\beta(z)\big) \Big) \cr
                     &=& \underbrace{\Big(\Big[\alpha(x),[y,z]\Big] + \Big[x,[\alpha(y),z]\Big] + \Big[x,[y,\alpha(z)]\Big]\Big)}_{\in \G} \cr
                     & & +\underbrace{\Big(-ad^*_{[y,z]}\big(\beta(x)\big) -ad^*_x\circ ad^*_z\big(\beta(y)\big)
                         + ad^*_x\circ ad^*_y\big(\beta(z)\big)\Big)}_{\in \G^*}. \label{alpha2}
\eeqn
From relations (\ref{alpha1}) and (\ref{alpha2}) we have :
\beq
\alpha\Big(\Big[x,[y,z]\Big]\Big) = \Big[\alpha(x),[y,z]\Big] + \Big[x,[\alpha(y),z]\Big] + \Big[x,[y,\alpha(z)]\Big].
\eeq
for any $x,y,z$ in $\G$ ; that is $\alpha$ is a prederivation of $\G$.
Relations (\ref{alpha1}) and (\ref{alpha2}) also give
\beq
\beta\Big(\Big[x,[y,z]\Big]\Big) = -ad^*_{[y,z]}\big(\beta(x)\big) -ad^*_x\circ ad^*_z\big(\beta(y)\big)
                         + ad^*_x\circ ad^*_y\big(\beta(z)\big).
\eeq
Now let $x$ be an element of $\G$ and $f,g$ be in $\G^*$. We have
\beq \label{psi1}
p([x,[f,g]]) = p(0) = 0.
\eeq
We also have
\beqn
p([x,[f,g]]) &=& \Big[p(x),[f,g]\Big] + \Big[x,[p(f),g]\Big] + \Big[x,[f,p(g)]\Big] \cr
             &=& 0 + \Big[x,[\psi(f)+\xi(f),g]\Big] + [x,[f,\psi(g)+\xi(g)]\Big] \cr
             &=& [x,ad^*_{\psi(f)}g] + [x,-ad^*_{\psi(g)}f] \cr
             &=& [x,ad^*_{\psi(f)}g -ad^*_{\psi(g)}f] \cr
             &=& ad^*_x  \Big(ad^*_{\psi(f)}g -ad^*_{\psi(g)}f\Big). \label{psi2}
\eeqn
From (\ref{psi1}) and (\ref{psi2}) it comes that
\beq
ad^*_x  \Big(ad^*_{\psi(f)}g -ad^*_{\psi(g)}f\Big) = 0,
\eeq
for every $x$ in $\G$ and every $f,g$ in $\G^*$. That is $ad^*_{\psi(f)}g -ad^*_{\psi(g)}f$ belongs the centralizer
$Z_{T^*\G}(\G)$ of $\G$ in $T^*\G$, for any $f$ and $g$ in $\G^*$.

\vskip 0.3cm
Let us now consider the following case. Let $x$ and $y$ be in $\G$ and $f$ be in $\G^*$.
\beq\label{psi-xi1}
p\Big(\Big[x,[y,f]\Big]\Big) = \underbrace{\psi\Big(\Big[x,[y,f]\Big]\Big)}_{\in \G} + \underbrace{\xi\Big(\Big[x,[y,f]\Big]\Big)}_{\in \G^*}
\eeq
Since $p$ is a prederivation, one has
\beqn
p\Big(\Big[x,[y,f]\Big]\Big) &=&  \Big[p(x),[y,f]\Big] + \Big[x,[p(y),f]\Big] + \Big[x,[y,p(f)]\Big] \cr
                             &=& \Big[\alpha(x)+\beta(x),[y,f]\Big] + \Big[x,[\alpha(y)+\beta(y),f]\Big]
                                + \Big[x,[y,\psi(f)+\xi(f)]\Big] \cr
                             &=& \Big[\alpha(x)+\beta(x),ad^*_yf\Big] + \Big[x,ad^*_{\alpha(y)}f\Big]
                                 + \Big[x,[y,\psi(f)]+ad^*_y\big(\xi(f)\big)\Big] \cr
                             &=& ad^*_{\alpha(x)}\circ ad^*_y(f) + ad^*_x\circ ad^*_{\alpha(y)}(f) + \big[x,[y,\psi(f)]\big]
                                + ad^*_x\circ ad^*_y\big(\xi(f)\big) \cr
                             &=& \underbrace{[x,[y,\psi(f)]}_{\in \G} + \underbrace{\Big(ad^*_{\alpha(x)}\circ ad^*_y +ad^*_x\circ ad^*_{\alpha(y)}
                                + ad^*_x\circ ad^*_y\circ\xi \Big)(f)}_{\in \G^*} \label{psi-xi2}
\eeqn
On one hand, relations (\ref{psi-xi1}) and (\ref{psi-xi2}) imply
\beqn
\psi\Big(\Big[x,[y,f]\Big]\Big)    &=& [x,[y,\psi(f)] \cr
\psi\Big(ad^*_x\circ ad^*_yf\Big)  &=& ad_x\circ ad_y \circ \psi (f), \nonumber
\eeqn
that is
\beq\label{psi-equiv1}
\psi\circ ad^*_x\circ ad^*_y  = ad_x\circ ad_y \circ \psi,
\eeq
for any $x$ and $y$ in $\G$. 
On the second hand relations (\ref{psi-xi1}) and (\ref{psi-xi2}) give
\beqn
\xi\Big(\Big[x,[y,f]\Big]\Big)  &=& \Big(ad^*_{\alpha(x)}\circ ad^*_y +ad^*_x\circ ad^*_{\alpha(y)}
                                    + ad^*_x\circ ad^*_y\circ\xi \Big)(f) \cr
\xi\circ ad^*_x\circ ad^*_y (f) &=& \Big(ad^*_{\alpha(x)}\circ ad^*_y +ad^*_x\circ ad^*_{\alpha(y)}
                                    + ad^*_x\circ ad^*_y\circ\xi \Big)(f) \cr
\Big(\xi\circ ad^*_x\circ ad^*_y -ad^*_x\circ ad^*_y\circ\xi\Big) (f) &=& \Big(ad^*_{\alpha(x)}\circ ad^*_y +ad^*_x\circ ad^*_{\alpha(y)}\Big) (f)
                   \nonumber
\eeqn
It comes that
\beq\label{xi-rel1}
[\xi,ad^*_x\circ ad^*_y] = ad^*_{\alpha(x)}\circ ad^*_y +ad^*_x\circ ad^*_{\alpha(y)}
\eeq


Let us summarize in the

\bthm\label{theorem:characterization-prederivations}
A prederivation $p :T^*\G \to T^*\G$ is defined by :
\beq
p(x,f) = \Big(\alpha(x) + \psi(f),\beta(x) + \xi(f)\Big)
\eeq
for any element $(x,f)$ of $T^*\G$, where 
\bitem
\item $\alpha : \G \to \G$ is a prederivation of $\G$ and 
\item $\beta : \G \to \G^*$, $\psi : \G^* \to \G$ and $\xi : \G^* \to \G^*$ are linear maps 
satisfying the following four relations :
\eitem 
\beq\label{beta-relation}
\beta\big([x,[y,z]]\big) = -ad^*_{[y,z]}\big(\beta(x)\big)   + ad^*_x \Big(ad^*_y\big(\beta(z)\big) 
-ad^*_z\big(\beta(y)\big) \Big)
\eeq
\beq\label{psi-relation1}
\psi\circ ad^*_x\circ ad^*_y  = ad_x\circ ad_y \circ \psi,
\eeq
\beq\label{psi-relation2}
ad^*_x  \Big(ad^*_{\psi(f)}g -ad^*_{\psi(g)}f\Big) = 0,
\eeq
\beq\label{xi-relation}
[\xi,ad^*_x\circ ad^*_y] = ad^*_{\alpha(x)}\circ ad^*_y +ad^*_x\circ ad^*_{\alpha(y)},
\eeq
for every elements $x$ and $y$ of $\G$ and any elements $f$ and $g$ in $\G^*$.
\ethm

\subsection{A Structure theorem for the Lie group $\paut(T^*\G)$}\label{section:sructure-theorem-preautomorphism}

Let us introduce the following notations :
\benum
\item $\pder(T^*\G)$ stands for the space of prederivations of $T^*\G$ ;
\item $\pder(\G)$ represents for the space of prederivations of $\G$ ;
\item $\mathcal Q'$ is the space of linear maps $\beta : \G \to \G^*$ satisfying relation (\ref{beta-relation}) ;
\item $\mathcal E'$ is the space of linear maps $\xi : \G^* \to \G^*$ such that
$$
[\xi,ad^*_x\circ ad^*_y] = ad^*_{\alpha(x)}\circ ad^*_y +ad^*_x\circ ad^*_{\alpha(y)},
$$
for some prederivation $\alpha$ of $\G$ and any elements $x$ and $y$ of $\G$.
\item $\Psi'$ stands for the space of linear maps $\psi : \G^* \to \G$ satisfying 
(\ref{psi-relation1}) and (\ref{psi-relation2}).


\item $\G_0'$ stands for the space of maps $\phi_{\alpha,\xi} :T^*\G \to T^*\G$, 
$(x,f)\mapsto \big(\alpha(x),\xi(f)\big)$, where  $\alpha$ is in $\pder(\G)$, $\xi$ in $\mathcal E'$ 
and $[\xi,ad^*_x\circ ad^*_y] = ad^*_{\alpha(x)}\circ ad^*_y +ad^*_x\circ ad^*_{\alpha(y)}$, for all  $x,y$ in $\G$ ;
\item $\G_1':=\Psi' \oplus \mathcal Q'$ (direct sum of vector spaces).
\eenum


\blem\label{lemma:lie-algebra-E'}
The space $\mathcal E'$ is a Lie algebra. Precisely, if $\xi_1,\xi_2$ in $\mathcal E'$ satisfy
\beqn 
\big[\xi_1,ad^*_x\circ ad^*_y\big] &=& ad^*_{\alpha_1(x)}\circ ad^*_y +ad^*_x\circ ad^*_{\alpha_1(y)} \nonumber \\
\big[\xi_2,ad^*_x\circ ad^*_y\big] &=& ad^*_{\alpha_2(x)}\circ ad^*_y +ad^*_x\circ ad^*_{\alpha_2(y)} \nonumber 
\eeqn 
for all $x$, $y$ in $\G$ and some $\alpha_1$, $\alpha_2$ in $\pder(\G)$, then $[\xi_1,\xi_2]$ belongs  to
$\mathcal E'$ and satisfies
\beq 
\Big[[\xi_1,\xi_2] \,,\, ad^*_x\circ ad^*_y\Big]= ad^*_{[\alpha_1,\alpha_2](x)} \circ ad^*_y + ad^*_x\circ ad^*_{[\alpha_1,\alpha_2](y)}, \nonumber
\eeq 
for all elements $x,y$ of $\G$.
\elem

\begin{proof}
Consider $\xi_1$ and $\xi_2$ as in Lemma \ref{lemma:lie-algebra-E'}. 
We have,
 \beqn
\Big[[\xi_1,\xi_2],ad^*_x\circ ad^*_y\Big] &=& \big[\xi_1 \circ \xi_2 -\xi_2 \circ \xi_1 \,,\, ad^*_x\circ ad^*_y\big] \cr
                                           &=& \big[\xi_1 \circ \xi_2 \,,\, ad^*_x\circ ad^*_y\big]
                                               -\big[\xi_2 \circ \xi_1 \,,\, ad^*_x\circ ad^*_y\big]
\eeqn
\beqn
\big[\xi_1 \circ \xi_2 \,,\, ad^*_x\circ ad^*_y\big] &=& (\xi_1 \circ \xi_2) \circ ad^*_x \circ ad^*_y
                                                       - ad^*_x \circ ad^*_y \circ (\xi_1 \circ \xi_2 ) \cr
                                                     &=& \xi_1\Big(\xi_2 \circ ad^*_x \circ ad^*_y  \Big)
                                                         -\Big( ad^*_x \circ ad^*_y \circ \xi_1\Big) \circ \xi_2 \cr
                                                     &=& \Big( \xi_1 \circ ad^*_{\alpha_2(x)} \circ ad^*_y +  \xi_1 \circ ad^*_x
                                                         \circ ad^*_{\alpha_2(y)} + \xi_1 \circ ad^*_x \circ ad^*_y \circ \xi_2 \Big)\cr
                                                     & & -\Big(\xi_1 \circ ad^*_x \circ ad^*_y -ad^*_{\alpha_1(x)}
                                                          \circ ad^*_y - ad^*_x \circ ad^*_{\alpha_1(y)} \Big) \circ \xi_2
\eeqn
Hence,
\beqn
\big[\xi_1 \circ \xi_2 \,,\, ad^*_x\circ ad^*_y\big] &=& \xi_1 \circ ad^*_{\alpha_2(x)} \circ ad^*_y +  \xi_1 \circ ad^*_x
                                                         \circ ad^*_{\alpha_2(y)} \cr
                                                     & &   + ad^*_{\alpha_1(x)} \circ ad^*_y \circ \xi_2 +   ad^*_x \circ
                                                          ad^*_{\alpha_1(y)}  \circ \xi_2
\eeqn
We also have :
\beqn
\big[\xi_2 \circ \xi_1 \,,\, ad^*_x\circ ad^*_y\big] &=& \xi_2 \circ ad^*_{\alpha_1(x)} \circ ad^*_y +  \xi_2 \circ ad^*_x
                                                         \circ ad^*_{\alpha_1(y)} \cr
                                                     & &   + ad^*_{\alpha_2(x)} \circ ad^*_y \circ \xi_1 +   ad^*_x \circ
                                                          ad^*_{\alpha_2(y)}  \circ \xi_1
\eeqn
Now we have
\beqn
\Big[[\xi_1,\xi_2] \,,\, ad^*_x\circ ad^*_y\Big] &=& \Big(\xi_1 \circ ad^*_{\alpha_2(x)} \circ ad^*_y +  \xi_1 \circ ad^*_x
                                                         \circ ad^*_{\alpha_2(y)} \cr
                                                 & &   + ad^*_{\alpha_1(x)} \circ ad^*_y \circ \xi_2 +   ad^*_x \circ
                                                          ad^*_{\alpha_1(y)}  \circ \xi_2 \Big)\cr
                                                 & & - \Big( \xi_2 \circ ad^*_{\alpha_1(x)} \circ ad^*_y +  \xi_2 \circ ad^*_x
                                                         \circ ad^*_{\alpha_1(y)} \cr
                                                 & &   + ad^*_{\alpha_2(x)} \circ ad^*_y \circ \xi_1 +   ad^*_x \circ
                                                          ad^*_{\alpha_2(y)}  \circ \xi_1 \Big) \cr
                                                 &=&\, \;\; \Big(\xi_1 \circ ad^*_{\alpha_2(x)} \circ ad^*_y
                                                       -ad^*_{\alpha_2(x)} \circ ad^*_y \circ \xi_1 \Big) \cr
                                                 & &   + \Big(\xi_1 \circ ad^*_x \circ ad^*_{\alpha_2(y)}
                                                       -  ad^*_x \circ  ad^*_{\alpha_2(y)}  \circ \xi_1\Big) \cr
                                                 & & + \Big(ad^*_{\alpha_1(x)} \circ ad^*_y \circ \xi_2
                                                      -\xi_2 \circ ad^*_{\alpha_1(x)} \circ ad^*_y\Big)  \cr
                                                 & & +  \Big(ad^*_x \circ ad^*_{\alpha_1(y)}  \circ \xi_2
                                                     -\xi_2 \circ ad^*_x \circ ad^*_{\alpha_1(y)}   \Big) \cr
                                                 &=& \,\;\;\big[\xi_1,ad^*_{\alpha_2(x)} \circ ad^*_y\big]
                                                     +\big[\xi_1,ad^*_x \circ ad^*_{\alpha_2(y)}\big] \cr
                                                 & & -\big[\xi_2,ad^*_{\alpha_1(x)} \circ ad^*_y\big]
                                                     - \big[\xi_2,ad^*_x \circ ad^*_{\alpha_1(y)}\big] \cr
                                                 &=&\,\;\; \Big(ad^*_{\alpha_1 \circ \alpha_2(x)} \circ ad^*_y
                                                     + ad^*_{\alpha_2(x)} \circ ad^*_{\alpha_1(y)}\Big)\cr
                                                 & & + \Big(ad^*_{\alpha_1(x)}  \circ ad^*_{\alpha_2(y)}
                                                     + ad^*_x \circ ad^*_{\alpha_1\circ \alpha_2(y)} \Big) \cr
                                                 & & -\Big(ad^*_{\alpha_2 \circ \alpha_1(x)}  \circ ad^*_y
                                                     + ad^*_{\alpha_1(x)} \circ ad^*_{\alpha_2(y)} \Big) \cr
                                                 & &   -\Big(ad^*_{\alpha_2(x)}  \circ ad^*_{\alpha_1(y)}
                                                     + ad^*_x \circ ad^*_{\alpha_2\circ \alpha_1(y)} \Big). \nonumber
\eeqn
We then have
\beq
\Big[[\xi_1,\xi_2] \,,\, ad^*_x\circ ad^*_y\Big]= ad^*_{[\alpha_1,\alpha_2](x)} \circ ad^*_y
                                                  + ad^*_x\circ ad^*_{[\alpha_1,\alpha_2](y)}. \nonumber
\eeq
\end{proof}

\blem
The space $\G_0'$ is a Lie subalgebra of the Lie algebra $\pder(T^*\G)$.
\elem

\begin{proof}

Let $\phi_{\alpha_1,\xi_1}$ and $\phi_{\alpha_2,\xi_2}$ be two elements of $\G_0'$.

\beqn
\Big( \phi_{\alpha_1,\xi_1} \circ \phi_{\alpha_2,\xi_2}\Big) (x,f) &=& \phi_{\alpha_1,\xi_1}\Big(\alpha_2(x),\xi_2(f)\Big) \cr
                                                                   &=&\Big(\alpha_1 \circ \alpha_2(x),\xi_1 \circ \xi_2(f)\Big) \nonumber
\eeqn
We then  have
\beq
\Big( \phi_{\alpha_1,\xi_1} \circ \phi_{\alpha_2,\xi_2}\Big) = \Big( \alpha_1 \circ \alpha_2,\xi_1 \circ \xi_2\Big). \nonumber
\eeq
By the same way, we have
\beq
\Big( \phi_{\alpha_2,\xi_2} \circ \phi_{\alpha_1,\xi_1}\Big) = \Big( \alpha_2 \circ \alpha_1,\xi_2 \circ \xi_1\Big). \nonumber
\eeq
Hence, 
\beq
[\phi_{\alpha_1,\xi_1},\phi_{\alpha_2,\xi_2}] = \Big( [\alpha_1,\alpha_2],[\xi_1,\xi_2]\Big) \in  \G_0'. \nonumber
\eeq

\end{proof}


\blem
$[\G_0',\Psi'] \subset \Psi'$ and $[\G_0',\mathcal Q'] \subset \mathcal Q'$. Hence, $[\G_0',\G_1'] \subset \G_1'$.
\elem

\begin{proof}

Let $\phi_{\alpha,\xi}$ and $\phi_{\psi,0}$ be elements of $\G_0'$ and $\Psi'$ respectively. We have
\beqn
\phi_{\alpha,\xi} \circ \phi_{\psi,0}(x,f) &=& \phi_{\alpha,\xi}\Big(\psi(f),0\Big) \cr
                                               &=& \Big(\alpha \circ \psi(f),0 \Big). \nonumber
\eeqn
\beqn
\phi_{\psi,0} \circ \phi_{\alpha,\xi}(x,f) &=& \phi_{\psi,0}\Big(\alpha(x),\xi(f) \Big) \cr
                                               &=& \Big(\psi \circ \xi(f),0 \Big). \nonumber
\eeqn
Then,
\beq
[\phi_{\alpha,\xi},\phi_{\psi,\beta}](x,f)=\Big((\alpha\circ\psi-\psi\circ\xi)(f),0\Big). \nonumber
\eeq
Let us show that $(\alpha\circ\psi - \psi\circ\xi)$ belongs to $\Psi'$. 
For any elements $x$ and $y$ in $\G$, we have
\beqn
(\alpha \circ \psi) \circ ad^*_x\circ ad^*_y &=& \alpha \circ (\psi \circ ad^*_x\circ ad^*_y) \cr
                                            &=& \alpha \circ (ad_x \circ ad_y \circ \psi)\cr
                                            &=& (\alpha \circ ad_x \circ ad_y )\circ \psi. \nonumber
\eeqn
Let $z$ be an element of $\G$. We have
\beqn
(\alpha \circ ad_x\circ ad_y)(z) &=& \alpha([x,[y,z]]) \cr
                                 &=& \Big[\alpha(x),[y,z]\Big] + \Big[x,[\alpha(y),z]\Big] + \Big[x,[y,\alpha(z)]\Big]\cr
                                 &=& \Big(ad_{\alpha(x)}\circ ad_y + ad_x \circ ad_{\alpha(y)} + ad_x \circ ad_y \circ \alpha \Big)(z).\nonumber
\eeqn
Then
\beq
\alpha \circ ad_x\circ ad_y = ad_{\alpha(x)}\circ ad_y + ad_x \circ ad_{\alpha(y)} + ad_x \circ ad_y \circ \alpha. \nonumber
\eeq
It comes that :
\beq \label{composition-alpha-psi}
(\alpha \circ \psi) \circ ad^*_x\circ ad^*_y = ad_{\alpha(x)}\circ ad_y\circ \psi + ad_x \circ ad_{\alpha(y)}\circ \psi
+ ad_x \circ ad_y \circ \alpha \circ \psi.
\eeq
Now, what about $(\psi \circ \xi) \circ ad^*_x\circ ad^*_y$ ?
\beqn
(\psi \circ \xi) \circ ad^*_x\circ ad^*_y &=& \psi \circ (\xi \circ ad^*_x\circ ad^*_y) \cr
                                          &=& \psi \circ \Big( [\xi,ad^*_x\circ ad^*_y] + ad^*_x\circ ad^*_y \circ \xi\Big) \cr
                                          &=& \psi \circ \Big( ad^*_{\alpha(x)} \circ ad^*_y + ad^*_x \circ ad^*_{\alpha(y)}
                                              + ad^*_x \circ ad^*_y \circ \xi\Big).\nonumber
\eeqn
Hence,
\beq\label{composition-psi-xi}
(\psi \circ \xi) \circ ad^*_x\circ ad^*_y = \psi \circ ad^*_{\alpha(x)} \circ ad^*_y  + \psi \circ  ad^*_x \circ ad^*_{\alpha(y)}
                                              + \psi \circ  ad^*_x \circ ad^*_y \circ \xi
\eeq
From (\ref{composition-alpha-psi}) and (\ref{composition-psi-xi}), we have :
 \beqn
(\alpha \circ \psi-\psi \circ \xi) \circ ad^*_x\circ ad^*_y &=&\,\;\; (ad_{\alpha(x)}\circ ad_y\circ \psi + ad_x \circ ad_{\alpha(y)}
                                                                \circ \psi \!+\! ad_x \circ ad_y \circ \alpha \circ \psi ) \cr
                                                            & & -(\psi \circ ad^*_{\alpha(x)} \circ ad^*_y  + \psi \circ  ad^*_x
                                                                  \circ ad^*_{\alpha(y)} + \psi \circ  ad^*_x \circ ad^*_y \circ \xi).\nonumber
 \eeqn
It comes that  $(\alpha\circ\psi - \psi\circ\xi)$ satisfies (\ref{psi-relation1}) as it verifies 
\beq\label{alpha-psi-psi-xi}
(\alpha \circ \psi-\psi \circ \xi) \circ ad^*_x\circ ad^*_y = ad_x\circ ad_y \circ (\alpha \circ \psi-\psi \circ \xi). \nonumber
\eeq
Let now  $f$ and $g$ be elements of $\G^*$.
\beqn
\Gamma &:=& ad^*_{(\alpha \circ \psi-\psi \circ \xi)(f)}g - ad^*_{(\alpha \circ \psi-\psi \circ \xi)(g)}f \cr
       &=& ad^*_{\alpha \circ \psi(f)}g-ad^*_{\psi \circ \xi(f)}g -ad^*_{\alpha \circ \psi(g)}f +ad^*_{\psi \circ \xi(g)}f \nonumber
\eeqn
For any $x$ in $\G$, one has the following
\beqn
ad^*_x \circ ad^*_{\alpha \circ \psi(f)} &=& [\xi,ad^*_x\circ ad^*_{\psi(f)}] - ad^*_{\alpha(x)} \circ ad^*_{\psi(f)} \nonumber\\
ad^*_x \circ ad^*_{\alpha \circ \psi(g)} &=& [\xi,ad^*_x\circ ad^*_{\psi(g)}] - ad^*_{\alpha(x)} \circ ad^*_{\psi(g)} \nonumber
\eeqn
Then
\beqn
ad^*_x \circ \Gamma &=& ad^*_x \circ \Big( ad^*_{(\alpha \circ \psi-\psi \circ \xi)(f)}g
                        - ad^*_{(\alpha \circ \psi-\psi \circ \xi)(g)}f \ \Big) \cr
                    &=& \xi \circ ad^*_x\circ ad^*_{\psi(f)}(g) -  ad^*_x\circ ad^*_{\psi(f)} \circ \xi(g)
                         - ad^*_{\alpha(x)} \circ ad^*_{\psi(f)}(g) \cr
                    & & - \xi\circ ad^*_x\circ ad^*_{\psi(g)}(f) + ad^*_x\circ ad^*_{\psi(g)} \circ \xi(f)
                         + ad^*_{\alpha(x)} \circ ad^*_{\psi(g)}(f) \cr
                    & & -ad^*_x\circ ad^*_{\psi \circ \xi(f)}(g) + ad^*_x\circ ad^*_{\psi \circ \xi(g)}(f) \cr
                    &=& \underbrace{\xi  \Big(ad^*_x\circ ad^*_{\psi(f)}(g)- \circ ad^*_x\circ ad^*_{\psi(g)}(f)\Big)}_{=0,
                      \text{ \tiny because of (\ref{psi-relation2}) }}
                        + \underbrace{ad^*_{\alpha(x)} \Big(ad^*_{\psi(g)}(f) -ad^*_{\psi(f)}(g)  \Big)}_{=0,
                        \text{ \tiny because of (\ref{psi-relation2}) }} \cr
                    & & + \underbrace{ad^*_x\circ ad^*_{\psi \circ \xi(g)}f- ad^*_x\circ ad^*_{\psi(\xi(g))}f}_{=0} +
                         \underbrace{ad^*_x \circ ad^*_{\psi(\xi(f))}g -ad^*_x\circ ad^*_{\psi \circ \xi(f)}g}_{=0}.\nonumber
\eeqn
Hence,
\beq
ad^*_x  \Big( ad^*_{(\alpha \circ \psi-\psi \circ \xi)(f)}g - ad^*_{(\alpha \circ \psi-\psi \circ \xi)(g)}f \ \Big)  = 0,
\eeq
for any $f,g$ in $\G^*$ and any $x$ in $\G$. We then have shown that $(\alpha \circ \psi-\psi \circ \xi)$ 
belongs to $\Psi'$. Hence, $[\G_0',\Psi'] \subset \Psi'$.

\vskip 0.3cm

Now we are going to show that $[\G_0',\mathcal Q'] \subset \mathcal Q'$.
For this goal, let $\phi_{\alpha,\xi}$ and $\phi_{0,\beta}$ be elements of $\G_0'$ and $\mathcal Q'$ respectively.
\beqn
 \phi_{\alpha,\xi} \circ \phi_{0,\beta}(x,f)    &=& \phi_{\alpha,\xi}\Big(0,\beta(x)\Big) \cr
                                                &=& \Big(0,\xi \circ \beta(x) \Big)
\eeqn
\beqn
\phi_{0,\beta} \circ \phi_{\alpha,\xi}(x,f) &=& \phi_{0,\beta}\Big(\alpha(x),\xi(f) \Big) \cr
                                               &=& \Big(0,\beta \circ \alpha (x) \Big)
\eeqn
Then,
\beq
[\phi_{\alpha,\xi},\phi_{0,\beta}](x,f)=\Big(0,(\xi\circ\beta-\beta\circ\alpha )(x)\Big)
\eeq
Does $(\xi \circ \beta - \beta \circ \alpha)$ satisfies relation (\ref{beta-relation}) ?
\beqn
(\xi \circ \beta)[x,[y,z]] &=& \xi \Big(-ad^*_{[y,z]}\big(\beta(x)\big) -ad^*_x \circ ad^*_z\big(\beta(y)\big)
                                + ad^*_x \circ ad^*_y\big(\beta(z)\big) \Big) \cr
                           &=& -\xi \circ ad^*_{[y,z]}\big(\beta(x) \big) - \xi \circ ad^*_x \circ ad^*_z\big(\beta(y)\big)
                               + \xi \circ ad^*_x \circ ad^*_y\big(\beta(z)\big) \cr
                           &=& -\xi \circ ad^*_y \circ ad^*_z\big(\beta(x)\big) + \xi \circ ad^*_z \circ ad^*_y\big(\beta(x)\big) \cr
                           & & -\xi \circ ad^*_x \circ ad^*_z\big(\beta(y)\big)  + \xi \circ ad^*_x \circ ad^*_y\big(\beta(z)\big) \cr
                           &=& -[\xi,ad^*_y \circ ad^*_z](\beta(x)) - ad^*_y \circ ad^*_z \circ \xi(\beta(x)) \cr
                           & & +[\xi,ad^*_z \circ ad^*_y](\beta(x)) + ad^*_z \circ ad^*_y \circ \xi(\beta(x)) \cr
                           & & -[\xi,ad^*_x \circ ad^*_z](\beta(y)) - ad^*_x \circ ad^*_z \circ \xi(\beta(y)) \cr
                           & & +[\xi,ad^*_x \circ ad^*_y](\beta(z)) + ad^*_x \circ ad^*_y \circ \xi(\beta(z)) \cr
                           &=& -ad^*_{\alpha(y)}\circ ad^*_z(\beta(x))-ad^*_y\circ ad^*_{\alpha(z)}(\beta(x))-ad^*_y\circ ad^*_z\circ\xi(\beta(x)) \cr
                           & & +ad^*_{\alpha(z)}\circ ad^*_y(\beta(x))+ad^*_z\circ ad^*_{\alpha(y)}(\beta(x))+ad^*_z\circ ad^*_y\circ\xi(\beta(x)) \cr
                           & & -ad^*_{\alpha(x)}\circ ad^*_z(\beta(y))-ad^*_x\circ ad^*_{\alpha(z)}(\beta(y))-ad^*_x\circ ad^*_z\circ\xi(\beta(y)) \cr
                           & & \!+ad^*_{\alpha(x)}\circ ad^*_y(\beta(z))\!+\!ad^*_x\circ ad^*_{\alpha(y)}(\beta(z))\!+\!ad^*_x\circ ad^*_y\circ\xi(\beta(z))
\eeqn
\beqn
(\beta \circ \alpha)[x,[y,z]] &=& \beta\Big( [\alpha(x),[y,z]] + [x,[\alpha(y),z]] + [x,[y,\alpha(z)]] \Big) \cr
                              &=& -ad^*_{[y,z]}\big(\beta(\alpha(x))\big) -ad^*_{\alpha(x)}\circ ad^*_z\big(\beta(y)\big)
                                  + ad^*_{\alpha(x)} \circ ad^*_y\big(\beta(z)\big) \cr
                              & & -ad^*_{[\alpha(y),z]}\big(\beta(x)\big) - ad^*_x \circ ad^*_z\big(\beta(\alpha(y)) \big)
                                  + ad^*_x \circ ad^*_{\alpha(y)}\big(\beta(z) \big) \cr
                              & & -ad^*_{[y,\alpha(z)]}\big(\beta(x)\big)\!-\! ad^*_x \circ ad^*_{\alpha(z)}\big(\beta(y)\big)
                                   \!+\! ad^*_x \circ ad^*_y\big(\beta(\alpha(z)) \big)
\eeqn
We then have
\beqn
(\xi \circ \beta - \beta \circ \alpha)[x,[y,z]] &=&\!\! -ad^*_{[y,z]}\Big((\xi \circ \beta)(x)\Big) \!-\! ad^*_{[\alpha(y),z]}(\beta(x))
                                                    \!-\! ad^*_{[y,\alpha(z)]}(\beta(x)) \cr
                                                & & \!\!- ad^*_x \circ ad^*_z\Big((\xi \circ \beta)(y)\Big)
                                                    + ad^*_x \circ ad^*_y\Big((\xi \circ \beta)(z)\Big) \cr
                                                & & \!\!+\! ad^*_{[y,z]}\Big((\beta \circ \alpha)(x)\Big) \!+\! ad^*_{[\alpha(y),z]}(\beta(x))
                                                    \!+\! ad^*_x \circ ad^*_z\Big((\beta \circ \alpha)(y)\Big) \cr
                                                & & + ad^*_{[y,\alpha(z)]}(\beta(x)) -ad^*_x \circ ad^*_y\Big((\beta \circ \alpha)(z)\Big)\cr
                                                &=&\!\! -ad^*_{[y,z]}\Big((\xi \circ \beta\! - \!\beta \circ \alpha)(x) \Big) \!-\! ad^*_x
                                                     \circ ad^*_z\Big((\xi \circ \beta \!-\! \beta \circ \alpha)(y) \Big)  \cr
                                                & & + ad^*_x \circ ad^*_y\Big((\xi \circ \beta - \beta \circ \alpha)(z) \Big)
\eeqn
That is $(\xi \circ \beta - \beta \circ \alpha)$ satisfies relation (\ref{beta-relation}) and then 
$[\G_0',\mathcal Q']\subset \mathcal Q'$. It is now clear that  $[\G_0',\G_1'] \subset \G_1'$.

\end{proof}


We summarize the Lemmas above in the 

\bthm\label{structuretheorem1}
Let  $\G$ be any finite-dimensional Lie algebra. 
Then the Lie algebra of prederivations of $T^*\G$ decomposes as follows :
 $\pder(T^*\G) =\G_0'\oplus\G_1'$, where $\G_0'$ is a reductive subalgebra of $\pder(T^*\G)$, 
that is $$[\G_0',\G_0']\subset \G_0' \text{ and } [\G_0',\G_1']\subset \G_1'.$$
\ethm 

\brmq
$\pder(T^*\G)$ is not a symmetric space as is $\der(T^*\G)$ (see Theorem \ref{structuretheorem}) since 
$[\G_1',\G_1']$ is not a subset of $\G_0'$. Precisely, let $\phi_{\psi_1,\beta_1}$ and $\phi_{\psi_2,\beta_2}$ be two elements of $\G_1'$.
Then,
\benum
\item $[\phi_{\psi_1,\beta_1}, \phi_{\psi_2,\beta_2}] = \Big((\psi_1 \circ \beta_2 - \psi_2 \circ \beta_1) \,,
\, (\beta_1 \circ \psi_2 - \beta_2 \circ \psi_1) \Big)$ ;
\item $[\phi_{\psi_1,\beta_1}, \phi_{\psi_2,\beta_2}]$ do not belong to $\G_0'$ even  $(\psi_1 \circ \beta_2\!-\!\psi_2 \circ \beta_1)$
is a prederivation of $\G$.
\item $(\beta_1 \circ \psi_2-\beta_2\circ \psi_1)$ is not linked to $(\psi_1 \circ \beta_2 - \psi_2 \circ \beta_1)$ by
(\ref{xi-relation}).
\eenum
\ermq
Indeed, let $\phi_{\psi_1,\beta_1}$ and $\phi_{\psi_2,\beta_2}$ be two elements of $\G_1'$.
\beqn
(\phi_{\psi_1,\beta_1} \circ \phi_{\psi_2,\beta_2}) (x,f) &=& \phi_{\psi_1,\beta_1} \Big(\psi_2(f),\beta_2(x) \Big) \cr
                                                          &=& \Big(\psi_1 \circ \beta_2(x),\beta_1 \circ \psi_2(f) \Big)
\eeqn
By the same way
\beq
(\phi_{\psi_2,\beta_2} \circ \phi_{\psi_1,\beta_1}) (x,f) =\Big(\psi_2 \circ \beta_1(x),\beta_2 \circ \psi_1(f) \Big)
\eeq
It comes that
\beq
[\phi_{\psi_1,\beta_1}, \phi_{\psi_2,\beta_2}](x,f) = \Big((\psi_1 \circ \beta_2 - \psi_2 \circ \beta_1)(x) \,,
\, (\beta_1 \circ \psi_2 - \beta_2 \circ \psi_1)(f) \Big)
\eeq
Let us see if $(\psi_1 \circ \beta_2 - \psi_2 \circ \beta_1)$ is a prederivation of $\G$.
\beqn
(\psi_1 \circ \beta_2)[x,[y,z]] &=& \psi_1 \Big( -ad^*_{[y,z]}\big(\beta_2(x)\big)-ad^*_x\circ ad^*_z\big(\beta_2(y)\big)
                                    + ad^*_x\circ ad^*_y\big(\beta_2(z)\big)\Big) \cr
                                &=&\!\!\! -\psi_1 \circ ad^*_{[y,z]}\big(\beta_2(x)\big)\!-\!\psi_1\circ ad^*_x\circ ad^*_z\big(\beta_2(y)\big)
                                      \! +\!\psi_1\circ  ad^*_x\circ ad^*_y\big(\beta_2(z)\big) \cr
                                &=& -ad_y \circ ad_z \circ \psi_1\big(\beta_2(x)\big) + ad_z \circ ad_y \circ \psi_1\big(\beta_2(x)\big) \cr
                                & & -ad_x \circ ad_z \circ \psi_1\big(\beta_2(y)\big) + ad_x \circ ad_y \circ \psi_1\big(\beta_2(z)\big) \cr
                                &=& -\Big[y,[z,\psi_1 \circ \beta_2(x)]\Big] + \Big[z,[y,\psi_1 \circ \beta_2(x)]\Big] \cr
                                & & -\Big[x,[z,\psi_1 \circ \beta_2(y)]\Big] + \Big[x,[y,\psi_1 \circ \beta_2(z)]\Big]
\eeqn
We also have :
\beqn
(\psi_2 \circ \beta_1)[x,[y,z]] &=& -\Big[y,[z,\psi_2 \circ \beta_1(x)]\Big] + \Big[z,[y,\psi_2 \circ \beta_1(x)]\Big] \cr
                                & & -\Big[x,[z,\psi_2 \circ \beta_1(y)]\Big] +\Big[x,[y,\psi_2 \circ \beta_1(z)]\Big]
\eeqn
Now we have :
\beqn
(\psi_1 \circ \beta_2-\psi_2 \circ \beta_1)[x,[y,z]]\!\! &=& \!\!\!-\Big[y,[z,\psi_1 \circ \beta_2(x)]\Big] \!\!+\!\! \Big[z,[y,\psi_1 \circ \beta_2(x)]\Big] \cr
                                                     & & -\Big[x,[z,\psi_1 \circ \beta_2(y)]\Big]\!\! +\!\! \Big[x,[y,\psi_1 \circ \beta_2(z)]\Big] \cr
                                                     & & \!\!+\!\!\Big[y,[z,\psi_2 \circ \beta_1(x)]\Big] \!\!-\!\! \Big[z,[y,\psi_2 \circ \beta_1(x)]\Big] \cr
                                                     & & +\Big[x,[z,\psi_2 \circ \beta_1(y)]\Big] - \Big[x,[y,\psi_2 \circ \beta_1(z)]\Big] \cr
                                                     &=&\!\!\!\!\! -\!\!\Big[y,[z,(\psi_1 \circ \beta_2 \!\!-\!\! \psi_2 \circ \beta_1)(x)]\Big]
                                                        \!\!+\!\!\Big[z,[y,(\psi_1 \circ \beta_2 \!\!-\!\! \psi_2 \circ \beta_1)(x)]\Big]            \cr
                                                     & & \!\!\!\!\! -\!\!\Big[x,[z,(\psi_1 \circ \beta_2 \!\!-\!\! \psi_2 \circ \beta_1)(y)]\Big]
                                                        \!\! +\!\!\Big[x,[y,(\psi_1 \circ \beta_2 \!\!-\!\! \psi_2 \circ \beta_1)(z)]\Big]           \cr
                                                     &=& \!\!\! \Big[(\psi_1 \circ \beta_2 \!\!-\!\! \psi_2 \circ \beta_1)(x),[y,z]\Big]
                                                         \!\!+\!\!\Big[x,[(\psi_1 \circ \beta_2 \!\!-\!\! \psi_2 \circ \beta_1)(y),z]\Big]           \cr
                                                     & & + \Big[x,[y,(\psi_1 \circ \beta_2 \!-\! \psi_2 \circ \beta_1)(z)]\Big]
\eeqn
Then $(\psi_1 \circ \beta_2-\psi_2 \circ \beta_1)$ is a prederivation of $\G$.

\vskip 0.3cm

We are now going to verify if $(\beta_1 \circ \psi_2 - \beta_2 \circ \psi_1)$ satisfies relation (\ref{xi-relation}). 
Let $x$ and $y$ be two elements of $\G$ and $f$ be in $\G^*$.
\beqn
[\beta_1 \circ \psi_2 - \beta_2 \circ \psi_1, ad^*_x \circ ad^*_y](f) &=&  [\beta_1 \circ \psi_2,ad^*_x \circ ad^*_y](f)
                                                                          -[\beta_2 \circ \psi_1,ad^*_x \circ ad^*_y](f) \cr
                                                                      &=& \Big(\beta_1 \circ \psi_2 \circ ad^*_x \circ ad^*_y
                                                                          -ad^*_x \circ ad^*_y\circ \beta_1 \circ \psi_2\Big)(f)\cr
                                                                      & & \!\!\!\! -\Big(\beta_1 \circ \psi_2 \circ ad^*_x \circ ad^*_y
                                                                          -ad^*_x \circ ad^*_y\circ \beta_1 \circ \psi_2\Big)(f) \cr
                                                                      &=& \Big(\beta_1 \circ ad_x \circ ad_y \circ \psi_2
                                                                          -ad^*_x \circ ad^*_y\circ \beta_1 \circ \psi_2\Big)(f)\cr
                                                                      & & \!\!\!\! -\Big(\beta_2 \circ ad_x \circ ad_y \circ \psi_1
                                                                          -ad^*_x \circ ad^*_y\circ \beta_2 \circ \psi_1\Big)(f) \cr
                                                                      &=& \beta_1\Big([x,[y,\psi_2(f)]]\Big)
                                                                          -ad^*_x \circ ad^*_y\Big(\beta_1\big(\psi_2(f)\big)\Big)  \cr
                                                                      & &\!\!\!\! -\beta_2\Big([x,[y,\psi_1(f)]]\Big)
                                                                          +ad^*_x \circ ad^*_y\Big(\beta_2\big(\psi_1(f)\big)\Big)  \cr
                                                                      &=& -ad^*_{[y,\psi_2(f)]}\Big(\beta_1(x)\Big)
                                                                         -ad^*_x \circ ad^*_{\psi_2(f)}\Big(\beta_1(y)\Big) \cr
                                                                      & & +ad^*_x \circ ad^*_y\Big(\beta_1\big(\psi_2(f)\big)\Big)
                                                                          -ad^*_x \circ ad^*_y\Big(\beta_1\big(\psi_2(f)\big)\Big) \cr
                                                                      & & + ad^*_{[y,\psi_1(f)]}\Big(\beta_2(x)\Big)
                                                                         +ad^*_x \circ ad^*_{\psi_1(f)}\Big(\beta_2(y)\Big) \cr
                                                                      & & -ad^*_x \circ ad^*_y\Big(\beta_2\big(\psi_1(f)\big)\Big)
                                                                          +ad^*_x \circ ad^*_y\Big(\beta_2\big(\psi_1(f)\big)\Big) \cr
                                                                      &=& -ad^*_y\circ ad^*_{\psi_2(f)}\Big(\beta_1(x)\Big)
                                                                           + ad^*_{\psi_2(f)}\circ ad^*_y\Big(\beta_1(x)\Big) \cr
                                                                      & & -ad^*_x\circ ad^*_{\psi_2(f)}\Big(\beta_1(y)\Big)
                                                                          + ad^*_y\circ ad^*_{\psi_1(f)}\Big(\beta_2(x)\Big)  \cr
                                                                      & & -ad^*_{\psi_1(f)}\circ ad^*_y\Big(\beta_2(x)\Big)
                                                                          + ad^*_x\circ ad^*_{\psi_1(f)}\Big(\beta_2(y)\Big)   \cr
                                                                      &=& -ad^*_y\circ ad^*_{\psi_2(\beta_1(x))}f
                                                                           + ad^*_{\psi_2(f)}\circ ad^*_y\Big(\beta_1(x)\Big) \cr
                                                                      & & -ad^*_x\circ ad^*_{\psi_2(\beta_1(y))}f
                                                                          + ad^*_y\circ ad^*_{\psi_1(\beta_2(x))}f  \cr
                                                                      & & -ad^*_{\psi_1(f)}\circ ad^*_y\Big(\beta_2(x)\Big)
                                                                          + ad^*_x\circ ad^*_{\psi_1(\beta_2(y))} f
\eeqn
Hence,
\beqn
[\beta_1 \circ \psi_2 - \beta_2 \circ \psi_1, ad^*_x \circ ad^*_y](f) &=& ad^*_y\circ ad^*_{(\psi_1\circ\beta_2-\psi_2\circ\beta_1)(x)}f
                                                                          +ad^*_x\circ ad^*_{(\psi_1\circ\beta_2-\psi_2\circ\beta_1)(y)}f \cr
                                                                      & & +ad^*_{\psi_2(f)}\circ ad^*_y\Big(\beta_1(x)\Big)
                                                                          -ad^*_{ \psi_1(f)}\circ ad^*_y\Big(\beta_2(x)\Big)
\eeqn
The map $(\beta_1 \circ \psi_2 - \beta_2 \circ \psi_1)$ does not satisfy the relation (\ref{xi-relation})
(with the prederivation $(\psi_1 \circ \beta_2-\psi_2 \circ \beta_1)$) because of the term
$ad^*_{\psi_2(f)}\circ ad^*_y\Big(\beta_1(x)\Big) -ad^*_{ \psi_1(f)}\circ ad^*_y\Big(\beta_2(x)\Big)$.

\subsection{Maps $\xi : \G^* \to \G^*$}

\bpro\label{proposition-xi'-caracterization}
Let $\G$ be a Lie algebra and let $\alpha$ be a prederivation of $\G$. A linear map $\xi' : \G \to \G$ satisfies
$[\xi',ad_x \circ ad_y] = -\big(ad_{\alpha(x)} \circ ad_y + ad_x\circ ad_{\alpha(y)}\big)$, for any elements $x$ and $y$ of $\G$ if and only if there exists a linear
map $j:\G \to \G$ satisfying
\beq
[j,ad_x\circ ad_y] = 0, 
\eeq
for any $x$, $y$ and $z$ in $\G$, such that $\xi' = j-\alpha$.
\epro

\begin{proof}
 Let $\xi'$ and $\alpha$ be as in the Proposition \ref{proposition-xi'-caracterization}.
 For any $x,y,z$ in $\G$, one has
\beqn
\xi'\circ ad_x \circ ad_y(z) -ad_x\circ ad_y\circ \xi'(z) &=& -ad_{\alpha(x)} \circ ad_y(z) - ad_x\circ ad_{\alpha(y)}(z) \cr              
                                                          &=&  -\big[\alpha(x),[y,z]\big] - \big[x,[\alpha(y),z]\big]  \cr 
                                                          &=& -\alpha\big([x,[y,z]]\big) +[x,[y,\alpha(z)]] \cr
                                                          &=& -\alpha \circ ad_x \circ ad_y (z) + ad_x \circ ad_y \circ \alpha(z). \nonumber
\eeqn
We then have
\beq \label{xi'-comute-adjoint-derived}
\Big[(\xi'+\alpha)\, ,\,ad_x \circ ad_y\Big] = 0,
\eeq
for any elements $x,y$ of $\G$. To finish the proof we just take $j=\xi'+\alpha$.

\end{proof}

Let us note by $\mathcal J'$ the space of linear maps $j:\G \to \G$ which satisfy 

\beq\label{rel-pre-biinvariant-tensor}
[j,ad_x\circ ad_y] = 0,
\eeq
for every $x,y,z$ in $\G$.
\bpro\label{proposition-xi-caracterization}
Let $\G$ be a non Abelian Lie algebra with dual space $\G^*$. A linear map $\xi : \G^* \to \G^*$ satisfies
$[\xi,ad^*_x \circ ad_y^*]=ad^*_{\alpha(x)} \circ ad^*_y+ ad^*_x \circ ad^*_{\alpha(y)}$, for some prederivation $\alpha$ of $\G$ and
every $x,y$ in $\G$ if and only if its transpose $\xi^t: \G \to \G$ is of the form $\xi^t = j-\alpha$, where
$j$ is in $\mathcal J'$.
\epro

\begin{proof}
Consider a linear map $\xi$ satisfying the hypothesis of Proposition \ref{proposition-xi-caracterization}. That is, for every
$x,y$ in $\G$, $[\xi,ad^*_x \circ ad_y^*]=ad^*_{\alpha(x)} \circ ad^*_y+ ad^*_x \circ ad^*_{\alpha(y)}$, for some $\alpha$ in $\pder(\G)$.
Taking transposes of the two sides one has
\beqn
[\xi,ad^*_x \circ ad_y^*]^t                      &=& \Big(ad^*_{\alpha(x)} \circ ad^*_y+ ad^*_x \circ ad^*_{\alpha(y)}\Big)^t \cr
-\Big[\xi^t,\big(ad^*_x \circ ad_y^*\big)^t\Big] &=& \Big(ad^*_{\alpha(x)} \circ ad^*_y \Big)^t + \Big( ad^*_x \circ ad^*_{\alpha(y)}\Big)^t \cr
-\Big[\xi^t,ad_y \circ ad_x\Big]                 &=& ad_y \circ ad_{\alpha(x)} +ad_{\alpha(y)} \circ ad_x  \cr
\Big[\xi^t,ad_y \circ ad_x\Big]                  &=& -\Big(ad_y \circ ad_{\alpha(x)} +ad_{\alpha(y)} \circ ad_x\Big).
\eeqn
From Proposition \ref{proposition-xi'-caracterization} we conclude that $\xi^t = j-\alpha$,
 where $j$ is in $\mathcal J'$.
\end{proof}

As a consequence, we have the following corollary of Theorem \ref{theorem:characterization-prederivations}.
\bcor\label{corollary-theorem:characterization-prederivations}
A prederivation $p :T^*\G \to T^*\G$ is defined by :
\beq
p(x,f) = \Big(\alpha(x) + \psi(f),\beta(x) + f \circ (j-\alpha)\Big)
\eeq
for any element $(x,f)$ of $T^*\G$, where $\alpha : \G \to \G$ is a prederivation of  $\G$,  $j : \G \to \G$ is in $\mathcal J'$, 
 $\beta : \G \to \G^*$ and $\psi : \G^* \to \G$  are linear maps satisfying relations (\ref{beta-relation}), 
(\ref{psi-relation1}) and  (\ref{psi-relation2}).  

\ecor

\section{Orthogonal Lie algebras}\label{section:orthogonal-lie-algebra-chap2}
\subsection{Maps $\alpha$, $\beta$, $\psi$, $\xi$ in orthogonal Lie algebras }

All over this section we consider  an orthogonal Lie algebra $(\G,\mu)$. Let $\theta : \G \to \G^*$
still stand for the isomorphism defined by $\langle \theta(x),y\rangle:=\mu(x,y)$, for all $x,y$ in $\G$.

\blem\label{isomorphism-Q'-Pder}
The map $\beta \mapsto \alpha_\beta:=\theta^{-1}\circ \beta$ is an isomorphism between the space $\mathcal Q'$ 
of linear maps $\beta$ satisfying relation (\ref{beta-relation}) and the space $\pder(\G)$ of prederivations of $\G$.
\elem 
\begin{proof}
 Let $\beta$ be an element of $\mathcal Q'$. For any $x,y,z$ in $\G$, we have
\beqn
\alpha_\beta\big(\big[x,[y,z]\big]\big)  &:=& \theta^{-1}\circ \beta\big(\big[x,[y,z]\big]\big)  \cr
                                          &=& \theta^{-1}\Big( -ad^*_{[y,z]}\big(\beta(x)\big)   +
                                              ad^*_x \circ ad^*_y\big(\beta(z)\big) - ad^*_x\circ ad^*_z\big(\beta(y)\big) \Big) \cr
                                          &=&\!\!\!\! -ad_{[y,z]} \circ \theta^{-1} \circ \beta(x) \!+\! ad_x \circ ad_y \circ \theta^{-1}
                                              \circ \beta(z) \!-\! ad_x\circ ad_z\circ\theta^{-1}\circ \beta(y) \cr
                                          &=&-\big[[y,z],\alpha_\beta(x)\big] + \big[x,[y,\alpha_\beta(z)]\big]
                                             - \big[x,[z,\alpha_\beta(z)]\big]
\eeqn
Then $\alpha_\beta$ belongs to $\pder(\G)$. Conversely, let $\alpha$ be a prederivation of $\G$ and set
$\beta_\alpha:=\theta\circ \alpha$. For any $x,y,z$ in $\G$ one has
\beqn
\beta_\alpha\big(\big[x,[y,z]\big]\big) &:=& \theta\Big( \big[\alpha(x),[y,z]\big] + \big[x,[\alpha(y),z]\big]
                                          + \big[x,[y,\alpha(z)]\big]\Big) \cr
                                        &=&-\theta\circ ad_{[x,y]}\big(\alpha(x)\big) + \theta\circ ad_x\big([\alpha(y),z]\big)
                                           + \theta \circ ad_x \big([y,\alpha(z)]\big) \cr
                                        &=&-ad^*_{[y,z]} (\theta\circ \alpha)(x) + ad^*_x\circ \theta\big(-ad_z(\alpha(y))\big)
                                           + ad^*_x\circ\theta\circ ad_y\big(\alpha(z)\big) \cr
                                        &=& -ad^*_{[y,z]} \beta_\alpha(x) - ad^*_x\circ ad^*_y\beta_\alpha(y)
                                            + ad^*_x\circ ad^*_y\beta_\alpha(z) \nonumber
\eeqn
Then $\beta_\alpha$ is an element of $\mathcal Q'$. This correspondence is obviously bijective.
\end{proof}

\vskip 0.3cm

Now we are going to look at maps $\psi's$ in the case where $\G$ is an orthogonal Lie algebra.

\blem \label{isomorphism-Psi-J'} 
The map $\psi \mapsto j_\psi:=\psi
\circ \theta$ is an isomorphism between the space of linear maps
$\psi:\G^* \to \G$ which satisfy relations (\ref{psi-relation1}) and the space $\mathcal J'$. 
\elem 

\begin{proof}
Take $\psi$  as in the Lemma \ref{isomorphism-Psi-J'}. Then for any elements $x,y,z$ in $\G$, we have
\beqn
[\psi\circ\theta,ad_x \circ ad_y] &=& \psi\circ\theta\circ  ad_x\circ ad_y -ad_x\circ ad_y \circ \psi \circ \theta \cr
                                  &=& \psi \circ ad^*_x\circ ad^*_y \circ \theta - \psi \circ ad^*_x\circ ad^*_y \circ \theta  \cr
                                  &=& 0 \nonumber
\eeqn
Then $j_\psi:=\psi\circ \theta$ is an element of $\mathcal J'$. 

Now consider an element $j$ of $\mathcal J'$ and set
$\psi_j:=j\circ \theta^{-1}$. Taking two arbitrary elements $x$ and $y$ of $\G$, we have
\beqn
\psi_j\circ ad^*_x\circ ad^*_y &=& j \circ \theta^{-1} \circ ad^*_x\circ ad^*_y \cr
                       &=& j \circ ad_x\circ ad_y \circ \theta^{-1}   \cr
                       &=&  ad_x\circ ad_y \circ j \circ \psi \cr
                       &=& ad_x\circ ad_y \circ \psi_j
\eeqn
Then $\psi_j$ satisfies relation (\ref{psi-relation1}). This correspondence is linear and invertible.
\end{proof}

\blem 
The maps $\xi \mapsto \theta^{-1}\circ \xi \circ \theta$ is an isomorphism of Lie algebras between the Lie algebra 
$\mathcal E'$ of linear maps $\xi : \G^* \to \G^*$ satisfying $[\xi,ad^*_x\circ ad^*_y] = ad^*_{\alpha(x)}\circ ad^*_y +ad^*_x\circ ad^*_{\alpha(y)}$, 
for some prederivation $\alpha$ of $\G$ and any $x$ and $y$ in $\G$, and the Lie algebra $\mathcal S'$ of linear 
maps $\xi':\G \to \G$ such that $[\xi',ad_x \circ ad_y] = ad_{\alpha(x)}\circ ad_y + ad_x \circ ad_{\alpha(y)}$, for some 
prederivation $\alpha$ of $\G$ and every elements $x,y$ of $\G$.
\elem 

\begin{proof}
 Consider an element $\xi$ of $\mathcal E'$  and set 
$\delta_\xi:=\theta^{-1}\circ \xi \circ \theta$. For $x,y,z$ in $\G$ we have
\beqn
[\delta_\xi,ad_x \circ ad_y]  &=& \delta_\xi \circ ad_x \circ ad_y - ad_x \circ ad_y \circ \delta_\xi \cr
                         &=& \theta^{-1}\circ \xi \circ \theta \circ ad_x \circ ad_y - ad_x \circ ad_y \circ \theta^{-1}\circ \xi \circ \theta \cr
                         &=& \theta^{-1}\circ \xi \circ ad^*_x \circ ad^*_y\circ \theta - \theta^{-1}\circ ad^*_x \circ ad^*_y \circ \xi \circ \theta \cr
                         &=& \theta^{-1}\circ ( \xi \circ ad^*_x \circ ad^*_y - ad^*_x \circ ad^*_y \circ \xi)\circ \theta \cr
                         &=& \theta^{-1}\circ [\xi,ad^*_x \circ ad^*_y] \circ \theta   \cr
                         &=& \theta^{-1}\circ \Big(ad^*_{\alpha(x)}\circ ad^*_y +ad^*_x\circ ad^*_{\alpha(y)}\Big) \circ \theta,
                                \text{ for some } \alpha \in \pder(\G) \cr
                         &=& \Big(ad_{\alpha(x)}\circ ad_y + ad_x \circ ad_{\alpha(y)}\Big) \circ \theta^{-1} \circ \theta \cr
                         &=& ad_{\alpha(x)}\circ ad_y + ad_x \circ ad_{\alpha(y)}.
\eeqn
Then $\delta_\xi$ is an element of $\mathcal S'$. 

\vskip 0.3cm
Now consider an element $\xi'$ of $\mathcal S'$ with $\alpha$ as
corresponding prederivation of $\G$. Then $\xi:=\theta \circ \xi' \circ \theta^{-1} $ is an element of
$\mathcal E'$ as one can see in the following. For any elements $x,y$ in $\G$ we have
\beqn
[\xi, ad^*_x \circ ad^*_y] &=& \xi \circ  ad^*_x \circ ad^*_y - ad^*_x \circ ad^*_y \circ \xi \cr
                   &=& \theta \circ \xi' \circ \theta^{-1} \circ   ad^*_x \circ ad^*_y -  ad^*_x \circ ad^*_y \circ \theta \circ \xi' \circ \theta^{-1} \cr
                   &=& \theta\circ \xi' \circ  ad_x \circ ad_y \circ \theta^{-1} - \theta \circ ad_x \circ ad_y \circ \xi' \circ \theta^{-1} \cr
                   &=& \theta \circ \big(\xi' \circ ad_x \circ ad_y-ad_x \circ ad_y \circ \xi' \big)\circ \theta^{-1} \cr
                   &=& \theta \circ [\xi',ad_x \circ ad_y] \circ \theta^{-1} \cr
                   &=& \theta \circ \big(ad_{\alpha(x)}\circ ad_y + ad_x \circ ad_{\alpha(y)}\big) \circ \theta^{-1} \cr
                   &=& \Big(ad^*_{\alpha(x)}\circ ad^*_y +ad^*_x\circ ad^*_{\alpha(y)}\Big) \circ \theta \circ \theta^{-1} \cr
                   &=& ad^*_{\alpha(x)}\circ ad^*_y +ad^*_x\circ ad^*_{\alpha(y)}.\nonumber
\eeqn
It comes that $\xi:=\theta \circ \xi' \circ \theta^{-1}$ belongs to $\mathcal E'$. 

\vspace{0.3cm}
Now note by $\delta : \mathcal E' \to \mathcal S'$, $\xi \mapsto \delta_{\xi}:=\theta^{-1} \circ \xi \circ \theta$.
For any $\xi_1$ and $\xi_2$ in $\mathcal E'$, we have :
\beqn
[\delta_{\xi_1},\delta_{\xi_2}] &=& \delta_{\xi_1}\circ \delta_{\xi_2} - \delta_{\xi_2}\circ \delta_{\xi_1}  \cr
                    &=& \theta^{-1} \circ \xi_1 \circ \theta \circ \theta^{-1} \circ \xi_2 \circ \theta -
                        \theta^{-1} \circ \xi_2 \circ \theta \circ \theta^{-1} \circ \xi_1 \circ \theta \cr
                    &=& \theta^{-1} \circ \xi_1 \circ \xi_2 \circ \theta -\theta^{-1} \circ \xi_2 \circ \xi_1 \circ \theta \cr
                    &=& \theta^{-1} \circ [\xi_1,\xi_2] \circ \theta \cr
                    &=& \delta_{[\xi_1,\xi_2]}  \nonumber
\eeqn
Thus, the map $\xi \mapsto \theta^{-1} \circ \xi \circ \theta$ is an isomorphism of Lie algebras.
\end{proof}

Now we have the following result which states that $\pder(T^*\G)$ is completely determined by $\pder(\G)$ and $\mathcal J'$.

\bpro\label{proposition:prederivation-orthogonal-lie-algebra}
Let $(\G,\mu)$ be an orthogonal Lie algebra and $\G^*$ its dual space. Consider the isomorphism 
$\theta : \G \to \G^*$ defined by $\langle \theta(x),y\rangle:=\mu(x,y)$. Any prederivation $\phi$ of $T^*\G$ 
has the following form
\beq 
\phi(x,f) = \Big( \alpha_1(x) + j_1 \circ \theta^{-1}(f) \,,\, \theta\circ \alpha_2(x) + (j_2^t-\alpha_1^t)(f)\Big),
\eeq 
for every $(x,f)$ in $T^*\G$, where 
\bitem
\item[-] $\alpha_1,\alpha_2$ are prederivations of $\G$, 
\item[-] $j_1,j_2$ are in $\mathcal J'$, with $ad^*_x\big(ad^*_{j_1\circ \theta^{-1}(f)}g-ad^*_{j_1\circ \theta^{-1}(g)}f\big)=0$, 
for all $x$ in $\G$; $f,g$ in $\G^*$,
\item[-] $j_2^t$ and $\alpha_1^t$ are the transposes of $j_2$ and $\alpha_1$ respectively.
\eitem 
\epro 

\brmq
\benum
\item Recall relation (\ref{psi-relation1}) :
$$
\psi \circ ad^*_x \circ ad^*_y = ad_x \circ ad_y \circ \psi,
$$ 
for all $x$ and $y$ in $\G$. We can also write
$$
\psi \circ ad^*_y \circ ad^*_x = ad_y \circ ad_x \circ \psi,
$$
for all $x$ and $y$ in $\G$. Substracting the above two relations we have, for all elements $x,y$ of $\G$,
$$
\psi \circ [ad^*_x,ad^*_y] = [ad_x,ad_y] \circ \psi 
$$
which can be written
$$ 
\psi \circ ad^*_{[x,y]}    =  ad_{[x,y]} \circ \psi.
$$
Hence, if $\psi$ satisfies relation (\ref{psi-relation1}) then 
\beq 
\psi \circ ad^*_{[x,y]}  =  ad_{[x,y]} \circ \psi, \label{psi-rel1}
\eeq 
for any $x$ and $y$ in $\G$.
\item By the same way, we recall relation (\ref{xi-relation}) :
$$
[\xi,ad^*_x\circ ad^*_y] = ad^*_{\alpha(x)}\circ ad^*_y +ad^*_x\circ ad^*_{\alpha(y)}
$$
for any $x$ and $y$ in $\G$. Changing the roles played by $x$ and $y$ we obtain
$$
[\xi,ad^*_y \circ ad^*_x] = ad^*_{\alpha(y)} \circ ad^*_x + ad^*_y \circ ad^*_{\alpha(x)}.
$$
Substracting again the two last relations above, we have
\beqn
\Big[\xi,[ad^*_x \,,\, ad^*_y]\Big]  &=& \big[ad^*_{\alpha(x)}, ad^*_y\big] + \big[ad^*_x,ad^*_{\alpha(y)}\big] \cr
\Big[\xi,ad^*_{[x,y]}\Big]           &=& ad^*_{[\alpha(x),y]} + ad^*_{[x,\alpha(y)]}. \nonumber
\eeqn
That is 
\beq 
\Big[\xi,ad^*_{[x,y]}\Big]  = ad^*_{([\alpha(x),y] + [x,\alpha(y)])},  \label{xi-relation2}
\eeq 
for all $x$ and $y$ in $\G$.

\eenum
\ermq

\subsection{Semi-simple Lie algebras}\label{chap:semi-simple-lie-algebras2}

A semi-simple Lie algebra is  an orthogonal Lie algebra.
Moreover, if $\G$ is semi-simple, then any prederivation is a derivation and hence an inner derivation (\cite{muller}).
In this case, we can write relation (\ref{xi-relation2}) as follows
\beq
\Big[\xi,ad^*_{[x,y]}\Big]  = ad^*_{\alpha([x,y])}
\eeq
for all $x,y$ in $\G$. Since $[\G,\G]=\G$, we can simply write
\beq
[\xi,ad^*_x]  = ad^*_{\alpha(x)}
\eeq
for any $x$ of $\G$. The following lemma is an immediate consequence of Section \ref{chap:orthogonal-algebra}.

\blem\label{xi-semi-simple}
If $\G$ is a semi-simple Lie algebra, then $\mathcal E'$ is nothing but the space of linear maps
$\xi':\G^* \to \G^*$ such that
$$
[\xi,ad^*_x] = ad^*_{\alpha(x)},
$$
for some derivation $\alpha$ of $\G$ and any $x$ in $\G$. Furthermore, if $\G$ decomposes into simple ideal as follows
$\G = \s_1 \oplus \s_2 \oplus \cdots \oplus \s_p$ ($p \in \N$), then
\beq
\xi = ad^*_{x_0} + \bigoplus_{i=1}^p\lambda_i id_{\s_i^*},
\eeq
for some $x_0$ in $\G$ and some real numbers $\lambda_1, \lambda_2, \ldots, \lambda_p$.
\elem

\vskip 0.3cm

Now we work on the maps $\beta$ and the relation (\ref{beta-relation}).
The maps $\beta \mapsto \alpha_\beta:=\theta^{-1}\circ \beta$ defined
in Lemma \ref{isomorphism-Q'-Pder} is then an isomorphism between the spaces $\der(\G)$ and $\mathcal Q'$,
since $\pder(\G)=\der(\G)$. From Proposition \ref{isomorphism-cocycle-derivation} it comes that if $\alpha_\beta$ is a derivation then
$\beta = \theta \circ \alpha_\beta$ is a $1$-cocycle of $\G$ with values in $\G^*$ for the coadjoint representation of
$\G$ on $\G^*$. We then proved the following lemma.

\blem\label{beta-semi-simple}
If $\G$ is a semi-simple Lie algebra, then any element $\beta$ of $\mathcal Q'$ is a $1$-cocycle of $\G$ with values in
$\G^*$ for the coadjoint representation of $\G$ on $\G^*$.
\elem

\blem\label{psi-semi-simple}
If $\G$ is a semi-simple Lie algebra, then $\Psi'=\{0\}$.
\elem
\begin{proof}
Relation (\ref{psi-relation2}) means that the linear form $ad^*_{\psi(f)}g - ad^*_{\psi(g)}f$ is closed,
for any linear forms $f$ and $g$ on $\G$. Since $\G$ is semi-simple, it is perfect ;  that is $\G$ is equal to its derived
ideal ($[\G,\G]=\G$). It is known that any closed form on a perfect Lie algebra is zero. Then, the closed form
$ad^*_{\psi(f)}g - ad^*_{\psi(g)}f$ is equal to zero for any $f$ and $g$ in $\G^*$, {\it i.e.}
\beq
ad^*_{\psi(f)}g - ad^*_{\psi(g)}f = 0,
\eeq
for every $f$ and $g$ in $\G$. Again because $[\G,\G]=\G$,  Relation (\ref{psi-relation1}) becomes
\beq
\psi \circ ad^*_x = ad_x \circ \psi
\eeq
Now we conclude with Proposition \ref{prop:psi-semisimple}.

\end{proof}

We summarize all the above by the following

\bthm\label{thm-prederivation-semi-simple}
Let $G$ be a finite dimensional semi-simple Lie group with Lie algebra $\G$. Then every prederivation of the Lie algebra $T^*\G$ of the 
cotangent bundle Lie group $T^*G$ of $G$ is a derivation.
\ethm

\begin{proof}
 This is direct consequence of Lemmas \ref{xi-semi-simple}, \ref{beta-semi-simple}, \ref{psi-semi-simple} and the fact that any
prederivation of a semi-simple Lie algebra is a derivation.
\end{proof}

\subsection{Compact Lie algebras}\label{section:compact-prederivation}

\blem\label{prederivation-compact}
Let $\G$ be a compact Lie algebra. Then every prederivation $\alpha$ of $\G$ is of the form $\alpha = ad_{x_0} \oplus \varphi$,
where $x_0$ belongs to the derived ideal of $[\G,\G]$ and $\varphi$ is an endomorphism of $Z(\G)$. That is for any $x_1$ in
$[\G,\G]$ and $x_2$ in $Z(\G)$,
 \beq
\alpha(x_1+x_2)=[x_0,x_1] + \varphi(x_2),
\eeq
where $x_0$ in $[\G,\G]$, where $\varphi$ is an endomorphism of $Z(\G)$.
\elem

\begin{proof}

Recall that a compact Lie algebra is a Lie algebra $\G$ which decomposes into the sum $\G=[\G,\G]\oplus Z(\G)$
of its derived ideal and its centre, with $[\G,\G]$ semi-simple and compact. A prederivation
$\alpha$ of such a Lie algebra preserves each of the factor $[\G,\G]$ and $Z(\G)$. Then it can be written as a direct sum of
a prederivation  $\alpha_1$ of $[\G,\G]$ and a prederivation $\alpha_2$ of $Z(\G)$. Since $[\G,\G]$ is semi-simple, 
the prederivation $\alpha_1$ is an inner derivation. Furthermore, the derivation $\alpha_2$ of $Z(\G)$ is just 
an endomorphism of $Z(\G)$ because
$Z(\G)$ is an Abelian ideal of $\G$. Hence, we can write
\beq
\alpha = ad_{x_0} \oplus \varphi,
\eeq
where $x_0$ is an element of the derived ideal of $\G$ and $\varphi$ is an endomorphism of $Z(\G)$. The prederivation
$\alpha$ acts on $\G$ as follows. If $x=x_1+x_2$ with $x_1$ in $[\G,\G]$ and $x_2$ in $Z(\G)$,
\beq
\alpha(x)=\alpha(x_1+x_2) = [x_0,x_1] + \varphi(x_2).
\eeq
\end{proof}

\blem\label{lemma:psi'-compact}
If $\G$ is a compact Lie algebra. Then the space $\Psi'$ is isomorphic to the space $End(Z(\G))$ of endomorphisms of the 
centre $Z(\G)$ of $\G$.
\elem

\begin{proof}

 A compact Lie algebra admits an orthogonal structure, then from Lemma \ref{isomorphism-Psi-J'}
the space of linear maps $\psi:\G^* \to \G$ which are equivariant with respect to the adjoint and coadjoint
representation of $[\G,\G]$ on $\G$ and $\G^*$ respectively is isomorphic to the space $\mathcal J'$. 
The correspondence is given by $\psi=j \circ \theta^{-1}$, for some $j$ in $\mathcal J'$.
For any element $x$ of $\G=[\G,\G]\oplus Z(\G)$, we write $x=x_1+x_2$, where $x_1$ is in the semi-simple ideal 
$[\G,\G]$ while $x_2$ belongs to the centre $Z(\G)$ of $\G$. Set 
\beq 
j(x) = \underbrace{\big(j_{11}(x_1)+j_{21}(x_2)\big)}_{\in [\G,\G]} + 
\underbrace{\big(j_{12}(x_1)+j_{22}(x_2)\big)}_{\in Z(\G)}, 
\eeq 
where $j_{11}:[\G,\G] \to [\G,\G]$, $j_{12} : [\G,\G] \to Z(\G)$, $j_{21}: Z(\G) \to [\G,\G]$ 
and $j_{22} : Z(\G) \to Z(\G)$  are linear maps.
 For any $x,y,z$ in $\G$, we have
\beqn
j\Big( \big[[x,y],z\big]\Big) &=& \big[[x,y],j(z)\big] \cr
j\Big( \big[[x_1,y_1],z_1\big]\Big) &=& \big[[x_1,y_1],j_{11}(z_1)+j_{21}(z_2)\big] \cr
                                    &=& \big[[x_1,y_1],j_{11}(z_1)\big] + \big[[x_1,y_1],j_{21}(z_2)\big] \label{hermine}
\eeqn 
If we take $z_2=0$, then $j\Big( \big[[x_1,y_1],z_1\big]\Big) = \big[[x_1,y_1],j_{11}(z_1)\big]$ for 
all $x_1,y_1,z_1$ in $[\G,\G]$. Relation (\ref{hermine}) gives $\big[[x_1,y_1],j_{21}(z_2)\big] = 0$ 
for all $x_1,y_1$ in $[\G,\G]$  and $z_2$ in $Z(\G)$; that is $j_{21}(z_2)$ belongs 
to the centre of $[\G,\G]$ which is $\{0\}$ since $[\G,\G]$ is semi-simple. 
It comes that $j_{21}\equiv 0$. On the other way 
\beqn 
j\Big( \big[[x_1,y_1],z_1\big]\Big) &=& j_{11}\Big( \big[[x_1,y_1],z_1\big]\Big) + j_{12}\Big( \big[[x_1,y_1],z_1\big]\Big) \cr
\underbrace{\big[[x_1,y_1],j_{11}(z_1)\big]}_{\in [\G,\G]}  &=& 
\underbrace{j_{11}\Big( \big[[x_1,y_1],z_1\big]\Big)}_{\in [\G,\G]} 
+ \underbrace{ j_{12}\Big( \big[[x_1,y_1],z_1\big]\Big)}_{\in Z(\G)}
\eeqn
It comes that  $j_{12}\Big( \big[[x_1,y_1],z_1\big]\Big) = 0$, for all $x_1,y_1,z_1$ in $[\G,\G]$. 
Then $J_{12}\equiv 0$ on the 
semi-simple ideal $[\G,\G]$. Now we have just 
\beq 
j(x) = j_{11}(x_1) + j_{22}(x_2),
\eeq 
where $j_{11}$ is an endomorphism of $[\G,\G]$ satisfying 
\beq\label{j-semi-simple-derived-ideal} 
j_{11}\Big( \big[[x_1,y_1],z_1\big]\Big) =\big[[x_1,y_1],j_{11}(z_1)\big],
\eeq  
for all $x_1,y_1,z_1$ in $[\G,\G]$ ; and $j_{22}$ is in $End(Z(\G))$. Since $[\G,\G]$ is perfect 
then (\ref{j-semi-simple-derived-ideal})
can be written 
\beq 
j_{11}([x_1,y_1]) = [x_1,j_{11}(y_1)],
\eeq 
for all $x_1,y_1$ in $[\G,\G]$. It comes, from Corollary \ref{j-semi-simple}, that 
\beq \label{j11compact}
j_{11}(x) = \sum_{i=1}^p\lambda_ix_{1i} + j_{22}(x_2),
\eeq 
where $x_1=x_{11}+ x_{12}+\cdots + x_{1p}$ is the decomposition of $x_1$ into elements of the simple components of $[\G,\G]$.
Now we have,
\beqn
\psi(f) &=& j \circ \theta^{-1}(f) \cr
        &=& j\Big(\sum_{k=1}^p \theta^{-1}(f_{1k}) + \theta^{-1}(f_2) \Big) \cr 
        &=& \sum_{i=1}^p\lambda_i\theta^{-1}(f_{1i}) + j_{22}\circ \theta^{-1}(f_2) \label{psi-prederivation-compact}
\eeqn 
From Lemma \ref{psi-semi-simple} the restriction of $\psi$ to the semi-simple ideal must be zero, then
\beq\label{psi-prederivation-compact1}
\psi(f) = j_{22}\circ \theta^{-1}(f_2).
\eeq 
and we are done.
%
\end{proof}

\blem\label{isomorphism-Q'-Pder2} 
If $\G$ is a compact Lie algebra, then the space $\mathcal Q'$ is isomorphic to the space
$ad_{[\G,\G]}\oplus End(Z(\G))$, where $ad_{[\G,\G]}$ stands for the
space of inner derivations of the derived ideal $[\G,\G]$ of $\G$.
\elem

\begin{proof}
 The proof is straightforward. Lemma \ref{isomorphism-Q'-Pder} asserts that $\mathcal Q'$ is isomorphic to $\pder(\G)$ and
Lemma \ref{prederivation-compact} implies that $\pder(\G)\stackrel{\sim}{=}ad_{[\G,\G]}\oplus End(Z(\G))$.
\end{proof}

\blem 
Let $\G$ be a compact Lie algebra. Then any linear map $\xi :
\G^* \to \G^*$ satisfying relation (\ref{xi-relation}) can be
written as 
\beq 
\xi = \left(\bigoplus_{i=1}^p \lambda_i id^*_{\s_i}
+ ad^*_{x_0}\right) \oplus \eta, 
\eeq 
where $\eta$ is an endomorphism of $Z(\G^*)$, $x_0$ is an element of  $[\G,\G]=\s_1\oplus\s_2\cdots\oplus\s_p$;
$\lambda_1, \lambda_2,\ldots, \lambda_p$ are real numbers and $p$ is
the number or simple components of $[\G,\G]$. More precisely, if
$f=f_1 + f_2$ is an element of $\G^*$ with $f_1 = f_{11} + f_{12} +
f_{13} + \cdots + f_{1p}$ in $[\G,\G]^*=\s^*_1\oplus \s^*_2 \oplus
\cdots \oplus \s^*_p$ and $f_2$ in $Z(\G)^*$, then 
\beq 
\xi(f) =\sum_{i=1}^p\lambda_i f_i + ad^*_{x_0}f_1 + \eta(f_2). 
\eeq 
\elem

\begin{proof}
We have already seen that the transpose $\xi^t$ of $\xi$ has the form $\xi^t=j-\alpha$, where $j$ is in 
$\mathcal J'$. From the proof of Lemma \ref{lemma:psi'-compact} (see Relation (\ref{j11compact})) we have
\beq
j = \left(\bigoplus_{i=1}^p \lambda_i id_{\s_i}\right) \oplus \rho_1, \nonumber
\eeq
where $\rho_1$ is an endomorphism of $Z(\G)$. Hence,
$$
\xi^t = \left(\bigoplus_{i=1}^p \lambda_i id_{\s_i}\right) \oplus \rho_1  -\big(ad_{x_0} \oplus \varphi_1\big),
$$
since a prederivation $\alpha$ of the compact Lie algebra $\G$ is given by $\alpha = ad_{x_0} \oplus \varphi_1$, where
$x_0$ is in $[\G,\G]$ and $\varphi_1$ is an endomorphism of $Z(\G)$ (see Lemma \ref{prederivation-compact}). We simply write
$$
\xi^t = \left(\bigoplus_{i=1}^p \lambda_i id_{\s_i}  -ad_{x_0}\right) \oplus \varphi_2,
$$
where $\varphi_2=\rho_1+\varphi_1$ belongs to $End(Z(\G))$. It comes that
$$
\xi = \left(\bigoplus_{i=1}^p \lambda_i id_{\s^*_i}  +ad^*_{x_0}\right) \oplus \eta,
$$
where $\eta = \varphi_2^t$ is the transpose of $\varphi_2$.
\end{proof}

The following Proposition holds and is a direct consequence of the Lemmas above.

\bpro
Let $\G$ be a compact Lie algebra. Let $\theta : \G \to \G^*$ be the isomorphism 
defined by $\langle \theta(x),y\rangle:=\mu(x,y)$, where $\mu$ is an arbitrary orthogonal structure on $\G$.
Then any prederivation of $T^*\G$ has the following form : 
\beqn 
\phi(x,f)  &=&  \Big(\![x_0,x_1]\! +\! \varphi_1(x_2) \!+\! \varphi_2 \circ \theta^{-1}(f_2) \,\,,\,\,
\theta([y_0,x_1]) \!+\! \theta\circ \varphi_3(x_2) \nonumber\\
\!& &+\! \sum_{i=1}^p\lambda_if_{1i} \!+\! 
ad^*_{x_0}f_1 \!+\! \eta(f_2) \!\Big), 
\eeqn 
where 
\bitem
\item[-] $x_0,y_0$ are in $[\G,\G]$; 
\item[-] $\varphi_1, \varphi_2, \varphi_3$ are endomorphisms of $Z(\G)$; 
\item[-] $\eta$ is in $End\big(Z(\G)^*\big)$;
\item[-] $\nu_i,\lambda_i$, $i=1,2,\ldots,p$ are real numbers;  
\item[-] $x=x_1+x_2=x_{11}+x_{12}+\cdots+x_{1p}+x_2 \in \G=[\G,\G]\oplus Z(\G)$ 
\item[-] and $f=f_1+f_2=f_{11}+f_{12}+\cdots+f_{1p}+f_2 \in \G^*=[\G,\G]^*\oplus Z(\G)^*$; 
\eitem 
$p$ being the number of simple components of 
$[\G,\G]=\s_1\oplus\s_2\oplus\cdots\oplus\s_p$. 
\epro 

\begin{proof}
Let $\G=[\G,\G]\oplus Z(\G)$ be a compact Lie algebra. From Proposition \ref{proposition:prederivation-orthogonal-lie-algebra} 
we have that any  prederivation $\phi$ of $\G$ has the form 
$$
\phi(x,f) = \Big( \alpha_1(x) + j_1 \circ \theta^{-1}(f) \,,\, \theta\circ \alpha_2(x) + (j_2^t-\alpha_1^t)(f)\Big),
$$
 for every $(x,f)$ in $T^*\G$, where $\alpha_1,\alpha_2$ are prederivations of $\G$, $j_1,j_2$ are in 
$\mathcal J'$, $j_2^t$ and $\alpha_1^t$ are the transposes of $j_2$ and $\alpha_1$ respectively.
 Now Lemma \ref{prederivation-compact} implies that $\alpha_1(x)=[x_0,x_1] +
\varphi_1(x_2)$ and $\alpha_2(x)=[y_0,x_1] + \varphi_3(x_2)$, for every $x=x_1+x_2 \in \G=[\G,\G]\oplus Z(\G)$
where $x_0$ and $y_0$ are fix elements of the derived ideal $[\G,\G]$. From the proof of Lemma \ref{lemma:psi'-compact}, we have
\beqn 
j_1(x)&=&\sum_{i=1}^p\nu_ix_{1i} + \varphi_2(x_2),\cr
j_2(x)&=&\sum_{i=1}^p\lambda_ix_{1i} + \varphi'_2(x_2), \nonumber
\eeqn 
where $x=x_1+x_2=x_{11}+x_{12}+ \cdots + x_{1p}+x_2 \in
[\G,\G]\oplus Z(\G)$, $\lambda, \nu_i,
i=1,2,\ldots,p$ being real numbers. Then, for any
$f=f_1+f_2=f_{11}+f_{12}+\cdots+f_{1p}+f_2$ in $[\G,\G]^*\oplus
Z(\G)^*$, we have

\beqn 
j_1\circ \theta^{-1}(f) &=& j_1\big(\theta^{-1}(f)\big) \cr
        &=&\sum_{i=1}^p\nu_i \Big(\theta^{-1}(f_{1i}) \Big) +
        \varphi_2\big(\theta^{-1}(f_2)\big).
\eeqn
The restriction of $\psi:=j_1\circ\theta^{-1}$ to the dual space of the semi-simple
ideal $[\G,\G]$ must be zero (see Proposition \ref{prop:psi-semisimple}). Then we have

\beq 
j_1\circ \theta^{-1}(f) = \varphi_2\circ \theta^{-1}(f_2), 
\eeq 
for all $f=f_1+f_2=f_{11}+f_{12}+\cdots+f_{1p}+f_2$ in $[\G,\G]^*\oplus Z(\G)^*$.

\vskip 0.3cm

 Let us, now have a look at maps $\beta:=\theta\circ \alpha_2$. We have 

\beq 
\beta(x)=\theta\big([y_0,x_1]\big) + \theta\circ \varphi_3(x_2)
\eeq 
for all $x=x_1+x_2$ in $\G=[\G,\G]\oplus Z(\G)$.
\end{proof}

\vskip 0.5cm

We finish this chapter by giving some situations where prederivations are used.

\section{Possible Applications and Examples}\label{section:applications-examples-chap3}

\subsection{Examples}
\bex[Affine Lie algebra of the real line]\label{chap2:aff(R)}{\normalfont
Let $\hbox{\rm Aff}(\R)$ stand for the affine Lie group of the real line and note by $\hbox{\rm aff}(\R)$ its Lie algebra.
We recall (see Example \ref{chap:aff(R)})  that $\D_1:=T^*\hbox{\rm aff}(\R)=\sspan(e_1, e_2, e_3)$ with the following brackets
$$
[e_1,e_2]=e_2 \;,\; [e_1,e_4]=-e_4 \;,\; [e_2,e_4]=e_3,
$$ 
One can readily verify that $\pder(\D_1)=\der(\D_1)= \R^2\ltimes\R^3$ (semi-direct product of the Abelian Lie algebras 
$\R^3=\sspan_{\R}(\phi_1,\phi_3,\phi_4)$ and $\R^2=\sspan_{\R}(\phi_2,\phi_5)$) with the following brackets (see Example \ref{chap:aff(R)}) :
$$
[\phi_2,\phi_1]=\phi_1 \; , \;  [\phi_2,\phi_3] = \phi_3 \;,\; [\phi_5,\phi_3]=\phi_3 \;,\; [\phi_5,\phi_4]= \phi_4.
$$ 

}
\eex 

\bex[The Lie Algebra of the Group $SL(2,\R)$ of Special Linear Group and the Lie Algebra of the Group $SO(3)$ of Rotations]{\normalfont
The Lie algebra $\mathfrak{sl}(2,\R)$ is simple, then $\pder(T^*\mathfrak{sl}(2,\R))=\der(T^*\mathfrak{sl}(2,\R))$ (See Example \ref{chap:sl(2)}). By the same argument,
$\pder(T^*\mathfrak{so}(3,\R))=\der(T^*\mathfrak{so}(3,\R))$ (see Example \ref{chap:so(3)}).
}
\eex 
\bex[The $4$-dimensional Oscillator Group]\label{prederivations-osciallator-group}{\normalfont In Example \ref{example-oscillator-group} we have define the Oscillator Lie group
and its Lie algebra. 
The $4$-dimensional oscillator algebra, is the space $\G_\lambda = \sspan\{e_{-1},e_0,e_1,\check{e}_1\}$ ($\lambda >0$) with the following brackets :
\beq 
[e_{-1},e_1] = \lambda \check e_1 \quad ; \quad  [e_{-1},\check e_1] = -\lambda e_1 \quad
; \quad [e_1,\check e_1] = e_0.
\eeq 
Let $(e^*_{-1},e^*_0,e^*_1,\check{e}^*_1)$ stand for  the basis of $\G_\lambda^*$ dual to $(e_{-1},e_0,e_1,\check{e}_1)$.
Then, the Lie algebra $T^*\G_\lambda = \sspan(e_{-1},e_0,e_1,\check{e}_1,e^*_{-1},e^*_0,e^*_1,\check{e}^*_1)$ with the brackets

\beq
\begin{array}{rcccrcccccc}
[e_{-1},e_1] &=& \lambda \check e_1 &;&  [e_{-1},\check e_1] &=& -\lambda e_1 &;& [e_1,\check e_1] &=& e_0   \cr
[e_{-1},e^*_1] &=& \lambda \check e^*_1 &;&  [e_{-1},\check e^*_1] &=& -\lambda e^*_1 &;& [e_1, e^*_0] &=&- \check e^*_1  \cr
[\check e_1,e^*_1] &=&-\lambda e^*_{-1} &;&  [e_1,\check e^*_1] &=& \lambda e^*_{-1} &;& [\check e_1, e^*_0] &=& e^*_1 \cr 
\end{array}
\eeq 
 
Consider the form $\mu_\lambda$ defined on $\G_\lambda$ by
\beq\label{orthogonal-structure-oscillator} 
\mu_\lambda(x,y) = x^{-1}y^0 + x^0y^{-1} + \frac{1}{\lambda}(x^1y^1+\check x^1 \check y^1)
\eeq 
for all $x=x^{-1}e_{-1}+x^0e_0+x^1e_1+\check x^1\check e_1$ and $y=y^{-1}e_{-1}+y^0e_0+y^1e_1+\check y^1\check e_1$. It is readily 
cheiked that the form (\ref{orthogonal-structure-oscillator}) defines an orthogonal structure on $\G_\lambda$ (\cite{bromberg-medina2004}). 
The isomorphism 
$\theta : \G_\lambda \to \G_\lambda$ defined by $\langle \theta (x),y\rangle = \mu_\lambda(x,y)$, for all $x,y$ in $\G_\lambda$ is given by
\beq 
\theta(e_{-1}) = e_0^* \quad ,\quad  \theta(e_0) = e_{-1}^* \quad ,\quad  
\theta(e_1)=\frac{1}{\lambda}e_1^* \quad ,\quad \theta(\check e_1)=\frac{1}{\lambda}\check e_1^*,
\eeq 
The inverse map of $\theta$ reads :

\beq
\theta^{-1}(e_{-1}^*) = e_0 \quad ,\quad  \theta^{-1}(e_0^*) = e_{-1} \quad ,\quad  
\theta^{-1}(e_1^*)=\lambda e_1 \quad ,\quad \theta^{-1}(\check e_1^*)=\lambda \check e_1.
\eeq 

Since $\G_\lambda$ is an orthogonal Lie algebra, any prederivation $\phi$ of $T^*\G_\lambda$ can be written as follows.
$$
\phi(x,f) = \Big( \alpha_1(x) + j_1 \circ \theta^{-1}(f) \,,\, \theta\circ \alpha_2(x) + (j_2^t-\alpha_1^t)(f)\Big),
$$ 
for every $(x,f)$ in $T^*\G$, where $\alpha_1,\alpha_2$ are in $\pder(\G_\lambda)$ and $j_1,j_2$ are in $\mathcal{J}'$ 
with conditions listed in Proposition \ref{proposition:prederivation-orthogonal-lie-algebra}. Now we have :
\bitem 
\item A prederivation  $\alpha$ of $\G_\lambda$ can be represented in the basis $(e_{-1},e_0,e_1,\check{e}_1)$ 
by the following matrix.
\beq 
\alpha = \left(
\begin{array}{ccrc}
0               &   0        &    0      &     0   \cr
a_{21}          & 2a_{33}    &   a_{23}  &  a_{24} \cr
-\lambda a_{23} &   0        &   a_{33}  &  a_{34} \cr
-\lambda a_{24} &   0        &  -a_{34}  &  a_{33}
\end{array}
\right),
\eeq 
where $a_{ij}$'s are reals numbers. We can put 
$$
\alpha_1 = \left(
\begin{array}{ccrc}
0               &   0        &    0      &     0   \cr
a_{21}          & 2a_{33}    &   a_{23}  &  a_{24} \cr
-\lambda a_{23} &   0        &   a_{33}  &  a_{34} \cr
-\lambda a_{24} &   0        &  -a_{34}  &  a_{33}
\end{array}
\right) \quad ;\quad  \alpha_2 = \left(
\begin{array}{ccrc}
0               &   0        &    0      &     0   \cr
b_{21}          & 2b_{33}    &   b_{23}  &  b_{24} \cr
-\lambda b_{23} &   0        &   b_{33}  &  b_{34} \cr
-\lambda b_{24} &   0        &  -b_{34}  &  b_{33}
\end{array}
\right).
$$
\item A linear map $j:\G_\lambda \to \G_\lambda$ which satisfies (\ref{rel-pre-biinvariant-tensor}) has the following form :
\beq 
j = \left(
\begin{array}{ccrc}
j_{11}          &   0        &    0      &     0   \cr
j_{21}          &  j_{11}    &    0      &     0   \cr
     0          &   0        &   j_{11}  &     0 \cr
    0           &   0        &    0      &   j_{11}
\end{array}
\right),
\eeq 
where $j_{11}$  and $j_{21}$ are real numbers. We set
$$
j_1 = \left(
\begin{array}{ccrc}
a_1          &   0     &    0   &     0   \cr
a_2          &  a_1    &    0   &     0   \cr
    0        &   0     &   a_1  &     0 \cr
    0        &   0     &    0   &   a_1
\end{array}
\right) \quad ; \quad j_2 = \left(
\begin{array}{ccrc}
b_1          &   0     &    0   &     0   \cr
b_2          &  b_1    &    0   &     0   \cr
    0        &   0     &   b_1  &     0 \cr
    0        &   0     &    0   &   b_1
\end{array}
\right).
$$
\eitem
It follows that 
$$
j_1\circ \theta^{-1} = \left(
\begin{array}{cccc}
0            &  a_1    &    0        &     0   \cr
a_1          &  a_2    &    0        &     0   \cr
    0        &   0     &\lambda a_1  &     0   \cr
    0        &   0     &    0        &\lambda a_1
\end{array}
\right) \quad ; \quad 
\theta \circ \alpha_2 = \left(
\begin{array}{cccc}
b_{21}   & 2b_{33}    &    b_{23}               &  b_{24} \cr
0        &   0        &     0                   &     0   \cr
- b_{23} &   0        &\frac{1}{\lambda}b_{33}  &\frac{1}{\lambda}b_{34} \cr
- b_{24} &   0        &-\frac{1}{\lambda}b_{34} &\frac{1}{\lambda}b_{33}
\end{array}
\right).
$$
If we consider the fact that $ad_x^*\big(ad^*_{j_1\circ\theta^{-1}(f)}g - ad_{j_1\circ\theta^{-1}(g)}f\big) = 0$, 
for all $x$ in $\G_\lambda$ and any $f,g$ in $\G^*_\lambda$, we obtain
$$
j_1\circ \theta^{-1} = \left(
\begin{array}{cccc}
    0        &   0     &    0        &     0   \cr
    0        &  a_2    &    0        &     0   \cr
    0        &   0     &    0        &     0   \cr
    0        &   0     &    0        &     0
\end{array}
\right)
$$
Now we write the matrix of a prederivation $\phi$ of $T^*\G_\lambda$.
\beq 
\phi=\left(
\begin{array}{cccccccc} 
0               &   0        &    0      &     0   &    0     &   0     &    0        &     0   \cr
&            &                         &           &     &              &  &   \cr
a_{21}          & 2a_{33}    &   a_{23}  &  a_{24} &    0     &  a_2    &    0        &     0   \cr
&            &                         &           &     &              &  &   \cr
-\lambda a_{23} &   0        &   a_{33}  &  a_{34} &    0     &   0     &    0        &     0   \cr
&            &                         &           &     &              &  &   \cr
-\lambda a_{24} &   0        &  -a_{34}  &  a_{33} &    0     &   0     &    0        &     0   \cr
&            &                         &           &     &              &  &   \cr
b_{21}          & 2b_{33}    &    b_{23}               &  b_{24}   &  b_1   & b_2-a_{21}  &\lambda a_{23}   & \lambda a_{24} \cr
&            &                         &           &     &              &  &   \cr
0               &   0        &     0                   &     0     &  0  & b_1-2a_{33} & 0 & 0   \cr
                &            &                         &           &     &              &  &      \cr    
- b_{23}        &   0        &\frac{1}{\lambda}b_{33}  &\frac{1}{\lambda}b_{34} & 0 & -a_{23} & b_1-a_{33} & a_{34} \cr
&            &                         &           &     &              &  &   \cr
- b_{24}        &   0        &-\frac{1}{\lambda}b_{34} &\frac{1}{\lambda}b_{33} & 0 & -a_{24} & -a_{34} & b_1-a_{33}
\end{array}\right)
\eeq 
It comes that $\pder(T^*\G_\lambda) = \sspan\{\phi_i, 1\leq i \leq 13\}$, where 
\beq
\begin{array}{ccl|cccl}
\hline 
\phi_1 &=& e_{21}-e_{56}                &&   \phi_2       &=& 2e_{22}+e_{33}+e_{44}-2e_{66}-e_{77}-e_{88} \cr
\hline 
\phi_3 &=&-e_{23}+\lambda e_{31}-\lambda e_{57} + e_{76} & & \phi_4 &=& e_{24}-\lambda e_{41}+\lambda e_{58}-e_{86} \cr
\hline
\phi_5 &=&e_{26}  & &\phi_6 &=& -\lambda e_{34}+\lambda e_{43} -\lambda e_{78} + \lambda e_{87} \cr
\hline 
\phi_7 &=& e_{51}  & & \phi_8 &=& 2e_{52}+\frac{1}{\lambda}e_{73}+\frac{1}{\lambda}e_{84} \cr 
\hline  
\phi_9 &=&-\lambda e_{53} +\lambda e_{71}  & & \phi_{10} &=& \lambda e_{54} - \lambda e_{81} \cr 
\hline
\phi_{11} &=& e_{55}+e_{66}+e_{77}+e_{88} & & \phi_{12} &=& e_{56} \cr 
\hline 
\phi_{13} &=& -e_{74}+e_{83}  &  &      & &       \cr
\hline 
\end{array}
\eeq 
with the following brackets
\beq 
\begin{array}{ccl|cccc|ccccc|ccc}
\hline 
[\phi_1,\phi_2] &=& -\phi_1 & & [\phi_1,\phi_8] &=& -2\phi_7 & & [\phi_2,\phi_3] &=& \phi_3 \cr
\hline 
[\phi_2,\phi_4] &=& \phi_4  & &[\phi_2,\phi_5] &=& 2\phi_5 & & [\phi_2,\phi_8] &=& -2\phi_8 \cr
\hline 
[\phi_2,\phi_9] &=&-\phi_9 & & [\phi_2,\phi_{10}] &=&-\phi_{10} & & [\phi_2,\phi_{12}] &=&2\phi_{12} \cr
\hline 
[\phi_2,\phi_{13}] &=&-2\phi_{13}  &  & [\phi_3,\phi_6] &=&\lambda\phi_4  & & [\phi_3,\phi_8] &=&-\frac{1}{\lambda}\phi_9 \cr
\hline
[\phi_3,\phi_{13}] &=&\phi_{10}  & & [\phi_4,\phi_6] &=& -\lambda\phi_3 & & [\phi_4,\phi_8] &=& -\frac{1}{\lambda}\phi_{10}  \cr
\hline 
[\phi_4,\phi_{13}] &=&-\phi_9 & & [\phi_5,\phi_8] &=&-2\phi_{12}   & & [\phi_5,\phi_{11}] &=& \phi_5  \cr
\hline
[\phi_6,\phi_9] &=&-\lambda \phi_{10}  & & [\phi_6,\phi_{10}] &=& -\lambda \phi_{10} & & [\phi_7,\phi_{11}] &=& -\phi_7   \cr
\hline 
[\phi_8,\phi_{11}] &=&-\phi_8 & & [\phi_9,\phi_{11}] &=&-\phi_9 & & [\phi_{10},\phi_{11}] &=&-\phi_{10} \cr
\hline
[\phi_{11},\phi_{12}] &=& \phi_{12} & & [\phi_{11},\phi_{13}] &=&\phi_{13}\cr
\hline 
\end{array}
\eeq 
One realizes that 
\beqn
\pder(T^*\G_\lambda) &=&ad_{T^*\G_\lambda}\ltimes\Big[\R^2\ltimes(\R^2\ltimes\R^3)\Big] \cr
                     &=&(\R^2\ltimes\R^4)\ltimes\Big[\R^2\ltimes(\R^2\ltimes\R^3)\Big] \cr
                     &=&\Big(\sspan(\phi_6,\phi_{13})\ltimes\sspan(\phi_3,\phi_4,\phi_9,\phi_{10})\Big)\cr
                     & &   \ltimes\Big[\sspan(\phi_2,\phi_{11})\ltimes\big(\sspan(\phi_1,\phi_5)
                        \ltimes\sspan(\phi_7,\phi_8,\phi_{12}\big)\Big]
\eeqn

}
\eex

\subsection{Possible Applications} \label{application-prederivations}

The existence of affine structures is a difficult and interesting problem (\cite{helmsteter},\cite{milnor3}, 
\cite{lichnerowicz-medina88}, \cite{lichnerowicz91}).

\vskip 0.5cm

Let $\G$ be a Lie algebra. If $D$ is an invertible derivation of $\G$, one defines an affine structure by the formula
\beq 
\nabla_xy = D^{-1} \circ ad_x \circ D(y),
\eeq 
for every $x$ and $y$ in $\G$. Unfortunately, there are Lie algebras that admits only singular derivations. 
Nilpotent such algebras are called characteristically nilpotent Lie algebras.

\vskip 0.5cm

If the Lie algebra $\G$ admits no regular derivation, then one uses a regular prederivation whenever it exists. The 
idea of the construction is the following (\cite{burde}). Given an invertible prederivation $P$, 
set $\omega(x,y) = P([x,y]) - [x,P(y)] - [P(x),y]$, for all $x,y \in \G$. Now we define
\beq\label{prederivation-affine-structure} 
\nabla_xy = P^{-1} \circ ad_x \circ P(y) + \frac{1}{2}P^{-1} \circ \omega(x,y),
\eeq 
for every $x,y \in \G$. In general the map $x \mapsto P^{-1} \circ ad_x \circ  + \frac{1}{2}P^{-1} \circ \omega(x,\cdot)$ might not 
be a representation. However, in many cases, (\ref{prederivation-affine-structure}) gives rise to an affine structure.

\chapter{Skew-symmetric Prederivations and Bi-invariant Metrics of Cotangent Bundles of Lie Groups}
\minitoc

\section{Introduction}

A Lie group endowed with a pseudo-Riemannian metric which is invariant under both 
left and right translations, is called an orthogonal, a bi-invariant or a quadratic Lie group. 
The corresponding Lie algebra is called an orthogonal or quadratic Lie algebra. 

\vskip 0.3cm

Such Lie groups  are important in Mathematics and in Physics. These objects are, for instance, useful in 
pseudo-Riemannian geometry, in the theory of Poisson-Lie groups, in relativity, in the theory of 
Hamiltonian systems,... (\cite{babelon-viallet}, \cite{bajo-benayadi-medina}, \cite{bordemann}, 
\cite{diaz-gadea-oubina}, \cite{drinfeld}, \cite{gadea-oubina99}, \cite{levichev}).

\vskip 0.3cm

According to the works of Medina and Revoy (\cite{me-re93},\cite{me-re85}), any orthogonal Lie algebra is obtained by 
the so-called double extension procedure. This is the case of orthogonal Lie algebras called hyperbolic and  
whose quadratic form is of signature $(n,n)$ (where $2n$ is the dimension of the concerned Lie algebra). 
One particularly interesting case is the one  where the Lie algebra admits two totally isotropic
subalgebras which are in duality. These latter Lie algebras named Manin-Lie algebras describe simply 
connected Poisson-Lie groups. 

\vskip 0.3cm

It is known that the cotangent bundle $T^*G$ of a Lie group $G$ with Lie algebra $\G$, considered with its 
Lie group structure obtained by semi-direct product of the Lie group $G$ and the Abelian Lie group
$\G^*$ (dual of $\G$) by means of the co-adjoint representation, possesses a hyperbolic  metric : 
the duality pairing. 

\vskip 0.3cm

Here we seek to characterize, up to isometric automorphisms,  all orthogonal structures on $T^*\G$
by means of the duality pairing and adjoint-invariant endomorphisms; hence,  all bi-invariant metrics on
$T^*G$; and to determine the corresponding group of isometries.  We 
will focus our study on orthogonal Lie algebras and on the more particular class of semi-simple Lie algebras.

\vskip 0.3cm

We will take $\G$ to be the Lie algebra of a Lie group $G$, $\G^*$ will be the dual space of $\G$ 
and $T^*\G:=\G\ltimes \G^*$ will be the semi-direct sum of the Lie algebra $\G$ and the vector space $\G^*$ 
via the coadjoint representation. Recall that $\pder(\G)$ stands for the Lie algebra of all prederivations 
of the Lie algebra $\G$ while $\mathcal J'$ is the set of all endomorphisms $j$ of $\G$ which satisfy 
$j(\big[[x,y],z\big])=\big[[x,y],j(z)\big]$, for any $x,y,z$ in $\G$.

\vskip 0.3cm

Among others, here are some of the important results contained in this chapter. 

\vskip 0.3cm
\noindent
{\bf Theorem~A} {\em
\benum 
\item Let $\G$ be a Lie algebra. Any orthogonal structure $\mu$ on $T^*\G$ is given by
\beq
\mu\Big((x,f),(y,g)\Big) = \big\langle g,j_{11}(x)\big\rangle + \big\langle f,j_{11}(y)\big\rangle 
+ \big\langle g,j_{21}(f)\big\rangle + \big\langle j_{12}(x),y\big\rangle,
\eeq
where $j_{11} : \G \to \G$, $j_{12} : \G \to \G^*$, $j_{21} : \G^* \to \G$ are as in 
Proposition \ref{prop:symmetric-bi-invariant-endom}.


\item If $(\G,\mu)$ is an orthogonal Lie algebra, then any orthogonal structure $\mu_D$ on $T^*\G$ has the form
\beq
\mu_\D\Big((x,f),(y,g)\Big)=\big\langle g\;,\;j_{11}(x) \big\rangle + \big\langle f\;,\;j_{11}(y)\big\rangle 
+ \big\langle g,j_2 \circ \theta^{-1}(f) \big\rangle + \big\langle \theta\circ j_1(x),y\big\rangle,
\eeq 
for all $(x,f)$, $(y,g)$ in $T^*\G$, where $\theta$ is the isomorphism 
induced by the $\mu$ through the formula (\ref{isomorphism-theta}) and $j_{11}$, $j_1$, $j_2$ satisfy 
conditions listed in Lemma \ref{lemma:symmetric-invertible-bi-invariant-endomorphism-orthogonal-lie-algebras}.


\item Let $\G$ be a semi-simple Lie algebra.  Any orthogonal structure $\mu$ on $T^*\G$ is given by 
\beq 
\mu\big((x,f),(y,g)\big) = \sum_{i=1}^p \lambda_i\big\langle(x_i,f_i),(y_i,g_i)\big\rangle_{\s_i} 
+ \sum_{k=1}^p\nu_kK_k(x_k,y_k),
\eeq 
for all $x, y \in \G=\s_1\oplus \s_2\oplus\cdots\oplus\s_p$ and $f, g\in \G^*=\s_1^*\oplus \s_2^*\oplus\cdots\oplus\s_p^*$ ;
where $\lambda_i \in \R^*$, $\nu_i\in \R$ and $K_i$ stands for the Killing form on $\s_i$ for all $i=1,2,\ldots,p$.
\eenum 
}

\vskip 0.3cm

\noindent
{\bf Theorem~B} ~ {\em
\benum 
\item Let $\G$ be a Lie algebra and let $\mu_\D$ be an orthogonal structure on $T^*\G$.  
A $\mu_\D$-skew-symmetric prederivation $\phi$ of $T^*\G$ has the form
\beq
\phi(x,f) = \Big(\alpha(x) +\psi(f)\,,\, \beta(x) + (j'^t-\alpha^t)(f) \Big),
\eeq 
where $j':\G \to \G$ is in $\mathcal J'$, $\alpha : \G \to \G$ is in $\pder(\G)$, 
$\beta : \G \to \G^*$ and  $\psi : \G^* \to \G$ are as in Theorem \ref{theorem:characterization-prederivations},   
with the additional conditions listed in Proposition \ref{prop:skew-symmetric-prederivation}.
\item Let $(\G,\mu)$ be an orthogonal Lie algebra. Then any prederivation $\phi$ of $T^*\G$ which is skew-symmetric 
with respect to any orthogonal structure $\mu_\D$ on $T^*\G$ can be written as follows
\beq 
\phi(x,f)= \Big(\alpha_1(x)+j_1'\circ \theta^{-1}(f) \;,\; \theta \circ \alpha_2(x) + ({j_2'}^t-\alpha_1^t)(f)\Big),
\eeq 
where $\alpha_1, \alpha_2$ are in $\pder(\G)$; $j_1',j_2'$ are in $\mathcal J'$ with the additional conditions
listed in Proposition \ref{prop:skew-symmetric-prederivation-orthogonal-lie-algebra}.
\item Let $\G$ a semi-simple Lie algebra. Then any prederivation of $T^*\G$ which is
skew-symmetric with respect to any orthogonal structure $\mu_\D$ on $T^*\G$ is an inner
derivation of $T^*\G$. 
\eenum

}

In Section \ref{chap3:section:preliminaries} is given some basic notions useful to make the chapter understandable. 
Section \ref{chap3:section:biinvariant-metrics} is dedicated to the characterization of orthogonal structures on 
the Lie algebras of cotangent bundles of Lie groups and their groups of isometries. In Sections 
\ref{chap3:section:orthogonal-lie-algebras} and \ref{chap3:section:semi-simple-lie-algebras} are respectively studied
the case of orthogonal Lie algebras and the case of semi-simple Lie algebras. The chapter finishes with some examples 
given in Section \ref{chap3:section:example}.

\section{Preliminaries}\label{chap3:section:preliminaries}
\subsection{Orthogonal Structures and Bi-invariant Endomorphisms}

Let $(G,\mu)$ be an orthogonal Lie group with Lie algebra $\G$. The metric $\mu$ induces on $\G$ an adjoint-invariant 
symmetric non-degenerate bilinear form $\langle,\rangle$, {\it i.e.} a symmetric and non-degenerate form $\langle,\rangle$ such that
\beq 
\langle[x,y],z\rangle + \langle y,[x,z]\rangle = 0,
\eeq 
for all $x,y$ in $\G$. The form $\langle,\rangle$ is called an orthogonal structure on $\G$.
It is well known (\cite{me-re93}) that any other 
non-degenerate symmetric bilinear form $B$ on $\G$ can be written as 
\beq\label{relation-bilinear-form-j} 
B(x,y)=\langle j(x),y\rangle,
\eeq 
for all $x,y$ in $\G$, where $j$ is a $\langle,\rangle$-symmetric automorphism of the vector space $\G$. Furthermore, 
the bilinear form $B$ is adjoint-invariant, {\it i.e.} is an orthogonal structure, if and only if $j$ commutes 
with all the adjoint operators $ad_x$, ($x \in \G$); that is 
\beq 
j([x,y]) = [j(x),y] = [x,j(y)], \label{bi-inv-endom}
\eeq 
for every $x,y$ in $\G$.

\vskip 0.3cm 

An endomorphism $j$ of $\G$ which satisfies  Relation (\ref{bi-inv-endom}) is called a 
bi-invariant or an adjoint-invariant endomorphism.

\vskip 0.3cm

From what is said above it comes that if one orthogonal structure $\langle,\rangle$ is known on $\G$ then we will be able to characterize 
all the orthogonal structures on $\G$ by characterizing all the $\langle,\rangle$-symmetric bi-invariant tensors on $\G$.

\subsection{Isometries of Bi-invariant Metrics of Lie Groups}

Let us begin by some reminders. 
\bdfn
Let $M$ be a smooth manifold equipped with a pseudo-Riemannian metric $\mu$. 
An isometry of the pseudo-Riemannian manifold $(M,\mu)$ is a diffeomorphism $f:M \to M$ such that $f^*\mu=\mu$, 
where $f^*$ stands for the "pull-back" via $f$. More precisely, for all $x$ in $M$ and any vectors $X_{|x},Y_{|x}$ in the tangent space $T_xM$,
\beq 
\mu_{f(x)}\Big(T_xf\cdot X_{|x},T_xf\cdot Y_{|x}\Big) = \mu_x\big(X_{|x},Y_{|x}\big).
\eeq 
\edfn

We denote by $I(M,\mu)$ the set of all isometries of $(M,\mu)$. Nomizu shows in \cite{nomizu} that the set $\hbox{\rm Aff}(M,\nabla)$ of all
affine transformations of the induced affine connection on $M$, with the compact-open topology, is a Lie transformation group.  
But $I(M,\mu)$ is a closed subgroup of $\hbox{\rm Aff}(M,\nabla)$. Then, equipped with the compact-open topology, $I(M,\mu)$ is a Lie transformation group.

\vskip 0.3cm

Let $(G,\mu)$ be an orthogonal Lie group with Lie algebra $\G$. We note by $I(G,\mu)=I(G)$ the set of all isometries of $(G,\mu)$, by 
$F(G)$ the set of those isometries which fix the identity element $\epsilon$ of $G$ and by $L_G$ the set of all left translations of $G$. 
We recall the following result due to M\"uller.
\blem(\cite{muller})
Let $G$ be a connected Lie group endowed with a bi-invariant metric.
\benum
\item $F(G)$ is a closed Lie subgroup of $I(G)$ ; 
\item $L_G$ is a closed connected subgroup isomorphic to $G$ ;
\item $I(G)=L_GF(G)$, with $L_G\cap F(G)=\{id\}$;  $id:G \to G$ is the identity map of $G$ ;
\item the manifolds $I(G)$ and  $G\times F(G)$ are diffeomorphic.
\eenum
\elem

\subsection{Isometries of Bi-invariant Metrics and Preautomorphisms}\label{section:isometries-preautomorphisms}

Let $(G,\mu)$ be an orthogonal Lie group with unit element $\epsilon$ and let  $\G$ be its Lie algebra. We will often
note by $\langle,\rangle :=\mu_{\mid \epsilon}$ the resulting non-degenerate adjoint-invariant bilinear form on $\G$.
The fact that $G$ is orthogonal is equivalent to the one that the geodesics through $\epsilon$ are the one-parameter
subgroups of $G$ (see \cite[Exercise 5, page $148$]{helgason}).

\vskip 0.3cm

Let $\exp : \G \to G$ denote the exponential map of the Lie group $G$. Consider an open neighborhood $U$ of $0$ in $\G$
such that the restriction $\exp_{\mid U} : U \to \exp(U)$ of $\exp$ to $U$ is a diffeomorphism and note by
$\log : \exp(U) \to U$ the inverse of $\exp_{\mid U}$.

\bdfn
A local isometry at $\epsilon$ is a diffeomorphism $\varphi:V_1 \to V_2$ between two open neighborhoods
$V_1$ and $V_2$ of $\epsilon$ in $G$ such that
\bitem
\item $\varphi$ fixes $\epsilon$, {\it i.e.} $\varphi(\epsilon)=\epsilon$ ;
\item $\varphi^*\mu = \mu$ on $V_1$, where $\varphi^*$ stands for the pull-back via $\varphi$.
\eitem
\edfn
Now, let $\varphi$ be a local isometry at $\epsilon$ and $x$  any arbitrary element of $\G$. The local 
isometry $\varphi$ maps any geodesic through $\epsilon$ onto a geodesic through $\epsilon$ on a 
neighborhood of $\epsilon$. Then there exists an element $y$ of $\G$ such that
\beq \label{geodesics-conservation}
\varphi\big(\exp(tx)\big) = \exp(ty),
\eeq
for $t$ small enough. Derivating relation (\ref{geodesics-conservation}) with respect to $t$ at $t=0$, one has
\beq\label{tangent-map-varphi}
T_\epsilon \varphi \cdot x = y,
\eeq
where $T_\epsilon\varphi$ is the tangent linear map of $\varphi$ at $\epsilon$. From relation (\ref{geodesics-conservation}) 
we have
\beq\label{y-expression}
y=\log \circ \varphi \circ \exp(x).
\eeq
Relations (\ref{geodesics-conservation}) and (\ref{tangent-map-varphi}) give the following nice formula
\beq
\varphi = \exp \circ T_\epsilon\varphi \circ \log
\eeq
on a suitable neighborhood of $\epsilon$ in $\exp(U)$.

\vskip 0.5cm

Thus if we identify two local isometries at $\epsilon$ that agree on a neighborhood of $\epsilon$, 
it comes that a local isometry at $\epsilon$ is uniquely determine by its differential at $\epsilon$. 
A local isometry at $\epsilon$ can be uniquely extended to an isometry on $G$. Indeed, it is well known 
that if $M$ and $N$ are two connected simply connected and geodesically complete pseudo-Riemannian
manifolds, then every isometry between connected open subsets of $M$ and $N$ can be uniquely extended 
to an isometry between $M$ and $N$ (\cite{kobayashi-nomizu}, \cite{barett},  \cite{wolf}). 

\vskip 0.5cm

Now we have the following theorem which establishes a link between local isometries at $\epsilon$ and 
the so-called preautomorphisms of the Lie algebra $\G$ of $G$.

\bthm(\cite{muller})
Let $(G,\mu)$ be a connected orthogonal Lie group with Lie algebra $\G$. Let $\langle , \rangle:=\mu_{\mid \epsilon}$
stands for the resulting orthogonal structure on $\G$ and let $P$ be an endomorphism of $\G$.
Then there exists a local isometry $\varphi$ of $G$ at $\epsilon$ with $T_\epsilon \varphi = P$ if and only if
$P$ satisfies
\benum
\item $\langle P(x),P(y) \rangle = \langle x,y \rangle$, for any $x,y$ in $\G$ ;
\item $P\big(\big[x,[y,z]\big] \big) = \big[P(x),[P(y),P(z)]\big]$, for all $x,y,z$ in $\G$.
\eenum
\ethm

Now, the author quoted above proves that once the Lie algebra of $F(G)$ is known one can readily construct the Lie algebra of $I(G)$. Here is his construction.

For $g$ in $G$ we define the following maps : 
\beq
\begin{array}{rclcrrrcrrc}
 L_g:G &\to    & G   & ; & R_g:G &\to    & G  &; & I_g:G &\to    & G \cr
h     &\mapsto& gh   &  &     h &\mapsto& hg & &     h &\mapsto& ghg^{-1}
\end{array}
\eeq
This maps are respectively the left transformation, the right translation and the inner automorphism by the element $g$ of $G$.

\vskip 0.3cm 

Let $\xi$ be in $\G$ and $\exp :\G \to G$ be the exponential map of $G$. 
Then, $L_{\exp(t\xi)}$, $R_{\exp(t\xi)}$, $I_{\exp(t\xi)}$ are one-parameter subgroups
of $I(G)$. Let us note by $X^{\xi,L}$, $X^{\xi,R}$ and $X^{\xi,I}$ their respective infinitesimal generators.
Now, we set
\beq 
X^{\xi,s}=X^{\xi,R} + X^{\xi,L} \quad \text{ and }\quad  X^{\xi,a}=X^{\xi,R} - X^{\xi,L}.
\eeq 
If $\{\alpha_1, \alpha_2,\ldots,\alpha_m\}$ is a basis of the Lie algebra $\mathcal F(G)$ of $F(G)$ and 
$\{\xi_1,\xi_2,\ldots,\xi_n\}$ a basis of $\G$ then $\{\alpha_1, \alpha_2,\ldots,\alpha_m,X^{\xi_1,s},X^{\xi_2,s},
\ldots,X^{\xi_n,s}\}$ is a basis of the Lie algebra $\mathcal I(G)$.

\vskip 0.3cm

It remains to compute the brackets on $\mathcal I(G)$. Let us first define some useful objects.

\vskip 0.3cm 

Let $\paut(\G)$ be the group of preautomorphisms of $\G$ and denote by $F(\G)$ the subgroup of 
$\paut(\G)$ consisting of those preautomorphisms which preserve the orthogonal structure on $\G$. Now
consider the map $T_\epsilon :F(G) \to F(\G)$ which associates
to any element $\phi$ of $F(G)$ its differential $T_\epsilon\phi$ at the neutral element $\epsilon$ of $G$.
The map  $T_\epsilon$ induces a map  $\partial_\epsilon :\mathcal F(G) \to \mathcal F(\G)$ between the Lie algebras
$\mathcal F(G)$ and $\mathcal F(\G)$ of $F(G)$ and $F(\G)$ respectively :
\beq 
\partial_\epsilon D = \left(\frac{d}{dt} T_\epsilon[\exp_{I(G)}(tD)]\right)_{|t=0}
\eeq 
Note that $\mathcal F(\G)$ consists of prederivations of $\G$ which are skew-symmetric with respect to the orthogonal
structure on $\G$. 

\vskip 0.3cm 

The following theorem gives the brackets on $\mathcal I(G)$.

\bthm(\cite{muller})\label{thm:muller-brackets-isometries-algebra}
Let $\xi$, $\eta$ be in $\G$ and $D$ be an element of $\mathcal F(G)$. Then,
\benum
\item $[X^{\xi,s},X^{\eta,s}] = [X^{\xi,a},X^{\eta,a}] = -X^{[\xi,\eta],a}$ ;
\item $[D,X^{\xi,s}] = X^{\partial_\epsilon D(\xi),s}$ ;
\eenum
\ethm

Now we conclude that if we know the prederivations of $\G$ which are skew-symmetric with respect to the 
orthogonal structure on $\G$ we can calculate the Lie algebra $\mathcal I(G)$ of the group $I(G)$ of isometries
of the orthogonal Lie group $(G,\mu)$.

\section{Bi-invariant Metrics on $T^*G$}\label{chap3:section:biinvariant-metrics}

Let $G$ be a Lie group with Lie algebra $\G$. We have already seen (Section \ref{chap:notations}) 
that the duality pairing $\langle,\rangle$ given by 
\beq 
\langle (x,f),(y,g)\rangle = f(y) + g(x),
\eeq 
for all $x,y$ in $\G$, defines an orthogonal structure on the Lie algebra $T^*\G=\G\ltimes\G^*$ of the Lie group $T^*G$.

\subsection{Bi-invariant Tensors on $T^*\G$}

Let $\langle,\rangle$ stand for the duality pairing on $T^*\G$, a bi-invariant metric on $T^*G$ is given by 
an invertible linear map
$j:T^*\mathcal G \to T^*\mathcal G$ satisfying
\begin{eqnarray}
j([u,v]) = [u,j(v)] ;\label{bi-invariant-endomorphism}\\ 
\langle j(u),v \rangle = \langle u,j(v) \rangle,  
\end{eqnarray}
for  all $u,v \in T^*\mathcal G$. 

\blem\label{lemma:invertible-bi-invariant-endomorphism}
Any linear map $j : T^*\G \to T^*\G$ satisfying (\ref{bi-invariant-endomorphism}) can be written as
\beq\label{expression:endomorphism-j}
j(x,f) = \Big(j_{11}(x) + j_{21}(f) \,,\,j_{12}(x) + j_{22}(f) \Big),
\eeq 
for any $(x,f) \in T^*\G$ ;   where $j_{11} : \G \to \G$, $j_{12} : \G \to \G^*$,
$j_{21} : \G^* \to \G$ and $j_{22}:\G^* \to \G^*$ are linear maps such that for all $x$ in $\G$, the following relations hold :  
\beqn
j_{11}\circ ad_x &=& ad_x \circ j_{11} \label{relationJ11}\\
j_{12}\circ ad_x &=& ad^*_x \circ j_{12} \label{relationJ12}\\
j_{21} \circ ad^*_x &=& ad_x \circ j_{21}=0 \label{relationJ21}\\
\big[j_{22},ad_x^*\big] &=& 0  \label{relationJ22}\\
ad^*_x\circ (j_{22}-j_{11}^t) &=& 0, \label{relationJ22-J11}
\eeqn
where $j^t_{11}$ stands for the transpose of the map $j_{11}$.
\elem 
\begin{proof}
 
If we write $u=(x,f)$ and $v=(y,g)$, then
\begin{eqnarray}
j([u,v])&=& j\big([x,y], ad^*_xg-ad^*_yf\big)\nonumber\\
        &=& \Big( j_{11}([x,y])+j_{21}(ad^*_xg-ad^*_yf)\,,\, j_{12}([x,y])+j_{22}(ad^*_xg-ad^*_yf) \Big)\label{derJ1}
\end{eqnarray}
and 
\begin{eqnarray}
[u,jv]&=& [(x,f)\,,\,\big(j_{11}(y)+j_{21}(g),j_{12}(y)+j_{22}(g)\big)]\nonumber\\
      &=& \Big([x,j_{11}(y)]+[x,j_{21}(g)]\, , \, ad^*_x\big(j_{12}(y)+j_{22}(g)\big) - ad^*_{j_{11}(y)+j_{21}(g)}f \Big)
\label{derJ2}
\end{eqnarray}
Considering the case where  $f=g=0$ the equality between (\ref{derJ2}) and (\ref{derJ3}) gives
\begin{eqnarray}
j_{11}([x,y])=[x,j_{11}(y)]  \text{~and~} j_{12}([x,y]) = ad^*_x(j_{12}(y)), \label{derJ3} \nonumber
\end{eqnarray}
for all $x,y\in\mathcal G$. The two relations above are equivalent to  (\ref{relationJ11}) 
and (\ref{relationJ12}) respectively.

\vskip 0.3cm
\noindent
The equality between (\ref{derJ1}) and (\ref{derJ2}) now gives
\begin{equation} \label{derj1}
\Big(j_{21}(ad^*_xg-ad^*_yf)\,,\, j_{22}(ad^*_xg-ad^*_yf) \Big)= \Big([x,j_{21}(g)]\,,\, 
ad^*_x\big(j_{22}(g)\big) - ad^*_{j_{11}(y)+j_{21}(g)}f \Big).
\end{equation}
Taking $f=0,$ we get on one hand
\begin{eqnarray}
j_{21}(ad^*_xg)= [x,j_{21}(g)], \nonumber
\end{eqnarray}
 for all $x\in\mathcal G$ and all $g\in\mathcal G^*$. Then Relation (\ref{relationJ21}) is proved. 
On the second hand we have
\begin{eqnarray}
j_{22}(ad^*_xg)=ad^*_x(j_{22}(g)), \nonumber
\end{eqnarray}
for every $g$ in $\G^*$ ; that is
\begin{equation}
j_{22}\circ ad^*_x=ad^*_x\circ j_{22}, \label{relationJ22-1}
\end{equation}
which is nothing but Relation (\ref{relationJ22}).
Now if we take $x=0$, we get from Eq. (\ref{derj1})
\begin{equation}
\Big(j_{21}(-ad^*_yf), j_{22}(-ad^*_yf) \Big)= \big(0, - ad^*_{j_{11}(y)+j_{21}(g)}f\big).
\end{equation}
The equality above implies the following two relations
\begin{eqnarray}
j_{21}(-ad^*_yf) &=& 0,  \nonumber \\
j_{22}(-ad^*_yf) &=& - ad^*_{j_{11}(y)+j_{21}(g)}f, \nonumber
\end{eqnarray}
for all $y\in \mathcal G$,  $f\in\mathcal G^*$. These equations are equivalent to
\begin{eqnarray}
j_{21}\circ ad^*_y &=& 0, \\
j_{22}\circ ad^*_y &=& ad^*_{j_{11}(y)}, \label{relationJ22-J11-1}\\
ad^*_{j_{21}(g)}   &=& 0, 
\end{eqnarray}
for all $y\in \mathcal G$ and  $g\in\mathcal G^*$. The relations
\begin{eqnarray}
ad^*_y\circ   j_{22} &=& j_{22}\circ ad^*_y \cr
                     &=& ad^*_{j_{11}(y)}  \cr
                     &=& ad^*_y\circ j_{11}^t, \label{guimbi}
\end{eqnarray}
for all $y\in \mathcal G$, coming from (\ref{relationJ22-1}) and  (\ref{relationJ22-J11-1}), give
\begin{eqnarray} 
ad^*_y\circ (  j_{22}-j_{11}^t)=0,
\end{eqnarray}
for all $y\in \mathcal G$ ;  which means that $(j_{22}-j_{11}^t)f$ is a closed $1$-form, for every $f\in\mathcal G^*$.
\end{proof}

\brmq \label{rmq:symmetric-bi-invariant-endom}
\benum
\item The equality $ad^*_{j_{21}(g)}=0$, for all $g$ is equivalent to
\begin{equation}
Im(j_{21}) \subset Z(\mathcal G),
\end{equation}
where  $Z(\mathcal G)$ is the centre of  $\mathcal G$. So if $Z(\mathcal G)=\{0\}$ then $j_{21}=0.$
 \item Relation (\ref{guimbi})  also gives
\begin{eqnarray}
-\langle j_{22}(f),[y,x]\rangle &=& \langle j_{22}(ad^*_y(f)),x \rangle \cr 
                                &=& -\langle f,[j_{11}(y),x] \rangle  \cr 
                                &=& -\langle f\circ j_{11},[y,x]\rangle,
\end{eqnarray}
for all $f$ in $\G^*$ and any $x,y$ in $\G$.
\eenum 
\ermq

\blem\label{lemma:endomorphism-j-symmetric}
Any linear map $j : T^*\G \to T^*\G$ which is $\langle,\rangle$-symmetric  can be written as
\beq \label{expression:endomorphism-j-symmetric}
j(x,f) = \Big(j_{11}(x) + j_{21}(f) \,,\,j_{12}(x) + j^t_{11}(f) \Big),
\eeq 
for any $(x,f) \in T^*\G$ ;   where $j_{11} : \G \to \G$, $j_{12} : \G \to \G^*$,
$j_{21} : \G^* \to \G$ are linear maps  with the following relations valid for all $x, y$ in $\G$ and all
$f, g$ in $\G^*$.
\beqn
\langle j_{12}(x),y \rangle &=& \langle x,j_{12}(y) \rangle \label{symJ12}\\
\langle j_{21}(f),g \rangle &=& \langle f,j_{21}(g) \rangle. \label{symJ21}
\eeqn
\elem
\begin{proof}
 Let $u=(x,f)$ and $v=(y,g)$ be two elements of $T^*\G$.
\begin{eqnarray}
\langle ju,v \rangle &=& \Big\langle \Big(j_{11}(x)+j_{21}(f),j_{12}(x)+j_{22}(f)\Big),(y,g) \Big\rangle \nonumber\\
                     &=& \langle j_{11}(x),g \rangle + \langle j_{21}(f),g \rangle + \langle j_{12}(x),y \rangle
                         + \langle j_{22}(f),y\rangle \label{derj2}
\end{eqnarray}
\begin{eqnarray}
\langle u,jv \rangle &=& \Big\langle (x,f), \Big(j_{11}(y)+j_{21}(g),j_{12}(y)+j_{22}(g) \Big) \Big\rangle \nonumber \\
                     &=& \langle f,j_{11}(y)\rangle + \langle f,j_{21}(g) \rangle
                        + \langle x,j_{12}(y) \rangle + \langle x,j_{22}(g)\rangle \label{derj3}
\end{eqnarray}
For $f=g=0$, the equality between (\ref{derj2}) and (\ref{derj3}) implies that for all $x,y$ in $\G$, 
\begin{equation}
 \langle j_{12}(x),y \rangle = \langle x,j_{12}(y) \rangle. \nonumber
\end{equation}
So (\ref{symJ12}) is established. By the same way, for $x=y=0$, Relation (\ref{symJ21}) is obtained, as
\begin{equation}
\langle j_{21}(f),g \rangle =\langle f,j_{21}(g) \rangle. \nonumber
\end{equation}
Now the equality between (\ref{derj2}) and (\ref{derj3}) can be written 
\begin{equation}
\langle j_{11}(x),g \rangle + \langle j_{22}(f),y \rangle = \langle f,j_{11}(y) \rangle + \langle x,j_{22}(g) \rangle. \nonumber
\end{equation}
We take $y=0$  to obtain
\begin{equation}
\langle x,j_{11}^t(g) \rangle = \langle j_{11}(x),g \rangle = \langle x,j_{22}(g) \rangle \nonumber
\end{equation}
for all $x$ in $\mathcal G$ and  $g$ in $\mathcal \G^*$; or equivalently
\begin{equation}
j_{11}^t=j_{22}  \nonumber 
\end{equation}
\end{proof}

\brmq
Relations (\ref{symJ12}) and (\ref{symJ21}) mean that $j_{12}$ and $j_{21}$ are symmetric with respect to
the duality.
\ermq

\blem
Let $\G$ be a Lie algebra without centre, that is $Z(\G)=\{0\}$. A $\langle,\rangle$-symmetric bi-invariant endomorphism $j:T^*\G \to T^*\G$ defined 
by (\ref{expression:endomorphism-j-symmetric}) is invertible if and only if $j_{11}$ is invertible.
\elem 

\begin{proof}
Let $j:T^*\G \to T^*\G$ be defined by (\ref{expression:endomorphism-j-symmetric}) 
with conditions listed in Lemma \ref{lemma:invertible-bi-invariant-endomorphism}.
We have seen in Remark \ref{rmq:symmetric-bi-invariant-endom} that if $Z(\G)=\{0\}$ then 
$j_{21}=0$. The determinant of $j$ is 
\beq 
\det(j) = \left|
\begin{array}{cc}
j_{11} & j_{21} \cr
j_{12} & j_{11}^t 
\end{array}
\right| = \left|
\begin{array}{cc}
j_{11} & 0 \cr
j_{12} & j_{11}^t 
\end{array}
\right| = \big(\det(j_{11})\big)^2.
\eeq
%
\end{proof}

Let us summarize the above three Lemmas in the 

\bpro\label{prop:symmetric-bi-invariant-endom}
Let $\G$ be a Lie algebra. Any $\langle,\rangle$-symmetric and adjoint-invariant tensor $j : T^*\G \to T^*\G$ has the form
\beq\label{expression:symmetric-bi-invariant-invertible-endomorphism}
j(x,f) = \Big(j_{11}(x) + j_{21}(f) \,,\,j_{12}(x) + j^t_{11}(f) \Big),
\eeq
for any $(x,f) \in T^*\G$ ;   where $j_{11} : \G \to \G$, $j_{12} : \G \to \G^*$,
$j_{21} : \G^* \to \G$ are linear maps  with the following relations valid for all $x, y$ in $\G$ and all
$f, g$ in $\G^*$.
\beqn
j_{11}\circ ad_x &=& ad_x \circ j_{11} \\
j_{12}\circ ad_x &=& ad^*_x \circ j_{21} \\
j_{21} \circ ad^*_x &=& ad_x \circ j_{21} =0 \label{equiv-j21}\\
\langle j_{12}(x),y \rangle &=& \langle x,j_{12}(y) \label{symJ12-1}\rangle \\
\langle j_{21}(f),g \rangle &=& \langle f,j_{21}(g) \label{symJ21-1}\rangle.
\eeqn
If, in addition, the centre $Z(\G)$ of the Lie algebra is $\{0\}$, then the endomorphism $j$ is 
invertible if and only if $j_{11}$ is invertible. 
\epro

\subsection{Orthogonal Structures on $T^*\G$}

In the sequel we will use the duality pairing on $T^*\G$ and Relation (\ref{relation-bilinear-form-j}) in order 
to determine all non-degenerate symmetric and adjoint-invariant form on $T^*\G$, hence all bi-invariant
metrics on $T^*G$. Precisely, any other orthogonal structure $\mu$ on $T^*\G$ is linked to the duality pairing $\langle,\rangle$ by
\beq 
\mu\Big((x,f),(y,g) \Big) = \Big\langle j(x,f),(y,g)\Big\rangle, 
\eeq 
for every $(x,f),(y,g)$ in $T^*\G$; where $j$ is an endomorphism of $T^*\G$ such that :
\bitem 
\item $j$ is invertible;
\item $j$ is symmetric with respect to the duality, {\it i.e. } $\langle j(u),v\rangle = \langle u,j(v)\rangle$, for all $u,v \in T^*\G$;
\item $j([u,v])=[j(u),v]=[u,j(v)]$, for all $u,v \in T^*\G$.
\eitem 

\vskip 0.3cm

We have the following characterization of all orthogonal structures on $T^*\G$.

\bthm\label{thm:orthogonal-structure}
Let $\G$ be a Lie algebra. Any orthogonal structure $\mu$ on $T^*\G$ is given by
\beq\label{expression:orthogonal-structure}
\mu\Big((x,f),(y,g)\Big) = \big\langle g,j_{11}(x)\big\rangle + \big\langle f,j_{11}(y)\big\rangle 
+ \big\langle g,j_{21}(f)\big\rangle + \big\langle j_{12}(x),y\big\rangle,
\eeq
where $j_{11} : \G \to \G$, $j_{12} : \G \to \G^*$, $j_{21} : \G^* \to \G$ are as in 
Proposition \ref{prop:symmetric-bi-invariant-endom}.
\ethm 

\begin{proof}
 Indeed, an orthogonal structure $\mu$ on $T^*\G$ is defined by 
\beq
\mu\Big((x,f),(y,g) \Big) = \Big\langle j(x,f),(y,g)\Big\rangle, \nonumber
\eeq 
where $j$ is given by (\ref{expression:symmetric-bi-invariant-invertible-endomorphism}) and 
condition listed in Proposition \ref{prop:symmetric-bi-invariant-endom}. We then have
\beqn
\mu\Big((x,f),(y,g) \Big) &=& \Big\langle \Big(j_{11}(x) + j_{21}(f) \,,\,j_{12}(x) + j^t_{11}(f) \Big),(y,g)\Big\rangle \cr
                          &=& \Big\langle g,j_{11}(x) + j_{21}(f)\Big\rangle + \Big\langle j_{12}(x) + j^t_{11}(f),y\Big\rangle \cr
                          &=& \Big\langle g,j_{11}(x)\Big\rangle + \Big\langle g,j_{21}(f)\Big\rangle + \Big\langle j_{12}(x),y\Big\rangle
                             + \Big\langle j^t_{11}(f),y\Big\rangle \cr 
                          &=& \Big\langle g,j_{11}(x)\Big\rangle + \Big\langle g,j_{21}(f)\Big\rangle + \Big\langle j_{12}(x),y\Big\rangle
                             + \Big\langle f,j_{11}(y)\Big\rangle \nonumber
\eeqn 
\end{proof}

\subsection{Skew-symmetric Prederivations on $T^*\G$}

Let $\G$ be a Lie algebra. On $T^*\G$ we consider the orthogonal structure $\mu$ defined 
by (\ref{expression:orthogonal-structure}). 
We seek  to characterise all prederivations of $T^*\G$ which are skew-symmetric with respect to t
he orthogonal structure  $\mu$.

\vskip 0.3cm

Let us first recall the space $\mathcal J'=\{j':\G \to \G \text{ linear } :[j',ad_x\circ ad_y]=0,\forall x,y \in \G\}$.

\bpro\label{prop:skew-symmetric-prederivation}
A $\mu$-skew-symmetric prederivation $\phi$ of $T^*\G$ has the form
\beq
\phi(x,f) = \Big(\alpha(x) +\psi(f)\,,\, \beta(x) + (j'^t-\alpha^t)(f) \Big),
\eeq 
where $j':\G \to \G$ is in $\mathcal J'$, $\alpha : \G \to \G$ is a prederivation of $\G$, 
$\beta : \G \to \G^*$ and  $\psi : \G^* \to \G$ are as in 
Theorem \ref{theorem:characterization-prederivations},   with the additional conditions
\beqn
\langle \beta(x),j_{11}(y)\rangle + \langle \beta(y),j_{11}(x) \rangle 
+ \langle j_{12}\circ \alpha(x),y\rangle + \langle j_{12}\circ \alpha(y),x\rangle &=& 0 \label{first-relation}\\
\langle f,j_{11}\circ \psi(g)\rangle + \langle g,j_{11}\circ \psi(f)\rangle + \langle f,j'\circ j_{21}(g)\rangle \cr
+\langle g,j'\circ j_{21}(f)\rangle - \langle f,\alpha\circ j_{21}(g)\rangle 
- \langle g,\alpha\circ j_{21}(f)\rangle &=& 0 \label{second-relation}\\
\big[ j_{11},\alpha\big]  + j_{21} \circ \beta + j'\circ j_{11} + \psi^t\circ j_{12} &=& 0 \label{third-relation}
\eeqn 
for every elements $x$ and $y$ of $\G$ and any elements $f$ and $g$ in $\G^*$.
\epro 

\begin{proof}
 Recall that according to Corollary \ref{corollary-theorem:characterization-prederivations} a prederivation $\phi$ of $T^*\G$ can be written as 
\beq
\phi(x,f) = \Big(\alpha(x) +\psi(f)\,,\, \beta(x) + (j'^t-\alpha^t)(f) \Big), \nonumber
\eeq 
where $j'$ belongs to $\mathcal J'$ and $\alpha, \beta, \psi$ are as in 
Theorem \ref{theorem:characterization-prederivations}. The prederivation $\phi$ is $\mu$-skew-symmetric 
if and only if
\beq 
\mu\big(\phi(x,f),(y,g)\big) + \mu\big((x,f),\phi(y,g)\big) =0, \label{orth-structure1}
\eeq 
for all $(x,f)$ and $(y,g)$ in $T^*\G$. On one hand, we have
\beqn
\mu\big(\phi(x,f),(y,g)\big) &=& \mu\Big(\big(\alpha(x) +\psi(f)\,,\, \beta(x) + (j'^t-\alpha^t)(f) \big)\;,\;(y,g)\Big) \cr
                             &=& \Big\langle g\;,\;j_{11}\big(\alpha(x)+\psi(f)\big) \Big\rangle 
                               + \Big\langle  \beta(x) + (j'^t-\alpha^t)(f)\;,\;j_{11}(y)\Big\rangle \cr
                             & & \Big\langle g\;,\;j_{21}\big(\beta(x) + (j'^t-\alpha^t)(f) \big)\Big\rangle  + 
                                \Big\langle j_{12}\big(\alpha(x) +\psi(f)\big)\;,\;y\Big\rangle \cr
                             &=& \big\langle g\;,\;j_{11}\circ \alpha(x)\big\rangle \!+\! 
                                  \big\langle g\;,\; j_{11}\circ \psi(f)\big\rangle \!
                                 +\! \big\langle \beta(x)\;,\; j_{11}(y)\big\rangle \!
                                 +\! \big\langle f\;,\; j'\circ j_{11}(y)\big\rangle \cr
                             & & -\big\langle f\;,\; \alpha\circ j_{11}(y)\big\rangle 
                                 + \big\langle g\;,\; j_{21} \circ \beta(x)\big\rangle 
                                 + \big\langle g\;,\; j_{21}\circ j'^t(f)\big\rangle  \cr
                             & &  - \big\langle g\;,\; j_{21}\circ \alpha^t(f)\big\rangle 
                                 + \big\langle j_{12}\circ \alpha(x),y\big\rangle 
                                 + \big\langle j_{12}\circ \psi(f),y\big\rangle  \label{first-part}
\eeqn 

\vskip 0.3cm
\noindent
On the other hand,
\beqn 
\mu\big((x,f),\phi(y,g)\big) &=& \mu\Big((x,f)\;,\;\big(\alpha(y) +\psi(g)\,,\, \beta(y) + (j'^t-\alpha^t)(g) \big)\Big) \cr
                             &=&  \Big\langle \beta(y) + (j'^t-\alpha^t)(g)\;,\; j_{11}(x) \Big\rangle 
                                  + \Big\langle f\;,\; j_{11}\big(\alpha(y) +\psi(g) \big)\Big\rangle \cr
                             & &  + \Big\langle  \beta(y) + (j'^t-\alpha^t)(g)\;,\; j_{21}(f) \Big\rangle  
                                  + \Big\langle j_{12}(x),  \alpha(y) +\psi(g) \Big\rangle  \cr
                             &=& \big\langle f\;,\;j_{11}\circ \alpha(y)\big\rangle + 
                                    \big\langle f\;,\; j_{11}\circ \psi(g)\big\rangle    
                                 + \big\langle  \beta(y)\;,\;j_{11}(x) \big\rangle \cr
                             & &   +\big\langle g \;,\; j'\circ j_{11}(x)\big\rangle -\big\langle g\;,\;\alpha\circ j_{11}(x)\big\rangle 
                                   + \big\langle \beta(y) \;,\; j_{21}(f)\big\rangle \cr
                             & & +  \big\langle g\;,\; j'\circ j_{21}(f)\big\rangle - \big\langle g,\alpha\circ j_{21}(f) \big\rangle 
                                 + \big\langle j_{12}\circ \alpha(y)\;,\; x \big\rangle \cr
                             & & + \big\langle j_{12}\circ \psi(g)\;,\; x \big\rangle \label{second-part}, 
                                 \text{ since  $j_{21}$  is $\langle, \rangle$-symmetric }
\eeqn 
Summing (\ref{first-part}) and (\ref{second-part}) and taking $f=g=0$, Relation (\ref{orth-structure1}) becomes
$$
 \big\langle \beta(x)\;,\; j_{11}(y)\big\rangle + \big\langle j_{12}\circ \alpha(x),y\big\rangle + \big\langle  \beta(y)\;,\;j_{11}(x) \big\rangle
 + \big\langle j_{12}\circ \alpha(y)\;,\; x \big\rangle =0
$$
So (\ref{first-relation}) is proved. 

\vskip 0.3cm
\noindent
Now we rewrite (\ref{orth-structure1}) and take $x=y=0$ to obtain (\ref{second-relation}).

\vskip 0.3cm
\noindent
Last, (\ref{third-relation}) is obtained by summing (\ref{first-part}) and (\ref{second-part}), 
simplifying and taking $y=0$.

\end{proof}

\section{Case of Orthogonal Lie Algebras}\label{chap3:section:orthogonal-lie-algebras}

Let $\G$ be a Lie algebra. We recall that an orthogonal structure $\mu$ on $\G$ induces 
an isomorphism $\theta : \G \to \G^*$ by the formula $\langle \theta(x),y \rangle = \mu(x,y)$, for all $x,y$
in $\G$. We also recall the set $\mathcal J=\{j:\G \to \G : j([x,y])=[j(x),y], \forall x,y \in \G\}$.

\subsection{Bi-invariant Metrics On  Cotangent Bundles of Orthogonal Lie groups}
\blem\label{lemma:symmetric-invertible-bi-invariant-endomorphism-orthogonal-lie-algebras}
Let $(\G,\mu)$ be an orthogonal Lie algebra. Any bi-invariant $\langle,\rangle$-symmetric invertible endomorphism 
$j:T^*\G \to T^*\G$ is given by
\beq \label{expression:symmetric-invertible-bi-invariant-endomorphism-orthogonal-lie-algebras}
j(x,f) = \Big(j_{11}(x) + j_2\circ \theta^{-1}(f)\;,\; \theta\circ j_1(x)+j_{11}^t(f) \Big),
\eeq 
for any $(x,f)$ in $T^*\G$; where  $j_{11}, j_1,j_2$ are elements  of 
$\mathcal J$ such that 
\bitem 
\item $j_{11}$ is invertible,  
\item $j_1$ is $\mu$-symmetric 
\item and $j_2$ satisfies the following two relations
\beqn
j_2 \circ ad_x &=& 0,\\
\langle j_2\circ \theta^{-1}(f)\;,\;g\rangle &=& \langle f\;,\;j_2\circ \theta^{-1}(g)\rangle
\eeqn 
for all $x$ in $\G$ and all $f,g$ in $\G^*$.
\eitem 
\elem 

\begin{proof}
 We have already seen that any bi-invariant $\langle,\rangle$-symmetric invertible endomorphism of $T^*\G$ is given by 
$$
j(x,f) = \Big(j_{11}(x) + j_{21}(f) \,,\,j_{12}(x) + j^t_{11}(f) \Big),
$$
with conditions listed in Proposition \ref{prop:symmetric-bi-invariant-endom}. 

\vskip 0.3cm

The linear maps $\theta^{-1}\circ j_{12}:\G \to \G$ and $j_{21}\circ \theta:\G \to \G$ 
commutes with all adjoint operators of $\G$; 
that is there exists $j_1$, $j_2$ in $\mathcal J$ such  that $\theta^{-1}\circ j_{12}=j_1$ and $j_{21} \circ \theta=j_2$. 
It comes that $j_{12} = \theta \circ j_1$ and $j_{21}=j_2 \circ \theta^{-1}$. 

\vskip 0.3cm 
\noindent
Now, (\ref{equiv-j21}) is equivalent to $j_2 \circ \theta^{-1}\circ ad_x^*=0$, for all $x$ in $\G$. Since
$\theta^{-1} \circ ad_x^*=ad_x \circ \theta^{-1}$, for all $x$ in $\G$, we have  $j_2 \circ ad_x \circ \theta^{-1}=0$, 
for any $x$ in $\G$. It comes that $j_2 \circ ad_x=0$, for all  $x$ in $\G$, since $\theta^{-1}$ is an isomorphism.

\vskip 0.3cm
\noindent
Relation (\ref{symJ12-1}) becomes $\langle\theta\circ j_1(x),y\rangle=\langle x,\theta\circ j_1(y)\rangle$, 
for any $x,y$ in $\G$. The latter can be written $\mu\big(j_1(x),y\big)=\mu\big(x,j_1(y)\big)$, for any $x,y$ in $\G$.

\vskip 0.3cm
\noindent
Last, Relation (\ref{symJ21-1}) reads $\langle j_2 \circ \theta^{-1}(f),g\rangle = \langle f,j_2\circ \theta^{-1}(g)\rangle$,
 for all $f,g$ in $\G^*$.
\end{proof}
\brmq\label{j2-perfect-lie-algebras}
The relation $j_2\circ ad_x=0$, for all $x$ in $\G$ means that $j_2$ vanishes identically on the derived 
ideal $[\G,\G]$ of $\G$. So, if $\G$ is perfect, then $j_2$ vanishes identically on all $\G$.
\ermq 

The following comes immediately from Lemma \ref{lemma:symmetric-invertible-bi-invariant-endomorphism-orthogonal-lie-algebras} 
and Remark \ref{j2-perfect-lie-algebras}
\bcor\label{invertible-biinvariant-endom-perfect-algebra}
Let $(\G,\mu)$ be an orthogonal and perfect Lie algebra. Then, any bi-invariant $\langle,\rangle$-symmetric invertible 
endomorphism $j:T^*\G \to T^*\G$ is given by
\beq 
j(x,f) = \Big(j_{11}(x) \;,\; \theta\circ j_1(x)+j_{11}^t(f) \Big),
\eeq 
for any $(x,f)$ in $T^*\G$; where  $j_{11}, j_1$ are bi-invariant tensors $\G$ such that  $j_1$ is $\mu$-symmetric and  $j_{11}$ is invertible.
\ecor 

\bthm\label{theorem:orthogonal-structure-orthogonal-lie-algebras}
Let $(\G,\mu)$ be an orthogonal Lie algebra. Any orthogonal structure $\mu_\D$ on $T^*\G$ is given by
\beq \label{expression:orthogonal-structure-orthogonal-lie-algebras}
\mu_\D\Big((x,f),(y,g)\Big)=\big\langle g\;,\;j_{11}(x) \big\rangle + \big\langle f\;,\;j_{11}(y)\big\rangle 
+ \big\langle g,j_2 \circ \theta^{-1}(f) \big\rangle + \big\langle \theta\circ j_1(x),y\big\rangle,
\eeq 
for all $(x,f)$, $(y,g)$ in $T^*\G$, where $j_{11}$, $j_1$ and $j_2$ satisfy conditions listed in 
Lemma \ref{lemma:symmetric-invertible-bi-invariant-endomorphism-orthogonal-lie-algebras}.
\ethm 
\begin{proof}
Any orthogonal structure $\mu_\D$ on $T^*\G$ is linked to the duality pairing by means of a map 
$j:T^*\G \to T^*\G$ defined by (\ref{expression:symmetric-invertible-bi-invariant-endomorphism-orthogonal-lie-algebras}) 
through the following formula valid for all  $(x,f)$, $(y,g)$ in $T^*\G$ : 
\beqn 
\mu_\D\Big((x,f),(y,g) \Big) &=& \langle j(x,f)\;,\; (y,g) \rangle. \cr
                             &=&\Big\langle \Big(j_{11}(x) + j_2\circ \theta^{-1}(f)\;,\; \theta\circ j_1(x)+j_{11}^t(f) \Big) \;,\; (y,g)\Big\rangle \cr
                             &=& \big\langle g \;,\; j_{11}(x) + j_2\circ \theta^{-1}(f) \big\rangle 
                               + \big\langle \theta\circ j_1(x)+j_{11}^t(f) \;,\; y \big\rangle \cr
                             &=& \big\langle g\;,\; j_{11}(x)\big\rangle +\big\langle g\;,\;  j_2\circ \theta^{-1}(f)\big\rangle
                                 + \big\langle \theta\circ j_1(x)\;,\;y\big\rangle + \big\langle j_{11}^t(f) \;,\; y\big\rangle \cr 
                             &=& \big\langle g\;,\; j_{11}(x)\big\rangle +\big\langle g\;,\;  j_2\circ \theta^{-1}(f)\big\rangle
                                 + \big\langle \theta\circ j_1(x)\;,\;y\big\rangle + \big\langle f \;,\; j_{11}(y)\big\rangle \nonumber
\eeqn

\end{proof}

\vskip 0.3cm

We have the following consequence coming from Theorem \ref{theorem:orthogonal-structure-orthogonal-lie-algebras} and 
Remark \ref{j2-perfect-lie-algebras}.
\bcor
Let $(\G,\mu)$ be an orthogonal and perfect Lie algebra. Any orthogonal structure $\mu_\D$ on $T^*\G$ is given by
\beq 
\mu_{\D}\Big((x,f),(y,g)\Big)=\big\langle g\;,\;j_{11}(x) \big\rangle + \big\langle f\;,\;j_{11}(y)\big\rangle 
 + \big\langle \theta\circ j_1(x)\;,\;y\big\rangle,
\eeq 
for all $(x,f)$, $(y,g)$ in $T^*\G$, where $j_{11}$ and $j_1$ are are bi-invariant tensors of $\G$ such that $j_{11}$ is invertible,  $j_1$ is $\mu$-symmetric.
\ecor

\vskip 0.3cm
Let us now characterise skew-symmetric prederivations of $T^*\G$ in the case where $\G$ is an orthogonal Lie algebra.
\vskip 0.3cm

\subsection{Skew-symmetric Prederivations}

\bpro\label{prop:skew-symmetric-prederivation-orthogonal-lie-algebra}
Let $(\G,\mu)$ be an orthogonal Lie algebra. Then any prederivation $\phi$ of $T^*\G$ which is skew-symmetric 
with respect to the orthogonal structure $\mu_\D$ defined by (\ref{expression:orthogonal-structure-orthogonal-lie-algebras}) 
can be written as 
\beq 
\phi(x,f)= \Big(\alpha_1(x)+j_1'\circ \theta^{-1}(f) \;,\; \theta \circ \alpha_2(x) + ({j_2'}^t-\alpha_1^t)(f)\Big),
\eeq 
where $\alpha_1, \alpha_2$ are in $\pder(\G)$; $j_1',j_2'$ are in $\mathcal J'$ with the following additional conditions:
\beqn 
\theta \circ j_1 \circ \alpha_1 + \alpha_1^t \circ \theta \circ j_1 + \alpha_2^t \circ \theta \circ j_{11} &=& 0 \\
(j_{11}\circ j_1' + j_2'\circ j_2 -\alpha_1 \circ j_2) \circ \theta^{-1} + j_2 \circ \theta^{-1}\circ({j_2'}^t-\alpha_1^t) 
+ \theta^{-t} \circ {j_1'}^t\circ j_{11}^t &=& 0 \\
\big[j_{11},\alpha_1\big] + j_2 \circ \alpha_2 + j_2'\circ j_{11} + \theta^{-t} \circ {j_1'}^t \circ \theta \circ j_1 &=& 0,
\eeqn 
where $\theta^{-t}$ is the transpose of $\theta^{-1}$.
\epro

\begin{proof}
From Proposition \ref{proposition:prederivation-orthogonal-lie-algebra}, if $\G$ is an orthogonal Lie algebra, 
then any prederivation of $T^*\G$ is given by 
$$
\phi(x,f) = \Big( \alpha_1(x) + j_1' \circ \theta^{-1}(f) \,,\, \theta\circ \alpha_2(x) + ({j_2'}^t-\alpha_1^t)(f)\Big),
$$ 
for every $(x,f)$ in $T^*\G$, where $\alpha_1,\alpha_2$ are prederivations of $\G$, $j_1',j_2'$ are in $\mathcal J'$. The prederivation 
$\phi$ is $\mu_D$-skew-symmetric means that
\beq \label{phi-skew-symmetric}
\mu_\D\Big(\phi(x,f),(y,g)\Big) + \mu_\D\Big((x,f),\phi(y,g)\Big) = 0,
\eeq 
for all $(x,f),(y,g)$ in $T^*\G$. Firstly, we compute $\mu_\D\big(\phi(x,f),(y,g)\big)$.
\beqn
\mu_\D\Big(\phi(x,f),(y,g)\Big) &=& \mu_\D\Big( \big( \alpha_1(x) + j_1' \circ \theta^{-1}(f) \,,\, \theta\circ \alpha_2(x) 
                                    + ({j_2'}^t-\alpha_1^t)(f)\big)\; , \; (y,g)\Big) \cr
                                &=& \Big\langle g\;,\;j_{11}\Big(\alpha_1(x) + j_1' \circ \theta^{-1}(f) \Big)\Big\rangle
                                   + \langle f , j_{11}(y)\rangle \cr 
                                & & + \Big\langle g \;,\; j_{2}\circ \theta^{-1}\Big(\theta\circ \alpha_2(x) 
                                    + ({j_2'}^t-\alpha_1^t)(f) \Big)  \Big\rangle \cr 
                                & &   + \Big\langle \theta \circ j_1\Big( \alpha_1(x) + j_1' \circ \theta^{-1}(f)\Big)\; , \; y\Big\rangle \cr
                                &=& \Big\langle g\;,\;j_{11}\circ \alpha_1(x)\Big\rangle 
                                   + \Big\langle g\;,\;j_{11} \circ j_1' \circ \theta^{-1}(f)\Big\rangle +  \langle f , j_{11}(y)\rangle \cr
                                & & + \Big\langle g \;,\;j_{2}\circ\alpha_2(x)\Big\rangle 
                                    + \Big\langle g \;,\; j_{2}\circ\theta^{-1}\circ {j_2'}^t(f) \Big\rangle \cr
                                & & -\Big\langle g \; , \; j_{2}\circ\theta^{-1}\circ \alpha_1^t(f) \Big\rangle 
                                    + \Big\langle \theta \circ j_1 \circ \alpha_1(x)\;,\;y\Big\rangle \cr 
                                & &  + \Big\langle \theta \circ j_1 \circ j_1'\circ \theta^{-1}(f)\;,\; y \Big\rangle.
\eeqn 
Secondly,
\beqn 
\mu_\D\Big((x,f),\phi(y,g)\Big) &=& \mu_\D\Big((x,f)\; , \; \big( \alpha_1(y) + j_1' \circ \theta^{-1}(g) \,,\, \theta\circ \alpha_2(y) 
                                    + ({j_2'}^t-\alpha_1^t)(g)\big) \Big) \cr 
                                &=& \Big\langle  \theta\circ \alpha_2(y)\! +\! ({j_2'}^t-\alpha_1^t)(g) \;,\; j_{11}(x) \Big\rangle
                                    \!+\! \Big\langle f\;,\; j_{11}\big(\alpha_1(y) \!+\! j_1' \circ \theta^{-1}(g)\big)\Big\rangle \cr
                                & & + \Big\langle \theta\circ \alpha_2(y)  + ({j_2'}^t-\alpha_1^t)(g)\;,\; j_2\circ \theta^{-1}(f)\Big\rangle \cr
                                & & + \Big\langle \theta\circ j_1(x) \;,\; \alpha_1(y) + j_1' \circ \theta^{-1}(g)\Big\rangle \cr 
                                &=& \Big\langle  \theta\circ \alpha_2(y)\;,\; j_{11}(x)\Big\rangle + \Big\langle {j_2'}^t(g)\;,\;j_{11}(x)\Big\rangle
                                    -\Big\langle \alpha_1^t(g) \;,\; j_{11}(x)\Big\rangle \cr
                                & & +\Big\langle f \;,\; j_{11}\circ \alpha_1(y) \Big\rangle 
                                    + \Big\langle f \;,\; j_{11}\circ j_1' \circ \theta^{-1}(g) \Big\rangle \cr 
                                & & + \Big\langle  \theta\circ \alpha_2(y) \;,\; j_2 \circ \theta^{-1}(f)\Big\rangle 
                                    + \Big\langle {j_2'}^t(g)\;,\; j_2 \circ \theta^{-1}(f)\Big\rangle \cr
                                & & -\Big\langle \alpha_1^t(g) \;,\; j_2 \circ \theta^{-1}(f)\Big\rangle 
                                    + \Big\langle \theta \circ j_1(x) \;,\; \alpha_1(y) \Big\rangle \cr
                                & & + \Big\langle \theta \circ j_1(x)\;,\; j_1'\circ \theta^{-1}(g)\Big\rangle.
\eeqn
Now, if we take $f=g=0$, (\ref{phi-skew-symmetric}) gives 
\beq
\Big\langle \theta \circ j_1 \circ \alpha_1(x)\;,\;y\Big\rangle + \Big\langle  \theta\circ \alpha_2(y)\;,\; j_{11}(x)\Big\rangle 
+ \Big\langle \theta \circ j_1(x) \;,\; \alpha_1(y) \Big\rangle = 0 \nonumber
\eeq 
which can be written 
\beq 
\Big\langle \theta \circ j_1 \circ \alpha_1(x)\;,\;y\Big\rangle + \Big\langle  y\;,\; \alpha_2^t\circ \theta\circ j_{11}(x)\Big\rangle 
+ \Big\langle \alpha_1^t\circ \theta \circ j_1(x) \;,\; y \Big\rangle = 0 \nonumber
\eeq
for all $x,y$ in $\G$. Then, 
$$
\theta \circ j_1 \circ \alpha_1 + \alpha_1^t\circ \theta \circ j_1 + \alpha_2^t\circ \theta \circ j_{11}=0.
$$

We can rewrite (\ref{phi-skew-symmetric}) as follows

\beqn
\begin{array}{lll}
& & \big\langle g\;,\;j_{11}\circ \alpha_1(x)\big\rangle \!+\! \big\langle g\;,\;j_{11} \circ j_1' \circ \theta^{-1}(f)\big\rangle \!+\! \langle f , j_{11}(y)\rangle  \cr 
& & + \big\langle g \;,\;j_{2}\circ\alpha_2(x)\big\rangle \!+\! \big\langle g \;,\; j_{2}\circ\theta^{-1}\circ {j_2'}^t(f) \big\rangle 
\!-\!\big\langle g \; , \; j_{2}\circ\theta^{-1}\circ \alpha_1^t(f) \big\rangle \cr 
& & + \big\langle \theta \circ j_1 \circ j_1'\circ \theta^{-1}(f)\;,\; y \big\rangle \! +\! \big\langle {j_2'}^t(g)\;,\;j_{11}(x)\big\rangle 
\!-\!\big\langle \alpha_1^t(g) \;,\; j_{11}(x)\big\rangle \cr  
& &+ \big\langle f \;,\; j_{11}\circ \alpha_1(y) \big\rangle \!+\! \big\langle f \;,\; j_{11}\circ j_1' \circ \theta^{-1}(g) \big\rangle  
  \!+\! \big\langle  \theta\circ \alpha_2(y) \;,\; j_2 \circ \theta^{-1}(f)\big\rangle \cr 
& &+ \big\langle {j_2'}^t(g)\;,\; j_2 \circ \theta^{-1}(f)\big\rangle 
  \!-\!\big\langle \alpha_1^t(g) \;,\; j_2 \circ \theta^{-1}(f)\big\rangle  
   \!+\! \big\langle \theta \circ j_1(x)\;,\; j_1'\circ \theta^{-1}(g)\big\rangle \!=\! 0. \label{phi-skew-symmetric-samba}
\end{array}
\eeqn 
If we take $x=y=0$, (\ref{phi-skew-symmetric-samba}) becomes
\beqn
\begin{array}{l}
\Big\langle g\;,\;j_{11} \circ j_1' \circ \theta^{-1}(f)\Big\rangle + \Big\langle g \;,\; j_{2}\circ\theta^{-1}\circ {j_2'}^t(f) \Big\rangle
-\Big\langle g \; , \; j_{2}\circ\theta^{-1}\circ \alpha_1^t(f) \Big\rangle \cr
+  \Big\langle f \;,\; j_{11}\circ j_1' \circ \theta^{-1}(g) \Big\rangle
 + \Big\langle {j_2'}^t(g)\;,\; j_2 \circ \theta^{-1}(f)\Big\rangle -\Big\langle \alpha_1^t(g) \;,\; j_2 \circ \theta^{-1}(f)\Big\rangle  = 0 
\end{array}
\eeqn
which is  
\beqn
\begin{array}{l}
\Big\langle g\;,\;j_{11} \circ j_1' \circ \theta^{-1}(f)\Big\rangle + \Big\langle g \;,\; j_{2}\circ\theta^{-1}\circ {j_2'}^t(f) \Big\rangle
-\Big\langle g \; , \; j_{2}\circ\theta^{-1}\circ \alpha_1^t(f) \Big\rangle \cr
+ \Big\langle \theta^{-t} \circ {j_1'}^t \circ j_{11}^t (f) \;,\;  g \Big\rangle
 + \Big\langle g\;,\;j_2'\circ j_2 \circ \theta^{-1}(f)\Big\rangle -\Big\langle g \;,\;\alpha_1\circ j_2 \circ \theta^{-1}(f)\Big\rangle  = 0, \nonumber
\end{array}
\eeqn 
for all $f,g$ in $\G^*$, where $\theta^{-t}$ is the transpose of $\theta^{-1}$.  This latter equality is equivalent to
$$
(j_{11}\circ j_1' + j_2'\circ j_2 -\alpha_1 \circ j_2) \circ \theta^{-1} + j_2 \circ \theta^{-1}\circ({j_2'}^t-\alpha_1^t) 
+ \theta^{-t} \circ {j_1'}^t\circ j_{11}^t = 0
$$
Last, (\ref{phi-skew-symmetric}) have a more simple expression :

\beqn
\begin{array}{lll}
& & \big\langle g\;,\;j_{11}\circ \alpha_1(x)\big\rangle  \!+\! \langle f , j_{11}(y)\rangle + \big\langle g \;,\;j_{2}\circ\alpha_2(x)\big\rangle   \cr 
& &+ \big\langle \theta \circ j_1 \circ j_1'\circ \theta^{-1}(f)\;,\; y \big\rangle + \big\langle {j_2'}^t(g)\;,\;j_{11}(x)\big\rangle \!-\!\big\langle \alpha_1^t(g) \;,\; j_{11}(x)\big\rangle 
 \cr 
& & + \big\langle f \;,\; j_{11}\circ \alpha_1(y) \big\rangle + \big\langle  \theta\circ \alpha_2(y) \;,\; j_2 \circ \theta^{-1}(f)\big\rangle 
 \!+\! \big\langle \theta \circ j_1(x)\;,\; j_1'\circ \theta^{-1}(g)\big\rangle \!=\! 0. \label{phi-skew-symmetric-samba2}
\end{array}
\eeqn 
We take $y=0$ and obtain
\beq 
\langle g \;,\; j_{11}\circ \alpha_1(x) \rangle \!+\! \langle g \;,\; j_2\circ \alpha_2(x) \rangle 
\!+\! \langle g \;,\; j_2' \circ j_{11}(x)\rangle \!-\!\langle g \;,\; \alpha_1 \circ j_{11}(x)\rangle 
\!+\! \langle \theta \circ j_1(x)\;,\;j_1'\circ \theta^{-1}(g) \rangle = 0 
\eeq 
The latter equality is equivalent to 
$$
[j_{11},\alpha_1] + j_2\circ \alpha_2 + j_2' \circ j_{11} + \theta^{-t}\circ {j_1'}^t\circ \theta \circ j_1 =0.
$$
\end{proof}

\section{Case of Semi-simple Lie Algebras}\label{chap3:section:semi-simple-lie-algebras}

Let $\G=\s_1\oplus \s_2\oplus\cdots\oplus\s_p$ be a semi-simple Lie algebra, where $\s_i$, $i=1,2,\ldots,p$
($p \in \N^*$) are simple ideals. 

\subsection{Bi-invariant metrics On Cotangent bundles of Semi-simple Lie groups}

\blem\label{endomorphism-j-semi-simple}
Any $\langle,\rangle$-symmetric and invertible bi-invariant  tensor  $j:T^*\G \to T^*\G$ 
is defined by 
\beq \label{expression:endomorphism-j-semi-simple}
j(x,f) = \Big(\sum_{i=1}^p \lambda_i x_i\; , \; \sum_{k=1}^p\nu_k\theta(x_k) + \sum_{i=1}^p \lambda_i f_i\Big),
\eeq 
where $x=x_1+x_2+\cdots+x_p \in \s_1\oplus \s_2\oplus\cdots\oplus\s_p$, $f=f_1+f_2+\cdots+f_p \in \s_1^*\oplus \s_2^*\oplus\cdots\oplus\s_p^*$,
$\lambda_i$, ($i=1,2,\ldots,p$) are non-zero real numbers, $\nu_i$  ($i=1,2,\ldots,p$) are any real numbers and $\theta:\G\to\G^*$ is the isomorphism induced by any 
orthogonal structure on $\G$ through the formula (\ref{isomorphism-theta}).
\elem

\begin{proof}
According to Corollary \ref{invertible-biinvariant-endom-perfect-algebra}, any bi-invariant, 
$\langle,\rangle$-symmetric and invertible endomorphism $j:T^*\G \to T^*\G$ has the form 
\beq 
j(x,f) = \Big(j_{11}(x), \theta \circ j_1(x) + j_{11}^t(f) \Big),
\eeq 
for all $(x,f)$ in $T^*\G$, where $j_{11}$ and $j_1$ are bi-invariant endomorphisms of $\G$ such that
\bitem 
\item $j_{11}$ is invertible, 
\item $j_1$ is symmetric with respect to any orthogonal structure on $\G$.
\eitem 
Now we have seen in Section \ref{chap:semisimple} that the map $j_{11}$ must have the form 
\beq 
j_{11}(x) = \sum_{i=1}^p \lambda_i x_i,
\eeq
if $x=x_1+x_2+\cdots+x_p \in \s_1\oplus \s_2\oplus\cdots\oplus\s_p$, where $\lambda_i, i=1,2,\ldots,p$ are real numbers. It comes that 
\beq
j_{11}^t(f) = \sum_{i=1}^p \lambda_i f_i,
\eeq 
if $f=f_1+f_2+\cdots+f_p \in \s_1^*\oplus \s_2^*\oplus\cdots\oplus\s_p^*$.

\vskip 0.3cm

We also have 
\beq
j_1(x) = \sum_{k=1}^p\nu_kx_k,
\eeq 
where $\nu_k$, $k=1,2,\ldots,p$, are real numbers. So we have 
\beqn 
\theta \circ j_1(x) &=& \theta\Big( \sum_{k=1}^p\nu_kx_k \Big) \cr
                    &=& \sum_{k=1}^p\nu_k\theta(x_k)
\eeqn  
We then can write $j$ as follows
\beq 
j(x,f) = \Big(\sum_{i=1}^p \lambda_i x_i, \sum_{k=1}^p\nu_k\theta(x_k) + \sum_{i=1}^p \lambda_i f_i \Big)
\eeq 
Now $j$ is invertible if and only if $j_{11} = \sum_{i=1}^p \lambda_i Id_{\s_i}$ is invertible.

For all $i=1,2,\ldots,p$, note by $n_i$ the dimension of the simple ideal $\s_i$ ($n_i:=\dim\s_i$). 
The matrix of $j_{11}$, in some basis 
$\{e_{11},e_{12},\ldots,e_{1n_1};e_{21},e_{22},\ldots,e_{2n_2};\ldots;e_{p1},e_{p2},\ldots,e_{pn_p}\}$,  
is the following

\beq
\left(
\begin{array}{cccccccccccc}
 A_1   &   O     & \ldots &  O    \cr
  O    &   A_2   & \ldots &  O    \cr  
\vdots & \vdots  & \ddots &  O    \cr
  O    &  O      & \ldots & A_p   \cr 
\end{array}
\right) 
\eeq 
where $A_i$ is the $n_i\times n_i$-matrix defined as follows
\beq 
A_i=diag(\underbrace{\lambda_i,\lambda_i,\ldots,\lambda_i}_{n_i \, times }) = 
\left( 
\begin{array}{cccccccccccc}
\lambda_i   &   O      & \ldots &  O    \cr
  O         &\lambda_i & \ldots &  O    \cr  
\vdots      & \vdots   & \ddots &  O    \cr
  O         &  O       & \ldots &\lambda_i   \cr 
\end{array}
\right)
\eeq 
The determinant of $j_{11}$ is then 
\beq 
\det(j_{11})=\prod_{k=1}^p\lambda_k^{n_k}.
\eeq 
Then, $j_{11}$ is invertible if and only if each of $\lambda_k$ is non-zero.
\end{proof}

\brmq
The matrix of $j$ in the basis $\{e_{11},e_{12},\ldots,e_{1n_1};e_{21},e_{22},\ldots,e_{2n_2};
\ldots; \\e_{p1},e_{p2},\ldots,e_{pn_p};     e_{11}^*,e_{12}^*,\ldots,e_{1n_1}^*;e_{21}^*,e_{22}^*,\ldots,e_{2n_2}^*;
\ldots;e_{p1}^*,e_{p2}^*,\ldots,e_{pn_p}^* \}$ is given by
\beq
\left(
\begin{array}{cccccccccccc}
 A_1   &   O     & \ldots &  O    &   O   &   O   &\ldots&  O          \cr
  O    &   A_2   & \ldots &  O    &   O   &   O   &\ldots&  O         \cr  
\vdots & \vdots  & \ddots &  O    &\vdots &\vdots &\vdots& \vdots     \cr
  O    &  O      & \ldots & A_p   &   O   &   O   &\ldots&  O         \cr 
       &         &        &       &   A_1 &   O   &\ldots&  O         \cr 
       &         &        &       &   O   & A_2   &\ldots&  O         \cr 
       &         &        &       &\vdots &\vdots &\ddots& \vdots     \cr 
       &         &        &       &   O   &   O   &\ldots& A_p        \cr 
\end{array}
\right) 
\eeq 
and the determinant of $j$ is 
\beq 
\det(j)=\prod_{k=1}^p\lambda_k^{2n_k}.
\eeq 
\ermq

\vskip 0.3cm

Note by $\langle,\rangle_{\s_i}$ the duality pairing between $\s_i$ and its dual space $\s_i^*$. 
The following theorem characterizes all orthogonal structures on $T^*\G$.

\vskip 0.3cm

\bthm\label{theorem:orthogonal-structure-semi-simple-lie-algebras}
Let $\G$ be a semi-simple Lie algebra.  Any orthogonal structure $\mu$ on $T^*\G$ is given by 
\beq\label{expression:metric-semi-simple} 
\mu\big((x,f),(y,g)\big) = \sum_{i=1}^p \lambda_i\big\langle(x_i,f_i),(y_i,g_i)\big\rangle_{\s_i} 
+ \sum_{k=1}^p\nu_k\langle \theta(x_k),y_k\rangle_{\s_k},
\eeq 
for all $x, y$ in $ \G=\s_1\oplus \s_2\oplus\cdots\oplus\s_p$ and $f, g$ in 
$\G^*=\s_1^*\oplus \s_2^*\oplus\cdots\oplus\s_p^*$ ;
where for all $i=1,2,\ldots,p$,  $(\lambda_i,\nu_i)$ belongs to $\R^*\times \R$.

\vskip 0.3cm
 
In particular, if $\G$ is simple, then any orthogonal structure can be written
\beq \label{expression:metric-simple}
\mu\big((x,f),(y,g)\big) = \lambda\big\langle(x,f),(y,g)\big\rangle + \nu\langle \theta(x),y\rangle,
\eeq 
for all $(x,f), (y,g)$ in $T^*\G$, where $(\lambda,\nu)$ is in $\R^*\times \R$.
\ethm 
\begin{proof}
Any orthogonal structure on $T^*\G$ is given by an adjoint-invariant invertible and 
$\langle,\rangle$-symmetric endomorphism $j$ through the formula

\beq
\mu_j\big((x,f),(y,g)\big) = \langle j(x,f)\,,\;(y,g) \rangle, 
\eeq 
We have seen in Lemma \ref{endomorphism-j-semi-simple} that 
$$
j(x,f) = \Big(\sum_{i=1}^p \lambda_i x_i\; , \; \sum_{k=1}^p\nu_k\theta(x_k) + \sum_{i=1}^p \lambda_i f_i\Big),
$$
where $x=x_1+x_2+\cdots+x_p \in \s_1\oplus \s_2\oplus\cdots\oplus\s_p$, $f=f_1+f_2+\cdots+f_p \in \s_1^*\oplus \s_2^*\oplus\cdots\oplus\s_p^*$,
$\lambda_i,\nu_i$, ($i=1,2,\ldots,p$) are real numbers, $\lambda_i\neq 0$, for all $i=1,2,\ldots,p$. We can write
\beqn
\mu_j\big((x,f),(y,g)\big) &=& \sum_{i=1}^p \lambda_i g(x_i) + \sum_{k=1}^p\nu_k\theta(x_k)(y) + \sum_{i=1}^p \lambda_if_i(y) \cr
                           &=& \sum_{i=1}^p \lambda_i \Big(g(x_i) +f_i(y)\Big) + \sum_{k=1}^p\nu_k\theta(x_k)(y) \cr
                           &=& \sum_{i=1}^p \lambda_i \Big(g_i(x_i) +f_i(y_i)\Big) + \sum_{k=1}^p\nu_k\theta(x_k)(y_k) \cr
                           &=& \sum_{i=1}^p \lambda_i \big\langle (x_i,f_i),(y_i,g_i)\big\rangle_{\s_i} 
                                   + \sum_{k=1}^p\nu_k\langle \theta(x_k),y_k\rangle_{\s_k} \label{metric-semi-simple}
\eeqn 
So, the first part of the theorem is proved. 

\vskip 0.3cm

Now, if $\G$ is simple, then $\G=\s_1$, $x=x_1$, $f=f_1$ and Relation (\ref{metric-semi-simple}) becomes
\beq 
\mu_j\big((x,f),(y,g)\big) =  \lambda \big\langle (x,f),(y,g)\big\rangle 
                                   + \nu\langle \theta(x),y\rangle \label{metric-simple}
\eeq 
where $\lambda \in \R^*$ and $\nu \in \R$.
\end{proof}

\brmq
If $\G=\s_1\oplus\s_2\oplus\cdots\oplus\s_p$ is a semi-simple Lie algebra, where $\s_i, i=1,2,\ldots,p$
are simple ideals, then on any of the ideals $\s_i$ the Killing form $K_{\s_i}$ defines an orthogonal
structure. Thus, Relation (\ref{expression:metric-semi-simple}) can be written 
\beq\label{expression:metric-semi-simple-killing}
\mu\big((x,f),(y,g)\big) = \sum_{i=1}^p \lambda_i\big\langle(x_i,f_i),(y_i,g_i)\big\rangle_{\s_i} 
+ \sum_{k=1}^p\nu_kK_{\s_k}(x_k,y_k)
\eeq 
and Relation (\ref{expression:metric-simple}) can be expressed as
\beq 
\mu_j\big((x,f),(y,g)\big) =  \lambda \big\langle (x,f),(y,g)\big\rangle + \nu K(x,y),
\eeq 
where $K$ stands for the Killing form on $\G$.
\ermq 

\vskip 0.3cm

Let us now study the group of isometries of bi-invariant metrics on $T^*G$, when $G$ is a semi-simple Lie group.

\subsection{Skew-symmetric Prederivations}

\bpro\label{skew-sym-prederivation-semisimple} 
Let $\G$ be a semi-simple Lie. Then any prederivation of $T^*\G$ which is
skew-symmetric with respect to any orthogonal structure $\mu$ on $T^*\G$ is an inner
derivation of $T^*\G$;  that is if $\phi$ is a $\mu$-skew-symmetric prederivation of  $T^*\G$, then
there exist $x_0$ in $\G$ and $f_0$ in $\G^*$ such that 
\beq
\phi(x,f)=([x_0,x], ad^*_{x_0}f-ad^*_xf_0 ), 
\eeq 
for every $x$ and $f$ in $\G$ and $\G^*$ respectively. 
\epro
\begin{proof}
We have shown in Theorem \ref{thm-prederivation-semi-simple} that if $\G$ is a semi-simple Lie algebra, then  every prederivation
of $T^*\G$ is a derivation. From Relation (\ref{eq:cocycle-semi-simple}) we have that, if $\G$ is semi-simple,  any derivation of $T^*\G$ has the
form
\beq
\phi(x,f)=([x_0,x], ad^*_{x_0}f-ad^*_xf_0 + \sum_{i=1}^p\gamma_if_i)
\eeq
for every $x$ in $\G$ and every $f:=f_1 + f_2 +\cdots +f_p$ in $\s^*_1\oplus \s^*_2 \oplus
\cdots \oplus \s^*_p=\G^*$, where $x_0$ is in $\G$, $f_0$ is in $\G^*$ and $\gamma_i$,
$i=1,..,p$, are real numbers. To prove the Proposition it suffices to prove that $\phi$ is  $\mu$-skew-symmetric if and 
only if $\gamma_i=0$, for all $i=1,2,\ldots,p$.

\beqn
\mu\Big(\phi(x,f),(y,g)\Big) &=& \mu\Big(\big([x_0,x], ad^*_{x_0}f-ad^*_xf_0 + \sum_{i=1}^p\gamma_if_i\big)\,,\,(y,g) \Big) \cr
                             &=& \sum_{i=1}^p\lambda_i\Big\langle \Big([x_0,x]_i, \big(ad^*_{x_0}f-ad^*_xf_0 
                                   + \sum_{k=1}^p\gamma_kf_k\big)_i\Big)\,,\,(y_i,g_i)\Big\rangle_{\s_i} \cr
                             & &    + \sum_{k=1}^p\nu_k\Big\langle \theta(y_k),[x_0,x]_k\Big\rangle_{\s_k} \nonumber
\eeqn
That is
\beqn
\mu\Big(\phi(x,f),(y,g)\Big) &=& \sum_{i=1}^p \lambda_i\left[g_i([x_0,x]_i) +(ad^*_{x_0}f)_i(y_i) - (ad^*_xf_0)_i(y_i)
                                 +\left(\sum_{k=1}^p\gamma_kf_k\right)_i(y_i)  \right] \cr 
                             & & + \sum_{k=1}^p\nu_k\langle \theta(y_k),[x_0,x]_k\rangle \label{skew-sym1}
\eeqn 
By the same way we obtain
\beqn
\mu\Big((x,f),\phi(y,g)\Big) &=& \sum_{i=1}^p \lambda_i\left[f_i([x_0,y]_i) + (ad^*_{x_0}g)_i(x_i) - (ad^*_yf_0)_i(x_i) 
                                  + \left(\sum_{k=1}^p\gamma_kg_k\right)_i(x_i)\right]  \cr
                               & & + \sum_{k=1}^p\nu_k\langle \theta([x_0,y]_k),x_k\rangle. \label{skew-sym2}
\eeqn 
Now $\phi$ is $\mu$-symmetric means that
\beqn 
0 &=& \mu\Big(\phi(x,f),(y,g)\Big) + \mu\Big((x,f),\phi(y,g)\Big) \cr
  &=& -\sum_{i=1}^p \lambda_i \Big[(ad^*_xf_0)_i(y_i) + (ad^*_yf_0)(x_i)\Big] +    
\sum_{k=1}^p\nu_k\Big[\langle \theta(y_k),[x_0,x]_k\rangle +  \langle \theta([x_0,y]_k),x_k\rangle \Big] \cr
  & & + \sum_{i=1}^p\lambda_ig_i([x_0,x]_i) + \sum_{i=1}^p\lambda_i(ad_{x_0}^*f)_i(y_i) 
      + \sum_{i=1}^p\lambda_i\left(\sum_{k=1}^p\gamma_kf_k\right)_i(y_i) \cr 
  & & + \sum_{i=1}^p\lambda_if_i([x_0,y]_i) + \sum_{i=1}^p\lambda_i(ad_{x_0}^*g)_i(x_i) 
      + \sum_{i=1}^p\lambda_i\left(\sum_{k=1}^p\gamma_kg_k\right)_i(x_i) \cr 
  &=& \sum_{i=1}^p\lambda_if_0\Big(\underbrace{[x_i,y_i]+[y_i,x_i]}_{=0}\Big) 
      + \sum_{k=1}^p\nu_k\Big[\underbrace{\langle \theta(y_k),[x_0,x_k]\rangle 
      - \langle\theta(y_k),[x_0,x_k]}_{=0}\rangle\Big] \cr
  & & + \sum_{i=1}^p\lambda_i \left[\underbrace{g_i([x_0,x]_i) + (ad_{x_0}^*g)_i(x_i)}_{=0}\right]  
      + \sum_{i=1}^p\left[\underbrace{f([x_0,y]_i) + (ad_{x_0}^*f)_i(y_i)}_{=0} \right] \cr
  & & + \sum_{i=1}\lambda_i\left[\left(\sum_{k=1}^p\gamma_kf_k\right)_i(y_i) 
      + \left(\sum_{k=1}^p\gamma_kg_k\right)_i(x_i) \right].
\eeqn 
We then have
\beqn 
\sum_{i=1}^p\lambda_i\left[\left(\sum_{k=1}^p\gamma_kf_k\right)_i(y_i) 
      + \left(\sum_{k=1}^p\gamma_kg_k\right)_i(x_i) \right] &=& 0 \cr
\sum_{i=1}^p\lambda_i\Big[\gamma_if_i(y_i) + \gamma_ig_i(x_i)\Big] &=& 0      
\eeqn 
for all $x,y$ in $\G$ and all $f,g$ in $\G^*$. Now if we take $y=0$, we obtain 
\beq 
\sum_{i=1}^p\lambda_i\left(\sum_{k=1}^p\gamma_kg_k\right)_i(x_i) = 0,
\eeq 
for all $x,y$ in $\G$ and all $f,g$ in $\G^*$.
That is 
\beq 
\sum_{i=1}^p \lambda_i\gamma_ig_i(x_i) = 0,
\eeq 
for all $x=x_1+x_2+\cdots+x_p \in \G$ and for all $g=g_1+g_2+\cdots+g_p$ in $\G^*$. 
Since, $\lambda_i \neq 0$, for any $i=1,2,\ldots,p$, then we have 
$\gamma_i=0$, for any $i=1,2,\ldots,p$. 

\end{proof}

\section{Examples}\label{chap3:section:example}


\subsection{The Affine Lie Group of the Real Line $\R$}

Let $G:=\hbox{\rm Aff}(\R)$ be the Lie group of affine motions of $\R$ and let  $\G:=\hbox{\rm aff}(\R)$ its Lie algebra. 
We note by  $T^*\G:=T^*\hbox{\rm aff}(\R)=\G\ltimes\G^*$ 
the Lie algebra of the cotangent bundle of the Lie group $G$. If $\G:=span\{e_1,e_2\}$ and $\{e_3,e_4\}$ denotes the dual basis of $\{e_1,e_2\}$, 
we have the following brackets:
\beq
[e_1,e_2]=e_2, \quad [e_1,e_4]=-e_4, \quad [e_2,e_4]=e_3
\eeq

\bpro
Let $\langle,\rangle$ denote the duality pairing between $\G$ and $\G^*$.
\benum
\item Any orthogonal structure $\mu$ on $T^*\G$ is given by the following expression:
\beq  
\mu\Big((x,f),(y,g) \Big) = a\langle (x,f),(y,g) \rangle + bx_1y_1, \label{orthogonal-structure-aff(R)}
\eeq
for all $(x,f)=x_1e_1+x_2e_2+f_3e_3+f_4e_4$ and $(y,g)=y_1e_1+y_2e_2+g_3e_3+g_4e_4$ in $T^*\G$, where $a \in \R^*$ and $b\in \R$.
\item Any orthogonal structure $\mu$ on $T^*\G$ is of signature $(2,2)$.
\eenum
\epro

\begin{proof}
On the basis $(e_1,e_2,e_3,e_4)$ of $T^*\G$ an invertible bi-invariant tensor $j :T^*\G \to T^*\G$
has the following matrix 
\beq\label{bi-invariant-endom-affine} 
\left(
\begin{array}{cccc}
a & 0 & 0 & 0 \\
0 & a & 0 & 0 \\
b & 0 & a & 0 \\
0 & 0 & 0 & a
\end{array}
\right), \, a \in \R^*,\, b\in \R.
\eeq 
We note  $j_{a,b}:=j$ if $j$ is defined by (\ref{bi-invariant-endom-affine}). One can verify that $j_{a,b}$   
is $\langle,\rangle$-symmetric.
 
\vskip 0.3cm
\noindent
Now any adjoint-invariant scalar product  on $T^*\G$ is given by an adjoint-invariant 
invertible and $\langle,\rangle$-symmetric  endomorphism $j_{a,b}$. 
We note it by $\mu_{j_{a,b}}$ or simply by  $\mu_{a,b}$. We have
\beq 
\mu_{a,b}\Big((x,f),(y,g) \Big) = \langle j_{a,b}(x,f),(y,g) \rangle, 
\eeq  
for all $(x,f)$ and $(y,g)$ in $T^*\G$. Now if we set $(x,f)=x_1e_1+x_2e_2+f_3e_3+f_4e_4$ 
and $(y,g)=y_1e_1+y_2e_2+g_3e_3+g_4e_4$, then we have
\beqn 
\mu_{a,b}\Big((x,f),(y,g) \Big) &=& \big\langle ax_1e_1+ax_2e_2 + (bx_1+af_3)e_3+af_4 \;,\; y_1e_1+y_2e_2+g_3e_3+g_4e_4\big\rangle \cr
                                &=&  ax_1g_3 + ax_2g_4 +(bx_1+af_3)y_1 + ay_2f_4 \cr 
                                &=& a(x_1g_3 + x_2g_4 + y_1f_3 + y_2f_4) + bx_1y_1. \nonumber
\eeqn
That is 
\beq  
\mu_{a,b}\Big((x,f),(y,g) \Big) = a\langle (x,f),(y,g) \rangle + bx_1y_1, \nonumber
\eeq
for all $(x,f)$ and $(y,g)$ in $T^*\G$, where $a \in \R^*$ and $b\in \R$.

\vskip 0.3cm 

Let us study the signature of the scalar product $\mu_{a,b}$. The matrix of $\mu_{a,b}$ on 
the basis $(e_1,e_2,e_3,e_4)$ of $T^*\G$ is given by
\beq 
M_{a,b} = \left(
\begin{array}{cccc}
b & 0 & a & 0 \cr
0 & 0 & 0 & a \cr
a & 0 & 0 & 0 \cr
0 & a & 0 & 0 \cr
\end{array}
\right).
\eeq 
The characteristic polynomial of $M_{a,b}$ is 
\beq 
P_{a,b}(t) = (t-a)(t+a)(t^2-bt-a^2),
\eeq  
for any $t \in \R$. It is now a little matter to check that $M_{a,b}$ has four eigenvalues : 
$$
-a \quad , \quad a \quad , \quad \dfrac{b-\sqrt{b^2+4a^2}}{2} \quad ,\quad \dfrac{b+\sqrt{b^2+4a^2}}{2}.
$$
Two of these eigenvalues are less than zero and two are greater than zero. Hence, $\mu_{a,b}$ is of signature $(2,2)$ 
for all $(a,b) \in \R^*\times \R$. 
\end{proof}

\vskip 0.3cm

Let $G_1$ stand for the identity connected component of $G$. We also note by $\mu_{a,b}$ the bi-invariant metric induced on the Lie group $T^*G_1$ 
by the orthogonal structure $\mu_{a,b}$ given by (\ref{orthogonal-structure-aff(R)}).

\bpro\label{aff(R):skew-symmetric-prederivations}
Any $\mu_{a,b}$-skew-symmetric prederivation $\phi$ of  $T^*\G$ is an inner derivation of $T^*\G$. 
\epro 

\begin{proof}
 A prederivation $\alpha$ of $T^*\G$ has the following matrix on the basis $(e_1,e_2,e_3,e_4)$ of $T^*\G$ 
(see Examples \ref{chap:aff(R)} and \ref{chap2:aff(R)}):
\beq 
\begin{pmatrix}
 0          &        0   &    0      &        0    \cr
\alpha_{21} &\alpha_{22} &    0      & \alpha_{24} \cr
\alpha_{31} &\alpha_{32} &\alpha_{33}&-\alpha_{21}  \cr
-\alpha_{32} &    0      &    0      &\alpha_{33}-\alpha_{22}  
\end{pmatrix}
\eeq 

A $\mu_{a,b}$-skew-symmetric prederivation $\alpha$ of $T^*\G$ is characterised by 
\beq \label{skew-symmetry-prederivation}
\mu_{a,b}\Big(\alpha(x,f),(y,g) \Big) = -\mu_{a,b}\Big((x,f),\alpha(y,g) \Big),
\eeq 
for all $(x,f)$ and $(y,g)$ in $T^*\G$. The left hand side of the equality above is
\beqn
\mu_{a,b}\Big(\alpha(x,f),(y,g) \Big)\!\!\! &=&\!\!\! \mu_{a,b}\Big( (\alpha_{21}x_1 \!+\!\alpha_{22}x_2\!+\!\alpha_{24}f_4)e_2 
                                               \!+\! (\alpha_{31}x_1+\alpha_{32}x_2 \!+\!\alpha_{33}f_3\!-\!\alpha_{21}f_4)e_3 \cr
                                      & & + (-\alpha_{32}x_1\!+\!(\alpha_{33}-\alpha_{22})f_4)e_4 \;\,,\;\,
                                           y_1e_1+y_2e_2+g_3e_3+g_4e_4\Big) \cr
                                      &=&\!\!\! a\Big\langle (\alpha_{21}x_1 \!+\!\alpha_{22}x_2\!+\!\alpha_{24}f_4)e_2 
                                               \!+\! (\alpha_{31}x_1+\alpha_{32}x_2 \!+\!\alpha_{33}f_3\!-\!\alpha_{21}f_4)e_3 \cr
                                      & & + (-\alpha_{32}x_1\! +\!(\alpha_{33}-\alpha_{22})f_4)e_4 \;\,,\;\,
                                           y_1e_1+y_2e_2+g_3e_3+g_4e_4 \Big\rangle  \cr
                                      &=&\!\!\! a\Big[(\alpha_{21}x_1 \!+\!\alpha_{22}x_2\!+\!\alpha_{24}f_4)g_4 \!+\! 
                                         (\alpha_{31}x_1+\alpha_{32}x_2 \!+\!\alpha_{33}f_3\!-\!\alpha_{21}f_4)y_1 \cr 
                                      & & \!\!+\! (-\alpha_{32}x_1\!+\!(\alpha_{33}-\alpha_{22})f_4)y_2   \Big] \cr
                                      &=& a\Big[\alpha_{21}(x_1g_4-y_1f_4) + \alpha_{22}x_2g_4 + \alpha_{24}f_4g_4 + \alpha_{31}x_1y_1 \cr 
                                      & &   + \alpha_{32}(x_2y_1-x_1y_2) + \alpha_{33}y_1f_3 + \alpha_{42}x_2y_2 + \alpha_{44}y_2f_4\Big] 
\eeqn 
The right hand side gives
\beqn
-\mu_{a,b}\Big((x,f),\alpha(y,g) \Big)\!\!\!\!\! &=&\!\!\!\! -\mu_{a,b}\Big( x_1e_1+x_2e_2+f_3e_3+f_4e_4  \;\,,\;\,
                                                (\alpha_{21}y_1 \!+\!\alpha_{22}y_2\!+\!\alpha_{24}g_4)e_2 \cr
                                             & &\!\!\! \! \!\!+\! (\alpha_{31}y_1+\alpha_{32}y_2 \!+\!\alpha_{33}g_3\!-\!\alpha_{21}g_4)e_3 
                                                 \!+\! (-\alpha_{32}y_1\! +\! \alpha_{42}y_2\!+\!\alpha_{44}g_4)e_4 \Big) \cr 
                                             &=&\!\!\!\!\! -a\Big[(\alpha_{21}y_1 \!+\!\alpha_{22}y_2\!+\!\alpha_{24}g_4)f_4 \!+\! 
                                                 x_1(\alpha_{31}y_1+\alpha_{32}y_2 \!+\!\alpha_{33}g_3\!-\!\alpha_{21}g_4) \cr
                                             & &\!\! \! +\! x_2(-\alpha_{32}y_1\! +\! \alpha_{42}y_2\!+\!\alpha_{44}g_4)\Big] \cr
                                             &=& -a\Big[\alpha_{21}(-x_1g_4+y_1f_4) + \alpha_{22}y_2f_4 + \alpha_{24}f_4g_4 \!+\! \alpha_{31}x_1y_1 \cr 
                                      & &   + \alpha_{32}(-x_2y_1+x_1y_2) + \alpha_{33}x_1g_3 + \alpha_{42}x_2y_2 + \alpha_{44}x_2g_4\Big] 
\eeqn 
The equality (\ref{skew-symmetry-prederivation}) then implies that 
 $\alpha_{24}=\alpha_{31}=\alpha_{33} = 0$. In this case $\alpha$ can be written 
$\alpha=\alpha_{21}\phi_1  + \alpha_{22}\phi_2 + \alpha_{32}\phi_4 $. One can readily check that $\phi_1=-ad_{e_2}$, $\phi_2=ad_{e_1}$, $\phi_4=-ad_{e_4}$.
\end{proof}
Let us now focus our attention on isometries of bi-invariant metrics on $T^*G_1$. Recall first the following materials.
\bitem
\item The operation law of $T^*G_1$ is given by (see  Proposition \ref{prop:double-affine}) :
\beq
x\cdot y= \Big(x_1+y_1\;,\; x_2 + y_2e^{x_1}\;,\; x_3+ y_3 + x_2y_4e^{-x_1}\;,\;x_4+y_4e^{-x_1}\Big).
\eeq 
The unit element is $\epsilon =(0,0,0,0)$ and the inverse of an element $x=(x_1,x_2,x_3,x_4)$ is the element
$x^{-1}=(-x_1,-x_2e^{-x_1},-x_3+x_2x_4,-x_4e^{x_1})$.
\item The exponential map of $T^*G_1$ is defined as follows (see Chapter \ref{chapter:affine-lie-group}, Corollary \ref{corollary:exponential-double-affine}) : 
let $\xi =\xi_1e_1+\xi_2e_2+ \xi_3e_3+\xi_4e_4$ be in $T^*\G$,

\beq 
\exp(\xi)\!\! = \!\!\left\{\!\!\!\!\!\!\!\!\!\!
\begin{array}{ll}
& \left(0\;,\;\xi_2\;,\;\dfrac{1}{2}\xi_2\xi_4 + \xi_3\;,\;\xi_4\right), \qquad \qquad \qquad \qquad  \qquad \qquad \quad \quad \quad \;\text{ if } \xi_1 = 0 \cr 
&  \cr
&\left(\!\xi_1 , \dfrac{\xi_2}{\xi_1}[\exp(\xi_1)\!\!-\!\!1], \xi_3 \!\!+\!\! \dfrac{\xi_2\xi_4}{\xi_1} 
\!\!+\!\!\dfrac{\xi_2\xi_4}{\xi_1^2}[\exp(-\xi_1)\!\!-\!\!1], \dfrac{\xi_4}{\xi_1}[1\!\!-\!\!\exp(-\xi_1)]\!\right) \text{ if }  \xi_1 \neq 0.
\end{array}\right. \nonumber
\eeq 
\eitem
For $i=1,2,3,4$, note by $X^{iL}$, $X^{iR}$ the infinitesimal generators of the one-parameter subgroups $L_{exp(te_i)}$ and $R_{exp(te_i)}$ respectively.
It is readily checked that for all $g=(x_1,x_2,x_3,x_4)$ in $T^*G_1$, we have :
\beq 
\begin{array}{lll}
X^{1,L}_{\mid g}=(1,x_2,0,-x_4) &,& X^{1,R}_{\mid g}=(1,0,0,0)        \cr 
                                & &                                   \cr
X^{2,L}_{\mid g}=(0,1,x_4,0)    &,& X^{2,R}_{\mid g}=(0,e^{x_1},0,0)  \cr 
                                & &                                   \cr
X^{3,L}_{\mid g}=(0,0,1,0)      &,& X^{3,R}_{\mid g}=(0,0,1,0)        \cr
                                & &                                   \cr
X^{4,L}_{\mid g}=(0,0,0,1)      &,& X^{4,R}_{\mid g}=(0,0,x_2e^{-x_1},0).  
\end{array}
\eeq 
It is now easy to obtain
\beq 
\begin{array}{lll}
X^{1,s}_{\mid g}=(2,x_2,0,-x_4)               &,& X^{1,a}_{\mid g}=(0,-x_2,0,x_4) \cr 
                                & &                                   \cr
X^{2,s}_{\mid g}=(0,1+e^{x_1},x_4,0)          &,& X^{2,a}_{\mid g}=(0,e^{x_1}-1,-x_4,0)\cr 
                                & &                                   \cr
X^{3,s}_{\mid g}=(0,0,2,0)                    &,& X^{3,a}_{\mid g}=(0,0,0,0) \cr 
                                & &                                   \cr
X^{4,s}_{\mid g}=(0,0,x_2e^{-x_1},1+e^{-x_1}) &,& X^{4,a}_{\mid g}=(0,0,x_2e^{-x_1},e^{-x_1}-1) \cr 
\end{array}
\eeq 
Now $\{\phi_1,\phi_2,\phi_4,X^{1,s},X^{2,s},X^{3,s},X^{4,s}\}$ is a basis of the Lie algebra $\mathcal I(G_1)$ of the Lie group $I(G_1)$ of
isometries of any bi-invariant metric on $T^*G_1$.  Now we just have to compute the brackets on  $\mathcal I(G_1)$ by  
Theorem \ref{thm:muller-brackets-isometries-algebra}. The non-vanishing brackets are the following :
\beq
\begin{array}{llrlllrlllr}
[\phi_1,\phi_2]  &=&-\phi_1   &,& [\phi_1,X^{1,s}] &=& X^{2,s}  &,& [\phi_1,X^{4,s}] &=&-X^{1,s} \cr 
[\phi_2,X^{2,s}] &=& X^{2,s}  &,& [\phi_2,X^{4,s}] &=& -X^{4,s} &,& [\phi_4,X^{1,s}] &=& X^{4,s} \cr
[\phi_4,X^{2,s}] &=&-X^{3,s}  &,& [X^{1,s},X^{2,s}]&=&-X^{2,s}  &,& [X^{1,s},X^{4,s}]&=& X^{4,s} \cr
[X^{2,s},X^{4,s}] &=& -X^{3,s}. &&                              &&                  
\end{array}
\eeq

%
%

\subsection{The Special Linear Group $SL(2,\R)$}


Let $G_2$ denote the special linear group $SL(2,\R)$. 
Set $\G_2:=\mathfrak{sl}(2,\R)=\sspan\{e_1,e_2,e_3\}$ and $T^*\G_2:=\G_2\ltimes\G_2^*=\sspan\{e_1,e_2,e_3,e_4,e_5,e_6\}$.
We have the following brackets (see Example \ref{chap:sl(2)}):
\beq
\begin{array}{ccccccccccc}
[e_1,e_2] &=& -2e_2  & & [e_1,e_3] &=& 2e_3  & & [e_1,e_5] &=& 2e_5  \cr 
[e_1,e_6] &=&-2e_6   & & [e_2,e_3] &=&-e_1   & & [e_2,e_4] &=& e_6 \cr
[e_2,e_5] &=&-2e_4   & & [e_3,e_4] &=&-e_5   & & [e_3,e_6] &=& 2e_4
\end{array}
\eeq
An element $(x,f)$ of $T^*\G_2$ can be written $(x,f)=x_1e_1+x_2e_2+x_3e_3+f_4e_4+f_5e_5+f_6e_6$.
\bpro
Let $\langle,\rangle$ stand for the duality pairing between $\G_2$ and $\G_2^*$ and $\mu$ be any orthogonal structure on $T^*\G_2$. Then, 
\benum
\item there exist $(a,b)$ in $\R^*\times\R$ such that 
\beq\label{orthogonal-structures-sl(2)} 
\mu\big((x,f),(y,g)\big) = a\big\langle(x,f),(y,g)\big\rangle + 4b(2x_1y_1+x_2y_3+x_3y_2),
\eeq 
for all  $(x,f)$ and $(y,g)$ in $T^*\G$.
\item The orthogonal structure $\mu$ on $T^*\G_2$ is of signature $(3,3)$.
\eenum
\epro

\begin{proof}
\benum
\item Since $\G_2$ is a simple Lie algebra,  Theorem \ref{theorem:orthogonal-structure-semi-simple-lie-algebras} asserts that 
any orthogonal structure $\mu_{a,b}$ on $T^*\G_2$ is given by
\beq 
\mu_{a,b}\big((x,f),(y,g)\big) = a\big\langle(x,f),(y,g)\big\rangle + b\langle \theta(x),y\rangle,
\eeq 
for all $(x,f), (y,g)$ in $T^*\G_2$, where $(a,b) \in \R^*\times \R$. Because of the simplicity of 
$\G_2$ every orthogonal structure on $\G_2$ is a multiple of the Killing form. So, we let $\theta$
be induced by the Killing form $K$ of $\G_2$, {\it i.e. } $\langle \theta(x),y\rangle=K(x,y)$, for all $x,y$ in $\G_2$. Then, 
\beq 
\mu\big((x,f),(y,g)\big) = a\big\langle(x,f),(y,g)\big\rangle + b K(x,y), \nonumber
\eeq 
for all $(x,f), (y,g)$ in $T^*\G_2$.

\vskip 0.3cm
\noindent
By definition, the Killing form is given by
\beq 
K(x,y) = trace(ad_x\circ ad_y),
\eeq 
for all $x,y$ in $\G_2$. On the basis of $(e_1, e_2, e_3)$ of $\G_2$, we have the following matrix of $K$
\beq 
\left( 
\begin{array}{ccc}
8  &  0  &  0  \cr
0  &  0  &  4  \cr
0  &  4  &  0
\end{array}
\right) \nonumber
\eeq 
Now it is a little matter to checked that 
\beq 
\mu_{a,b}\big((x,f),(y,g)\big) = a\big\langle(x,f),(y,g)\big\rangle + b(8x_1y_1+4x_2y_3+4x_3y_2) \nonumber
\eeq 

\item The matrix of $\mu_{a,b}$ on the basis of $T^*\G_2$ is 
\beq 
M_{a,b}=
\left(
\begin{array}{cccccc}
8b & 0 & 0 & a & 0 & 0 \\
0 & 0 & 4b & 0 & a & 0 \\
0 & 4b & 0 & 0 & 0 & a \\
a & 0 & 0 & 0 & 0 & 0 \\
0 & a & 0 & 0 & 0 & 0 \\
0 & 0 & a & 0 & 0 & 0
\end{array}
\right).
\eeq 
The characteristic polynomial of $M_{a,b}$ is 
\beq 
P_{a,b}(t) = (t^2-8bt-a^2)\big[t^4-2(a^2+8b^2)t^2+a^4\big],
\eeq 
for all $t\in \R$. The roots of $P_{a,b}$ are
\beq
\begin{array}{cclcccl}
t_1 &=& 4b-\sqrt{a^2+16b^2}                              &;& t_2 &=& 4b+\sqrt{a^2+16b^2} \cr   
    & &                                                  & &     & &                  \cr
t_3 &=&-\sqrt{a^2+8b^2-4|b|\sqrt{a^2+4b^2}}              &;& t_4 &=&+\sqrt{a^2+8b^2-4|b|\sqrt{a^2+4b^2}} \cr
    & &                                                  & &     & &                                              \cr                  
t_5 &=&-\sqrt{a^2+8b^2+4|b|\sqrt{a^2+4b^2}}              &;& t_6 &=&+\sqrt{a^2+8b^2+4|b|\sqrt{a^2+4b^2}}
\end{array} \nonumber
\eeq
The roots $t_1,t_3$ and $t_5$ are negative while $t_2,t_4$ and $t_6$ are positive. Then, the signature of the 
bilinear form  $\mu_{a,b}$ is $(3,3)$, for all $(a,b) \in \R^* \times \R$.
\eenum
\end{proof} 
%

 From Theorem \ref{thm-prederivation-semi-simple}, any prederivation of $T^*\G_2$ is a derivation. Then the space of 
prederivations of $T^*\G_2$ is $\pder\big(T^*\G_2\big)=\der(T^*\G_2)=\sspan(\phi_1,\phi_2,\phi_3,\phi_4,\phi_5,\phi_6,\phi_7)$, 
where $\phi_i$, $i=1,2,3,4,5,6,7$ are defined  in Example \ref{chap:sl(2)}. 
If $\phi$ is a prederivation  of $T^*\G_2$,  
\beq 
\phi  = \alpha_1 \phi_1 + \alpha_2 \phi_2+ \alpha_3 \phi_3 + \alpha_4 \phi_4 + \alpha_5 \phi_5 + \alpha_6 \phi_6 + \alpha_7 \phi_7,
\eeq 
where $\alpha_i$, $i=1,2,3,4,5,6,7$, are real numbers. Now, since $\G_2$ is simple then $\phi$ is skew-symmetric 
with respect to the form (\ref{orthogonal-structures-sl(2)}) if and only if it is an inner derivation 
(Proposition \ref{skew-sym-prederivation-semisimple}). Hence, $\alpha_2=0$ and 
then any $\mu_{a,b}$-skew-symmetric prederivation of $T^*\G_2$ has the form 
\beq 
\phi=\alpha_1 \phi_1 + \alpha_3 \phi_3 + 
\alpha_4 \phi_4 + \alpha_5 \phi_5 + \alpha_6 \phi_6 + 
\alpha_7 \phi_7,
\eeq 
where $\alpha_i$, $i=1,3,4,5,6,7$, are real numbers. 
In the basis of $T^*\G_2$, such prederivation has the following matrix
\beq 
\left(
\begin{array}{cccccc}
       0   &-\alpha_3 & \alpha_4  &   0      &    0    &    0    \cr
-2\alpha_4 &-\alpha_1 &   0       &   0      &    0    &    0    \cr
2\alpha_3  &     0    &\alpha_1   &   0      &    0    &    0    \cr
      0    &\alpha_6  &  \alpha_7 &   0 &2\alpha_4& -2\alpha_3   \cr
-\alpha_6  &    0     &-\alpha_5  & \alpha_3 &\alpha_1 &    0          \cr
-\alpha_7  &\alpha_5  &   0       &-\alpha_4 &   0     &-\alpha_1
\end{array}
\right)
\eeq 

\subsection{The $4$-dimensional Oscillator Lie Group}
In Example \ref{prederivations-osciallator-group} we have defined the $4$-dimensional osciallator Lie group $G_\lambda$ 
and its Lie algebra $\G_\lambda$.

\bpro
Any bi-invariant metric on the $4$-dimensional oscillator group  is Lorentzian and its induced orthogonal structure 
$\langle,\rangle_\lambda$ on the Lie algebra  $\G_\lambda$ has the following form :
\beq 
\langle x,y\rangle_\lambda = k\mu_\lambda(x,y) + mx^{-1}y^{-1}, \label{general-orthogonal-structure-osciallator}
\eeq 
for any $x=x^{-1}e_{-1}+x^0e_0+x^1e_1+\check x^1\check e_1$ and $y=y^{-1}e_{-1}+y^0e_0+y^1e_1+\check y^1\check e_1$ in $\G_\lambda$, where 
$\mu_\lambda$ is the orthogonal structure on $\G_\lambda$ given by (\ref{orthogonal-structure-oscillator}) and $(k,m)$ is in $\R^*\times \R$.
\epro
\begin{proof}
We have already seen that the form $\mu_\lambda$ given by (\ref{orthogonal-structure-oscillator}) is an orthogonal structure on $\G_\lambda$. 
Then, any other orthogonal structure $\langle,\rangle_\lambda$ on $\G_\lambda$ is given by $\langle x,y\rangle_\lambda = \mu_\lambda\big(j(x),y\big)$,
for all $x,y$ in $\G_\lambda$, where $j:\G_\lambda \to \G_\lambda$ in an $\mu_\lambda$-symmetric and invertible bi-invariant tensor. Now, one can check that
such map $j$ is given by $j(x)=kx^{-1}e_{-1}+(mx^{-1}+kx^{0})e_0+kx^1e_1+k\check x^1\check e_1$, where $(k,m)$ is in $\R^*\times \R$. So, we have
\beqn
\langle x,y\rangle_\lambda &=& \mu_\lambda\Big(kx^{-1}e_{-1}+(mx^{-1}+kx^{0})e_0+kx^1e_1+k\check x^1\check e_1 \,,\, y^{-1}e_{-1}+y^0e_0+y^1e_1+\check y^1\check e_1\Big) \cr
                           &=& kx^{-1}y^0 + (mx^{-1}+kx^{0})y^{-1} + \frac{1}{\lambda}(kx^1y^1+k\check x^1\check y^1) \cr
                           &=& kx^{-1}y^0 + mx^{-1}y^{-1}+kx^{0}y^{-1} + \frac{1}{\lambda}(kx^1y^1+k\check x^1\check y^1) \cr
                           &=& k\Big[x^{-1}y^0 + x^{0}y^{-1} + \frac{1}{\lambda}(x^1y^1+\check x^1\check y^1) \Big] + mx^{-1}y^{-1} \cr
                           &=& k\mu_\lambda(x,y) + mx^{-1}y^{-1}. \nonumber
\eeqn 
Now the matrix of $\langle,\rangle_\lambda$ on the basis ($e_{-1}, e_0, e_1, \check e_1$) of $\G_\lambda$ is given by
\beq
M_\lambda=\left(
\begin{array}{cccc}
m  &  k  &          0        &  0               \cr
k  &  0  &          0        &  0               \cr
0  &  0  &\frac{k}{\lambda}  &  0               \cr
0  &  0  &          0        &\frac{k}{\lambda}
\end{array} \right)
\eeq
The characteristic polynomial of $M_\lambda$ is $P_\lambda(t)=\left(t-\frac{k}{\lambda}\right)^2(t^2-mt-k^2)$ and its roots are
\beq 
t_1 =\frac{k}{\lambda} \quad ; \quad t_2=\frac{1}{2}\big(m+\sqrt{m^2+4k^2}\big) \quad ; \quad t_3=\frac{1}{2}\big(m-\sqrt{m^2+4k^2}\big)
\eeq
$t_2$ and $t_3$  are simple roots while $t_1$ is of multiplicity $2$. It is clear that $t_2 \geq 0$, $t_3 \leq 0$ and $t_1$ has the same sign as $k$.
\end{proof}

\bpro
Any orthogonal structure $\mu_\lambda^*$  on $T^*\G_\lambda$ can be written as
\beq 
\mu_\lambda^*\Big((x,f),(y,g)\Big)\!\! =\!\! A\langle (x,f),(y,g)\rangle \!+\! B\mu_\lambda(x,y) 
                                           +\! C(x^{-1}g^0 \!+ y^{-1}f^0) \!+\! Dx^{-1}y^{-1} \!+\! Ef^0g^0, \label{orthogonal-structure-cotangent-osciallator}
\eeq 
for all elements $(x,f)=x^{-1}e_{-1}+x^0e_0+x^1e_1+\check x^1\check e_1+f^{-1}e_{-1}^*+f^0e_0^*+f^1e_1^*+\check f^1\check e_1^*$ and 
$(y,g)=y^{-1}e_{-1}+y^0e_0+y^1e_1+\check y^1\check e_1+g^{-1}e_{-1}^*+g^0e_0^*+g^1e_1^*+\check g^1\check e_1^*$ in $T^*\G_\lambda$,
 where $(A;B,C,D,E)$ is in  $\R^*\times\R^4$ and $\langle,\rangle$ stands for the duality pairing between $\G_\lambda$ and $\G_\lambda^*$.
\epro
\begin{proof}
 Since $\G_\lambda$ is an orthogonal Lie algebra, then (see Theorem \ref{theorem:orthogonal-structure-orthogonal-lie-algebras})
 any orthogonal structure $\mu_\lambda^*$ on $T^*\G_\lambda$ is given by
\beq
\mu_\lambda^*\Big((x,f),(y,g)\Big)=\big\langle g\;,\;j_{11}(x) \big\rangle + \big\langle f\;,\;j_{11}(y)\big\rangle 
+ \big\langle g,j_2 \circ \theta^{-1}(f) \big\rangle + \big\langle \theta\circ j_1(x),y\big\rangle, \nonumber
\eeq 
for all $(x,f)$, $(y,g)$ in $T^*\G_\lambda$, where $\theta : \G_\lambda \to \G_\lambda^*$, $\langle\theta(x),y\rangle = \mu_\lambda(x,y)$ 
for all $x,y$ in $\G_\lambda$;  $j_{11}$, $j_1$ and $j_2$ commute with all adjoint operators of $\G$ and satisfy the following conditions : 
\benum
\item $j_{11}$ is invertible ; 
\item $j_1$ is $\mu_\lambda$-symmetric \big($\langle,\rangle_\lambda$ being defined by (\ref{general-orthogonal-structure-osciallator})\big);
\item $j_2 \circ ad_x =0$ and $\langle j_2 \circ \theta^{-1}(f),g\rangle =\langle f,j_2 \circ \theta^{-1}(g)\rangle$, for all $x$ in $\G$ and 
all $f,g$ in $\G^*$. 
\eenum 
One can easily establish that the endomorphisms $j_{11}$, $j_1$ and $j_2$ have the following matrices on the basis $(e_{-1}, e_0, e_1, \check e_1)$ of $\G_\lambda$ : 
\beq 
j_{11} = \left(
\begin{array}{cccc}
a_{11} &  0  &  0  &  0 \cr
a_{21} &  a_{11}  &  0  &  0 \cr
0 &  0  &  a_{11}  &  0 \cr
0 &  0  &  0  & a_{11} \cr
\end{array}
\right) \, ; \, j_1= \left(
\begin{array}{cccc}
b_{11} &  0  &  0  &  0 \cr
b_{21} &  b_{11}  &  0  &  0 \cr
0 &  0  &  b_{11}  &  0 \cr
0 &  0  &  0  & b_{11} \cr
\end{array}
\right) \, ; \, j_2= \left(
\begin{array}{cccc}
0 &  0  &  0  &  0 \cr
c_{21} &  0  &  0  &  0 \cr
0 &  0  &  0  &  0 \cr
0 &  0  &  0  & 0 \cr
\end{array}
\right) \nonumber
\eeq 
where $a_{11} \neq 0$, $a_{21}$, $b_{11}$, $b_{21}$, $c_{21}$ are real numbers. 
It is also readily checked that, with respect to the basis $(e_{-1}, e_0, e_1, \check e_1)$ 
and ($e_{-1}, e_0, e_1, \check e_1$) of $\G_\lambda$ and $\G_\lambda^*$ respectively, $\theta$ and $\theta^{-1}$ have the following matrices :
\beq 
\theta= \left(
\begin{array}{cccc}
0 &  1  &  0  &  0 \cr
1 &  0  &  0  &  0 \cr
0 &  0  &\frac{1}{\lambda}    &  0 \cr
0 &  0  &  0  & \frac{1}{\lambda} \cr
\end{array}
\right) \,;\, \theta^{-1}= \left(
\begin{array}{crcc}
0 &  1  &  0  &  0 \cr
1 & 0  &  0  &  0 \cr
0 &  0  &\lambda    &  0 \cr
0 &  0  &  0  & \lambda \cr
\end{array}
\right)
\eeq 
It is now a little matter to establish that

\beqn
\mu_\lambda^*\Big((x,f),(y,g)\Big) &=& a_{11}\langle (x,f),(y,g)\rangle + b_{11}\mu_\lambda(x,y) \cr
                                         & &  + a_{21}(x^{-1}g^0 + y^{-1}f^0) + b_{21}x^{-1}y^{-1} + c_{21}f^0g^0.
\eeqn
This latter equality is nothing but  (\ref{orthogonal-structure-cotangent-osciallator}); one just has to put 
$A=a_{11}$, $B=b_{11}$, $C=a_{21}$, $D=b_{21}$, $E=c_{21}$. 
\end{proof}

\chapter{ K\"ahlerian Structure On The Lie Group of Affine Motions of the Real Line $\R$ }\label{chapter:affine-lie-group}
\minitoc

\section{Introduction}

An almost  complex structure on a Lie algebra $\G$, when it exists, is defined by a linear endomorphism $J:\G \to \G$, $J^2=-Id$
($Id$ is the identity map of $\G$). If, in addition, $J$ satisfies the condition 
$$J[x,y] -[Jx,y]-[x,Jy] - J[Jx,Jy] =0,$$ 
for all $x$ and $y$ in $\G$, we will say that $J$ is integrable. Now, an integrable almost complex structure is called a complex structure.
In this case the pair $(\G,J)$ is called a complex algebra.

\vskip 0.3cm

A metric $\mu$ on a complex algebra $(\G,J)$ is called K\"ahlerian if it is hermitian, that is, 
$$
\mu(Jx,Jy) =\mu(x,y),
$$ for all vectors $x$ and $y$ in $\G$, 
and if $J$ is a parallel tensor  with respect to the  connection arising from $\mu$.
Likewise, given a Lie algebra with metric $\mu$, we shall say that a complex structure $J$ on $\G$ is
K\"aahlerian if $\mu$ is K\"ahlerian with respect to $J$ in the above sense. Such a pair
$(J,\mu)$ defines a K\"ahlerian structure on $\G$.

\vskip 0.3cm

A Poisson-Lie structure on a Lie group $G,$ is given by a Poisson tensor $\pi$ on $G,$ 
such that, when the Cartesian product $G\times G$ is equipped with the Poisson tensor $\pi\times\pi$, 
the multiplication $m: ~(\sigma,\tau)\mapsto \sigma\tau$ is a Poisson map between the Poisson manifolds 
$(G\times G, \pi\times\pi)$ and $(G, \pi)$. If $f,$ $g$ are in $\G^*$ and $\bar f,\bar g$ are $\mathcal C^\infty$ 
functions on $G$ with respective derivatives $f=\bar f_{*,\epsilon},$ ~ $g = \bar g_{*,\epsilon}$ at 
the unit $\epsilon$ of $G,$ one defines another element $[ f, g]_*$ of $\G^*$ by setting 
$$
[f,g]_*:=(\{\bar f,\bar g\})_{*,\epsilon}.
$$ 
Then $[f,g]_*$ does not depend on the choice 
of $\bar f$ and $\bar g$ as above, and $(\G^*,[,]_*)$ is a Lie algebra. Now, there is a symmetric 
role played by the spaces $\G$ and $\G^*,$ dual to each other. Indeed, as well as acting on $\G^*$ 
via the coadjoint action, $\G$ is also acted on by $\G^*$ using the coadjoint action of $(\G^*,[,]_*).$
A lot of the most interesting properties and applications of $\pi,$ are encoded in the new Lie 
algebra $(\G\oplus\G^*, [,]_\pi),$ where
\beq
[(x,f),(y,g)]_\pi:=([x,y]+ad^*_f y - ad^*_g x, ad^*_x g - ad^*_y f+ [f,g]_*),
\eeq
for every $x,y$ in $\G$ and every $f,g$ in $\G^*.$

\vskip 0.3cm

The Lie algebras $(\G\oplus\G^*,[,]_\pi)$ and $(\G^*, [,]_*)$ are respectively called the double 
and the dual Lie algebras of the Poisson-Lie group $(G,\pi)$. Endowed with the duality pairing defined 
in (\ref{eq:dualitypairing}), the double Lie algebra of any Poisson-Lie group ($G,\pi$), is an orthogonal 
Lie algebra, such that $\G$ and $\G^*$ are maximal totally isotropic (Lagrangian) subalgebras. 

\vskip 0.3cm

Let $r$ be an element of the wedge product $\wedge^2\G.$ Denote by $r^+$ (resp. $r^-$) the left
(resp. right) invariant bivector field on $G$ with value $r=r^+_\epsilon$ ~ (resp. $r=r^-_\epsilon$)
at $\epsilon$. If $\pi_r:=r^+-r^-$ is a Poisson tensor, then it is a Poisson-Lie tensor and $r$
is called a solution of the Yang-Baxter Equation. If, in particular, $r^+$  is a (left invariant)
Poisson tensor on $G$, then $r$ is called a solution of the Classical Yang-Baxter Equation (CYBE)
on $G$ (or $\G$). In this latter case, the double Lie algebra  $(\G\oplus\G^*,[,]_{\pi_r})$
is isomorphic to the Lie algebra $\D$ of the cotangent bundle $T^*G$ of $G$. See e.g. \cite{di-me-cybe}.
 We may also consider the linear map $\tilde r:\G^*\to\G,$ where $\tilde r (f):=r(f,.).$
The linear map $\theta_r:~(\G\oplus\G^*,[,]_{\pi_r})\to \D$, 
$$\theta_r(x,f):= (x+\tilde r(f),f),$$
is an isomorphism of Lie algebras, between $\D$ and the double Lie algebra of any
Poisson-Lie group structure on $G,$ given by a solution $r$ of the CYBE.

\vskip 0.3cm

Let $G=\hbox{\rm Aff}(\R)$ denote the group of affine motions of the real line $\R$. It possesses a lot of interesting structures : 
symplectic (\cite{agaoka},\cite{bajo-benayadi-medina}), 
complex, affine (\cite{bordeman-medina-ouadfel}), K\"alerian (\cite{lichnerowicz-medina}). Note by $\G=\hbox{\rm aff}(\R)$ its Lie algebra. 
We wish to study the geometry of $G$ as a K\"ahlerian Lie group.
This supposes to describe the symplectic structures, the complex structures, the affine transformations and transformations 
which preserve those structures. Furthermore, the symplectic structure corresponds to a solution of the Classical Yang-Baxter equation $r$
(see \cite{di-me-cybe}). So we will also study the double Lie group $\mathcal D(G,r)$ of $G$ associated to $r$.

\vskip 0.3cm

In Section \ref{section:affine-lie-group} we show how can man construct a sympectic structure on $G$, determine the induced left-invariant
 affine structure,  study the geodesics of this affine structure and compare these geodesics with the integral curves of left-invariant
vector fields on $G$.  In Section \ref{section:double-affine-lie-group} we deal with the geometry of a double Lie group of $G$ associated to
a solution of the Classical Yang-Baxter equation. As explained below, the Lie algebra $\mathcal D(\G,r):=\G\ltimes \G^*(r)$ of any double Lie group
$\mathcal D(G,r)$ of $G$ is isomorphic to the Lie algebra $T^*\G$ of the cotangent  bundle $T^*G:=G\ltimes \G^*$(\cite{di-me-cybe}). The Lie group structure
on $T^*G$ considered here is the one obtained by semi-direct product via the coadjoint action of $G$ on the dual space $\G^*$ of $\G$, considered as 
an Abelian Lie group. We construct a double Lie group $\mathcal D(G,r)$ with Lie algebra $\mathcal D(\G,r)\simeq T^*\G$. The double Lie group
 $\mathcal D(G,r)$ admits an affine structure and a complex structure (\cite{di-me-cybe}). We study the both structures.

\section{The Affine Lie Group of the Real Line}\label{section:affine-lie-group}

\subsection{The Affine Lie Group of $\R$ and its Lie algebra}

An affine transformation of $\R$ is a function $\R \to \R$, $x \mapsto ax + b$, where 
$a$ and $b$ are real numbers ($a \neq 0$). Let $\hbox{\rm Aff}(\R)$  denote the set of all such transformations.
We identify the sets $\hbox{\rm Aff}(\R)$ and $\R^* \times \R$ by putting $f=(a,b)$ if $f: x \mapsto ax + b$.
%
Now consider the operation
\beq\label{group operation}
(a_1,b_1)(a_2,b_2) := (a_1a_2, a_1b_2 + b_1), 
\eeq
for all $(a_1,b_1),(a_2,b_2)$ in $\hbox{\rm Aff}(\R)$.
It is readily checked that, endowed with the composition rule (\ref{group operation})
$\hbox{\rm Aff}(\R)$ is a group.  The identity element is $\epsilon:=(1,0)$ and the inverse of the 
element $(a,b)\in G$ is given by $(a,b)^{-1} = (\dfrac{1}{a},- \dfrac{b}{a})$. 
Considering the underlying manifold $\R^* \times \R$, $G$ is a Lie group. We will note it by $G$.
\brmq
The operation law (\ref{group operation}) is nothing but the composition law of maps.
\ermq



The Lie algebra of $G$ is $\G = \R^2$ (as a set). 
The bracket on $\G$ is given by 
\beq\label{bracket}
[(u',v'),(u,v)] = (0,-uv' + vu').
\eeq
In some basis $(e_1,e_2)$ of $\G$ we have the following :
\beq 
[e_1,e_2] = e_2
\eeq 


\subsection{Symplectic and Affine Structures on the Affine Lie group}

We first recall the affine Lie group of $\R^n$ ($n\in \N$) and how can man  construct 
a symplectic form on it (see \cite{agaoka}). The affine group of $\R^n$ is the even dimensional Lie group
\beq
\hbox{\rm Aff}(\R^n) =\left\{ \left(
                               \begin{array}{cc}
                                A & v \\
                                0 & 1
                               \end{array} \right), \; A \in GL(n,\R), \; v \in \R^n\right\}
\eeq
Its Lie algebra is
\beq
\hbox{\rm aff}(\R^n) :=\left\{ \left(
                               \begin{array}{cc}
                                A & v \\
                                0 & 0
                               \end{array} \right), \; A \in \mathfrak{gl}(n,\R), \; v \in \R^n\right\}
\eeq
Let $e_{ij}$ be the matrix such that the $(i-j)$-entry equals $1$ and the other entries are zero.
$(e_{ij})_{\stackrel{1\leq i\leq n}{1 \leq j\leq n+1} }$ forms a basis of $\hbox{\rm aff}(n,\R)$.
Denote by $(e^*_{ij})_{\stackrel{1\leq i\leq n}{1 \leq j\leq n+1} }$ its dual basis and set
\beq
\alpha_0 := \sum_{k=1}^n e^*_{k,k+1} \quad  \mbox{ and } \quad  \omega_0:= -d\alpha_0.
\eeq
Hence, if $x,y$ belong to $\hbox{\rm aff}(n,\R)$,
\beqn
\omega_0(x,y) &=& -d\alpha_0(x,y), \nonumber \\
              &=& \alpha_0([x,y])  \nonumber
\eeqn



One can check that the $2$-form $\omega_0$ is non-degenerate and  
gives a left invariant symplectic structure $\omega$ on $\hbox{\rm Aff}(n,\R)$ by the following formula :
\beq
\omega_g(X_{|g},Y_{|g}) :=\omega_0(T_gL_{g^{-1}}\cdot X_{|g},T_gL_{g^{-1}}\cdot Y_{|g}), \label{sympectic-structure-induction}
\eeq
for all   $g$ in $\hbox{\rm Aff}(n,\R)$ and all vectors  $X_{|g},Y_{|g}$ in $T_g\hbox{\rm Aff}(n,\R)$, 
where $L_g:\hbox{\rm Aff}(n,\R) \to \hbox{\rm Aff}(n,\R)$, $h\mapsto g\cdot h$ 
stands for the left translation by $g$ in $G$. 
\vskip 0.3cm

From now on we focus our  attention on $G=\hbox{\rm Aff}(\R)$. The  Lie algebra of $G$ can be written 
\beq 
\G:= \hbox{\rm aff}(\R) =\{ \left(
                               \begin{array}{cc}
                                u & v \\
                                0 & 0
                               \end{array} \right); u, v \in \R \}
\eeq 
Put
$$e_{11}:=\left(
\begin{array}{cc}
1 & 0 \\
0 & 0
\end{array}
\right) \quad ; \quad e_{12}:= \left(
\begin{array}{cc}
0 & 1 \\
0 & 0
\end{array} \right).
$$
Then $\G = vect\{e_{11},e_{12}\}$. We rename the vectors as follow : $e_{11}=e_1,e_{12}=e_2$.
We have
\beq
[e_1,e_2] = e_2.
\eeq
Now we put 
$$
\alpha_0 = e^*_{12} = e^*_2  \text{ and } \omega_0 = -d\alpha_0 = -d e^*_2,
$$
that is 
\beq
\omega_0(x,y) = e^*_2([x,y]),
\eeq
for all $x,y$ in $\G$. We then have
\beq
\omega_0(e_1,e_1) = 0 \quad ; \quad \omega_0(e_1,e_2) = 1 \quad  ; \quad \omega_0(e_2,e_2) = 0. \label{symplectic-form-affine}
\eeq
$\omega_0$ is a non-degenerate $2$-form on $\G$. It induces a symplectic form $\omega$ on $G$ by relation  (\ref{sympectic-structure-induction}).
%

\vskip 0.3cm

For any $\xi$  in $\G$, let $X^\xi$ denote the associated left invariant vector field. That is 
\beq 
X^\xi_{\mid g}:=T_\epsilon L_g \cdot \xi,
\eeq 
for any $g$ in $G$, where $\epsilon$ is the identity element of $G$. One defines an affine connection $\nabla$ on $G$ by the following formula (see \cite{chu}) : 
$\forall \; \xi,\eta, \sigma \in \G$,
\beq
\omega(\nabla_{X^\xi}X^\eta,X^\sigma) = - \omega(X^\eta,[X^\xi,X^\sigma]).
\eeq
\noindent
We obtain 
\beq
\nabla_{e_1}e_1 = -e_1 \quad ; \quad \nabla_{e_1}e_2 = 0 \quad ; \quad \nabla_{e_2}e_1 = -e_2 \quad ;
\quad \nabla_{e_2}e_2 = 0. \label{affine-structure-aff}
\eeq
\noindent
Note by $\Gamma_{ij}^k$ the symbols of Christoffel, that is $\nabla_{e_i}e_j=\Gamma_{ij}^ke_k$ (Einstein's summation). We then have 
the following symbols of Christoffel
\beqn
\begin{array}{llrlllrllllllll}
\Gamma_{11}^1 &=& -1 & ; & \Gamma_{11}^2 &=& 0 & ; &  \Gamma_{12}^1 &=& 0 & ; & \Gamma_{12}^2 &=& 0 \cr
\Gamma_{21}^1 &=& 0  & ; & \Gamma_{21}^2 &=&-1 & ; &  \Gamma_{22}^1 &=& 0 & ; & \Gamma_{22}^2 &=& 0
\end{array}
\eeqn  

\vskip 0.3cm 

In the sequel, we will consider the connected component of the unit $\epsilon$ of $G$. Let us note it by $G_0$. That is 
$G_0=\R^*_+\times \R=\{(x,y) \in \R\times \R, x>0\}$. We endow $G_0$ with the connection (also denote by $\nabla$) induced on $G_0$ by the connection
$\nabla$ defined by relation (\ref{affine-structure-aff}).

\subsection{Geodesics of $(G_0,\nabla)$ at the unit}

%
%
Let $(\xi_0,\eta_0)$ be an element of $\G$ and let  $\gamma=(\gamma_1,\gamma_2)$ be an auto-parallel ($\nabla_{\dot \gamma(t)}\dot \gamma(t)=0$, for all $t$) 
curve such that
\beq 
\gamma(0)=\epsilon=(1,0) \quad ; \quad \dot\gamma(0)=(\xi_0,\eta_0). \label{initial-conditions1}
\eeq 
The curve $\gamma$ satisfies the equations :
\beqn
{\ddot\gamma}_1 -\dot\gamma_1^2=0. \label{equation-geodesic1}\\
{\ddot\gamma}_2 -\dot\gamma_1\dot\gamma_2=0. \label{equation-geodesic2}
\eeqn 
%
%
Now we consider the following two cases.

\vskip 0.2cm
\noindent 
{\bf Case 1. } $\gamma(0)=(1,0)$ and $\dot\gamma(0)=(0,\eta_0)$.

\noindent 
If $\gamma_1$ is not constant, then for any $t \in \R$ such that $\gamma_1(t) \neq 0$, we can 
solve equation (\ref{equation-geodesic1}) as follows :
\beqn
{\ddot\gamma}_1 -\dot\gamma_1^2=0 &\Leftrightarrow& \frac{{\ddot\gamma}_1}{\dot\gamma_1^2} = 1 \cr 
                                  &\Leftrightarrow&  -\frac{1}{\dot\gamma_1} = t + c,  \quad c \in \R \cr
                                  &\Leftrightarrow&   \dot\gamma_1(t)  = -\frac{1}{t+c} .
\eeqn  
From the condition $\dot\gamma_1(0)=0$, we obtain : $-\frac{1}{c}=0$, which is not possible, 
then $\dot\gamma_1(t)=0$, for all $t \in \R$. 
We then have : 
\bitem
\item $\gamma_1(t)=constante=a$, for all real number $t$ ;
\item Equation (\ref{equation-geodesic1}) gives $\gamma_2(t) = bt + d$, for all $t$ in $\R$, where $b$ and $d$ belong to $\R$.
\eitem 
From conditions $\gamma(0)=(1,0)$ and $\dot\gamma(0)=(0,\eta_0)$, we have : 
\beq 
a=1 \quad ; \quad d=0 \quad ; \quad b=\eta_0.
\eeq 
So, for any $(0,\eta_0)$ in $\G$ the geodesic through $\epsilon$ with velocity $(0,\eta_0)$ is given by 
\beq 
\gamma(t) = (1,\eta_0 t),
\eeq 
for all $t \in \R$.
%

%

\vskip 0.2cm 
\noindent
{\bf Case 2.} Now we consider an element $(\xi_0,\eta_0)$ of $\hbox{\rm aff}(\R)$, with $\xi_0 \neq 0$. 
Equation (\ref{equation-geodesic1}) can be solve as above and  
\beq
\dot\gamma_1(t)  = -\frac{1}{t+c}.
\eeq  
From the condition $\dot\gamma_1(0)=\xi_0$ we obtain : $-\dfrac{1}{c}=\xi_0$, {\it i.e.} $c=-\dfrac{1}{\xi_0}$. Now we have :
\beqn
\dot\gamma_1(t)  &=& -\frac{1}{t-\frac{1}{\xi_0}} =-\frac{\xi_0}{\xi_0t-1}, \; t\neq \frac{1}{\xi_0}\cr  
    \gamma_1(t)  &=& -\ln|\xi_0t-1| + cste. 
\eeqn  
Since $\gamma_1(0)=1$, we have $cste=1$. It comes that :
\beq 
\gamma_1(t)=1-\ln|\xi_0t-1|,
\eeq 
for any $\displaystyle t \neq \frac{1}{\xi_0}$ and $\displaystyle \frac{1-e}{\xi_0} < t < \frac{1+e}{\xi_0}$, 
since $\gamma_1(t)>0$, if it is defined. Equation (\ref{equation-geodesic2}) now becomes 
\beqn
{\ddot\gamma}_2 +\frac{\xi_0}{\xi_0t-1}\dot\gamma_2=0.
\eeqn
So we have
\beq 
\dot\gamma_2(t) = \frac{C_1}{|\xi_0t-1|},
\eeq 
where $C_1$ is a real number. Since $\dot\gamma_2(0)=\eta_0$, we have $C_1=\eta_0$ and 
\beq 
\dot\gamma_2(t) = \frac{\eta_0}{|\xi_0t-1|}.
\eeq 
Integrating the latter we have :
\beq 
\gamma_2(t) = \frac{\eta_0}{\xi_0}\ln(k|\xi_0t-1|),
\eeq 
where $k >0$, $\displaystyle t \neq \frac{1}{\xi_0}$ and $\displaystyle \frac{1-e}{\xi_0} < t < \frac{1+e}{\xi_0}$.
Since $\gamma_2(0)=0$, we have $k = 1$ and 
\beq 
\gamma_2(t)=\frac{\eta_0}{\xi_0}\ln|\xi_0t-1|,
\eeq  
for any $\displaystyle t \neq \frac{1}{\xi_0}$ and $\displaystyle \frac{1-e}{\xi_0} < t < \frac{1+e}{\xi_0}$. 
Hence, for any element $(\xi_0,\eta_0)$, with $\xi_0 \neq 0$, the geodesic through $\epsilon$ with velocity $(\xi_0,\eta_0)$ is given by :
\beq 
\gamma(t) = \Big(1-\ln|\xi_0t-1|\;,\;\frac{\eta_0}{\xi_0}\ln|\xi_0t-1|\Big), 
\eeq 
for all $\displaystyle t \neq \frac{1}{\xi_0}$ and $\displaystyle \frac{1-e}{\xi_0} < t < \frac{1+e}{\xi_0}$.

\vskip 0.3cm
\noindent
Now we can summarize. 
\bpro
A geodesic through the unit element of $(G_0,\nabla)$ with velocity $(\xi,\eta)$ in $\G$ is given by 
\beq\label{geodesics-affine-structure} 
\gamma(t) = \left\{ 
\begin{array}{lcl}
(1,\eta t )       &\mbox{ if }& \xi =0, \cr
                  &           &          \cr 
(1-\ln|\xi t-1|\;,\;\dfrac{\eta}{\xi}\ln|\xi t -1|) &\mbox{ if }& \xi \neq 0.
\end{array}
\right.
\eeq 
for all $\displaystyle t \neq \frac{1}{\xi}$ and $\displaystyle \frac{1-e}{\xi} < t < \frac{1+e}{\xi}$.
\epro
In order to represent them, let us write Cartesian equations of the geodesics. Set 
\beq 
\gamma(t) = (y(t),x(t)).
\eeq 
\bitem
\item If $\xi=0$, the Cartesian equation of the geodesic through $\epsilon$ with velocity $(0,\eta)$ reads 
\beq 
y=1.
\eeq 
This is the unique complete geodesic and is just a horizontal line through $\epsilon$. 
\item If $\xi \neq 0$ and  $\eta=0$, the Cartesian equation of the geodesic through $\epsilon$ with velocity $(\xi,0)$ is given by 
\beq 
\left\{
\begin{array}{ccc}
x  &=& 0  \cr
y  &>& 0.       
\end{array}
\right.
\eeq 
This geodesic is not complete and is the vertical line through $\epsilon$ contained in the half-plan  $\{(y,x) \in \R^2: y>0\}$.
\item If $\xi\neq 0$ and $\eta \neq 0$, the Cartesian equation of the geodesic through $\epsilon$ with velocity $(\xi,\eta)$ is 
$y=-\dfrac{\xi}{\eta}x+1$ and $y>0$. We can simply write 
\beq 
\left\{
\begin{array}{ccc}
y  &=& ax+1  \cr
y  &>& 0.       
\end{array}
\right. \qquad  a \neq \R.
\eeq
These geodesics are oblique lines through $\epsilon$. They are not complete. 
\eitem 
The Figure \ref{fig-geodesics-affine} represents the geodesics of $(G_0,\nabla)$ through $\epsilon$.

%
%
%

\begin{figure}[htbp]
\centering
\includegraphics[height=10cm,width=13cm]{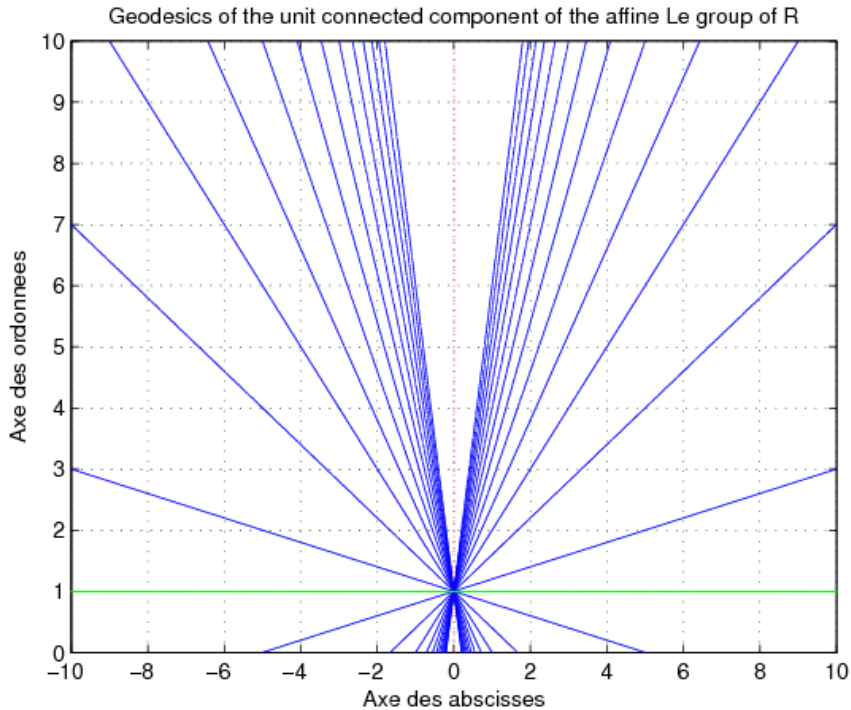}
\caption{Geodesics of $G_0$ through $\epsilon$}
\label{fig-geodesics-affine}
\end{figure}

We just take the geodesics at $t=1$, whenever it is possible, to get the
\bcor
The exponential map of the affine structure $\nabla$ on $G_0$ is defined for any   
$(\xi,\eta)$  in $\G$ such that $\xi \neq 1$ and $1-e< \xi < 1+e$, by 
\beq 
Exp_\epsilon(\xi,\eta) = \left\{ 
\begin{array}{lcl}
(1,\eta)       &\mbox{ if }& \xi =0, \cr
               &           &          \cr 
(1-\ln|\xi-1|\;,\;\frac{\eta}{\xi}\ln|\xi-1|) &\mbox{ if }& \xi \neq 0.
\end{array}
\right.
\eeq 
\ecor 

\vskip 0.3cm

Let us now look at the inverse of $Exp_\epsilon$, whenever it exists. Let $(y,x)$ be an element 
of $G_\epsilon$. We wish to find $(\xi,\eta)$ in $\G$ such that $Exp_\epsilon(\xi,\eta)=(y,x)$.
\benum
\item If $y=1$, then the unique solution is $(\xi,\eta) = (0,x)$.
\item Suppose now $y \neq 1$, then the equation $Exp_\epsilon(\xi,\eta)=(y,x)$ gives the following two equations :
\beqn
1 - \ln|\xi -1|              &=& y \label{a}\\
\dfrac{\eta}{\xi} \ln|\xi-1| &=& x \label{b}
\eeqn 
Equation (\ref{a}) is equivalent to
\beqn
\ln|\xi -1|              &=& 1 -y  \cr
|\xi-1|                  &=& \exp(1-y) \cr
                         & &           \cr
\xi-1                    &=& \left\{
\begin{array}{ccc}
 \exp(1-y)  &\text{ if } & \xi > 1  \cr
-\exp(1-y)  &\text{ if } & \xi < 1
\end{array}
\right.\cr
                        & &           \cr
\xi                     &=& \left\{
\begin{array}{ccc}
1+ \exp(1-y)  &\text{ if } & \xi > 1  \cr
              &            &          \cr
1-\exp(1-y)  &\text{ if }  & \xi < 1
\end{array}
\right.
\eeqn 
Relation (\ref{b})  becomes
\beqn
\dfrac{\eta}{\xi} (1-y) &=& x  \cr
\eta                    &=& \frac{\xi}{1-y}x \cr   
                        & &                  \cr
\eta                    &=& \left\{
\begin{array}{ccc}
\dfrac{1+ \exp(1-y)}{1-y} x &\text{ if }& \xi > 1  \cr
                           &           &          \cr
\dfrac{1-\exp(1-y)}{1-y}x  &\text{ if }& \xi < 1
\end{array}
\right.
\eeqn
\eenum
It comes that the logarithm map of $(G_0,\nabla)$ which is the inverse of $Exp_\epsilon$, is defined only on 
the subset $\{(1,x), x\in \R\}$ of $G_0$.
\bpro
The Logarithm map of $(G_0,\nabla)$ is defined on $\{(1,x), x\in \R\}$ and is given by
\beq 
Log_\epsilon(1,x)=(0,x).
\eeq 
\epro
\subsection{Integral curves of left invariant vector fields on $G$}

If $g=(y,x)$ et $h=(y',x')$ are two elements of $G_0$, the left translation $L_g$ by $g$ acts on  $h$ as follows :
\beq 
L_gh =(yy', yx'+x).
\eeq 
It comes that the differential map at $\epsilon$ of  $L_g$ on the basis $(e_1,e_2)$ has the following matrix :
\beq 
T_\epsilon L_g = \left(
\begin{array}{cc}
y   &    0   \\
0   &    y
\end{array}
\right).\nonumber
\eeq 
Let $\xi = \xi_1e_1 + \xi_2e_2$ be an element of $\G$. We wish to compute the exponential $\exp_G(\xi)$ of $\xi$.
Let $X^\xi$ stand for the left invariant vector field associated to $\xi$. We have :
\beq 
X^\xi_{\mid g}:=T_\epsilon L_g \cdot \xi = \left(
\begin{array}{cc}
y   &    0   \\
0   &    y
\end{array}
\right)\left( \begin{array}{c}
       \xi_1 \\
       \xi_2
        \end{array}
\right) = y\xi_1\frac{\partial}{\partial y} +  y\xi_2\frac{\partial}{\partial x}.
\eeq 
Now let $\gamma=(\gamma_1,\gamma_2)$ be the curve such that 
\beq 
\gamma(0)=\epsilon=(1,0) \text{ and } \dot\gamma(t)=X^\xi_{\mid \gamma(t)}.
\eeq 
We then have the following equations :
\beq \label{equation-geodesic3}
\left\{\begin{array}{lcl}
        \dot \gamma_1(t) &=& \gamma_1(t)\xi_1; \\
        \dot \gamma_2(t) &=& \gamma_1(t)\xi_2.
       \end{array}
\right.
\eeq 
The first relation of the  system (\ref{equation-geodesic3}) has the following solution : 
\beq 
\gamma_1(t) = k \exp(\xi_1 t),\;\; t \in \R,
\eeq 
where $k >0$. Since $\gamma_1(0)=1$, we have $k=1$ and 
\beq 
\gamma_1(t) =\exp(\xi_1 t), \;\; t \in \R.
\eeq 
Now the second equation of the system (\ref{equation-geodesic3}) becomes :
\beq
\dot \gamma_2(t) = \xi_2\exp(\xi_1 t). \label{second-composante-geodesic} 
\eeq
We consider two cases :
\benum
\item[(i)] If $\xi_1=0$, then we have 
\beqn
\dot\gamma_2(t) &=& \xi_2 \cr
\gamma_2(t)     &=& \xi_2 t + q, 
\eeqn 
 where $q$ is a real number. But $\gamma_2(0)=0$, then $q=0$ and 
\beq 
\gamma_2(t) = \xi_2 t, \;\; t \in \R.
\eeq 
\item[(ii)] If $\xi_1 \neq 0$, then the equation (\ref{second-composante-geodesic}) integrates to 
\beq 
\gamma_2(t) = \frac{\xi_2}{\xi_1}\exp(\xi_1t) + r,
\eeq
for some real number  $r$. Since, $\gamma_2(0)=0$, then $\dfrac{\xi_2}{\xi_1}+r=0$, {\it i.e.} $r=-\dfrac{\xi_2}{\xi_1}$.
Hence, we have :
\beq 
\gamma_2(t) = \frac{\xi_2}{\xi_1}\Big[\exp(\xi_1t)-1\Big],
\eeq
for all $t$ in $\R$. 
\eenum
We then summarize  in the
\bpro
For any element $\xi=\xi_1e_1 + \xi_2 e_2$ in $\G$, the integral curve $\gamma_\xi$ of the left-invariant vector field associated to $\xi$
is defined by
\beq 
\gamma(t) = \left\{ 
\begin{array}{ccc}
                         (1\;,\;\xi_2t)                          &\mbox{ if }& \xi_1 = 0;  \cr 
                                                                 &           &             \cr 
 \Big(\exp(\xi_1t) \;,\;\frac{\xi_2}{\xi_1}[\exp(\xi_1t)-1]\Big) &\mbox{ if }& \xi_1 \neq 0.
\end{array}
\right.
\eeq 
for all $t$ in $\R$.
\epro 
It is readily checked that the Cartesian equation of these integral curves are given as follows. Set $\gamma_\xi(t) = (y,x)$.
\bitem 
\item If $\xi_1=0$, then the Cartesian equation of $\gamma_\xi$ is $y=1$. These curve is complete.
\item If $\xi_1\neq 0$ and $\xi_2=0$, then $x=0$ and $y>0$. This is a non-complete integral curve.
\item If $\xi_1\neq 0$ and $\xi_2 \neq 0$, we have the equation : $y = \dfrac{\xi_1}{\xi_2}x+1$ and  $y>0$ 
or simply $y=ax+1$ and  $y>0$, where $a$ is a non-zero real number.
\eitem  
\brmq
The integral curves of the left-invariant vector fields associated to the elements of $\G$ globally coincides with the geodesics
through $\epsilon$ of $G_0$ obtained in (\ref{geodesics-affine-structure}).

\ermq
\bcor
The exponential map of the group $G_0$ is defined by 
\beq 
\exp_G(\xi)=\left\{ 
\begin{array}{ccc}
                         (1\;,\;\xi_2)                           &\mbox{ if }& \xi_1 = 0;  \cr 
                                                                 &           &             \cr 
 \Big(\exp(\xi_1) \;,\;\frac{\xi_2}{\xi_1}[\exp(\xi_1)-1]\Big)   &\mbox{ if }& \xi_1 \neq 0.
\end{array}
\right.
\eeq 
for all $\xi=\xi_1e_1+\xi_2e_2$ in $\G$.
\ecor 
Let us now define the Logarithm map of $G_0$.
Let $(y,x) \in G=\R^*_+\times \R$. We want to find $\xi \in \G$ such that $\exp_G(\xi)=(y,x)$. 
\benum 
\item If $y=1$, then the unique solution is $\xi=(0,x)$.
\item Suppose $y\neq 1$, then 
\beq 
\Big(\exp(\xi_1) \;,\;\dfrac{\xi_2}{\xi_1}[\exp(\xi_1)-1]\Big) = (y,x). 
\eeq 
That is 
\beq
\left\{\begin{array}{ccc}
\exp(\xi_1)                        &=& y \cr
                                   &  &   \cr
\dfrac{\xi_2}{\xi_1}[\exp(\xi_1)-1] &=& x
\end{array}
\right. 
\eeq 
and then
\beq 
 \left\{\begin{array}{ccc}
\xi_1   &=& \ln y \cr
        & &          \cr
\xi_2 &=& \dfrac{x}{y-1}\ln y.
\end{array}
\right.       
\eeq
\eenum
We get,
\bpro
The map $\exp_G$ is invertible and its inverse is the map $Log_G : G \to \G$ defined by
\beq 
Log_G(y,x) = \left\{\begin{array}{ccc}
(0,x)                                          &\mbox{ if }&  y=1 \cr
                                               &           &       \cr
\left(\ln y \, , \, \dfrac{x}{y-1}\ln y\right) &\mbox{ if }& y \neq 1
\end{array}
\right. 
\eeq 
\epro

\section{Double Lie groups  of the affine Lie group of $\R$}\label{section:double-affine-lie-group}

\subsection{Double Lie group of the affine Lie group of $\R$}

Let $G:=\hbox{\rm Aff}(\R)$ be the affine Lie group of $\R$ and let $\G:=\hbox{\rm aff}(\R)$ stand for the affine Lie algebra. It is shown 
(see \cite{di-me-cybe}) that the double Lie algebra $\mathcal D(\G,r)$ (where $r:\G^* \to \G$ is a solution of the Classical
Yang-Baxter Equation), of a double Lie group $\mathcal D(G,r)$ of $G$, is isomorphic to  the Lie algebra $T^*\G$ of the 
cotangent bundle $T^*G=G \ltimes \G^*$ endowed with the Lie group structure obtained by semi-direct
product via the coadjoint action of the Lie group $G$ and the Abelian Lie group $\G^*$. The Lie bracket of $T^*\G$, on some basis
$(e_1,e_2,e_3,e_4)$, reads :
\beq 
[e_1,e_2]=e_2 \quad , \quad [e_1,e_4]=-e_4 \quad , \quad [e_2,e_4]=e_3.
\eeq 
We write $T^*\G=\R e_1\ltimes \mathcal H_3$ (see Example \ref{chap:aff(R)}),  where $e_1$ acts on the Heisenberg Lie algebra 
$\mathcal H_3 = span(e_2,e_3,e_4)$ by the restriction of $ad_{e_1}$. Set  $D:=ad_{e_1}=diag(1,0,-1)$. 
One Lie group of  $T^*\G$ is then the group $\R \ltimes \mathbb H_3$, where $\mathbb H_3$ is the $3$-dimensional 
Heisenberg group ($Lie(\mathbb H_3)=\mathcal H_3$)  and $\R$ acts on $\mathbb H_3$  via the standard 
exponential of the endomorphism $D$ by  $\rho : \R \to Aut(\mathbb H_3)$. Precisely, 
\beq 
\rho_{t} : y \mapsto \exp_{\mathbb H_3}\Big(Exp(tD)Y\Big),
\eeq 
where  $\exp_{\mathbb H_3}$ stands for the exponential map of the Lie group $\mathbb H_3$, $Exp$ is the standard exponential 
of endomorphisms and  $Y$ is an element of $\G$ such that $\exp_{\mathbb H_3}(Y)=y$. The product on $\R \ltimes \mathbb H_3$ reads :
\beq 
(t,x)(s,y) = (t+s,x\cdot \rho_t(y)),
\eeq 
where  $x\cdot \rho_t(y)$ is the product in the Heisenberg group of the  elements $x$ and  $\rho_t(y)$. 
We then need to know the exponential and the logarithm map of the Heisenberg group.
\blem
Let $\exp_{\mathbb H_3}$ and  $\ln_{\mathbb H_3}$ be the exponential and the logarithm maps of the 
Heisenberg group $\mathbb H_3$. Then,
\benum
\item the exponential map is defined by : for all $\xi=(\xi_2,\xi_3,\xi_4)$ in $\mathcal H_3$,
\beq 
\exp_{\mathbb H_3}(\xi) = (\xi_2\;,\;\xi_3 + \frac{1}{2}\xi_2\xi_4\;,\; \xi_4)
\eeq 
\item while the logarithm is given by 
\beq 
\ln_{\mathbb H_3}(y)=(y_2\;,\;y_3-\frac{1}{2}y_2y_4\;,\;y_4),
\eeq 
for any $(y_2,y_3,y_4)$ in $\mathbb H_3$.
\eenum
\elem  
\begin{proof}
 Recall that the multiplication of $ \mathbb H_3$ is given by : 
\beq 
x\cdot y = (x_2,x_3,x_4)\cdot (y_2,y_3,y_4) = (x_2+y_2,x_3+y_3 + x_2y_4,x_4+y_4)
\eeq
and the unit element of $\mathbb H_3$ is $\epsilon_{\mathbb H_3}=(0,0,0)$. The differential map at $\epsilon_{\mathbb H_3}$ of the left translation by 
an element $x$ has the following matrix on the basis $(e_2,e_3,e_4)$ : 
\beq 
T_{\epsilon_{\mathbb H_3}} L_x = \left( 
\begin{array}{ccc}
1 & 0 & 0 \cr
0 & 1 & x_2 \cr 
0 & 0 & 1
\end{array}
\right).
\eeq 
Let us compute the exponential map $\exp_{\mathbb H_3}$. Consider an element  $\xi =\xi_2e_2+ \xi_3e_3+\xi_4e_4$ of $\mathcal H_3$ 
and let $X^\xi$ be the associated left invariant vector field. We note 
by $\gamma_\xi$ the integral curve of $X^\xi$ with initial conditions $\gamma_\xi(0)=\epsilon_{\mathbb H_3}$ and  
$\dot\gamma_\xi(0)=\xi$. We have :
\beq 
X^\xi_{\mid (x_2,x_3,x_4)} = T_\epsilon L_{(x_2,x_3,x_4)} \cdot \xi = (\xi_2,\xi_3 + x_2\xi_4,\xi_4)
\eeq 
and  
\beq 
\dot\gamma_\xi(l) = X^\xi_{\mid \gamma_\xi(l)}.
\eeq 
If we set $\gamma_\xi=(\gamma_\xi^2,\gamma_\xi^3,\gamma_\xi^4)$, we obtain :
\beqn
\dot\gamma_\xi^2(l) &=& \xi_2 \\
\dot\gamma_\xi^3(l) &=& \xi_3 +\gamma_\xi^2(l)\xi_4 \\
\dot\gamma_\xi^4(l) &=& \xi_4.
\eeqn
With the initial conditions we obtain the following solution :
\beqn
\gamma_\xi^2(l) &=& \xi_2l \\
\gamma_\xi^3(l) &=& \xi_3l + \frac{1}{2}\xi_2\xi_4l^2\\
\gamma_\xi^3(l) &=& \xi_4l.
\eeqn
It comes that  
\beq 
\exp_{\mathbb H_3}(\xi) = (\xi_2\;,\;\xi_3 + \frac{1}{2}\xi_2\xi_4\;,\; \xi_4).
\eeq 
Let now $Y=(Y_2,Y_3,Y_4)$ be an element of $\mathcal H_3$ such that $\exp_{\mathbb H_3}(Y)=y$. Then 
$$
(Y_2,Y_3+\frac{1}{2}Y_2Y_4,Y_4) = (y_2,y_3,y_4).
$$ 
We obtain that 
\beq 
Y_2=y_2 \quad ;\quad Y_3=y_3 - \frac{1}{2}y_2y_4 \quad ; \quad  Y_4=y_4.
\eeq
That is 
\beq 
\ln_{\mathbb H_3}(y)=(y_2\;,\;y_3-\frac{1}{2}y_2y_4\;,\;y_4),
\eeq 
where $\ln_{\mathbb H_3}$ is the inverse map of $\exp_{\mathbb H_3}$.
\end{proof}
Now we have :
\beq 
Exp(tD) = diag(e^t,1,e^{-t}) = \left(
\begin{array}{ccl}
e^t  &  0   &  0 \cr 
0    &  1   &  0 \cr
0    &  0   & e^{-t}
\end{array}
 \right).
\eeq 
 Hence, 
\beqn  
\rho_t(y) &=& \exp_{\mathbb H_3}\Big(Exp(tD)Y\Big) \cr 
          &=& \exp_{\mathbb H_3}\Big(y_2e^t\;,\; y_3 - \frac{1}{2}y_2y_4\;,\; y_4e^{-t}\Big) \cr 
          &=& \Big(y_2e^t \;,\; y_3 - \frac{1}{2}y_2y_4 + \frac{1}{2}y_2e^t\times y_4e^{-t}\;,\;y_4e^{-t}\Big) \cr 
          &=& \Big(y_2e^t \;,\; y_3 \;,\;y_4e^{-t}\Big)
\eeqn   
Now we can write the multplication on $\R\ltimes \mathbb H_3$ as follows : 
\beqn  
(t,x)\cdot(s,y) &=& (t,x_2,x_3,x_4)\cdot(s,y_2,y_3,y_4) \cr 
           &=& \Big(t+s\;,\; (x_2,x_3,x_4)\cdot \rho_t(y_2,y_3,y_4)\Big) \cr 
           &=& \Big(t+s\;,\; (x_2,x_3,x_4)\cdot (y_2e^t,y_3,y_4e^{-t})\Big) \cr 
           &=& \Big(t+s\;,\; x_2 + y_2e^t\;,\; x_3+ y_3 + x_2y_4e^{-t}\;,\;x_4+y_4e^{-t}\Big).
\eeqn 
Hence, the product of two elements $x=(x_1,x_2,x_3,x_4)$,  $y=(y_1,y_2,y_3,y_4)$ of $\R\times \mathbb H_3$ is given by 
\beq\label{product-double-affine} 
x\cdot y= \Big(x_1+y_1\;,\; x_2 + y_2e^{x_1}\;,\; x_3+ y_3 + x_2y_4e^{-x_1}\;,\;x_4+y_4e^{-x_1}\Big).
\eeq 
The unit element is $\epsilon =(0,0,0,0)$ and the inverse of an element $x=(x_1,x_2,x_3,x_4)$ is the element
$x^{-1}=(-x_1,-x_2e^{-x_1},-x_3+x_2x_4,-x_4e^{x_1})$.

\vskip 0.3cm

We have prove the following
\bpro\label{prop:double-affine}
Endowed with the product (\ref{product-double-affine}), $\R\times \mathbb H_3$ is a double Lie group of the affine Lie group of $\R$.
\epro
Let $\D(G,r)$ denote the double Lie group of $G$ defined in Proposition \ref{prop:double-affine}.

\subsection{Connection on the double of the affine Lie group }
Among a lot of possibilities, we are interested in the left invariant affine 
connection on $\mathcal D(G,r)$, introduced on any double Lie group of a symplectic Lie group by 
Diatta and Medina in \cite{di-me-cybe} :
\beq 
\nabla_{(x,\alpha)^+}(y,\beta)^+ = (x\cdot y +ad_\alpha y  , ad^*_{r(\alpha)}\beta + ad_x^*\beta)^+,
\eeq 
where $x\cdot y$ is the product induced by the symplectic structure on $\G$ through the formula 
\beq 
\omega(x\cdot y,z) = -\omega(y,[x,z]),
\eeq 
for all $x,y,z \in \G$. 
\bpro
On the basis of $\mathcal D(\G,r)$, the connection is given by
 
\beq 
\begin{array}{rrlrrrlrrrlrrrl}
\nabla_{e_1}e_1 &=& -e_1      &;& \nabla_{e_1}e_2 &=& 0   &;& \nabla_{e_1}e_3 &=& e_2 &;& \nabla_{e_1}e_4 &=& -e_1-e_4 \cr 
\nabla_{e_2}e_1 &=& -e_2      &;& \nabla_{e_2}e_2 &=& 0   &;& \nabla_{e_2}e_3 &=& 0   &;& \nabla_{e_2}e_4 &=& e_3      \cr 
\nabla_{e_3}e_1 &=& e_2       &;& \nabla_{e_3}e_2 &=& 0   &;& \nabla_{e_3}e_3 &=& 0   &;& \nabla_{e_3}e_4 &=& -e_3     \cr
\nabla_{e_4}e_1 &=& -e_1-e_4  &;& \nabla_{e_4}e_2 &=& e_3 &;& \nabla_{e_4}e_3 &=& 0   &;& \nabla_{e_4}e_4 &=& -e_4      
\end{array} \nonumber
\eeq

\epro

\begin{proof}
 We have (see Relation (\ref{symplectic-form-affine}))
\beq 
\omega(e_1,e_2) = 1
\eeq 
and (see Relation (\ref{affine-structure-aff}))
\beq 
e_1 \cdot e_1 = -e_1 \quad ; \quad e_2\cdot e_1 = -e_2.
\eeq 
Let us compute the bracket $[e_3,e_4]_*$ on $\G^*(r)$.
\beqn
[e_3,e_4]_* = ad^*_{r(e_3)}e_4 - ad^*_{r(e_4)}e_3,
\eeqn 
where $r:=q^{-1}:\mathcal{G}^* \mapsto \mathcal{G}$, with   $\langle q(x),y\rangle =\omega(x,y)$.
\beqn 
 \langle q(e_1),e_1\rangle &=&\omega(e_1,e_1) =0. \\
 \langle q(e_1),e_2\rangle &=&\omega(e_1,e_2) =1.\label{1}
\eeqn
Then,  
$ q(e_1) = e^*_2 = e_4 $.
\beqn
 \langle q(e_2),e_1\rangle &=&\omega(e_2,e_1) = -1.\\
 \langle q(e_2),e_2 \rangle &=& \omega(e_2,e_2) = 0.
\eeqn 
Hence,  $q(e_2) = -e^*_1 = -e_3 $. We then have the matrix of $q$ on the basis $(e_1,e_2)$ and $(e_3,e_4)$ of  $\G$ and  $\G^*$ respectively :
\beq 
M= \left(
             \begin{array}{cc}
              0 & -1 \\
             1 & 0 
            \end{array}
\right).
\eeq
The  matrix  $M$ est invertible and its inverse reads :  
\beq 
M^{-1}=
\left(
                 \begin{array}{cc}
                 0 & 1\\
                 -1 & 0
                \end{array}
\right).
\eeq 
It comes  that $r(e_3)=-e_2$ et $r(e_4)=e_1$. Hence,
\begin{eqnarray}
 [e_3,e_4]_* &=& -ad^*_{e_2} e_4 -ad^*_{e_1} e_3. \cr
             &=& + e_4\circ ad_{e_2} +e_3 \circ ad_{e_1}. 
\end{eqnarray}
\beqn
e_4 \circ ad_{e_2} (e_i)=
\left\{\begin{array}{ccc}
          -1 & \mbox{ si } &   i=1\\
           0 & \mbox{ si } & i=2  
         \end{array} 
\right.\quad \Rightarrow e_4\circ ad_{e_2}=-e_3.
\eeqn
\beqn
e_3 \circ ad_{e_1} (e_i)=
\left \{
         \begin{array}{ccc}
          0 & \mbox{ si } &   i=1\\
          0 & \mbox{ si } &   i=2
         \end{array}
\right. \Rightarrow e_3 \circ ad_{e_1}=0.
\eeqn
We then have $[e_3,e_4]_*=-e_3.$ 

\vskip 0.3cm

We can now compute the connection on the basis of $\mathcal D(\G,r)$.

\beqn 
\nabla_{e_1}e_1 &=& e_1\cdot e_1 = -e_1. \\
\nabla_{e_1}e_2 &=& e_1\cdot e_2 = 0. \\
\nabla_{e_1}e_3 &=& ad^*_{e_3} e_1 + ad^*_{e_1}e_3 = ad^*_{e_3} e_1 =-e_1 \circ {ad_{e_3}}_{\mid \G^*(r)} = e_2 \\
\nabla_{e_1}e_4 &=& ad^*_{e_4} e_1 + ad^*_{e_1}e_4 =-e_1 \circ {ad_{e_4}}_{\mid \G^*(r)} + -e_4 \circ {ad_{e_1}}_{\mid \G} =-e_1-e_4. \\
\nabla_{e_2}e_1 &=& e_2 \cdot e_1 = -e_2. \\
\nabla_{e_2}e_2 &=& e_2 \cdot e_2 = 0. \\
\nabla_{e_2}e_3 &=& ad^*_{e_3} e_2 + ad^*_{e_2}e_3 = 0. \\ 
\nabla_{e_2}e_4 &=& ad^*_{e_4} e_2 + ad^*_{e_2}e_4 = e_3. \\
\nabla_{e_3}e_1 &=& \nabla_{e_1}e_3 = e_2. \\ 
\nabla_{e_3}e_2 &=&\nabla_{e_2}e_3 = 0. \\
\nabla_{e_3}e_3 &=& ad^*_{r(e_3)}e_3 =-ad^*_{e_2}e_3=0. \\ 
\nabla_{e_3}e_4 &=&ad^*_{r(e_3)}e_4 =-ad^*_{e_2}e_4=-e_3. \\
\nabla_{e_4}e_1 &=& \nabla_{e_1}e_4 = -e_1-e_4.\\
\nabla_{e_4}e_2 &=&\nabla_{e_2}e_4 = e_3. \\
\nabla_{e_4}e_3 &=& ad^*_{r(e_4)}e_3 =ad^*_{e_1}e_3=0. \cr
\nabla_{e_4}e_4 &=& ad^*_{r(e_4)}e_4 =ad^*_{e_1}e_4=-e_4.
\eeqn 

Thus, if we set $\nabla_{e_i}e_j=\Gamma_{ij}^ke_k$ (Einstein's summation), the Christoffel's symbols $\Gamma_{ij}^k$ 
for this connection are :
\beq
\begin{array}{rclcccccccccccl}
\Gamma_{11}^1 &=&-1  &;& \Gamma_{11}^2 &=& 0 &;& \Gamma_{11}^3 &=& 0 &;& \Gamma_{11}^4 &=& 0 \cr
\Gamma_{12}^1 &=& 0  &;& \Gamma_{12}^2 &=& 0 &;& \Gamma_{12}^3 &=& 0 &;& \Gamma_{12}^4 &=& 0 \cr
\Gamma_{13}^1 &=& 0  &;& \Gamma_{13}^2 &=& 1 &;& \Gamma_{13}^3 &=& 0 &;& \Gamma_{13}^4 &=& 0 \cr
\Gamma_{14}^1 &=&-1  &;& \Gamma_{14}^2 &=& 0 &;& \Gamma_{14}^3 &=& 0 &;& \Gamma_{14}^4 &=&-1 \cr

\Gamma_{21}^1 &=& 0  &;& \Gamma_{21}^2 &=&-1 &;& \Gamma_{21}^3 &=& 0 &;& \Gamma_{21}^4 &=& 0 \cr
\Gamma_{22}^1 &=& 0  &;& \Gamma_{22}^2 &=& 0 &;& \Gamma_{22}^3 &=& 0 &;& \Gamma_{22}^4 &=& 0 \cr
\Gamma_{23}^1 &=& 0  &;& \Gamma_{23}^2 &=& 0 &;& \Gamma_{23}^3 &=& 0 &;& \Gamma_{23}^4 &=& 0 \cr
\Gamma_{24}^1 &=& 0  &;& \Gamma_{24}^2 &=& 0 &;& \Gamma_{24}^3 &=& 1 &;& \Gamma_{24}^4 &=& 0 \cr

\Gamma_{31}^1 &=& 0  &;& \Gamma_{31}^2 &=& 1 &;& \Gamma_{31}^3 &=& 0 &;& \Gamma_{31}^4 &=& 0 \cr
\Gamma_{32}^1 &=& 0  &;& \Gamma_{32}^2 &=& 0 &;& \Gamma_{32}^3 &=& 0 &;& \Gamma_{32}^4 &=& 0 \cr
\Gamma_{33}^1 &=& 0  &;& \Gamma_{33}^2 &=& 0 &;& \Gamma_{33}^3 &=& 0 &;& \Gamma_{33}^4 &=& 0 \cr
\Gamma_{34}^1 &=& 0  &;& \Gamma_{34}^2 &=& 0 &;& \Gamma_{34}^3 &=&-1 &;& \Gamma_{34}^4 &=& 0 \cr

\Gamma_{41}^1 &=&-1  &;& \Gamma_{41}^2 &=& 0 &;& \Gamma_{41}^3 &=& 0 &;& \Gamma_{41}^4 &=&-1 \cr
\Gamma_{42}^1 &=& 0  &;& \Gamma_{42}^2 &=& 0 &;& \Gamma_{42}^3 &=& 1 &;& \Gamma_{42}^4 &=& 0 \cr
\Gamma_{43}^1 &=& 0  &;& \Gamma_{43}^2 &=& 0 &;& \Gamma_{43}^3 &=& 0 &;& \Gamma_{43}^4 &=& 0 \cr
\Gamma_{44}^1 &=& 0  &;& \Gamma_{44}^2 &=& 0 &;& \Gamma_{44}^3 &=& 0 &;& \Gamma_{44}^4 &=&-1 \cr
\end{array}\nonumber
\eeq 
The only non vanishing symbols are :
\beq
\begin{array}{rclcccccccccccl}
\Gamma_{11}^1 &=&-1 &;& \Gamma_{13}^2 &=& 1 &;& \Gamma_{14}^1 &=&-1 &;& \Gamma_{14}^4 &=&-1 \cr
\Gamma_{21}^2 &=&-1 &;& \Gamma_{24}^3 &=& 1 &;& \Gamma_{31}^2 &=& 1 &;& \Gamma_{34}^3 &=&-1 \cr
\Gamma_{41}^1 &=&-1 &;& \Gamma_{41}^4 &=&-1 &;& \Gamma_{42}^3 &=& 1 &;& \Gamma_{44}^4 &=&-1 \cr
\end{array}\nonumber
\eeq 

\end{proof}

\subsection{Geodesics of $(\D(G,r),\nabla)$}

Now let $\gamma(t) =\big(\gamma_1(t),\gamma_2(t),\gamma_3(t),\gamma_4(t)\big)$ be a geodesic such that 
$\gamma(0) = (0,0,0,0)$ in $T^*G\simeq \D(G,r)$ and $\dot\gamma(0)=(\xi_1,\xi_2,\xi_3,\xi_4) \in \mathcal D(\G,r)$. 
We have the following non-linear differential equations which are the equations of geodesics of $\D(G,r)$.
\beqn
\ddot\gamma_1 - \dot\gamma_1^2 - \dot\gamma_1\dot\gamma_4 - \dot\gamma_1\dot\gamma_4 =0, \cr  
\ddot\gamma_2 + \dot\gamma_1\dot\gamma_3 - \dot\gamma_1\dot\gamma_2 + \dot\gamma_1\dot\gamma_3 =0, \cr
\ddot\gamma_3 + \dot\gamma_2\dot\gamma_4 - \dot\gamma_3\dot\gamma_4 + \dot\gamma_2\dot\gamma_4 =0, \cr
\ddot\gamma_4 - \dot\gamma_4^2 - \dot\gamma_1\dot\gamma_4 - \dot\gamma_1\dot\gamma_4 =0. \nonumber
\eeqn 
We rearrange the latter equations as follows : 
\beqn
\ddot\gamma_1 - \dot\gamma_1^2 - 2\dot\gamma_1\dot\gamma_4 =0, \\  
\ddot\gamma_2 - \dot\gamma_1\dot\gamma_2 + 2\dot\gamma_1\dot\gamma_3 =0, \\
\ddot\gamma_3 - \dot\gamma_3\dot\gamma_4 + 2\dot\gamma_2\dot\gamma_4 =0, \\
\ddot\gamma_4 - \dot\gamma_4^2 -  2\dot\gamma_1\dot\gamma_4 =0. 
\eeqn 
Unfortunately we do not yet have a solution for this system.

\subsection{Integral curves of left-invariant vector fields on  the double Lie group of the affine Lie group of $\R$}

Let  $\xi =\xi_1e_1+\xi_2e_2+ \xi_3e_3+\xi_4e_4$ be in $\mathcal D(\G,r)$. 
The left invariant vector field $X^\xi$ associated to $\xi$ is given by 
\beqn 
X^\xi_{\mid (x_1,x_2,x_3,x_4)} &=& T_\epsilon L_{(x_1,x_2,x_3,x_4)}\cdot \xi \cr 
                               &=& \left(\begin{array}{cccc}
                                          1   &      0   &  0  &  0 \cr 
                                          0   & e^{x_1}  &  0  &  0 \cr
                                          0   &      0   &  1  & x_2e^{-x_1} \cr 
                                          0   &      0   &  0  & e^{-x_1}
                                         \end{array}
 \right)\left( \begin{array}{c}
\xi_1 \cr 
\xi_2 \cr 
\xi_3 \cr 
\xi_4
\end{array}
\right) \cr  
                          & &                                        \cr
                          &=& (\xi_1\;,\;\xi_2e^{x_1}\;,\; \xi_3 + x_2\xi_4e^{-x_1}\;,\;\xi_4e^{-x_1}).
\eeqn 
Let $\gamma_\xi$ the unique curve such that $\gamma_\xi(0)=\epsilon$, $\dot\gamma_\xi(0)=\xi$ and 
\beq 
\dot\gamma_\xi(t)=X^\xi_{\mid \gamma_\xi(t)}. \label{eq:integral-curve}
\eeq 
If $\gamma_\xi(t)=(\gamma_\xi^1(t),\gamma_\xi^2(t),\gamma_\xi^3(t),\gamma_\xi^4(t))$, we have the following equations coming from  (\ref{eq:integral-curve}) :
\beqn 
\dot \gamma_\xi^1(t) &=& \xi_1 \label{integrale-curve1}\\
\dot \gamma_\xi^2(t) &=& \xi_2\exp[\gamma_\xi^1(t)] \label{integrale-curve2}\\
\dot \gamma_\xi^3(t) &=& \xi_3 + \xi_4\gamma_\xi^2(t)\exp[-\gamma_\xi^1(t)] \label{integrale-curve3}\\
\dot \gamma_\xi^4(t) &=& \xi_4\exp[-\gamma_\xi^1(t)]. \label{integrale-curve4}
\eeqn 
Resolving relation (\ref{integrale-curve1}) and taking care with the initial conditions we have :
\beq 
\gamma_\xi^1(t) = \xi_1t.
\eeq 
Relation (\ref{integrale-curve2}) is resolved as follows.
\bitem
\item If $\xi_1=0$, then 
\beqn
\gamma_\xi^2(t) &=& \xi_2t + b, \quad b \in \R \cr 
               &=& \xi_2t \quad  \mbox{ since } \gamma_\xi^2(0)=0.
\eeqn
\item If $\xi_1\neq 0$, then we have 

\beqn
\gamma_\xi^2(t) &=& \frac{\xi_2}{\xi_1} \exp(\xi_1t) + b_1, \quad b_1 \in \R ; \cr 
                &=& \frac{\xi_2}{\xi_1}[\exp(\xi_1t)-1], \quad \mbox{ since } \gamma_\xi^2(0) = 0.
\eeqn 
\eitem
Now we come to the relation (\ref{integrale-curve3}) and again consider two cases.
\bitem
\item If $\xi_1 = 0$, the equation (\ref{integrale-curve3}) can be written as  
\beq 
\dot \gamma_\xi^3(t) = \xi_3 + \xi_4\xi_2t.
\eeq 
The latter equation is solved as follows : 
\beqn
\gamma_\xi^3(t) &=& \xi_3t +\frac{1}{2} \xi_4\xi_2t^2 + c_1, \quad c_1 \in \R; \cr 
                &=& \xi_3t +\frac{1}{2} \xi_4\xi_2t^2, \quad \mbox{ as } \gamma_\xi^3(0)=0.
\eeqn
\item If $\xi_1\neq 0$, the relation (\ref{integrale-curve3}) can be written as 
\beqn 
\dot \gamma_\xi^3(t) &=& \xi_3 + \xi_4\frac{\xi_2}{\xi_1}[\exp(\xi_1t)-1]\exp(-\xi_1t) \cr 
                     &=& \xi_3 + \frac{\xi_2\xi_4}{\xi_1}[1-\exp(-\xi_1t)].
\eeqn
and  is integrated as 
\beqn
\gamma_\xi^3(t) &=& \xi_3t +\frac{\xi_2\xi_4}{\xi_1}\left[t +\frac{1}{\xi_1}\exp(-\xi_1t)\right] + c_2 \cr 
                &=& \xi_3t +\frac{\xi_2\xi_4}{\xi_1}\left[t +\frac{1}{\xi_1}\exp(-\xi_1t)\right] -\frac{\xi_2\xi_4}{\xi_1^2}, \; \mbox{ since } \gamma_\xi^3(0)=0. \cr 
                &=& \left(\xi_3 + \frac{\xi_2\xi_4}{\xi_1}\right)t +\frac{\xi_2\xi_4}{\xi_1^2}\left[\exp(-\xi_1t) -1\right].
\eeqn
\eitem
Let us now solve equation (\ref{integrale-curve4}). 
\bitem 
\item If $\xi_1 =0$, then (with the condition $\gamma_\xi^4(0)=0$)
\beq 
\gamma_\xi^4(t) = \xi_4t.
\eeq 
\item If $\xi_1 \neq 0$, then 
\beqn
\gamma_\xi^4(t) &=& -\frac{\xi_4}{\xi_1}\exp(-\xi_1t) + d, \quad d \in \R; \cr 
              &=& \frac{\xi_4}{\xi_1}[1-\exp(-\xi_1t)], \mbox{ since } \gamma_\xi^4(0)=0.
\eeqn
\eitem
Let us summarize all the above. 
\bpro
The integral curve of the left invariant vector field associated to any element $\xi =\xi_1e_1+\xi_2e_2+ \xi_3e_3+\xi_4e_4$ 
of  $\mathcal D(\G,r)$ is defined by  
\beq                                                                    
\gamma_\xi(t)=\left(0\;,\;\xi_2t\;,\;\dfrac{1}{2}\xi_2\xi_4t^2 + \xi_3t\;,\;\xi_4t\right),   
\eeq 
if  $\xi_1=0$;  and by 
\beq 
\gamma_\xi(t)\!\!=\!\!  
\left(\!\xi_1t , \dfrac{\xi_2}{\xi_1}[\exp(\xi_1t)\!-\!1] , \,\left(\xi_3\!\! +\!\! \dfrac{\xi_2\xi_4}{\xi_1}\right)t 
\!\!+\!\!\dfrac{\xi_2\xi_4}{\xi_1^2}[\exp(-\xi_1t)\!\!-\!\!1], \dfrac{\xi_4}{\xi_1}[1\!\!-\!\!\exp(-\xi_1t)]\right)
\eeq 
if $\xi_1 \neq 0$.
\epro 
As an immediate consequence, we have the 
\bcor\label{corollary:exponential-double-affine} 
The exponential map of the double Lie group $\mathcal D(G,r)$ of the affine Lie group of $\R$ is defined as follows. For any 
$\xi =\xi_1e_1+\xi_2e_2+ \xi_3e_3+\xi_4e_4$,
\beq 
\exp_{\D(G,r)}(\xi)\!\! = \!\!\left\{\!\!\!\!\!\!\!\!\!\!
\begin{array}{ll}
& \left(0\;,\;\xi_2\;,\;\dfrac{1}{2}\xi_2\xi_4 + \xi_3\;,\;\xi_4\right), \qquad \qquad \qquad \qquad  \qquad \qquad \quad \quad \quad \;\text{ if } \xi_1 = 0 \cr 
&  \cr
&\left(\!\xi_1 , \dfrac{\xi_2}{\xi_1}[\exp(\xi_1)\!\!-\!\!1], \xi_3 \!\!+\!\! \dfrac{\xi_2\xi_4}{\xi_1} 
\!\!+\!\!\dfrac{\xi_2\xi_4}{\xi_1^2}[\exp(-\xi_1)\!\!-\!\!1], \dfrac{\xi_4}{\xi_1}[1\!\!-\!\!\exp(-\xi_1)]\!\right) \text{ if }  \xi_1 \neq 0.
\end{array}\right. \nonumber
\eeq 
\ecor 

\vskip 0.3cm 

We are now going to deal with the invertibility of the exponential map above.
%
%
Let $(x,y,z,t)$ be an arbitrary element of $\D(G,r)$. Our goal is to find $\xi$ in $T^*\G$ such   that $\exp_{\D(G,r)}(\xi)=(x,y,z,t)$. 
\benum 
\item If $x=0$, then  
\beqn 
\left(0,\xi_2,\dfrac{1}{2}\xi_2\xi_4 + \xi_3,\xi_4\right) = (x,y,z,t) \cr
\left\{\begin{array}{ccc}
 0                            &=& x \cr
                              &  &   \cr
\xi_2                         &=& y \cr
                              &  &   \cr
\frac{1}{2}\xi_2\xi_4 + \xi_3 &=& z \cr
                              &  &   \cr
 \xi_4                        &=& t
\end{array}
\right. \Longleftrightarrow 
\left\{\begin{array}{lcl}
 0                            &=& x \cr
                              &  &   \cr
\xi_2                         &=& y \cr
                              &  &   \cr
 \xi_3                        &=& z -\dfrac{1}{2}yt \cr
                              &  &   \cr
 \xi_4                        &=& t
\end{array}
\right. 
\eeqn 

We then have that 
\blem 
The restriction $\exp_{\mid \{0\}\times \R^3}$ of the exponential map of $\mathcal D(G,r)$ 
to the subset $\{0\}\times \R^3$ of $\mathcal D(G,r)$  is invertible and its inverse is given by
\beq
(0,y,z,t) \mapsto  (0,y,z-\frac{1}{2}yt,t) 
\eeq 
\elem 

\item Suppose $x \neq 0$, then we have
\beq 
\left(\xi_1 , \dfrac{\xi_2}{\xi_1}[e^{\xi_1}-1], \xi_3\! +\! \dfrac{\xi_2\xi_4}{\xi_1} \!
+\! \dfrac{\xi_2\xi_4}{\xi_1^2}[e^{-\xi_1}\!-\!1], \dfrac{\xi_4}{\xi_1}[1\!-\!e^{-\xi_1}]\right)=(x,y,z,t) 
\eeq 
\beq
\left\{\begin{array}{lcl}
\xi_1                                                                                      &=& x \cr
                              &  &   \cr
\dfrac{\xi_2}{\xi_1}[e^{\xi_1}-1]                                                          &=& y \cr
                              &  &   \cr
\xi_3\! +\! \dfrac{\xi_2\xi_4}{\xi_1} \!+\! \dfrac{\xi_2\xi_4}{\xi_1^2}[e^{-\xi_1}\!-\!1]  &=& z \cr
                              &  &   \cr
\dfrac{\xi_4}{\xi_1}[1\!-\!e^{-\xi_1}]                                                     &=& t
\end{array}
\right.  \Leftrightarrow 
\left\{\begin{array}{lcl}
\xi_1                                                                                      &=& x \cr
                              &  &   \cr
\xi_2                                                                                      &=& \dfrac{xy}{e^x-1} \cr
                              &  &   \cr
\xi_3\!                                                                      &=& z+\dfrac{xyt}{(e^x-1)(e^{-x}-1)} + \dfrac{yt}{e^x-1} \cr
                              &  &   \cr
\xi_4                                                                        &=& \dfrac{xt}{1-e^{-x}}
\end{array}
\right. \nonumber 
\eeq 
Hence,
\blem
The restriction $\exp_{\mid \R^*\times \R^3}$ of the exponential map of the Lie group $\mathcal D(G,r)$ to the subset 
$\R^*\times \R^3$ is invertible and its inverse is the map defined as :

\beq 
(x,y,z,t)\mapsto \left(x  , \dfrac{xy}{e^x-1},z\!+\!\dfrac{yt}{e^x-1}\Big[\dfrac{x}{e^{-x}\!-\!1}\!+\! 1\Big],\dfrac{xt}{1\!-\!e^{-x}}\right)
\eeq 
\elem 
\eenum 

The precedent two Lemmas imply
\bpro
The exponential map of the Lie group $\mathcal D(G,r)$ is invertible ans its inverse is the map 
$Log_{\mathcal D(G,r)} :\mathcal D(G,r) \to \mathcal D(\G,r) $ defined as follows :

\beq 
Log_{\mathcal D(G,r)}(x,y,z,t) = \left\{\begin{array}{ccc}
(0,y,z-\frac{1}{2}yt,t)                        &\mbox{ if }&  x=0  \cr
                                               &           &       \cr
\left(x  , \dfrac{xy}{e^x-1},z\!+\!\dfrac{yt}{e^x-1}\Big[\dfrac{x}{e^{-x}\!-\!1}\!+\! 1\Big],\dfrac{xt}{1\!-\!e^{-x}}\right) &\mbox{ if }& x \neq 0
\end{array}
\right. 
\eeq 
\epro

\subsection{A Left Invariant Complex Structure On The Double of The Affine Lie group of $\R$}

From \cite{di-me-cybe}, the following formula defines a left-invariant complex structure $J$ on any Lie group with Lie 
algebra  $\mathcal D(\G,r)$ :
\beq 
J\big((x,\alpha)^+\big):= \big(-r(\alpha),q(x)\big)^+,
\eeq
for any $(x,\alpha)$ in $\mathcal D(\G,r)$.
We have
\beqn
Je_1^+ &=& (q(e_1))^+ = e_4^+ = \left(
\begin{array}{cccc}
1  &    0    &      0    &     0       \cr
0  &  e^{x_1}    &      0    &     0       \cr
0  &  0      &      1    &   x_2e^{-x_1} \cr
0  &  0      &      0    &    e^{-x_1}
\end{array}
\right)\left(
\begin{array}{c}
0  \cr
0  \cr
0  \cr
1
\end{array}
 \right)=
\left(
\begin{array}{c}
0  \cr
0  \cr
x_2e^{-x_1}  \cr
e^{-x_1}
\end{array}
 \right) \\
Je_2^+ &=& (q(e_2))^+ = -e_3^+ = -\left(
\begin{array}{cccc}
1  &    0    &      0    &     0       \cr
0  &e^{x_1}  &      0    &     0       \cr
0  &  0      &      1    &   x_2e^{-x_1} \cr
0  &  0      &      0    &    e^{-x_1}
\end{array}
\right)\left(
\begin{array}{c}
0  \cr
0  \cr
1  \cr
0
\end{array}
 \right) = -
\left(
\begin{array}{c}
0  \cr
0  \cr
1  \cr
0
\end{array}
 \right) \\
Je_3^+ &=& (-r(e_3))^+ = e_2^+ = \left(
\begin{array}{cccc}
1  &    0    &      0    &     0       \cr
0  &e^{x_1}  &      0    &     0       \cr
0  &  0      &      1    &   x_2e^{-x_1} \cr
0  &  0      &      0    &    e^{-x_1}
\end{array}
\right)\left(
\begin{array}{c}
0  \cr
1  \cr
0  \cr
0
\end{array}
 \right)=
\left(
\begin{array}{c}
0  \cr
e^{x_1}  \cr
0  \cr
0
\end{array}
 \right) \\
Je_4^+ &=& (-r(e_4))^+ = -e_1^+ = -\left(
\begin{array}{cccc}
1  &    0    &      0    &     0       \cr
0  &e^{x_1}  &      0    &     0       \cr
0  &  0      &      1    &   x_2e^{-x_1} \cr
0  &  0      &      0    &    e^{-x_1}
\end{array}
\right)\left(
\begin{array}{c}
1  \cr
0  \cr
0  \cr
0
\end{array}
 \right)= -
\left(
\begin{array}{c}
1  \cr
0  \cr
0  \cr
0
\end{array}
 \right) 
\eeqn
Let us summarize
\beqn 
Je_1^+ &=& x_2e^{-x_1}\frac{\partial}{\partial x_3} + e^{-x_1}\frac{\partial}{\partial x_4} \\
Je_2^+ &=& -\frac{\partial}{\partial x_3}  \\
Je_3^+ &=& e^{x_1}\frac{\partial }{\partial x_2}  \\
Je_4^+ &=& -\frac{\partial}{\partial x_1}
\eeqn 

This tensor is not bi-invariant, that is it does not commute with the adjoint operators of $\G$. Indeed, if $j:=J_{\mid \epsilon}$, we have
\bitem
\item[$\bullet$] $j[e_1,e_2] = je_2 = -e_3$ ;
\item[$\bullet$] $[e_1,je_2] = [e_1,-e_3] = 0$,
\eitem
then $j[e_1,e_2] \neq [e_1,je_2]$.

\chapter*{General Conclusion}
\fancyhead[L]{General Conclusion}
\addstarredchapter{General Conclusion}


In this thesis we study some aspect of the geometry of cotangent bundles of Lie groups as Drinfel'd double Lie groups. 
Automorphisms of cotangent bundles of Lie groups are completely characterized and interesting results are obtained. 
We give prominence to the fact that the Lie groups  of automorphisms of cotangent bundles of Lie groups are super-symmetric 
Lie groups (Theorem \ref{structuretheorem}). 
In the cases of semi-simple Lie algebras, compact Lie algebras and more generally orthogonal Lie algebras,  we recover by simple methods  
interesting co-homological known results (Section \ref{chap:cohomology}).

\vskip 0.3cm 

Another theme in this thesis is the study of prederivations of cotangent bundles of Lie groups. The Lie algebra of 
prederivations encompasses the one of derivations as a subalgebra. We find out that Lie algebras of cotangent Lie groups 
(which are not semi-simple) of semi-simple Lie groups have the property that all their prederivations are derivations. This result is an extension
of a well known result due to M\"uller (\cite{muller}). The structure of the Lie algebra of prederivations of 
Lie algebras of cotangent bundles of Lie groups is explore and we have shown that the Lie algebra of prederivations of Lie algebras of 
cotangent bundle of Lie groups are reductive Lie algebras.

\vskip 0.3cm

Prederivations are useful tools for classifying objects like pseudo-Riemannian metrics (\cite{bajo4}, \cite{muller}). 
We have studied bi-invariant metrics on cotangent bundles of Lie groups and their isometries. The Lie algebra of the Lie group of
isometries of a bi-invariant metric on a Lie group is entirely determine by prederivations of the Lie algebra which are skew-symmetric
with respect to the induced orthogonal structure on the Lie algebra. We have shown that  the Lie 
group of isometries of any bi-invariant metric on the cotangent bundle of any semi-simple Lie groups is given by inner derivations of the Lie
algebra of the  cotangent bundle.

\vskip 0.3cm

Last, we have dealt with an introduction to the geometry of the Lie  group of affine motions of the real line $\R$, which is a K\"ahlerian 
Lie group (see \cite{lichnerowicz-medina}). We describe, through explicit expressions, a symplectic structure, a complex structure, geodesics. 
Since the symplectic structure corresponds to a solution $r$ of the Classical Yang-Baxter equation  (see \cite{di-me-cybe}), we
also study the double Lie group associated to $r$.

\vskip 0.5cm  

Admittedly, questions remain. Can it be otherwise ?

\vskip 0.3cm 

Let $\G$ be a Lie algebra. We have said that the Lie algebra $\pder(T^*\G)$ of prederivations of $T^*\G$ contains the one $\der(T^*\G)$ 
of derivations as a Lie subalgebra. It would be interesting to know :
\bitem 
\item if $\pder(T^*\G)$ can be decomposed into a semi-direct sum of $\der(T^*\G)$ and a subspace $\mathfrak \h$
of $\pder(T^*\G)$ : $\pder(T^*\G) = \der(T^*\G) \ltimes \mathfrak{h}$ ; 
\item if $\der(T^*\G)$ is an ideal of $\pder(T^*\G)$.
\eitem 
We have seen that if $\G$ is semi-simple, then $\pder(T^*\G)=\der(T^*\G)$. It would be a great step to find necessary and sufficient 
 conditions for which the latter equality holds.

\vskip 0.3cm

Another open question is the following. As the cotangent bundle of any Lie group is a Drinfel'd double Lie group, it seems to be a 
good idea to extend the results within this thesis to the class of Lie groups which are double Lie groups of Poisson-Lie groups. 

\vskip 0.3cm

One other step is to list all orthogonal Lie groups of low-dimension using the double extension procedure of Medina and Revoy.
Studying the isomorphic classes, up to isometric automorphisms, of bi-invariant metrics on cotangent bundles of Lie groups is also 
an interesting subject. It seems to be reasonable to begin by simple Lie algebras.



\end{document}